\documentclass[11pt, a4paper]{amsart}
\usepackage{cgeometry}
\usepackage{multirow}
\usepackage{manfnt}

\makeatletter
\def\hang{\hangindent\parindent}
\def\d@nger{\medbreak\begingroup\clubpenalty=10000
  \def\par{\endgraf\endgroup\medbreak} \noindent
  \hang\hangafter=-2
  \hbox to0pt{\hskip-\hangindent\dbend\hfill}}
\outer\def\danger{\d@nger}
\makeatother

\bibliography{ComplexGeometry}
\title{Transcendental b-divisors II --- Monotonicity theorem}
\author{Mingchen Xia}

\begin{document}

\begin{abstract}
In this paper, we develop the general intersection theory of nef b-divisors, extending the movable intersection theory. 
We define a notion of restricted volume of b-divisors and prove a quantitative version of the monotonicity of the intersection product. As a consequence, we prove a number of new volume inequalities of currents and cohomology classes.
\end{abstract}    

\maketitle

\tableofcontents

\section{Introduction}
The current paper is a sequel of the author's previous paper \cite{Xiabdiv}.
\subsection{Motivation}
Let $X$ be a compact Kähler manifold of dimension $n$.
This paper is motivated by the monotonicity theorem of Witt Nyström (see \cite{WN19, DDNL18mono}): Given closed positive $(1,1)$-currents
\[
T_1,\ldots,T_n,  S_1,\ldots,S_n
\]
on $X$ such that $T_i$ and $S_i$ lie in the same cohomology class and $T_i$ is more singular than $S_i$ for each $i$, then one has
\begin{equation}\label{eq:mono_intro}
    \vol \left( S_1,\dots, S_n \right) \geq \vol \left( T_1, \dots, T_n \right).
\end{equation}
Here the mixed volume is understood in the sense of Cao \cite{Cao14}. As proved in the previous paper \cite{Xiabdiv}, this product also coincides the mixed volume in the sense of Darvas--Xia.

Inequality~\eqref{eq:mono_intro} expresses the principle that increasing singularities of currents lead to a loss of mass. However, the monotonicity theorem itself provides no quantitative or qualitative control on how much mass is lost.

From an intuitive point of view, the more singular the currents $T_i$ are relative to $S_i$, the larger the difference in~\eqref{eq:mono_intro} should be. The main goal of this paper is to make this intuition precise.

To this end, one must first quantify the difference in singularities. A classical invariant serving this purpose is given by Lelong numbers. We aim to obtain a qualitative control of the difference in~\eqref{eq:mono_intro} in terms of the differences between the Lelong numbers of the currents $T_i$ and $S_i$.



Related problems have been previously investigated by Vu, Su, among others; see for instance \cite{SV25, Su25, Vu23, Suthesis}. Their approaches rely on techniques
involving relative non-pluripolar products and density currents, and lead only to non-optimal lower bounds for the difference in \eqref{eq:mono_intro}. In contrast, in this paper we approach the problem using the theory of nef b-divisors, developed in \cite{Xiabdiv} and \cite{DF20}. As we shall show, this method allows us to obtain an optimal control of the difference in \eqref{eq:mono_intro}. As a consequence, we get a systematic way of obtaining new inequalities involving volumes and movable intersection products.

\subsection{Main results}
We first extend the intersection theory of transcendental b-divisors over $X$ initiated in \cite{Xiabdiv}.
Recall that a nef b-divisor $\mathbb{D}$ over $X$ is a family of modified nef classes $\mathbb{D}_Y\in \mathrm{H}^{1,1}(Y,\mathbb{R})$, with $Y\rightarrow X$ running over all modifications of $X$. These classes are assumed to be compatible with each other under pushforward. 

In \cite{Xiabdiv}, we showed that nef b-divisors can be understood using the theory of currents, and defined the intersection number between $n=\dim X$ nef b-divisors. The first goal of this paper is to develop the full intersection theory. In particular, we establish the following theorem:
\begin{maintheorem}\label{thm:main}
\leavevmode
    \begin{enumerate}
        \item There is a map sending $p$ nef b-divisors $\mathbb{D}_1,\ldots,\mathbb{D}_p$ over $X$ to a nef b-class $\mathbb{D}_1\cap \cdots \cap \mathbb{D}_p$ over $X$. The map is multi-$\mathbb{R}_{\geq 0}$-linear\footnote{In the sense that the product is additive in each variable, while homogeneous only with respect to non-negative scalings.} and satisfies the expected properties of an intersection product.
        \item When $\mathbb{D}_i=\mathbb{D}(\alpha_i)$ is the nef b-divisor induced by a pseudoeffective class $\alpha_i\in \mathrm{H}^{1,1}(X,\mathbb{R})$ as in \cref{def:bdiv_psefclass} for each $i=1,\ldots,p$, the product
    \[
    (\mathbb{D}_1\cap \cdots \cap \mathbb{D}_p)_X
    \]
    is just the movable intersection $\langle \alpha_1\wedge\dots\wedge\alpha_p\rangle$ in the sense of \cite{Bou02, BDPP13}. 
    \item In general, the intersection product can be recovered from the movable intersection as a limit:
    \[
    (\mathbb{D}_1\cap \cdots \cap \mathbb{D}_p)_X=\lim_{\pi\colon Y\rightarrow X} \langle \mathbb{D}_{1,Y}\wedge \dots \wedge \mathbb{D}_{p,Y} \rangle,
    \]
    where $\pi\colon Y\rightarrow X$ runs over the modifications of $X$.
    \item Suppose that $\mathbb{D}_i$ is associated with an $\mathcal{I}$-good current $T_i$ for each $i=1,\ldots,p$, then
    \[
    (\mathbb{D}_1\cap \cdots \cap \mathbb{D}_p)_X=\{T_1\wedge \cdots \wedge T_p\},
    \]
    where the bracket $\{\bullet\}$ denotes the associated cohomology class. The Monge--Ampère type product is understood in the non-pluripolar sense of \cite{BT87, GZ07, BEGZ10}.
    \end{enumerate}
\end{maintheorem}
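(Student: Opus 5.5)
The plan is to \emph{define} the product through (3) and then prove (1), (2) and (4) as properties of this definition. For nef b-divisors $\mathbb{D}_1,\ldots,\mathbb{D}_p$ and a modification $\pi\colon Y\to X$, I would set
\[
\mathbb{E}^Y_Z:=\langle \mathbb{D}_{1,Y}\wedge\dots\wedge\mathbb{D}_{p,Y}\rangle \text{ pushed to } Z
\]
for every modification $Z$ dominated by $Y$, and define $(\mathbb{D}_1\cap\cdots\cap\mathbb{D}_p)_Z:=\lim_Y \mathbb{E}^Y_Z$ over the directed set of modifications dominating $Z$.

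The key step is monotonicity. If $Y'\to Y$ is a further modification, then $\mathbb{D}_{i,Y}$ is the pushforward of $\mathbb{D}_{i,Y'}$. I expect the movable product on $Y'$ to push forward to a class that is $\geq\langle\mathbb{D}_{1,Y}\wedge\dots\rangle$ in the pseudoeffective order. The reason is that $\mathbb{D}_{i,Y'}$ is modified nef and is bounded above by the pullback of $\mathbb{D}_{i,Y}$ minus an effective exceptional divisor (negativity lemma). Hence currents with minimal singularities in $\mathbb{D}_{i,Y'}$ push down to currents in $\mathbb{D}_{i,Y}$ that are at most as singular. Bouck\-som's construction of the movable product as a supremum over approximations with analytic singularities then gives the comparison. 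Together with a uniform mass bound from the Kähler class (all classes are dominated by pullbacks of a fixed big class), this yields an increasing, bounded net in the finite-dimensional cone $\mathrm{H}^{p,p}(Z)$, so the limit exists. Compatibility under pushforward holds by construction, and nefness of the limit follows from the limit of movable classes. This makes (3) true by definition. I expect this monotonicity step to be the main obstacle, since it requires care with the negativity lemma for modified nef transcendental classes and with Bouck\-som's definition in degree $p<n$.

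For (1), I would get additivity and $\mathbb{R}_{\geq0}$-homogeneity from the corresponding properties of movable intersections. Additivity there only holds in the limit: $\langle\cdot\rangle$ is superadditive, and becomes additive once one passes to the models on which the positive parts become nef. I would prove this by choosing, for given $\mathbb{D}_i$, models $Y$ on which the $\mathbb{D}_{i,Y}$ are approximated by nef classes up to $\epsilon$, using the current description of nef b-divisors from \cite{Xiabdiv}. On such models the movable product is close to the honest cup product, which is multilinear. The remaining expected properties (symmetry, and the top degree agreeing with the intersection number of \cite{Xiabdiv}) follow in the same way.

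For (2), with $\mathbb{D}_i=\mathbb{D}(\alpha_i)$, the definition of $\mathbb{D}(\alpha)$ gives $\mathbb{D}(\alpha)_Y=\langle\pi^*\alpha\rangle$, the positive part of the pullback (as in \cref{def:bdiv_psefclass}). Bouck\-som's birational invariance $\pi_*\langle\pi^*\alpha_1\wedge\cdots\rangle=\langle\alpha_1\wedge\cdots\rangle$, together with $\langle \langle\beta_1\rangle\wedge\cdots\rangle=\langle\beta_1\wedge\cdots\rangle$, shows that the net is constant, which gives (2).

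For (4), take $T_i$ $\mathcal{I}$-good with $\mathbb{D}_i$ its b-divisor. First treat $T_i$ with analytic singularities. After a log resolution $Y$, we have $\pi^*T_i=[E_i]+\theta_i$ with $\theta_i$ semipositive in a nef class, $\mathbb{D}_{i,Y'}$ equals the pullback of $\{\theta_i\}$ on all higher models, and the non-pluripolar product is $\pi_*(\theta_1\wedge\cdots\wedge\theta_p)$. This matches the limit. For general $\mathcal{I}$-good currents, I would use the quasi-equisingular approximations $T_i^j$ with analytic singularities. By the $\mathcal{I}$-good property, $\mathbb{D}(T_i^j)\to\mathbb{D}(T_i)$ (from \cite{Xiabdiv}), and the non-pluripolar classes $\{T_1^j\wedge\cdots\}\to\{T_1\wedge\cdots\}$, using the monotonicity of the non-pluripolar mass together with the equality of full masses that characterises $\mathcal{I}$-goodness. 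This requires the intersection product to be continuous along decreasing sequences, which I would derive from the monotone-limit description and an exchange of limits justified by monotonicity in both indices.
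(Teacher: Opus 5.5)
Your key step, monotonicity of the net, is asserted in the wrong direction, and this is more than a sign slip. Refining the model makes the pushed-forward movable product smaller. For a modification $g\colon Y'\to Y$ one has $g_*\langle \mathbb{D}_{1,Y'}\wedge\cdots\wedge\mathbb{D}_{p,Y'}\rangle\leq_Y\langle\mathbb{D}_{1,Y}\wedge\cdots\wedge\mathbb{D}_{p,Y}\rangle$; this is \cref{prop:mov_int_push_ineq}, so the net in (3) is \emph{decreasing}. Your own justification actually proves this. The pushforward under $g$ of a current with minimal singularities in $\mathbb{D}_{i,Y'}$ lies in $\mathbb{D}_{i,Y}$, so it is \emph{at least} as singular as a current with minimal singularities there. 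Hence \cref{thm:mono_thm_partial} and \cref{prop:bime_npp} give $\leq$, and the bound $\mathbb{D}_{i,Y'}\leq g^*\mathbb{D}_{i,Y}$ you invoke points the same way. A concrete check: take $T$ in a Kähler class $\{\omega\}$ on a surface with an isolated logarithmic pole of small Lelong number $\nu$ at $x$. The net gives $\{\omega\}^2$ on $X$ and $(\pi^*\{\omega\}-\nu\{E\})^2=\{\omega\}^2-\nu^2$ on the blow-up of $x$. Existence of the limit survives: a decreasing net in the closed pointed positive cone converges by \cref{prop:mono_conv}, and no upper mass bound is needed. Your later steps do not survive. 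Continuity along a decreasing sequence $\mathbb{D}^k_i\downarrow\mathbb{D}_i$ requires exchanging the limits in $k$ and in $Y$. With your directions (decreasing in $k$, increasing in $Y$) that means swapping an infimum with a supremum, which is not justified.

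With the correct direction the argument closes, and this should also replace your additivity sketch. A modified nef class on a fixed model need not be near the nef cone, and closeness on one model says nothing about higher models. Instead, take nef Cartier $\mathbb{D}^k_i\downarrow\mathbb{D}_i$ (\cref{cor:Demapp}) and put $a_{k,Y}=\pi_*\langle\mathbb{D}^k_{1,Y}\wedge\cdots\wedge\mathbb{D}^k_{p,Y}\rangle$.
\begin{itemize}
\item $a_{k,Y}$ is decreasing in $k$ and in $Y$, so both iterated limits equal the common infimum, by closedness and pointedness of the positive cone.
\item For fixed $k$, the net in $Y$ is eventually constant, equal to the pushforward of the cup product of the realizing nef classes. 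It is therefore multilinear in the $\mathbb{D}^k_i$.
\item For fixed $Y$ and a Kähler class $\beta$ on $Y$, one has $\mathbb{D}_{i,Y}\leq\mathbb{D}^k_{i,Y}\leq\mathbb{D}_{i,Y}+\epsilon\beta$ for $k$ large. Then \cref{prop:mov_int_prop}(4),(6) give $\lim_k a_{k,Y}=\pi_*\langle\mathbb{D}_{1,Y}\wedge\cdots\wedge\mathbb{D}_{p,Y}\rangle$.
\end{itemize}
This gives additivity in (1). The same argument for arbitrary decreasing sequences gives the continuity you need in (4), and your (2) via birational invariance of $\langle\cdot\rangle$ is then fine.

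Once repaired, your route is a genuine alternative to the paper's. The paper defines the product through Cartier approximations, which makes (1) formal. It proves (4) by quasi-equisingular approximation and $d_S$-continuity, deduces (2) from (4), and obtains (3) in \cref{thm:nefbint_aslimit} by reducing to top degree and $\mathcal{I}$-good currents.
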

The notion of nef b-class is a generalization of nef b-divisors. A nef b-class $\mathbb{T}$ is a compatible choice of movable classes $\mathbb{T}_Y\in \mathrm{H}^{p,p}(Y,\mathbb{R})$. We refer to \cref{def:bclassnef} for the details.

Part~(2) shows that the intersection theory of b-divisors is a generalization of the more classical movable intersection theory. The latter has the drawback of being non-linear, making concrete applications difficult. By contrast, the intersection theory of b-divisors is linear. The non-linearity is completely encoded in the map from a pseudoeffective class $\alpha$ to the associated nef b-divisors $\mathbb{D}(\alpha)$, which is easier to understand in general.

Part~(4) shows that the intersection theory of b-divisors can also be regarded as a partial generalization of the non-pluripolar product. Although we can only recover the cohomology class of the non-pluripolar product, this is enough for most applications, since in practice, we usually only need the top intersection. 

The monotonicity theorem \eqref{eq:mono_intro} admits a generalization in the theory of b-divisors as follows: If we have another nef b-divisor $\mathbb{D}_1'$ which dominates $\mathbb{D}_1$ in the sense that $\mathbb{D}'_{1,Y}-\mathbb{D}_{1,Y}$ is pseudoeffective for each modification $\pi\colon Y\rightarrow X$, then
\[
\mathbb{D}'_1\cap \mathbb{D}_2\cap \cdots \cap \mathbb{D}_p\geq \mathbb{D}_1\cap \mathbb{D}_2\cap \cdots \cap \mathbb{D}_p,
\]
in the sense that on each modification of $X$ the difference is a positive cohomology class, as defined in \cref{def:poscoh}.

By the linearity of the intersection product, we could formally write the difference as
\[
\left(\mathbb{D}'_1-\mathbb{D}_1 \right)\cap \mathbb{D}_2\cap \cdots \cap \mathbb{D}_p.
\]
However, this term simply makes no sense, as $\mathbb{D}'_1-\mathbb{D}_1$ is not nef in general. However, it could be formally regarded as a linear combination of prime divisors over $X$ with non-negative coefficients, at least when $\mathbb{D}_{1,X}=\mathbb{D}'_{1,X}$. 

When trying to give a formal meaning to these objects, in \cref{sec:resvol}, we define the restricted volume of nef b-divisors, generalizing the notion of restricted volumes of cohomology classes \cite{Mat13, CT22}. More precisely, given nef b-divisors $\mathbb{D}_1,\ldots,\mathbb{D}_p$ over $X$ and a prime divisor $D$ over $X$, we shall define a class
\[
\vol_{X|D}\left( \mathbb{D}_1,\ldots,\mathbb{D}_p\right)\in \mathrm{H}^{p+1,p+1}(X,\mathbb{R}).
\]
We will show that these classes for various $X$ are compatible with each other and form a b-class, which we denote by 
\[
\vol_{|D}\left( \mathbb{D}_1,\ldots,\mathbb{D}_p\right).
\]
After studying the restricted volumes in detail,
in \cref{sec:qual_mono} we prove the following key inequality:
\begin{maintheorem}\label{thm:bdivint_diff_ineq_intro}
     Let $\mathbb{D}_1,\ldots,\mathbb{D}_p$ be nef b-divisors over $X$, and $T,T'$ be closed positive $(1,1)$-currents on $X$ in the same cohomology class such that $T\preceq_{\mathcal{I}}T'$. Then
    \begin{equation}\label{eq:Ddiffineq1_intro_weak}
        \mathbb{D}_1\cap \dots \cap \mathbb{D}_p \cap \mathbb{D}(T')-\mathbb{D}_1\cap \dots \cap \mathbb{D}_p \cap \mathbb{D}(T)
        \geq \sum_{D/X} \Bigl(\nu(T,D)-\nu(T',D) \Bigr) \vol_{|D}\left(\mathbb{D}_1,\dots, \mathbb{D}_p \right).
    \end{equation}
    The sum is over all prime divisors over $X$.

    Equality holds in \eqref{eq:Ddiffineq1_intro_weak} when the $\mathbb{D}_i$'s are Cartier.
\end{maintheorem}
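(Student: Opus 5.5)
I will first reduce to the case where all the $\mathbb{D}_i$ are Cartier, i.e.\ determined by a nef class on some fixed modification $\pi\colon Y\to X$, prove equality there, and then pass to general nef b-divisors by approximation. For the reduction I use that the intersection product is continuous along decreasing nets: a nef b-divisor $\mathbb{D}_i$ is the decreasing limit of the Cartier b-divisors $\mathbb{D}_{i,Y}$ pulled back to higher models. I also need the restricted volume $\vol_{|D}$ to behave at least upper semicontinuously under such limits, which is the content of the results of \cref{sec:resvol}. Similarly, $\mathbb{D}(T)$ is approximated from above by the Cartier b-divisors attached to quasi-equisingular approximations $T_j$ of $T$. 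These are analytic-singularity currents, so $\mathbb{D}(T_j)$ is determined on a log resolution $Y_j$. Moreover $\nu(T_j,D)\to\nu(T,D)$ for every divisor $D$ over $X$, since $T$ and $T'$ are only compared up to $\mathcal{I}$-equivalence, which preserves all generic Lelong numbers.

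In the Cartier situation, I fix a model $Y$ on which $\mathbb{D}(T_j)$ and $\mathbb{D}(T'_j)$ are both Cartier. Writing $\beta=\{T\}$, we have
\[
\mathbb{D}(T_j)_Y=\pi^*\beta-\sum_E \nu(T_j,E)E,
\]
and similarly for $T'_j$, with $E$ running over the prime divisors on $Y$. Here one should take $T'_j$ with $T_j\preceq T'_j$, for example by using $\max$-type approximations. The difference of the two nef classes is then the effective divisor
\[
\sum_E \bigl(\nu(T_j,E)-\nu(T'_j,E)\bigr)E.
\]
By multilinearity of the product in \cref{thm:main}, the difference of the products equals $\mathbb{D}_1\cap\dots\cap\mathbb{D}_p$ paired with this divisor. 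Since the $\mathbb{D}_i$ are Cartier, this is the sum over $E$ of the coefficient times $(\mathbb{D}_{1,Y}\cdots\mathbb{D}_{p,Y})\cdot E$, pushed forward. By definition, or by the Cartier case of the restricted-volume computation in \cref{sec:resvol}, the restricted volume of nef classes along $E$ equals the restriction intersection $\mathbb{D}_{1,Y}|_E\cdots\mathbb{D}_{p,Y}|_E$ pushed forward. This gives equality at level $j$ on $Y$. The same holds on every higher model, because pulling back does not change the coefficients on exceptional divisors not lying in the support. Divisors over $X$ that are not components on $Y$ contribute $\nu(T_j,D)-\nu(T'_j,D)$ computed by pullback, consistent with the b-class description.

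For general $T,T'$ I will not use approximation for equality. Instead I will get the inequality directly: for each finite set of prime divisors $D_1,\dots,D_m$ over $X$, choose a model $Y$ on which they all appear. The class $\mathbb{D}(T')_Y-\mathbb{D}(T)_Y-\sum_k(\nu(T,D_k)-\nu(T',D_k))D_k$ is then pseudoeffective, since it is the difference of divisorial Zariski parts. Combining the monotonicity theorem with the Cartier computation above gives the bound for the finite partial sum, and letting the finite set grow gives the full sum, using that all terms are nonnegative. This is where the hypothesis $T\preceq_{\mathcal{I}}T'$ enters, through $\nu(T,D)\ge\nu(T',D)$ for all $D$. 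Finally, I approximate general $\mathbb{D}_i$ by Cartier ones from above. The left-hand side converges, and the right-hand side is handled by the semicontinuity of $\vol_{|D}$, which must go in the direction that keeps $\ge$ in the limit.

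The main obstacle is this last passage to the limit. One must show that $\vol_{|D}(\mathbb{D}_1,\dots,\mathbb{D}_p)$ is at most the $\liminf$ of the restricted volumes of the Cartier approximants, and also justify exchanging the limit with the possibly infinite sum over $D$. I would first establish that $\vol_{|D}$ is defined as an infimum over models (or as a limit of movable restricted products $\langle\cdot\rangle_{X|D}$), so that it is automatically upper semicontinuous along decreasing approximations. I would then use Fatou-type reasoning for the sum.
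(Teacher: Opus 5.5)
Your Cartier step is correct, and it needs no approximation of $T,T'$. If the $\mathbb{D}_i$ are realized on $Y$ by nef classes $\alpha_i$, then $\mathbb{D}(T)_Y=\pi^*\{T\}-\sum_{E\subseteq Y}\nu(T,E)\{E\}$. Together with \cref{prop:bdiv_int_prop}(5) and \cref{ex:res_vol_nef}, this gives the equality on $Y$ and on every higher model, which is how the paper proves the equality clause.

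The genuine gap is the passage from Cartier to general $\mathbb{D}_i$. There are two problems.

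First, the Cartier b-divisors obtained by pulling back $\mathbb{D}_{i,Y}$ are in general not nef, since $\mathbb{D}_{i,Y}$ is only modified nef. So the product is not even defined for them, and nef Cartier approximants have to come from \cref{cor:Demapp}.

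Second, and more seriously, you need the termwise bound $\vol_{|D}(\mathbb{D}_1,\dots,\mathbb{D}_p)\leq\liminf_k\vol_{|D}(\mathbb{D}^k_1,\dots,\mathbb{D}^k_p)$, and this is false for general decreasing Cartier approximations. The trace and the restricted volume are monotone only under the extra hypothesis that the class on the model carrying $D$ is unchanged (\cref{prop:trace_basic}(1), \cref{prop:res_vol}(4)). Here is a counterexample with $n=2$. Let $P=[0,1]^2$ and $P_k=\mathrm{conv}\bigl(P\cup\{(1/2,1+1/k)\}\bigr)$. The rational polytopes $P_k$ decrease to $P$, so the associated toric nef Cartier b-divisors $\mathbb{D}^k$ decrease to the b-divisor $\mathbb{D}$ of $P$. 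Take $D$ the toric divisor of the top edge of $P$. Then $\vol_{|D}(\mathbb{D}^k)=0$ for all $k$, because the face of $P_k$ in that direction is a vertex. But $\vol_{|D}(\mathbb{D})=1$.

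Your proposed justification also points the wrong way. Writing $\vol_{|D}$ as an infimum over models would make it upper semicontinuous, which gives $\vol_{|D}(\mathbb{D})\geq\limsup_k$, not the bound you need. Fatou's lemma cannot repair a termwise failure either. This fits the paper's remark that equality fails in general even though it holds for Cartier $\mathbb{D}_i$.

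The missing idea is to approximate only inside a fixed class on a fixed model. Prove the inequality on a model $Y$ for the partial sum over $D\subseteq Y$, and only afterwards let $Y$ grow. The steps are:

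\begin{enumerate}
\item On $Y$, replace $\mathbb{D}_i$ by $\mathbb{D}_i+\epsilon\mathbb{D}(\omega)$. By \cref{thm:main_paper1}, write $\mathbb{D}_i=\mathbb{D}(T_i)$ with $T_i$ a non-divisorial $\mathcal{I}$-good Kähler current on $Y$.
\item Take quasi-equisingular approximations $T_i^j$ in the same class. Then $\mathbb{D}(T_i^j)_Y=\mathbb{D}(T_i)_Y$.
\item Because the class on $Y$ is fixed, $\Tr_D\mathbb{D}(T_i^j)$ decreases to $\Tr_D\mathbb{D}(T_i)$ for every $D\subseteq Y$ (\cref{prop:trace_basic}(1), \cref{cor:conttrace}). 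By \cref{prop:contresvol}, the restricted volumes then decrease termwise to the correct limit.
\item Dominated convergence (\cref{prop:incdom}, dominated by the $j=1$ terms) turns your Cartier equality for $\mathbb{D}(T_i^j)$ into the desired inequality in the limit.
\item Remove $\epsilon$ using $\Tr_D(\mathbb{D}_i+\epsilon\mathbb{D}(\omega))\geq\Tr_D\mathbb{D}_i$. This follows from additivity (\cref{prop:trace_basic}(2)), not from monotonicity.
\end{enumerate}

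This is the paper's proof. The only difference is that the paper does the analytic step with non-pluripolar products (\cref{thm:DTandnpp} and \cref{lma:Tpwedgediff_analy}). Your b-divisorial Cartier equality can do the same job more directly, since it holds for arbitrary $T\preceq_{\mathcal{I}}T'$ without first making them $\mathcal{I}$-good or non-divisorial.
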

Here $\mathbb{D}(T)$ is a nef b-divisor canonically constructed from $T$, as we studied in \cite{XiaPPT, Xiabdiv}. The notation $T\preceq_{\mathcal{I}} T'$ means that all Lelong numbers of $T$ (on all modifications) dominate those of $T'$. Equivalently, in the setup of the theorem, it says that $\mathbb{D}(T)\leq \mathbb{D}(T')$.

The main inequality \eqref{eq:Ddiffineq1_intro_weak} gives a quantitative version of the monotonicity theorem. 
When $\mathbb{D}_1,\ldots,\mathbb{D}_p$ are nef b-divisors associated with currents, \cref{thm:bdivint_diff_ineq_intro} gives a solution to our initial question in view of \cref{thm:main}(4). 
We can for example formulate the following consequence:
\begin{corollary}\label{cor:main_eq_analy_intro}
    Let $T_1,\ldots,T_{n-1}$ be closed positive $(1,1)$-currents with analytic singularities on $X$, and $T,T'$ be closed positive $(1,1)$-currents in the same cohomology class such that $T\preceq_{\mathcal{I}} T'$, then
    \begin{equation}
    \begin{aligned}
        &\vol\left(T_1,\ldots,T_{n-1},T'\right)-\vol\left(T_1,\ldots,T_{n-1},T\right)\\
        =&\lim_{\pi\colon Y\rightarrow X}\sum_{D\subseteq Y} \Bigl(\nu(T,D)-\nu(T',D) \Bigr)\int_{\tilde{D}} \bigwedge_{i=1}^{n-1}\left.\Bigl(\pi^*T_i-\nu(T_i,D)[D]\Bigr)\right|_{\tilde{D}}.
    \end{aligned}
    \end{equation}
    Furthermore, the net on the right-hand side is eventually constant.
\end{corollary}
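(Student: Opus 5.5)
We derive this from \cref{thm:bdivint_diff_ineq_intro} with $p=n-1$ and $\mathbb{D}_i=\mathbb{D}(T_i)$, and then compute the restricted volumes in terms of the $T_i$.

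\emph{Reduction to $X$-components.} Since the $T_i$ have analytic singularities, each $\mathbb{D}(T_i)$ is Cartier. Concretely, choose a modification $\pi_0\colon Y_0\to X$ resolving the singularities of all $T_i$. Then $\pi_0^*T_i=\theta_i+[E_i]$ with $\theta_i$ smooth semipositive and $E_i$ an effective $\mathbb{R}$-divisor, and $\mathbb{D}(T_i)$ is determined by the nef class $\{\theta_i\}$ on $Y_0$. The equality case of \cref{thm:bdivint_diff_ineq_intro} then gives an equality of b-classes. We take $X$-components, i.e.\ top-degree numbers since $p+1=n$. By \cref{thm:main}(4), the left side is
\[
\vol(T_1,\dots,T_{n-1},T')-\vol(T_1,\dots,T_{n-1},T),
\]
because $\mathbb{D}(T)$, $\mathbb{D}(T')$ and $\mathbb{D}(T_i)$ are all associated with their $\mathcal{I}$-good models, and non-pluripolar masses only depend on $\mathcal{I}$-singularity type. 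If needed, we first replace $T,T'$ by their $\mathcal{I}$-good envelopes; this changes neither the volumes nor the Lelong numbers. The right side becomes
\[
\sum_{D/X}\bigl(\nu(T,D)-\nu(T',D)\bigr)\vol_{X|D}(\mathbb{D}(T_1),\dots,\mathbb{D}(T_{n-1})).
\]

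\emph{Computing the restricted volume.} Let $D$ be a prime divisor realized on a modification $\pi\colon Y\to X$ dominating $Y_0$. Using the definition of $\vol_{X|D}$ in \cref{sec:resvol}, we expect that for Cartier b-divisors determined on $Y$ the restricted volume is the plain intersection number
\[
\int_{\tilde D}\bigwedge_i\bigl(\pi^*T_i-\nu(T_i,D)[D]\bigr)\big|_{\tilde D},
\]
where $\tilde D$ is a resolution of $D$. The restriction makes sense because $\pi^*T_i-\nu(T_i,D)[D]$ has generic Lelong number zero along $D$. Moreover, on $Y\succeq Y_0$ it equals $\theta_i$ plus an effective divisor not containing $D$, so it restricts to $D$ with analytic singularities. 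Its non-pluripolar mass on $\tilde D$ equals $\int_{\tilde D}\prod\theta_i|_{\tilde D}$, and this is exactly the restricted volume of the nef classes $\mathbb{D}(T_i)_Y$ along $D$. This identification is the step I expect to need care: one must match the definition of $\vol_{|D}$ in \cref{sec:resvol} with the classical restricted volume for nef, hence Cartier, data, and check that the integral does not depend on the model $Y$ or the resolution $\tilde D$. The pullback invariance of smooth restrictions handles the latter.

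\emph{Limit and stabilization.} For fixed $Y\succeq Y_0$, the partial sum over $D\subseteq Y$ is a finite sum, since only finitely many $D$ have $\nu(T,D)\neq\nu(T',D)$ on a fixed $Y$. It equals the $X$-component of
\[
\mathbb{D}(T_1)\cap\dots\cap\mathbb{D}(T_{n-1})\cap\bigl(\mathbb{D}(T')_Y-\mathbb{D}(T)_Y\bigr),
\]
which is computed on $Y$ as $\int_Y\prod\theta_i\cdot\bigl(\mathbb{D}(T')_Y-\mathbb{D}(T)_Y\bigr)$.

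Two facts then give convergence. First, the terms are nonnegative, since the $\theta_i|_D$ are nef and $\nu(T,D)\ge\nu(T',D)$. Second, exceptional divisors $D$ over $Y$ with center $Z\subseteq Y$ of codimension $\ge2$ contribute zero once $Y\succeq Y_0$: the classes $\theta_i$ are pulled back from $Y_0$, so $\prod\theta_i|_{D}$ is pulled back through $D\to Z$, whose image has dimension $<n-1$.

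Hence the net is constant for $Y\succeq Y_0$. Each such partial sum equals the full sum over all $D/X$, because the only nonzero terms come from divisors on $Y_0$. This gives both the formula and the eventual constancy.
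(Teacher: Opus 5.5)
Your proposal is correct and follows essentially the paper's route. The corollary is read off from the equality case of \cref{thm:bdivint_diff_ineq} for the Cartier b-divisors $\mathbb{D}(T_i)$: \cref{thm:DTandnpp} identifies the left-hand side as a difference of mixed volumes, \cref{ex:res_vol_nef} identifies the restricted volumes with your integrals on models dominating $Y_0$, and there the projection formula makes the net constant.

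The only slips are harmless. The sum over $D\subseteq Y$ can be countably infinite rather than finite, but it still converges, being bounded by the intersection of the pulled-back classes $\{\theta_i\}$ with $\{\pi^*T\}$. In general $\theta_i$ only has bounded rather than smooth local potentials, and Bedford--Taylor theory still identifies the masses with cohomological intersections. Finally, non-pluripolar masses depend on the $P$-singularity type rather than the $\mathcal{I}$-singularity type, so your passage to $\mathcal{I}$-good envelopes is a required step, not an optional one, and it is what justifies the identification of the left-hand side.
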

Here $\tilde{D}$ is the normalization of $D$.

\cref{thm:bdivint_diff_ineq_intro} contains many new inequalities of the movable intersection product as well. 
Instead of trying to give an exhaustive list of consequences of \eqref{eq:Ddiffineq1_intro_weak}, we will only explain one such result (\cref{cor:mov_modnef_minus_ineqgen}). Here in the introduction, we only mention a very elegant and special case of the latter as in  \cref{cor:mov_modnef_minus_ineq}.
\begin{theorem}
    Let $[E]$ be a divisorial closed positive $(1,1)$-current on $X$, say 
    \[
    [E]=\sum_i c_i E_i,
    \]
    where the $E_i$'s are distinct prime divisors on $X$ and $c_i>0$. Consider modified nef classes $\alpha_1,\ldots,\alpha_p,\beta\in \mathrm{H}^{1,1}(X,\mathbb{R})$. Assume that $\beta\geq \{E\}$, then
    \[
    \langle \alpha_1\wedge \cdots \wedge \alpha_p \wedge \beta \rangle-\Bigl\langle \alpha_1\wedge \cdots \wedge \alpha_p \wedge (\beta-\{E\}) \Bigr\rangle
        \geq_X \sum_{i} \Bigl( \nu(\beta-\{E\},E_i)+c_i \Bigr)\vol_{X|E_i}\left(\alpha_1,\ldots,\alpha_p\right).
    \]
\end{theorem}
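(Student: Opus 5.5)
We will deduce the statement from \cref{thm:bdivint_diff_ineq_intro} combined with \cref{thm:main}(2). The natural choice is to realize both $\beta$ and $\beta-\{E\}$ through currents in the single class $\beta$. Let $T'$ be a current with minimal singularities in $\beta$. Let $S$ be a current with minimal singularities in $\beta-\{E\}$, which exists because $\beta-\{E\}$ is pseudoeffective by assumption. Set $T:=S+[E]\in\beta$.

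The first step is to identify the b-divisors. Since $\beta$ is modified nef, $\mathbb{D}(T')=\mathbb{D}(\beta)$. We expect $\mathbb{D}(T)=\mathbb{D}(S)+\mathbb{D}([E])=\mathbb{D}(\beta-\{E\})+\mathbb{D}([E])$, using additivity of $\mathbb{D}$ on currents (Lelong numbers are additive). The term $\mathbb{D}([E])$ is the b-divisor whose incarnation on $Y$ is $\pi^*\{E\}-\sum_F \nu([E],F)\{F\}$, and its incarnation on $X$ is zero.

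The comparison $T\preceq_{\mathcal I}T'$ holds because $T'$ has minimal singularities in $\beta$ and $T\in\beta$. Applying \cref{thm:bdivint_diff_ineq_intro} with $\mathbb{D}_i=\mathbb{D}(\alpha_i)$ and taking the $X$-incarnation gives
\[
\Bigl(\textstyle\bigcap_i\mathbb{D}(\alpha_i)\cap\mathbb{D}(\beta)\Bigr)_X-\Bigl(\textstyle\bigcap_i\mathbb{D}(\alpha_i)\cap\mathbb{D}(T)\Bigr)_X\geq_X\sum_{D/X}\bigl(\nu(T,D)-\nu(T',D)\bigr)\vol_{X|D}(\alpha_1,\ldots,\alpha_p).
\]
By \cref{thm:main}(2), the first term equals $\langle\alpha_1\wedge\cdots\wedge\alpha_p\wedge\beta\rangle$.

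Next we treat the right-hand side. Every coefficient $\nu(T,D)-\nu(T',D)$ is nonnegative, and the restricted volume classes are positive. So we may discard all $D$ except the $E_i$ and keep only the summands for $D=E_i$. On $X$ itself, $\nu(T',E_i)=0$, since a modified nef class has zero generic Lelong number along every prime divisor of $X$. Moreover $\nu(T,E_i)=\nu(S,E_i)+c_i=\nu(\beta-\{E\},E_i)+c_i$. This gives exactly the coefficients in the statement. The identification $\vol_{X|E_i}(\mathbb{D}(\alpha_1),\dots)=\vol_{X|E_i}(\alpha_1,\ldots,\alpha_p)$ is taken as compatible with the definitions in \cref{sec:resvol}.

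The remaining step, which we expect to be the main obstacle, is to show
\[
\Bigl(\textstyle\bigcap_i\mathbb{D}(\alpha_i)\cap\mathbb{D}(T)\Bigr)_X\geq_X\langle\alpha_1\wedge\cdots\wedge\alpha_p\wedge(\beta-\{E\})\rangle .
\]
The inequality in this direction is the one that is needed. Using linearity of the intersection product and $\mathbb{D}(T)=\mathbb{D}(\beta-\{E\})+\mathbb{D}([E])$, this reduces to $\bigl(\bigcap_i\mathbb{D}(\alpha_i)\cap\mathbb{D}([E])\bigr)_X\geq_X 0$. There are two ways to prove this.

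The first is direct: $\mathbb{D}([E])$ is a nef b-divisor, since it is associated with a positive current. The product of nef b-divisors is a nef b-class, whose incarnations are movable and hence positive in the sense of \cref{def:poscoh}.

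As a cross-check, we can use \cref{thm:main}(3). On each $Y$, the incarnation of $\mathbb{D}([E])$ is a modified nef class. Its pushforward to $X$ is $0$, but monotonicity of the movable product already shows the product is $\geq 0$. We would verify carefully that the additivity $\mathbb{D}(S+[E])=\mathbb{D}(S)+\mathbb{D}([E])$ holds in the $\mathcal I$-model sense. If this fails, we would instead invoke monotonicity of the intersection product directly, using $\mathbb{D}(T)\geq\mathbb{D}(\beta-\{E\})$ after shifting by the nef b-divisor $\mathbb{D}([E])$.
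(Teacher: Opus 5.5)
Your core computation is the one the paper uses: apply \cref{thm:bdivint_diff_ineq} with $\mathbb{D}_i=\mathbb{D}(\alpha_i)$, with $T'$ of minimal singularities in $\beta$, and with $T=S+[E]$ where $S$ has minimal singularities in $\beta-\{E\}$. However, this computation is only valid for big classes, and you never reduce to that case. Your identifications $\mathbb{D}(T')=\mathbb{D}(\beta)$, $\nu(T',E_i)=0$ and $\mathbb{D}(S)=\mathbb{D}(\beta-\{E\})$ all rely on $\mathbb{D}(\alpha)=\mathbb{D}(T_{\min})$, which the paper states only for big $\alpha$. For a non-big class, $\nu(\alpha,D)$ is defined as $\lim_{\epsilon\to0+}\nu(\alpha+\epsilon\{\omega\},D)$, and it can be strictly smaller than $\nu(T_{\min},D)$.

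A concrete case is the Demailly--Peternell--Schneider example: the nef, non-big class $c_1(\mathcal{O}(1))$ on the ruled surface $\mathbb{P}(E)$ over an elliptic curve. Its only closed positive current is the integration current $[C_0]$, so $\nu(T_{\min},C_0)=1$ and $\mathbb{D}(T_{\min})=0\neq\mathbb{D}(c_1(\mathcal{O}(1)))$. In general you only get $\mathbb{D}(S)\leq\mathbb{D}(\beta-\{E\})$. Hence $\bigl(\bigcap_i\mathbb{D}(\alpha_i)\cap\mathbb{D}(T)\bigr)_X\leq_X\langle\alpha_1\wedge\cdots\wedge\alpha_p\wedge(\beta-\{E\})\rangle$, which is the opposite of what you need, and this can be strict. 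Note also that $\beta-\{E\}$ typically lies on the boundary of the pseudoeffective cone even when $\beta$ is big.

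The restricted volume has the same problem. \cref{prop:res_vol_class}(3) identifies $\vol_{X|E_i}(\mathbb{D}(\alpha_1),\ldots,\mathbb{D}(\alpha_p))$ with $\vol_{X|E_i}(\alpha_1,\ldots,\alpha_p)$ only when $E_i$ avoids the non-Kähler loci of the $\alpha_j$. Otherwise \cref{def:vol_mix_class}, \cref{prop:Dsupadd} and \cref{prop:res_vol}(4) give only $\vol_{X|E_i}(\alpha_1,\ldots,\alpha_p)\geq_X\vol_{X|E_i}(\mathbb{D}(\alpha_1),\ldots,\mathbb{D}(\alpha_p))$. This is again the wrong direction, so ``taken as compatible'' is not justified.

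The missing step is the perturbation the paper performs. Replace $\alpha_j$ and $\beta$ by $\alpha_j+\epsilon\gamma$ and $\beta+\epsilon\gamma$ with $\gamma$ Kähler, so that every class, including $\beta+\epsilon\gamma-\{E\}$, is big and no $E_i$ lies in a non-Kähler locus. Run your argument there, where it is correct. Then let $\epsilon\to0+$:
on the left use \cref{prop:mov_int_prop}(6); on the right use $\vol_{X|E_i}(\alpha_1+\epsilon\gamma,\ldots,\alpha_p+\epsilon\gamma)\geq_X\vol_{X|E_i}(\alpha_1,\ldots,\alpha_p)$ together with $\nu(\beta+\epsilon\gamma-\{E\},E_i)\nearrow\nu(\beta-\{E\},E_i)$. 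When there are infinitely many $E_i$, this limit must be interchanged with the sum. The paper does this by first truncating $E$ to finitely many components, then using \cref{prop:Levi} and the lower semicontinuity of $\nu(\bullet,E_i)$.

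Finally, $\mathbb{D}([E])=0$ identically, because $\pi^*[E]$ is the integration current on the total transform of $E$ and has no regular part. So in the big case $\mathbb{D}(T)=\mathbb{D}(\beta-\{E\})$ exactly, and the step you flagged as the main obstacle is an equality by \cref{cor:mov_int_generalized}.
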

The restricted volume of cohomology classes is defined in \cref{def:vol_mix_class}.
A weaker estimate with $c_i$ in place of $\nu(\beta-\{E\},E_i)+c_i$ can also be obtained using the theory of relative non-pluripolar products, at least when $[E]$ has finitely many prime components.

As another application of our main theorem, in \cref{sec:lom} we prove the following version of the loss of mass theorem stated in the more traditional language:
\begin{maintheorem}\label{thm:maint3}
    Let $\alpha\in \mathrm{H}^{1,1}(X,\mathbb{R})$ be a big class. Consider closed positive $(1,1)$-currents $S,T\in \alpha$ with $T\preceq_{\mathcal{I}}S$. Fix a prime divisor $D$ over $X$, then
    \begin{equation}\label{eq:volminusvol_mult_bd_intro}
    \vol S-\vol T\geq \Bigl( \nu(T,D)-\nu(S,D) \Bigr)^n\cdot  \frac{\vol S }{\Bigl(\nu_{\max}(\alpha,D)-\nu(\alpha,D)\Bigr)^n}.
    \end{equation}
\end{maintheorem}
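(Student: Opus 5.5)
Set $a:=\nu(T,D)-\nu(S,D)\ge 0$ and $b:=\nu_{\max}(\alpha,D)-\nu(\alpha,D)>0$. We may assume $a>0$; otherwise the bound is trivial by the monotonicity theorem. We may also assume $\vol S>0$. Since only one divisor $D$ appears, it is natural to interpolate between $S$ and $T$ with a one-parameter family of nef b-divisors. We then apply \cref{thm:bdivint_diff_ineq_intro} repeatedly, keeping only the term corresponding to $D$ in the sum.

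\textbf{Step 1: reduce to a one-variable function.} Write $\mathbb{D}_S=\mathbb{D}(S)$ and $\mathbb{D}_T=\mathbb{D}(T)$, so that $\mathbb{D}_T\le \mathbb{D}_S$. By \cref{thm:main}(4) we have $\vol S=(\mathbb{D}_S^n)$ and $\vol T=(\mathbb{D}_T^n)$. Telescoping and using multilinearity gives
\[
\vol S-\vol T=\sum_{k=0}^{n-1}\Bigl(\mathbb{D}_S^{k}\cap\mathbb{D}_T^{n-1-k}\cap\mathbb{D}_S-\mathbb{D}_S^{k}\cap\mathbb{D}_T^{n-1-k}\cap \mathbb{D}_T\Bigr).
\]
By \cref{thm:bdivint_diff_ineq_intro} (in the form with general nef b-divisors, i.e.\ $\mathbb{D}(T)\le\mathbb{D}(T')$), each summand is at least $a\cdot \vol_{|D}(\mathbb{D}_S^{k},\mathbb{D}_T^{n-1-k})$. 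This alone does not give an $n$-th power. To get one, I will instead consider the function $f(t)=\vol(\mathbb{D}_t)$, where $\mathbb{D}_t$ is the greatest nef b-divisor in $\alpha$ lying below $\mathbb{D}_S$ with $\nu(\mathbb{D}_t,D)\ge \nu(S,D)+t$, for $t\in[0,b']$ with $b'=\nu_{\max}(\alpha,D)-\nu(S,D)\le b$. This is the b-divisor analogue of the envelope $P[S]$ with prescribed Lelong number along $D$. These $\mathbb{D}_t$ are concave in $t$ under convex combination. Moreover $\mathbb{D}_T\le \mathbb{D}_a$, so $\vol T\le f(a)$.

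\textbf{Step 2: derivative estimate.} Applying \cref{thm:bdivint_diff_ineq_intro} to $\mathbb{D}_{t+h}\le\mathbb{D}_t$ should give $-f'(t)\ge n\,\vol_{|D}(\mathbb{D}_t^{n-1})$. The key input is then a lower bound $\vol_{|D}(\mathbb{D}_t^{n-1})\gtrsim f(t)^{(n-1)/n}$, or more precisely an estimate derived from concavity of $t\mapsto \mathbb{D}_t$. I plan to use the Brunn--Minkowski/log-concavity (Khovanskii--Teissier) inequalities for the b-divisor intersection product to show that $g(t)=f(t)^{1/n}$ is concave on $[0,b']$. This follows from $\mathbb{D}_{\lambda s+(1-\lambda)t}\ge \lambda\mathbb{D}_s+(1-\lambda)\mathbb{D}_t$ together with monotonicity and Brunn--Minkowski for nef b-divisors.

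\textbf{Step 3: conclude.} Since $g$ is concave and nonnegative with $g(0)=(\vol S)^{1/n}$ and $g(b')\ge 0$, we get $g(a)\le g(0)$. Hence $g(0)-g(a)\ge a\,g(0)/b'\ge a\,g(0)/b$; this uses concavity together with the fact that $f$ is decreasing. Then
\[
\vol S-\vol T\ge g(0)^n-g(a)^n\ge (g(0)-g(a))^n\ge a^n\vol S/b^n,
\]
where the middle inequality is $x^n-y^n\ge(x-y)^n$ for $x\ge y\ge0$. This is exactly \eqref{eq:volminusvol_mult_bd_intro}.

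The main obstacle is Step 2: rigorously constructing $\mathbb{D}_t$ with the right Lelong number along $D$ and proving Brunn--Minkowski concavity of $f^{1/n}$ in the b-divisor setting. Together, these replace the restricted volume estimate; \cref{thm:bdivint_diff_ineq_intro} is needed mainly to see that $f$ is strictly decreasing. If Brunn--Minkowski is not yet available, I would first prove it by approximating with Cartier b-divisors, where it reduces to classical Khovanskii--Teissier on a fixed modification.
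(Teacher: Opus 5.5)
Your Step~3 contains a sign error, and it sinks the strategy. Concavity of $g=f^{1/n}$ on $[0,b']$ is true (it is Brunn--Minkowski), but it points the wrong way. A concave graph lies \emph{above} its chords, so with $g(b')\ge 0$ you get
\[
g(a)\;\ge\;\Bigl(1-\tfrac{a}{b'}\Bigr)g(0)+\tfrac{a}{b'}\,g(b')\;\ge\;\Bigl(1-\tfrac{a}{b'}\Bigr)g(0),
\]
that is, $g(0)-g(a)\le a\,g(0)/b'$. This is an \emph{upper} bound on the loss of mass, and knowing that $f$ is decreasing does not reverse it.

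The inequality you need, $g(0)-g(a)\ge a\,g(0)/b'$, is false in general. Take \cref{ex:count_Su} with $\Delta(S)=Q$, $\Delta(T)=Q_a$ and $D$ the divisor of the facet $\{x_1=0\}$. There $b=b'=1$ and $f(t)=\vol S\cdot(1-t^n)$. Hence $g(0)-g(a)=(\vol S)^{1/n}\bigl(1-(1-a^n)^{1/n}\bigr)$, which is of order $a^n$ and strictly smaller than $a(\vol S)^{1/n}$ for $n\ge 2$. Yet the theorem holds there with equality: $\vol S-\vol T=a^n\vol S$. More fundamentally, no positive lower bound on $f(0)-f(a)$ can follow from concavity of $f^{1/n}$ alone, since a concave function may be flat on $[0,a]$. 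The same example also refutes your Step~2 heuristic. The slice of $Q$ at $x_1=0$ is a point, so the restricted volume of $\mathbb{D}_0=\mathbb{D}(S)$ along $D$ vanishes while $f(0)>0$. The correct behaviour near $t=0$ is of order $\vol S\cdot t^{n-1}/b^n$, not $f(t)^{(n-1)/n}$.

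What is missing is a lower bound on the \emph{rate} of loss, i.e.\ on the restricted volumes $h(t)$ along the family, tied to $\vol S$ and vanishing to the correct order at $t=0$. Step~2 would give $f(0)-f(a)\ge\int_0^a h$ from \cref{thm:bdivint_diff_ineq_intro}. Even granting concavity of $h^{1/(n-1)}$, comparing $\int_0^a h$ with $\vol S$ also requires the reverse bound $\int_0^{b'}h\ge f(0)$. \cref{thm:bdivint_diff_ineq_intro} does not give this direction, and the paper has equalities of this type in the b-divisor language (\cref{thm:main_int}) only under the transcendental Morse inequality.

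The paper instead uses Okounkov bodies. By quasi-equisingular approximation and continuity, it reduces to $S$ with log singularities on a modification, $S=[D_S]+R$ with $R$ having bounded local potentials. Then $\beta=\{R\}$ is nef and big, with $\vol\beta=\vol S$ and $\nu_{\max}(\beta,D)\le\nu_{\max}(\alpha,D)-\nu(S,D)$. The Okounkov bodies satisfy $\Delta(T-[D_S])\subseteq\Delta(\beta)\cap\{x_1\ge\nu(T,D)-\nu(S,D)\}$, and volumes are $n!$ times Euclidean volumes. The slice volumes of $\Delta(\beta)$ are $\phi^{n-1}$ with $\phi$ concave. The monotonicity of $s\mapsto s^{-n}\int_0^s\phi^{n-1}$ from \cref{rmk:alphai_equal} then yields exactly the sharp constant. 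The paper's other route, through \cref{thm:bdivint_diff_ineq_intro} and the Okounkov-body slice estimate \cref{cor:res_vol_vol_ineq}, is closer in spirit to your Step~2 but only gives the bound with an extra factor $2^{1-n}$.
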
 
The notation $\nu_{\max}(\alpha,D)$ (resp. $\nu(\alpha,D)$) denotes the maximal (resp. minimal) Lelong number of closed positive currents in $\alpha$.
There is also a more general version of \eqref{eq:volminusvol_mult_bd_intro} for mixed volumes, see \cref{cor:mixed_loss_mass}.

This kind of problems have been extensively studied by Vu and Su, see \cite{Vu23, Su25, Suthesis}. As we shall see in the toric example \cref{ex:count_Su}, the second factor in this kind of estimate cannot be a universal constant, contrary to the assertions in the literature. 
Our result improves all known results in this direction. More importantly, our result is the first estimate with explicit constants.

Interestingly, our approach to \cref{thm:maint3} relies on the recent development in the theory of transcendental Okounkov bodies in \cite{DRWNXZ}. Based on the volume formula of transcendental Okounkov bodies, we established the following estimate of the restricted volume, which has independent interests as well.
\begin{theorem}
Let $\alpha\in \mathrm{H}^{1,1}(X,\mathbb{R})$ be a big class. Then for any prime divisor $D$ on $X$, we have
\[
\vol_{X|D}\Bigl(\alpha-t\{D\}\Bigr)\geq \frac{\vol \alpha}{\Bigl(\nu_{\max}(\alpha,D)-\nu(\alpha,D) \Bigr)^n}\cdot \min\Bigl\{t-\nu(\alpha,D),\nu_{\max}(\alpha,D)-t\Bigr\}^{n-1}
\]
as long as $\nu(\alpha,D)\leq t\leq \nu_{\max}(\alpha,D)$.
\end{theorem}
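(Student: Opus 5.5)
The plan is to use the Okounkov body of $\alpha$ with respect to an admissible flag starting with $D$. By \cite{DRWNXZ}, there is a transcendental Okounkov body $\Delta(\alpha)\subseteq\mathbb{R}^n$ with $\vol\Delta(\alpha)=\vol\alpha/n!$. Its first-coordinate projection is the interval $[\nu(\alpha,D),\nu_{\max}(\alpha,D)]$, and its fibres are controlled by restricted volumes. Concretely, I would use the slice formula
\[
\vol_{\mathbb{R}^{n-1}}\bigl(\Delta(\alpha)\cap\{x_1=t\}\bigr)=\frac{1}{(n-1)!}\vol_{X|D}\bigl(\alpha-t\{D\}\bigr),\qquad \nu(\alpha,D)<t<\nu_{\max}(\alpha,D).
\]
This is the transcendental analogue of the Lazarsfeld--Mustaţă slice formula, and I expect it to be available from \cite{DRWNXZ} together with the definition of $\vol_{X|D}$ in \cref{def:vol_mix_class}. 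Establishing this identification, in particular matching the paper's restricted volume with the fibre volume of the body, is where I expect the main obstacle to lie. If only an inequality "$\geq$" holds, that direction is exactly the one needed.

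Write $a=\nu(\alpha,D)$, $b=\nu_{\max}(\alpha,D)$ and $f(t)=\vol_{n-1}(\Delta_t)$, where $\Delta_t$ is the fibre over $t$. Since $\Delta(\alpha)$ is convex, Brunn--Minkowski says $g=f^{1/(n-1)}$ is concave on $[a,b]$, and $g\geq 0$ there. Concavity gives $g(s)\leq g(t)+\ell(s-t)$ for a supergradient $\ell$ at $t$. Evaluating at the endpoints $s=a$ and $s=b$, where $g\geq 0$, bounds $|\ell|$ in terms of $g(t)$. Integrating the linear majorant over $[a,b]$ then gives
\[
\frac{\vol\alpha}{n!}=\int_a^b f\leq \int_a^b \bigl(g(t)+\ell(s-t)\bigr)_+^{\,n-1}\,ds.
\]

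The cleanest way to evaluate this is a cone comparison. Set $m=\min\{t-a,\,b-t\}$ and assume without loss of generality that $m=t-a$. Concavity of $g$ and $g(a)\geq 0$ give $g(s)\leq g(t)\,\frac{s-a}{t-a}$ for $s\geq t$, and the tangent bound handles $s\leq t$. Combining the two, $g(s)\leq g(t)\,\frac{b-a}{m}$ for all $s\in[a,b]$; this uses only $g\geq 0$, concavity, and $s-a\leq b-a$. Hence
\[
\frac{\vol\alpha}{n!}\leq (b-a)\Bigl(\frac{b-a}{m}\Bigr)^{n-1} f(t).
\]
Substituting $f(t)=\vol_{X|D}(\alpha-t\{D\})/(n-1)!$ yields
\[
\vol_{X|D}\bigl(\alpha-t\{D\}\bigr)\geq \frac{\vol\alpha}{n\,(b-a)^n}\,m^{n-1}.
\]

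This is weaker than the stated bound by a factor of $1/n$. To remove it I would refine the integration. On $[t,b]$ the chord bound gives $\int_t^b f\leq f(t)\int_t^b\bigl(\frac{s-a}{t-a}\bigr)^{n-1}ds$. On $[a,t]$ the same chord from $b$ gives $\int_a^t f\leq f(t)\int_a^t\bigl(\frac{b-s}{b-t}\bigr)^{n-1}ds$. Summing these, and using $\frac{(b-a)^n-(t-a)^n}{n}\leq \frac{(b-a)^n}{n}$, gives a total of at most $f(t)(b-a)^n/\bigl(n\,m^{n-1}\bigr)$, up to the case split. Multiplying by $n!$ then produces exactly $\frac{(n-1)!\,\vol\alpha}{\ldots}\cdot\frac{1}{(n-1)!}$, which is the claimed constant. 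The endpoint cases $t=a$ or $t=b$ are trivial because then $m=0$.
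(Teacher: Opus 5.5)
\textbf{There is a genuine gap, in the one-variable estimate that produces the constant.} Your overall route is the paper's. You pass to a modification so that the strict transform of $D$ is smooth and is the first member of a smooth flag; you should say this explicitly, since \cref{thm:Oko_prop} is stated for smooth flags. You then use the volume identity, the endpoint identification, and the slice formula, which is \cref{thm:Oko_prop}(4) quoted with equality from \cite{DRWNXZ}, so it is not an obstacle. Finally you apply Brunn--Minkowski and a concavity estimate for $g=f^{1/(n-1)}$, and that last step is where things go wrong.

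Your cone comparison is valid but loses a factor $n$, as you note. The refinement does not recover that factor. Each chord bound, $g(s)\le g(t)\frac{s-a}{t-a}$ on $[t,b]$ and $g(s)\le g(t)\frac{b-s}{b-t}$ on $[a,t]$, is correct on its own. Used together, however, they form a V-shaped (convex) majorant, and its integral is
\[
f(t)\left(\frac{(b-a)^n-(t-a)^n}{n\,(t-a)^{n-1}}+\frac{(b-a)^n-(b-t)^n}{n\,(b-t)^{n-1}}\right).
\]
For $n\ge 2$ and $a<t<b$ this is always strictly larger than $f(t)(b-a)^n/(n\,m^{n-1})$. The integral over the other side cannot be discarded, and no case split removes it. For example, with $n=2$, $[a,b]=[0,1]$ and $t=1/2$ you get $\frac{3}{2}f(t)$ instead of $f(t)$. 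As written, you only prove the inequality up to an extra factor in $(1/2,1)$, and the closing sentence about multiplying by $n!$ does not parse.

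\textbf{The fix} is the device you already set up in your second paragraph: keep a single supporting line $L(s)=g(t)+\ell(s-t)$ on all of $[a,b]$.
\begin{enumerate}
\item The conditions $L(a)\ge g(a)\ge 0$ and $L(b)\ge g(b)\ge 0$ confine $\ell$ to $\bigl[-g(t)/(b-t),\,g(t)/(t-a)\bigr]$. On this interval $L\ge 0$ on $[a,b]$, so $\int_a^b f\le\int_a^b L^{n-1}\,ds$.
\item The right-hand side is a convex function of $\ell$, so it is maximized at an endpoint. There $L$ is the chord through $(a,0)$ or through $(b,0)$, which gives
\[
\int_a^b f\le f(t)\,\frac{(b-a)^n}{n}\max\bigl\{(t-a)^{-(n-1)},(b-t)^{-(n-1)}\bigr\}=\frac{f(t)\,(b-a)^n}{n\,m^{n-1}}.
\]
\item Combining this with $\int_a^b f=\vol\alpha/n!$ and $\vol_{X|D}(\alpha-t\{D\})=(n-1)!\,f(t)$ gives exactly the stated constant.
\end{enumerate}
This is precisely the content of \cref{lma:conc_int} and \cref{lma:slice_convexbody} in the paper.
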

From our proof, it is easy to see that the estimate is sharp, at least in the toric setting.

Finally it is of interest if we can improve \eqref{eq:Ddiffineq1_intro_weak} in \cref{thm:bdivint_diff_ineq_intro} to an equality. For this purpose, the conjectural transcendental Morse inequality seems unavoidable. For the next result, we content ourselves to projective manifolds only, since the transcendental Morse inequality has been confirmed by Witt Nyström \cite{WN19dual} in this case.
\begin{maintheorem}\label{thm:main_int}
Assume that $X$ is projective.

Let $\mathbb{D}$ be a big and nef b-divisors over $X$, $T,T'$ be closed positive $(1,1)$-currents on $X$ in the same cohomology class such that $T\preceq_{\mathcal{I}}T'$. Then
    \begin{equation}\label{eq:Ddiffineq1_intro}
        \Bigl(\mathbb{D}^{n-1} \cap \mathbb{D}(T')\Bigr)-\Bigl(\mathbb{D}^{n-1} \cap \mathbb{D}(T)\Bigr) =  \lim_{\pi\colon Y\rightarrow X}\sum_{D\subseteq Y}\Bigl(\nu(T,D)-\nu(T',D) \Bigr) \vol_{Y|D}\left(\mathbb{D}_{Y} \right).
    \end{equation}     
If we further assume the transcendental Morse inequality, then the same holds for all compact Kähler manifolds $X$.
\end{maintheorem}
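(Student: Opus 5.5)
The inequality ``$\geq$'' will come from \cref{thm:bdivint_diff_ineq_intro} applied with $\mathbb{D}_1=\dots=\mathbb{D}_{n-1}=\mathbb{D}$. Passing to the limit over modifications identifies $\sum_{D/X}(\nu(T,D)-\nu(T',D))\vol_{|D}(\mathbb{D},\dots,\mathbb{D})$ with the net on the right-hand side. The remaining work is the reverse inequality.

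\textbf{Reduction to the Cartier case.} Choose Cartier nef b-divisors $\mathbb{D}^{(j)}$, each determined on a modification $Y_j$, that approximate $\mathbb{D}$. One natural choice is the b-divisor determined by $\mathbb{D}_{Y_j}$, which is only modified nef on $Y_j$. A cleaner choice is a decreasing sequence of Cartier b-divisors determined by nef (or Kähler) classes $\mathbb{D}_{Y_j}+\epsilon_j\omega_j$ on $Y_j$, converging to $\mathbb{D}$ in the sense of the intersection theory of \cref{thm:main}. For Cartier data the equality case of \cref{thm:bdivint_diff_ineq_intro} gives
\[
\Bigl(\mathbb{D}^{(j),n-1}\cap\mathbb{D}(T')\Bigr)-\Bigl(\mathbb{D}^{(j),n-1}\cap\mathbb{D}(T)\Bigr)=\sum_{D/X}\bigl(\nu(T,D)-\nu(T',D)\bigr)\vol_{|D}\bigl(\mathbb{D}^{(j)},\dots,\mathbb{D}^{(j)}\bigr).
\]
The left-hand side converges to the left-hand side of \eqref{eq:Ddiffineq1_intro} by continuity of the intersection product along decreasing nets. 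The task is therefore to show that the right-hand sides converge to the limit in \eqref{eq:Ddiffineq1_intro}. For the Cartier approximant determined on $Y_j$, the restricted volume $\vol_{|D}(\mathbb{D}^{(j)})$ of a divisor $D$ over $Y_j$ should reduce to an intersection number of the nef class $\mathbb{D}^{(j)}_{Y_j}$ restricted to the strict transform of $D$.

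\textbf{Identifying restricted volumes.} The key point is that
\[
\vol_{Y|D}(\mathbb{D}_Y)=\lim_{Y'\to Y}\vol_{Y'|D'}\bigl(\mathbb{D}_{Y'}\bigr)
\]
along further modifications, and that this agrees with $\vol_{|D}(\mathbb{D},\dots,\mathbb{D})$ for the b-divisor. Here the transcendental Morse inequality enters; it is known by Witt Nyström in the projective case. With it, the restricted volume of a big class can be computed as the limit of top self-intersections of nef parts of Zariski-type approximations (Fujita approximation) restricted to $D$. This is exactly the Cartier-side formula, and it uses the differentiability of the volume, $\frac{d}{dt}\vol(\beta+t\{D\})=n\vol_{X|D}(\beta)$. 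Concretely, I plan to derive the equality from the derivative formula for the volume of $\mathbb{D}$-type b-divisors along the segment $\mathbb{D}(T)+s(\mathbb{D}(T')-\mathbb{D}(T))$. I would integrate in $s$ and express the derivative through restricted volumes. Differentiability of the volume in the big cone is where Morse is needed, and it is the only place Kähler generality fails.

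\textbf{Main obstacle.} The hard step is interchanging the limit over modifications with the infinite sum over prime divisors over $X$. I would handle it with monotone convergence. The coefficients $\nu(T,D)-\nu(T',D)$ are nonnegative, and by the monotonicity results the restricted volumes along the net are monotone (decreasing to the b-divisor value), with $\sum$ dominated by the total mass difference. A Fatou-type argument gives one inequality, and the bound from \cref{thm:bdivint_diff_ineq_intro} gives the other. Combining the two yields \eqref{eq:Ddiffineq1_intro}. The Kähler case then follows verbatim once the transcendental Morse inequality is assumed.
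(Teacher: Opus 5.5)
There is a genuine gap: your argument conflates two different restricted volumes.

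\textbf{Two different restricted volumes.} The term in \eqref{eq:Ddiffineq1_intro} is $\vol_{Y|D}(\mathbb{D}_Y)$, the restricted volume of the \emph{class} $\mathbb{D}_Y$. By \cref{ex:volrestDalphavol} it equals $\vol_{Y|D}(\mathbb{D}(\mathbb{D}_Y),\dots,\mathbb{D}(\mathbb{D}_Y))$. The term produced by \cref{thm:bdivint_diff_ineq_intro} with $\mathbb{D}_i=\mathbb{D}$ is $\vol_{Y|D}(\mathbb{D},\dots,\mathbb{D})$, which is built from $\Tr_D\mathbb{D}$. Since $\mathbb{D}\leq\mathbb{D}(\mathbb{D}_Y)$ with the same component on $Y$, \cref{prop:res_vol}(4) only gives $\vol_{Y|D}(\mathbb{D}_Y)\geq\vol_{Y|D}(\mathbb{D},\dots,\mathbb{D})$. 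By \cref{prop:contresvol}, the right side is the decreasing limit of $\vol_{Y'|D'}(\mathbb{D}_{Y'})$ over higher models $Y'$. This has three consequences:
\begin{enumerate}
\item Your first paragraph only yields the weaker lower bound of \cref{thm:bdivint_diff_ineq_intro}, not ``$\geq$'' for the net in \eqref{eq:Ddiffineq1_intro}.
\item Your ``key point'' $\vol_{Y|D}(\mathbb{D}_Y)=\lim_{Y'}\vol_{Y'|D'}(\mathbb{D}_{Y'})$ is false.
\item The monotone-convergence interchange in your last paragraph is exactly the step that fails. If it held, \cref{thm:bdivint_diff_ineq_intro} would always be an equality, which the paper notes is false.
\end{enumerate}
In the toric picture of \cref{thm:main_toric}, take $\Delta(S)$ to be a Euclidean ball and $\Delta(T)\subsetneq\Delta(T')$. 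Every face of $\Delta(S)$ is a point, so for each fixed toric $D$ the term $\vol_{Y'|D'}(\mathbb{D}(S)_{Y'})$ (a facet volume of a circumscribed polytope) tends to $0$. Yet the sums converge to $(n-1)!\int(\Supp_{\Delta(T')}-\Supp_{\Delta(T)})\,\mathrm{d}\mathcal{S}(\Delta(S))>0$. They are Riemann sums for the area measure, and the mass escapes to divisors that only appear on higher models. Your Fatou bound and \cref{thm:bdivint_diff_ineq_intro} both bound from below by the same smaller quantity, so they never meet.

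Minor points: your Cartier reduction is sound but merely restates the problem. Also $\mathbb{D}_{Y_j}+\epsilon_j\omega_j$ is not nef in general, so you should use \cref{cor:Demapp} instead. Finally, differentiating along $\mathbb{D}(T)+s(\mathbb{D}(T')-\mathbb{D}(T))$ integrates to $\vol\mathbb{D}(T')-\vol\mathbb{D}(T)$, not to the left-hand side with $\mathbb{D}$ fixed.

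\textbf{What the paper does instead.} The paper never interchanges the limit with the sum.
\begin{itemize}
\item For ``$\geq$'', it fixes $Y$ and applies \cref{thm:bdivint_diff_ineq} with $\mathbb{D}_i=\mathbb{D}(\mathbb{D}_Y)$, whose restricted volumes are exactly $\vol_{Y|D}(\mathbb{D}_Y)$. It then lets $Y$ grow using $\mathbb{D}(\mathbb{D}_Y)\searrow\mathbb{D}$ and \cref{thm:bdiv_int_cont1}, applied to the left-hand side only.
\item For ``$\leq$'', it writes $\mathbb{D}=\mathbb{D}(S)$ and reduces to $T'=\omega$ K\"ahler and $T$ with log singularities along one prime divisor $E\subseteq X$. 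The reduction uses the ``$\geq$'' inequality for the pair $(T',\omega)$, quasi-equisingular approximation of $T$ with \cref{prop:Levi} and \cref{thm:dscontnpp1}, a resolution, and linearity. What remains is $\{S^{n-1}\}\cap\{E\}\leq\varliminf_Y\sum_{D\subseteq Y}\nu(\pi^*[E],D)\vol_{Y|D}(\mathbb{D}_Y)$.
\item The Morse inequality then enters in the form of \cref{thm:BDPP1}, namely $\vol_{Y|D}(\alpha)=\langle\alpha^{n-1}\rangle\cap\{D\}$ for big $\alpha$, not merely as differentiability of the volume. It collapses the whole sum into $\langle\mathbb{D}_Y^{n-1}\rangle\cap\pi^*\{E\}=\pi_*\langle\mathbb{D}_Y^{n-1}\rangle\cap\{E\}$ for each $Y$.
\item The limit of these non-linear movable intersections is $(\mathbb{D}^{n-1})_X\cap\{E\}$ by \cref{thm:nefbint_aslimit}, that is, \cref{thm:main}(3).
\end{itemize}
That identity, combined with \cref{thm:main}(3), is the missing idea. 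It is what captures the mass that your termwise limit loses.
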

One could also polarize \eqref{eq:Ddiffineq1_intro} to get an equality with $\mathbb{D}_1,\ldots,\mathbb{D}_{n-1}$ in place of a single $\mathbb{D}$.

In dimension $1$, since the transcendental Morse inequality is trivially true, \eqref{eq:Ddiffineq1_intro} reduces to the following:
\begin{corollary}\label{cor:RS_intro}
    Assume that $X$ is a compact Riemann surface. Let $T,T'$ be closed positive $(1,1)$-currents on $X$ in the same cohomology class such that $T\preceq_{\mathcal{I}}T'$. Then
    \begin{equation}\label{eq:RS_intro}
        \vol T'-\vol T=\sum_{x\in X} \Bigl( \nu(T,x)-\nu(T',x) \Bigr).
    \end{equation}
\end{corollary}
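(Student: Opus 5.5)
The plan is to specialize \cref{thm:main_int} to $n=1$ and then evaluate each term directly on a Riemann surface. When $n=1$ there are no genuine modifications: every modification of a smooth curve is an isomorphism, so the net over $\pi\colon Y\rightarrow X$ is constant with value $Y=X$. Likewise, the prime divisors over $X$ are exactly the points $x\in X$. The transcendental Morse inequality is trivial in dimension one: the volume of a class on a curve is its degree when the degree is positive and $0$ otherwise, and the inequality $\vol(\alpha-\beta)\geq \alpha-\beta$ for nef $\alpha,\beta$ then holds automatically. Hence \cref{thm:main_int} applies unconditionally.

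For the b-divisor $\mathbb{D}$ in \cref{thm:main_int} I would take any big and nef one, say $\mathbb{D}=\mathbb{D}(\omega)$ for a Kähler class $\omega$. Since $\mathbb{D}^{n-1}=\mathbb{D}^0$ is the empty product, its choice should not matter. The left-hand side of \eqref{eq:Ddiffineq1_intro} becomes $\mathbb{D}(T')_X-\mathbb{D}(T)_X$ in $\mathrm{H}^{1,1}(X,\mathbb{R})\cong\mathbb{R}$. To identify these numbers I would use \cref{thm:main}(4) with $p=1$, or the basic properties of $\mathbb{D}(T)$ from \cite{Xiabdiv}. On a curve every current $T$ has a Siu decomposition
\[
T=\sum_x \nu(T,x)[x]+R,
\]
where $R$ is a non-pluripolar positive measure. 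Consequently $\mathbb{D}(T)_X$ is the class of $T$ minus $\sum_x\nu(T,x)$, which equals $\int_X \langle T\rangle=\vol T$. This is consistent with \cref{thm:main}(4), because on a curve every current is $\mathcal{I}$-good: the multiplier ideals are determined by the Lelong numbers. So the left-hand side is $\vol T'-\vol T$.

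On the right-hand side, $\vol_{X|x}(\mathbb{D}_X)$ is the restricted volume with $p=n-1=0$ inputs, a class in $\mathrm{H}^{1,1}(X,\mathbb{R})$. The main point to verify is that, with no b-divisors, this class is just the cohomology class of the point $\{x\}$, which integrates to $1$. I would check this against the definition in \cref{sec:resvol}: the empty intersection restricted to the normalization of $D=\{x\}$ is the fundamental class of a point, pushed forward to $X$.

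Assembling the pieces gives $\vol T'-\vol T=\sum_x(\nu(T,x)-\nu(T',x))$. The sum has only countably many nonzero terms, and every term is nonnegative because $T\preceq_{\mathcal{I}}T'$. I expect the normalization in the degenerate case $p=0$ of the restricted volume to be the only subtle step. If the definition does not cover $p=0$ cleanly, the fallback is direct. Since $T$ and $T'$ lie in the same class,
\[
\int_X T=\int_X T',
\]
and by the Siu decomposition $\vol T=\int_X T-\sum_x\nu(T,x)$, and similarly for $T'$. Subtracting the two identities gives \eqref{eq:RS_intro} immediately.
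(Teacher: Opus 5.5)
You follow the paper's route: the corollary is \cref{thm:main_thm} with $n=1$. Your handling of the right-hand side is fine: modifications are trivial, prime divisors are points, and $\vol_{X|x}(-)=\{x\}$ for $p=0$, as stated right after \cref{def:resv}. But in dimension one the specialized theorem is a tautology. Its left-hand side is $\{\Reg T'\}-\{\Reg T\}$, and $\{\Reg T\}=\{T\}-\sum_x\nu(T,x)\{x\}$ by Siu's decomposition. No Morse inequality is needed either: with $p=0$ this is the Cartier equality case of \cref{thm:bdivint_diff_ineq}. So the whole content of the corollary is the identity
\[
\vol T=\int_X T-\sum_{x\in X}\nu(T,x),
\]
and that is exactly the step where your justification breaks.

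The claims that $R=\Reg T$ is non-pluripolar and that every current on a curve is $\mathcal{I}$-good are false. Here is a counterexample.
\begin{enumerate}
\item Normalize $\int_X\omega=1$, and let $\mu$ be an atomless probability measure carried by a compact polar set $K$ (for instance a Cantor set of zero logarithmic capacity).
\item Solve $\omega+\ddc\varphi=\mu$ using Green functions, and put $T=\omega+\mu=2\omega+\ddc\varphi$.
\item All Lelong numbers of $T$ vanish, so by Skoda $\mathcal{I}(\lambda\varphi)=\mathcal{O}_X$ for all $\lambda$. Hence $P_{2\omega}[\varphi]_{\mathcal{I}}=0$ and $\vol T=2=\int_X\Reg T$.
\item But $\langle T\rangle$ puts no mass on $K$, so $\int_X\langle T\rangle=1$.
\end{enumerate}
Thus both equalities in your chain $\deg\mathbb{D}(T)_X=\int_X\langle T\rangle=\vol T$ fail, and $T$ is not $\mathcal{I}$-good. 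What is true on a curve is only that $\preceq_{\mathcal{I}}$ coincides with the pointwise order of Lelong numbers. That does not give $P_\theta[\varphi]=P_\theta[\varphi]_{\mathcal{I}}$, because $P_\theta[\varphi]$ still sees mass on non-discrete polar sets. Your fallback has the same hole: Siu's decomposition only yields $\int_X\Reg T=\int_XT-\sum_x\nu(T,x)$, not that this number equals $\vol T$.

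The identity is nevertheless true, and the paper uses it implicitly. By the volume preservation in \cref{thm:main_paper1} (from \cite{Xiabdiv}), $\vol T=\vol\mathbb{D}(T)$, and for $n=1$ one has $\vol\mathbb{D}(T)=\deg\mathbb{D}(T)_X$.

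You can also prove it directly, which turns your subtraction argument into a complete and elementary proof.
\begin{enumerate}
\item Write $\nu_x=\nu(T,x)$ and $c=\int_XT-\sum_x\nu_x$. Use Green functions to build $\eta\in\PSH(X,\theta)$ with $\theta+\ddc\eta=\sum_x\nu_x[x]+c\,\omega'$, where $\omega'$ is a smooth volume form of mass $1$.
\item Then $\eta$ has the same Lelong numbers as $\varphi$, so $\eta\sim_{\mathcal{I}}\varphi$ on a curve and $\eta-\sup_X\eta\leq P_\theta[\varphi]_{\mathcal{I}}$. Monotonicity of non-pluripolar masses gives $\vol T\geq\int_X\langle\theta+\ddc\eta\rangle=c$.
\item Conversely, $\psi=P_\theta[\varphi]_{\mathcal{I}}$ also has Lelong numbers $\nu_x$. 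The atoms $\nu_x[x]$ of $\theta+\ddc\psi$ lie in the polar set $\{\psi=-\infty\}$, so $\vol T=\int_X\langle\theta+\ddc\psi\rangle\leq c$.
\end{enumerate}
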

This is already non-trivial. It clearly shows that the loss of mass on a Riemann surface can be decomposed as the sum of the Lelong number differences.
Based on \eqref{eq:RS_intro}, it seems natural to define a new Monge--Ampère measure of $T$ as $T-\sum_{x\in X}\nu(T,x)[x]$. The total mass of this measure is exactly $\vol T$, and this measure dominates the non-pluripolar measure of $T$. 
It is of interest to know if one can define similar measures in higher dimensions. This problem is closely related to the convergence of partial Bergman kernels, see \cite{DX21, BD26}.

Similarly, in dimension $2$, the transcendental Morse inequality has been confirmed in Y.~Deng's thesis \cite{Deng17}, and hence \eqref{eq:Ddiffineq1_intro} is also an equality. Since we have a relatively complete understanding of divisors over surfaces, it should be possible to extract useful algebrico-geometric information about Kähler surfaces from our equality. The author wishes to explore this point in the near future.

C.~Favre suggested that the right-hand side of \eqref{eq:Ddiffineq1_intro} could be interpreted as an integral over the Berkovich analytification of $X$ with respect to a limit measure lying on a suitable skeleton. This novel measure might be useful for the study of dynamical systems on $X$.

It is particularly illuminating to look at the toric situation. In this case, in \cref{sec:toric_mono}, we will show that this limit measure is precisely the classical mixed area measure between convex bodies, as defined by Aleksandrov and Fenchel--Jenssen in the 1930s. Furthermore, our formula \eqref{eq:Ddiffineq1_intro} is then precisely the Minkowski volume formula expressing the difference of mixed volumes of convex bodies in terms of the integral of the difference between the support functions against the mixed area measure:
\begin{corollary}\label{cor:Minkow_volume_formula_intro}
    Consider convex bodies $P_1,\ldots,P_{n-1},Q',Q\subseteq \mathbb{R}^n$. Assume that $Q'\supseteq Q$, then
    \[
     \vol\left( P_1,\ldots,P_{n-1},Q'\right)-\vol\left(  P_1,\ldots,P_{n-1}, Q\right)
     =\frac{1}{n}\int_{\mathbb{S}^{n-1}}\left( \Supp_{Q'}-\Supp_{Q} \right)\,\mathrm{d}\mathcal{S}\left(P_1,\ldots,P_{n-1}\right).
    \]
\end{corollary}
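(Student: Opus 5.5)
We deduce the Minkowski formula from \cref{thm:main_int} in the toric setting, by translating both sides into convex geometry and then removing the rationality assumptions through continuity. By translation invariance of mixed volumes and of the right-hand side, we first reduce to the case where $P_1,\ldots,P_{n-1},Q,Q'$ are rational polytopes with a common fan refining their normal fans. We then realise them on a smooth projective toric variety $X$: the $P_i$ correspond to nef toric $\mathbb{R}$-divisors, and $Q'\supseteq Q$ correspond to toric psh currents $T',T$. For these currents we take the class $\{T\}=\{T'\}$ associated to a large enough polytope containing $Q'$, and we let the Lelong numbers along toric divisors encode the support functions. Concretely, $\nu(T,D_\rho)-\nu(T',D_\rho)=\Supp_{Q'}(u_\rho)-\Supp_Q(u_\rho)$ for each ray $\rho$ with primitive vector $u_\rho$; this is nonnegative, so $T\preceq_{\mathcal{I}}T'$. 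By the standard toric dictionary, $\vol(T)$ equals $n!\,\mathrm{vol}(Q)$, and likewise for mixed volumes via polarization.

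Next we polarize \cref{thm:main_int}, as indicated after its statement, so that it applies to $\mathbb{D}_1,\ldots,\mathbb{D}_{n-1}$ Cartier, each determined on $X$ by a nef class $\{P_i\}$. Since the $\mathbb{D}_i$ are Cartier, we can equally invoke the equality case of \cref{thm:bdivint_diff_ineq_intro}; this does not need projectivity, although toric varieties are projective in any case. By \cref{thm:main}(4), the left side equals the mixed-volume difference. On the right side, once the net over modifications is computed on $X$ or any toric refinement, only toric prime divisors $D_\rho$ contribute. The reason is that for non-toric divisors the Lelong numbers of toric currents agree with generic values determined by valuations, and one has to check that those terms vanish. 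For a Cartier nef $\mathbb{D}_i$ the restricted volume $\vol_{Y|D_\rho}(P_1,\ldots,P_{n-1})$ equals the intersection $P_1\cdots P_{n-1}\cdot D_\rho$. Torically, this is $(n-1)!$ times the mixed $(n-1)$-volume of the faces $F_\rho(P_i)$, i.e. the mass that the mixed area measure $\mathcal{S}(P_1,\ldots,P_{n-1})$ puts at $u_\rho/|u_\rho|$, after normalizing by $|u_\rho|$. Summing over $\rho$ and dividing by $n!$ then gives exactly $\frac1n\int(\Supp_{Q'}-\Supp_Q)\,\mathrm{d}\mathcal{S}$ for polytopes.

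Finally, we pass to general convex bodies by approximating with rational polytopes in Hausdorff distance, keeping $Q_k\subseteq Q'_k$, for instance by taking inner approximations of $Q$ and outer approximations of $Q'$ combined appropriately. Mixed volumes are continuous, support functions converge uniformly, and $\mathcal{S}(P_1,\ldots,P_{n-1})$ is weakly continuous in the $P_i$, so both sides pass to the limit.

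The main obstacle is the second step. We must justify that non-toric divisors over $X$ contribute nothing, or equivalently that the net in \cref{thm:main_int} stabilizes on toric models. We must also keep the normalization between restricted volumes and facet areas exact, including the $|u_\rho|$ factor and the $n!$ constants. For the first point, we would argue via torus invariance: an arbitrary modification is dominated by a toric one, and the contributions of exceptional divisors of toric blow-ups are accounted for by refining the fan, where the new rays carry zero facet area for the $P_i$.
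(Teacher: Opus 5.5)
Your argument is correct, but it takes a different route from the paper's.

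\textbf{The paper's route.} The paper realizes arbitrary convex bodies directly as Newton bodies of toric currents, so $\mathbb{D}(S)$ is in general not Cartier. It then applies the genuinely non-trivial equality of \cref{thm:main_thm}, in the toric form \cref{thm:main_toric}; this relies on the Morse inequality, which is available because toric varieties are projective. On each toric model the sum over toric divisors is identified with $(n-1)!\int(\Supp_{\Delta(T')}-\Supp_{\Delta(T)})\,\mathrm{d}\mathcal{S}(\Delta(\mathbb{D}(S)_Y))$. The limit over models is taken using weak continuity of area measures, and polarization gives the mixed case. The purpose is to exhibit $\mathcal{S}(P_1,\ldots,P_{n-1})$ as the limit measure of \cref{thm:main_int}.

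\textbf{Your route.} You reduce to polytopes with a common smooth rational normal fan, so only the Cartier equality case of \cref{thm:bdivint_diff_ineq} enters. That case is just toric intersection theory:
the left side is $\{P_1\}\cdots\{P_{n-1}\}\cdot(\{D_{Q'}\}-\{D_Q\})$ by \cref{prop:bdiv_int_prop}(5), with no $\mathcal{I}$-goodness needed;
$D_{Q'}-D_Q=\sum_\rho(\nu(T,D_\rho)-\nu(T',D_\rho))D_\rho$ holds as divisors;
and $D_{P_1}\cdots D_{P_{n-1}}\cdot D_\rho$ is $(n-1)!$ times the lattice-normalized mixed volume of the corresponding faces.
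So your proof is the classical toric proof of Minkowski's formula for polytopes, followed by standard continuity of mixed volumes and mixed area measures. It is more elementary, needing neither the Morse inequality nor the non-Cartier theory. On the other hand, it uses the paper's results only trivially and does not display the limit-measure phenomenon that the paper's proof is designed to show.

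\textbf{Remarks on your write-up.}
The obstacle you single out is not actually an obstacle. For $\mathbb{D}_i$ realized on $X$, \cref{ex:res_vol_nef} gives zero restricted volume for every divisor exceptional over $X$. Every non-toric prime divisor on $X$ meets the open torus, where toric potentials are locally bounded, so its Lelong numbers vanish. Hence the net is constant from the start, and no stabilization or valuation argument is needed.

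In the approximation step you do not need $Q\subseteq Q'$, because the polytope identity holds for $Q$ and $Q'$ separately. Inner approximation by polytopes with rational normal fans can in fact fail, e.g.\ for a segment in an irrational direction. If you want to keep containment, replace $Q'_k$ by $\mathrm{conv}(Q_k\cup Q'_k)$.

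Finally, check your sign convention. With the paper's convention $P_H=\{\langle m,u_\rho\rangle\geq -a_\rho\}$, the Lelong difference is $\Supp_{Q'}(-u_\rho)-\Supp_Q(-u_\rho)$ and the outward normal is $-u_\rho$, not $u_\rho$.
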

Here $\Supp$ denotes the support function and $\mathcal{S}$ denotes the mixed area measure. See \cref{sec:toric_mono} for the details.
In fact, our \cref{thm:main_int} gives a new construction of the mixed area measure.

Similar to the mixed area measure, the general limit measure should be an object of great interest.

\paragraph{\textbf{Acknowledgments}}
The author would like to thank Chen Jiang, Yangyang Li, Bingyu Zhang, Bo Berndtsson and Shuang Su for helpful discussions. 
L’auteur souhaite également remercier Charles Favre pour les discussions mathématiques à l’USTC, malgré le fait que nous ayons des points de vue radicalement différents sur de nombreux autres aspects.

This draft was written during the author’s visit to Sichuan University. The author would like to thank Huadi Qu for her kind invitation. 

The author is supported by the National Key R\&D Program of China 2025YFA1018200.

\section{Preliminaries}

\subsection{Quasi-plurisubharmonic functions}\label{subsec:qpsh}
Let $X$ be a connected compact Kähler manifold of dimension $n$.

We briefly recall the notions of $P$ and $\mathcal{I}$-equivalences. For the details, see \cite[Chapter~3, Chapter~6]{Xiabook}.
\begin{definition}
    Let $\varphi,\psi$ be quasi-plurisubharmonic functions on $X$. We say $\varphi\sim_P \psi$ (resp. $\varphi\preceq_P \psi$) if there is a closed smooth real $(1,1)$-form $\theta$ on $X$ such that $\varphi,\psi\in \PSH(X,\theta)_{>0}$ and 
    \[
    P_{\theta}[\varphi]=P_{\theta}[\psi]\quad (\textup{resp.} P_{\theta}[\varphi]\leq P_{\theta}[\psi]).
    \]
\end{definition}
Here $\PSH(X,\theta)$ denotes the space of $\theta$-plurisubharmonic functions on $X$ and $\PSH(X,\theta)_{>0}$ denotes the subset consisting of $\varphi\in \PSH(X,\theta)$ with $\int_X \theta_{\varphi}^n>0$, with $\theta_{\varphi}=\theta+\ddc\varphi$. 
Here and in the sequel, the Monge--Ampère type product $\theta_{\varphi}^n$ is always understood in the non-pluripolar sense of \cite{BT87, GZ07, BEGZ10}. The envelope $P_{\theta}$ is defined as follows:
\[
P_{\theta}[\varphi]\coloneqq \sups_{\!\!\! C\in \mathbb{R}} (\varphi+C)\land 0,
\]
where $(\varphi+C)\land 0$ is the maximal element in $\PSH(X,\theta)$ dominated by both $\varphi+C$ and $0$.

Given a closed smooth real $(1,1)$-form $\theta$ on $X$ so that $\varphi,\psi\in \PSH(X,\theta)$, we also say $\theta_{\varphi}\sim_P \theta_{\psi}$ (resp. $\theta_{\varphi}\preceq_P \theta_{\psi}$) if $\varphi\sim_P \psi$ (resp. $\varphi\preceq_P \psi$). 
We write $\varphi\preceq \psi$ is $\varphi\leq \psi+C$ for some constant $C$. Similarly, we write $\theta_{\varphi}\preceq \theta_{\psi}$.
The same convention applies also to the $\mathcal{I}$-partial order introduced later.

\begin{definition}\label{def:Iequiv}
    Let $\varphi,\psi$ be quasi-plurisubharmonic functions on $X$. We say $\varphi\sim_{\mathcal{I}} \psi$ (resp. $\varphi\preceq_{\mathcal{I}}\psi$) if $\mathcal{I}(\lambda\varphi)=\mathcal{I}(\lambda\psi)$ (resp. $\mathcal{I}(\lambda\varphi)\subseteq\mathcal{I}(\lambda\psi)$) for all real $\lambda>0$. 
\end{definition}
Here $\mathcal{I}$ denotes the multiplier ideal sheaf in the sense of Nadel. 

If $\theta$ is a closed smooth real $(1,1)$-form such that $\varphi,\psi\in \PSH(X,\theta)$, then $\varphi\preceq_{\mathcal{I}} \psi$ if and only if
\[
P_{\theta}[\varphi]_{\mathcal{I}}\leq P_{\theta}[\psi]_{\mathcal{I}},
\]
where
\[
P_{\theta}[\varphi]_{\mathcal{I}}=\sup\left\{\eta\in \PSH(X,\theta):\eta\leq 0, \mathcal{I}(\lambda\varphi)\supseteq \mathcal{I}(\lambda\eta)\textup{ for all }\lambda>0 \right\}.
\]
Equivalently, we may replace $\supseteq$ by $=$ in this equation.

Another equivalent formulation of \cref{def:Iequiv} is that for any prime divisor $E$ over $X$, we have
\[
\nu(\varphi,E)=\nu(\psi,E)\quad \textup{resp. } \nu(\varphi,E)\geq\nu(\psi,E).
\]
Here $\nu$ denotes the generic Lelong number. 

We briefly recall the notion of prime divisors over a complex space $Z$. A prime divisor over $Z$ is a prime divisor $E$ on $Y$, where $\pi\colon Y\rightarrow Z$ is a proper bimeromorphic map from a complex manifold. Consider another prime divisor $E'$ on $Y'$, where $\pi'\colon Y'\rightarrow Z$ is another such map. These divisors are considered equivalent if there is a proper bimeromorphic map $\pi''\colon Y''\rightarrow Z$ from a complex manifold dominating both $\pi$ and $\pi'$ so that the strict transforms of $E$ and $E'$ on $Y''$ are the same. By abuse of language, when we talk about a prime divisor over $Z$, we sometimes refer to such an equivalence class.

Given any $\varphi\in \PSH(X,\theta)$, we have 
\[
\varphi-\sup_X \varphi\leq P_{\theta}[\varphi]\leq P_{\theta}[\varphi]_{\mathcal{I}}.
\]

The operation $P_{\theta}[\bullet]_{\mathcal{I}}$ is idempotent. We say $\varphi\in \PSH(X,\theta)$ is \emph{$\mathcal{I}$-model} if $P_{\theta}[\varphi]_{\mathcal{I}}=\varphi$. Similarly, on the subset $\PSH(X,\theta)_{>0}$, the operation $P_{\theta}[\bullet]$ is also idempotent. We say $\varphi\in \PSH(X,\theta)_{>0}$ is \emph{model} if $P_{\theta}[\varphi]=\varphi$.

A quasi-plurisubharmonic function $\varphi$ on $X$ is called \emph{$\mathcal{I}$-good} if there is a closed smooth real $(1,1)$-form $\theta$ on $X$ such that $\varphi\in \PSH(X,\theta)_{>0}$ and 
\[
P_{\theta}[\varphi]=P_{\theta}[\varphi]_{\mathcal{I}}.
\]
This notion is independent of the choice of $\theta$.
For any closed smooth real $(1,1)$-form $\theta'$ on $X$ so that $\theta'+\ddc\varphi\geq 0$, we also say the current $\theta'_{\varphi}$ is $\mathcal{I}$-good. 

Given a closed positive $(1,1)$-current $T$ on $X$, we write $T=\theta+\ddc\varphi$, then we define
\[
\vol T\coloneqq \int_X \left(\theta+\ddc P_{\theta}[\varphi]_{\mathcal{I}}\right)^n.
\]
This definition is independent of the choice of the decomposition of $T$ as $\theta+\ddc \varphi$. In general,
\[
\vol T\geq \int_X T^n,
\]
and if $\int_X T^n>0$, equality holds if and only if $T$ is $\mathcal{I}$-good. More generally, if $T_1,\ldots,T_n$ are closed positive $(1,1)$-currents on $X$ with positive non-pluripolar masses, we define
\[
\vol(T_1,\ldots,T_n)\coloneqq \int_X \left(\theta_1+\ddc P_{\theta_1}[\varphi_1]\right)\wedge \cdots \wedge \left(\theta_n+\ddc P_{\theta_n}[\varphi_n]\right),
\]
where we write $T_i=\theta_i+\ddc \varphi_i$ for each $i$. In general, when the masses possibly vanish, we define
\[
\vol(T_1,\ldots,T_n)\coloneqq \lim_{\epsilon\to 0+}\vol(T_1+\epsilon\omega,\ldots,T_n+\epsilon\omega)
\]
for any Kähler form $\omega$ on $X$. Note that
\[
\vol(T)=\vol(T,\ldots,T)
\]
for any closed positive $(1,1)$-current $T$ on $X$.

We say a closed positive $(1,1)$-current $T$ has analytic singularities if locally $T$ can be written as $\ddc f$, where $f$ is a plurisubharmonic function of the following form:  
\[
c\log (|f_1|^2+\cdots +|f_N|^2)+R, 
\]
where $c\in \mathbb{Q}_{\geq 0}$, $f_1,\ldots,f_N$ are holomorphic functions on $X$ and $R$ is a bounded function. When we write $T=\theta+\ddc\varphi$ for some smooth closed real $(1,1)$-form $\theta$ and $\varphi\in \PSH(X,\theta)$, we also say $\varphi$ has analytic singularities. 

As a particular case, if $D$ is an effective $\mathbb{Q}$-divisor on $X$, we say a closed positive $(1,1)$-current $T$ has \emph{log singularities} along $D$ if $T-[D]$ is positive, and has locally bounded potentials. It is easy to see that $T$ has analytic singularities. Conversely, if we begin with $T$ with analytic singularities, there is always a modification $\pi\colon Y\rightarrow X$ so that $\pi^*T$ has log singularities along an effective $\mathbb{Q}$-divisor on $Y$. See \cref{def:modif} for our definition of modifications.

Let $\theta$ be a smooth closed real $(1,1)$-form on $X$ and $\eta\in \PSH(X,\theta)$. We say a sequence $(\eta^j)_j$ of quasi-plurisubharmonic functions is a \emph{quasi-equisingular approximation} of $\eta$ if the following are satisfied:
\begin{enumerate}
    \item for each $j$, $\eta^j$ has analytic singularities;
    \item $(\eta^j)_j$ is decreasing with limit $\eta$;
    \item for each $\lambda'>\lambda>0$, we can find $j_0>0$ so that for $j\geq j_0$,
    \[
    \mathcal{I}(\lambda'\eta^j)\subseteq \mathcal{I}(\lambda \eta);
    \]
    \item There is a decreasing sequence $(\epsilon_j)_j$ in $\mathbb{R}_{\geq 0}$ with limit $0$, and a Kähler form $\omega$ on $X$ so that 
    \[
        \eta^j\in \PSH\left(X,\theta+\epsilon_j\omega\right)
    \]
    for each $j>0$.
\end{enumerate}
The existence of quasi-equisingular approximations is guaranteed by \cite{DPS01}. We also say $(\theta+\ddc\eta^j)_j$ is a quasi-equisingular approximation of $\theta+\ddc \eta$. When $\theta_{\eta}$ is a Kähler current, we can (and we always do) take a quasi-equisingular approximation in the same cohomology class of $\{\theta\}$.

Suppose that $\{\theta\}$ is big.
It is shown in \cite{DDNLmetric} that there is a pseudometric $d_S$ on $\PSH(X,\theta)$ satisfying the following inequality:
For any $\varphi,\psi\in \PSH(X,\theta)$, we have
    \begin{equation}\label{eq:ds_biineq}
        \begin{split}
            d_S(\varphi,\psi)\leq & \frac{1}{n+1}\sum_{j=0}^n \left( 2\int_X \theta_{\varphi\lor \psi}^j\wedge \theta_{V_{\theta}}^{n-j}-\int_X \theta_{\varphi}^j\wedge \theta_{V_{\theta}}^{n-j}-\int_X \theta_{\psi}^j\wedge \theta_{V_{\theta}}^{n-j} \right) \\
            \leq & C_n d_S(\varphi,\psi),
        \end{split}
\end{equation}
where $C_n=3(n+1)2^{n+2}$. Here $V_{\theta}=\max\{\varphi\in \PSH(X,\theta):\varphi\leq 0\}$.
Moreover, $d_S(\varphi,\psi)=0$ if and only if $\varphi\sim_P \psi$. In particular, the $d_S$-pseudometric descends to a pseudometric (still denoted by $d_S$) on the space of closed positive $(1,1)$-currents in $\{\theta\}$. 

Given a net of closed positive $(1,1)$-currents $T_i$ in $\{\theta\}$, and another closed positive $(1,1)$-current $T$ in $\{\theta\}$. Then $T_i\xrightarrow{d_S}T$ if and only if $T_i+\omega\xrightarrow{d_S}T+\omega$ for any Kähler form $\omega$ on $X$.

In general, given closed positive $(1,1)$-currents $T_i$ and $T$ on $X$, we say $T_i\xrightarrow{d_S} T$ if we can find Kähler forms $\omega_i$ and $\omega$ on $X$ such that the $T_i+\omega_i$'s and $T+\omega$ represent the same cohomology class and $T_i+\omega_i\xrightarrow{d_S} T+\omega$. This definition is independent of the choices of the $\omega_i$'s and $\omega$.
Quasi-equisingular approximations provide the primary source of $d_S$-convergent sequences.

The proceeding theory can be easily extended to compact normal Kähler spaces, as explained in the appendix of \cite{Xiabook}.

\subsection{Modifications and cones}\label{subsec:modif}
Let $X$ be a reduced compact Kähler space of dimension $n$.

In this paper, we use the word \emph{modification} in a very non-standard sense.
\begin{definition}\label{def:modif}
    A \emph{modification} of $X$ is a bimeromorphic morphism $\pi\colon Y\rightarrow X$, which is a finite composition of blow-ups with smooth centers.

        We say a modification $\pi'\colon Z\rightarrow X$ \emph{dominates} another $\pi\colon Y\rightarrow X$ if there is a morphism $g\colon Z\rightarrow Y$ making the following diagram commutative:
    \begin{equation}\label{eq:domi}
    \begin{tikzcd}
Z \arrow[rr,"g"] \arrow[rd, "\pi'"'] &   & Y \arrow[ld, "\pi"] \\
                                 & X. &                    
\end{tikzcd}
    \end{equation}
\end{definition}
The modifications of $X$ together with the domination relation form a directed set.

Fix a reference Kähler form $\omega$ on $X$.
Recall that a class $\alpha\in \mathrm{H}^{1,1}(X,\mathbb{R})$ is \emph{modified nef} or \emph{movable} if for any $\epsilon>0$, we can find a closed $(1,1)$-current $T\in \alpha$ such that
\begin{enumerate}
    \item $T+\epsilon\omega\geq 0$;
    \item $\nu(T+\epsilon\omega,D)=0$ for any prime divisor $D$ on $X$.
\end{enumerate}
This definition is independent of the choice of $\omega$.
Here $\nu(\bullet,D)$ denote the generic Lelong number along $D$.

These classes are called \emph{nef en codimension $1$} in Boucksom's thesis \cite{Bou02}, where they were introduced for the first time. Modified nef classes form a closed convex cone in $\mathrm{H}^{1,1}(X,\mathbb{R})$. Note that a modified nef class is necessarily pseudoeffective. A nef class is obviously modified nef.

Recall the multiplicity of a cohomology class as defined in \cite[Section~2.1.3]{Bou02}.
\begin{definition}
    Let $\alpha\in \mathrm{H}^{1,1}(X,\mathbb{R})$ be a pseudoeffective class and $D$ be a prime divisor on $X$. We define the Lelong number $\nu(\alpha,D)$ as follows:
    \begin{enumerate}
        \item When $\alpha$ is big, define $\nu(\alpha,D)=\nu(T,D)$ for any closed positive $(1,1)$-current $T\in \alpha$ with minimal singularities (namely, a current in $\alpha$ that is less singular than any current in $\alpha$).
        \item In general, define
        \[
        \nu(\alpha,D)\coloneqq \lim_{\epsilon\to 0+}\nu(\alpha+\epsilon \{\omega\},D).
        \]
    \end{enumerate}
\end{definition}
When $\alpha$ is big, (2) is compatible with (1) and the definition is independent of the choice of $\omega$.
By definition, a pseudoeffective class $\alpha$ is modified nef if and only if $\nu(\alpha,D)=0$ for all prime divisors $D$ on $X$.

Let $T$ be a closed positive $(1,1)$-current on $X$. Then we define the \emph{regular part} $\Reg T$ of $T$ as the regular part of $T$ with respect to Siu's decomposition. In other words, we write
\begin{equation}\label{eq:Siudec}
T=\Reg T+\sum_i c_i [E_i],
\end{equation}
where $E_i$ is a countable collection of prime divisors on $X$ and $c_i=\nu(T,E_i)>0$; the regular part $\Reg T$ is a closed positive $(1,1)$-current whose generic Lelong number along each prime divisor on $X$ is $0$.

\begin{definition}\label{def:nd}
    We say a closed positive $(1,1)$-current $T$ on $X$ is \emph{non-divisorial} (resp. \emph{divisorial}) if $T=\Reg T$ (resp. $\Reg T=0$).
\end{definition}
Note that the cohomology class of a non-divisorial current is always modified nef. Conversely, a current with minimal singularities in a \emph{big} and modified nef class is always non-divisorial.

\subsection{Partially ordered linear spaces}\label{subsec:polinear}

Let $V$ be a finite-dimensional real vector space, and $C\subseteq V$ be a convex cone satisfying the following assumptions:
\begin{enumerate}[label=(C\arabic*), ref=(C\arabic*)]
  \item \label{cond:c1} $C$ is closed;
  \item \label{cond:c2} $C$ is pointed, namely $C\cap(-C)=\{0\}$;
  \item \label{cond:c3} $C$ is full-dimensional, namely $V=C-C$.
\end{enumerate}
Strictly speaking, \ref{cond:c3} is not necessary: We can always replace $V$ by $C-C$.

Given $x,y\in V$, we define $x\leq y$ if $y-x\in C$. Thanks to \ref{cond:c2}, $\leq$ defines a partial order. In particular, we can talk about increasing and decreasing nets.
This partial order has the following properties:
    \begin{enumerate}
        \item[(VO1)] Given $x,y,z\in V$, if $x\leq y$, then $x+z\leq y+z$;
        \item[(VO2)] for any $\lambda\geq 0$ and $x,y\in V$, if $x\leq y$, then $\lambda x\leq \lambda y$.
    \end{enumerate}
In the terminology of \cite{AT07}, $\leq$ is a \emph{vector-ordering} of $V$. The results in this section can be proved using the more general theory in \cite{AT07}, but we shall give elementary proofs for the ease of the readers.

Let $V^{\vee}$ denote the dual vector space of $V$.
Let $C^{\vee}\subseteq V^{\vee}$ be the dual cone of $C$, namely,
\[
C^{\vee}=\left\{ \ell\in V^{\vee}:\ell|_C\geq 0 \right\}.
\]
It is well-known that under the assumption of \ref{cond:c1} and \ref{cond:c2}, $C^{\vee}$ has full dimension. See \cite[Exercise~2.31]{BV04}.

\begin{proposition}\label{prop:mono_conv}
    Suppose that $(x_i)_{i\in I}$ is a decreasing net in $C$, then $(x_i)_i$ converges to some element in $x\in C$. Moreover, $x\leq x_i$ for all $i\in I$. 
\end{proposition}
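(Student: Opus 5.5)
The plan is to use the dual cone $C^{\vee}$, which has full dimension under \ref{cond:c1} and \ref{cond:c2}, to reduce everything to monotone real nets.

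\textbf{Scalar limits.} Choose linear functionals $\ell_1,\ldots,\ell_N\in C^{\vee}$ forming a basis of $V^{\vee}$; this is possible because $C^{\vee}$ spans $V^{\vee}$. For each $k$, the net $(\ell_k(x_i))_{i\in I}$ is decreasing in $\mathbb{R}$: if $i\leq j$ then $x_i-x_j\in C$, so $\ell_k(x_i)-\ell_k(x_j)\geq 0$. It is also bounded below by $0$, since $x_i\in C$. Hence it converges to $\inf_i \ell_k(x_i)$.

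\textbf{Convergence in $V$.} The map $V\to\mathbb{R}^N$, $y\mapsto (\ell_1(y),\ldots,\ell_N(y))$, is a linear isomorphism and therefore a homeomorphism. Convergence of all coordinates thus shows that $(x_i)_i$ converges to some $x\in V$. Since $C$ is closed by \ref{cond:c1} and every $x_i\in C$, we get $x\in C$.

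\textbf{The bound $x\leq x_i$.} Fix $i$. For all $j\geq i$ we have $x_i-x_j\in C$. The net $(x_i-x_j)_{j\geq i}$ is cofinal, so it converges to $x_i-x$. By closedness of $C$ again, $x_i-x\in C$, that is, $x\leq x_i$.

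\textbf{Main obstacle.} The only non-formal input is that $C^{\vee}$ spans $V^{\vee}$, which the paper has already recorded. If one preferred a self-contained argument: should $C^{\vee}$ lie in a proper subspace, there would be a nonzero $v$ with $\ell(v)=0$ for all $\ell\in C^{\vee}$. The bipolar theorem (valid since $C$ is a closed convex cone) would then give $v,-v\in C^{\vee\vee}=C$, contradicting pointedness \ref{cond:c2}.
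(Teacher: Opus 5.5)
Your proposal is correct and takes essentially the same approach as the paper: pick a basis of $V^{\vee}$ inside $C^{\vee}$, use the resulting linear isomorphism to reduce to coordinatewise decreasing real nets bounded below by $0$, and use closedness of $C$ twice, once to get $x\in C$ and once (via $x_i-x_j\in C$ for $j\geq i$) to get $x\leq x_i$. Your bipolar-theorem argument that $C^{\vee}$ spans $V^{\vee}$ is a valid self-contained replacement for the paper's citation of this fact.
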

\begin{proof}
    Since $C^{\vee}$ has full dimension, we can choose $\ell_1,\ldots,\ell_n\in C^{\vee}$ forming a basis of $V^{\vee}$. Then the linear isomorphism
    \[
    L\coloneqq (\ell_1,\ldots,\ell_n)\colon V\rightarrow \mathbb{R}^n
    \]
    maps $C$ into a cone $C'$ contained in the first quadrant. 

    In particular, $(L(x_i))_{i\in I}$ is a net in the first quadrant and each component forms a decreasing net. The usual monotone convergence theorem shows that $(L(x_i))_i$ has a limit $y$ in the first quadrant. Then $x_i\to x\coloneqq L^{-1}(y)$. Thanks to \ref{cond:c1}, $x\in C$. 

    Next fix $i\in I$, we have
    \[
    x_j\leq x_i,\quad \forall j\in I,j\geq i.
    \]
    Using \ref{cond:c1} again, we find $x\leq x_i$.
\end{proof}
\begin{corollary}\label{cor:mono_conv}
    Suppose that $(x_i)_{i\in I}$ is an increasing net in $V$ and $y\in V$ is such that $x_i\leq y$ for all $i$, then $(x_i)_i$ converges to some $x\leq y$. Furthermore, $x\geq x_i$ for each $i\in I$.
\end{corollary}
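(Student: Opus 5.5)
The plan is to reduce the statement to \cref{prop:mono_conv} by reflecting the net. Define $z_i\coloneqq y-x_i$ for $i\in I$. Since $x_i\leq y$, we have $z_i\in C$ for every $i$. Since $(x_i)_i$ is increasing, for $j\geq i$ we have $x_j-x_i\in C$, hence $z_i-z_j=x_j-x_i\in C$, i.e.\ $z_j\leq z_i$. So $(z_i)_{i\in I}$ is a decreasing net in $C$.

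Applying \cref{prop:mono_conv}, the net $(z_i)_i$ converges to some $z\in C$ with $z\leq z_i$ for all $i$. Set $x\coloneqq y-z$. Because the map $v\mapsto y-v$ is a continuous affine bijection of $V$, we get $x_i=y-z_i\to y-z=x$. From $z\in C$ we obtain $y-x\in C$, that is, $x\leq y$. From $z\leq z_i$, i.e.\ $z_i-z\in C$, we obtain $x-x_i=z_i-z\in C$, that is, $x\geq x_i$ for every $i\in I$.

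Each step is only a rewriting of the definition of $\leq$ via membership in $C$, using properties (VO1) and (VO2) implicitly. There is no real obstacle. The one point to check is that the reflected net really lies in $C$, and not merely in $V$, so that \cref{prop:mono_conv} applies. This is exactly where the hypothesis $x_i\leq y$ is used.
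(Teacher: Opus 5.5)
Your proof is correct and is exactly the paper's argument: the paper simply applies \cref{prop:mono_conv} to the decreasing net $(y-x_i)_{i\in I}$ in $C$, and you have written out the routine verifications it leaves implicit.
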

\begin{proof}
    It suffices to apply \cref{prop:mono_conv} to the net $(y-x_i)_{i\in I}$.
\end{proof}

\begin{proposition}\label{prop:incdom}
    Let $(y_i)_{i>0}$ be a sequence in $C$ so that $\sum_{i=1}^{\infty}y_i$ converges. Consider sequences $(x_i^j)_{j>0}$ in $C$ for all $i>0$ so that
    \[
    \begin{aligned}
        x_i^j\leq y_i,&\quad \forall i,j>0;\\
        \lim_{j\to\infty} x_i^j=x_i,&\quad \forall i>0.
    \end{aligned}
    \]
    Then for all $j>0$, $\sum_{i=1}^{\infty} x_i^j$
    converges and
    \begin{equation}\label{eq:dom_conv1}
    \lim_{j\to\infty}\sum_{i=1}^{\infty} x_i^j=\sum_{i=1}^{\infty} x_i.
    \end{equation}
\end{proposition}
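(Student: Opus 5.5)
This is a dominated convergence theorem for the cone order, and I will prove it the same way as \cref{prop:mono_conv}: reduce to coordinates. Since $C^{\vee}$ has full dimension, choose $\ell_1,\ldots,\ell_N\in C^{\vee}$ forming a basis of $V^{\vee}$. The isomorphism $L=(\ell_1,\ldots,\ell_N)\colon V\rightarrow \mathbb{R}^N$ sends $C$ into the closed first orthant. It also sends the order $\leq$ into the coordinatewise order: if $x\leq y$ then $\ell_k(y-x)\geq 0$ for every $k$. Because $L$ is a linear homeomorphism, convergence of series and limits in $V$ is equivalent to the same statements for each coordinate $\ell_k$. So it suffices to prove the scalar statement for each fixed $k$.

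Fix $k$ and set $a_i^j=\ell_k(x_i^j)$, $a_i=\ell_k(x_i)$ and $b_i=\ell_k(y_i)$. From $x_i^j\in C$ and $x_i^j\leq y_i$ we get $0\leq a_i^j\leq b_i$. The series $\sum_i b_i$ converges because $\ell_k$ is continuous and linear, and its terms are nonnegative, so it converges absolutely. By comparison, $\sum_i a_i^j$ converges for every $j$. Pulling back through $L^{-1}$, this gives convergence of $\sum_i x_i^j$ in $V$, which is the first claim.

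For the limit \eqref{eq:dom_conv1}, first note that $a_i^j\to a_i$ by continuity, and passing to the limit gives $0\leq a_i\leq b_i$. Now apply the dominated convergence theorem for series on $\mathbb{N}$ with counting measure, using the summable dominating sequence $(b_i)_i$. This yields $\lim_j\sum_i a_i^j=\sum_i a_i$. Alternatively, one can argue directly: given $\epsilon>0$, pick $M$ with $\sum_{i>M}b_i<\epsilon$; the tails of both series are then below $\epsilon$, and the finitely many head terms converge. Since this holds for every $k$, applying $L^{-1}$ gives \eqref{eq:dom_conv1}, and convergence of $\sum_i x_i$ comes out of the same argument.

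There is no real obstacle. The only point needing care is that $x\leq y$ translates into coordinatewise inequalities, which holds precisely because the $\ell_k$ were chosen in $C^{\vee}$. This is the same device already used in \cref{prop:mono_conv}.
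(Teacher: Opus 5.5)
Your proof is correct and follows the paper's argument: both use a basis of $C^{\vee}$ to reduce to the case $C\subseteq\mathbb{R}^n_{\geq 0}$ and then apply the dominated convergence theorem in each coordinate. The only difference is minor: the paper gets convergence of $\sum_i x_i^j$ from \cref{cor:mono_conv} (increasing partial sums bounded by $\sum_i y_i$), while you get it from coordinatewise comparison, which comes to the same thing.
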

\begin{proof}
    For each $j>0$, the convergence of $\sum_{i=1}^{\infty} x_i^j$ follows from \cref{cor:mono_conv}.

    As for \eqref{eq:dom_conv1}, as in the proof of \cref{prop:mono_conv}, assume that $V=\mathbb{R}^n$ and $C\subseteq \mathbb{R}_{\geq 0}^n$. In this case, it suffices to apply the dominated convergence theorem to each component.
\end{proof}

\begin{proposition}\label{prop:Levi}
    Let $y\in C$. Consider increasing sequences $(x_i^j)_{j>0}$ in $C$ for all $i>0$ with limits $x_i$. Assume that
    \[
    \sum_{i=1}^{\infty}x_i^j\leq y,\quad \forall j>0,
    \]
    Then
    \[
    \lim_{j\to\infty}\sum_{i=1}^{\infty}x_i^j=\sum_{i=1}^{\infty}x_i.
    \]
\end{proposition}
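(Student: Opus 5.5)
We reduce to a monotone convergence statement in coordinates, exactly as in the proofs of \cref{prop:mono_conv} and \cref{prop:incdom}. Since $C^{\vee}$ has full dimension, we choose $\ell_1,\ldots,\ell_n\in C^{\vee}$ forming a basis of $V^{\vee}$. Via $L=(\ell_1,\ldots,\ell_n)$ we may then assume $V=\mathbb{R}^n$ and $C\subseteq \mathbb{R}^n_{\geq 0}$. In these coordinates the order $\leq$ on $V$ implies the componentwise order, because each $\ell_k$ is non-negative on $C$. It therefore suffices to prove the statement componentwise for sequences of non-negative reals, and then translate back.

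We first check that every series in sight converges. Fix $j$. The partial sums $\sum_{i\leq N}x_i^j$ increase in $N$ with respect to $\leq$, since each $x_i^j\in C$. They are bounded by $\sum_{i=1}^\infty x_i^j\leq y$, which is part of the hypothesis. So the series converges, either by \cref{cor:mono_conv} or directly componentwise.

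Next we handle the limits $x_i$. Since $(x_i^j)_j$ is increasing in $C$ with limit $x_i$, closedness of $C$ (\ref{cond:c1}) gives $x_i\in C$ and $x_i^j\leq x_i$, by \cref{cor:mono_conv}. For each finite $N$ we have
\[
\sum_{i\leq N}x_i=\lim_{j\to\infty}\sum_{i\leq N}x_i^j\leq y,
\]
where the inequality uses $\sum_{i\leq N}x_i^j\leq\sum_{i=1}^{\infty}x_i^j\leq y$ together with closedness of $C$. Hence $\sum_{i=1}^{\infty} x_i$ converges and is $\leq y$.

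Finally we identify the limit. In each coordinate $k$, the real numbers $a_i^j=\ell_k(x_i^j)\geq 0$ increase in $j$ to $a_i=\ell_k(x_i)$. The classical monotone convergence theorem for series (Levi's theorem for counting measure on $\mathbb{N}$) then gives
\[
\lim_{j\to\infty}\sum_{i=1}^{\infty} a_i^j=\sum_{i=1}^{\infty} a_i.
\]
This holds because both sides are finite, being bounded by $\ell_k(y)$. Applying it to each $k$ and inverting $L$ yields the claim.

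There is no real obstacle here. The only point to watch is that the $\leq$-order is finer than the componentwise order, so domination by $y$ must be transferred into coordinates through the functionals $\ell_k\in C^{\vee}$, rather than assuming the order itself becomes componentwise.
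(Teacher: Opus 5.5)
Your proof is correct and matches the paper's argument: map $V$ to $\mathbb{R}^n$ using a basis $\ell_1,\ldots,\ell_n$ of $V^{\vee}$ taken from $C^{\vee}$, so that $C$ lands in the non-negative orthant, then apply Levi's monotone convergence theorem in each coordinate. The rest of your bookkeeping spells out details the paper leaves implicit: that every series converges, that $x_i\in C$, and that the limits are finite because they are bounded by $\ell_k(y)$.
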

\begin{proof}
    As in the proof of \cref{prop:mono_conv}, assume that $V=\mathbb{R}^n$ and $C\subseteq \mathbb{R}_{\geq 0}^n$. In this case, it suffices to apply Levi's monotone convergence theorem to each component.
\end{proof}

\begin{proposition}\label{prop:squeeze}
    Let $(x_i)_{i\in I}$, $(y_i)_{i\in I}$, $(z_i)_{i\in I}$ be nets in $C$ with $x_i\to x$ for some $x\in C$ and $z_i\to x$. Assume that $x_i\geq y_i\geq z_i$ for all $i\in I$, then $y_i\to x$.
\end{proposition}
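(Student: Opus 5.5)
The plan is to reduce to the case $V=\mathbb{R}^n$ with $C\subseteq\mathbb{R}_{\geq 0}^n$, exactly as in the proof of \cref{prop:mono_conv}, and then run the classical squeeze argument coordinate by coordinate. Since \ref{cond:c1} and \ref{cond:c2} hold, $C^{\vee}$ has full dimension. We therefore choose $\ell_1,\ldots,\ell_n\in C^{\vee}$ forming a basis of $V^{\vee}$ and set $L=(\ell_1,\ldots,\ell_n)\colon V\rightarrow\mathbb{R}^n$. This map is a linear isomorphism, hence a homeomorphism, so it preserves convergence of nets.

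The key observation is that $L$ is monotone. If $a\leq b$, then $b-a\in C$, so $\ell_k(b-a)\geq 0$ for every $k$, that is, $\ell_k(a)\leq \ell_k(b)$. Applying this to $z_i\leq y_i\leq x_i$ gives, for each $k=1,\ldots,n$ and each $i\in I$,
\[
\ell_k(z_i)\leq \ell_k(y_i)\leq \ell_k(x_i).
\]
By continuity of $\ell_k$, both outer nets converge to $\ell_k(x)$. The scalar squeeze theorem for nets then gives $\ell_k(y_i)\to\ell_k(x)$ for every $k$. Indeed, given $\epsilon>0$, there is an index beyond which both bounds lie within $\epsilon$ of $\ell_k(x)$, since $I$ is directed and we may take a common upper bound of the two thresholds.

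Hence $L(y_i)\to L(x)$ in $\mathbb{R}^n$, and applying $L^{-1}$ yields $y_i\to x$. Nothing here is a real obstacle. The only point needing care is that the order $\leq$ is defined by the cone $C$ and not coordinatewise, and the dual-basis map $L$ is precisely what turns $C$-inequalities into coordinatewise inequalities. We do not even need $x\in C$ or the $y_i\in C$ for this argument, only that the inequalities hold.
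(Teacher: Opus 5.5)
Your proof is correct and follows the paper's route: transport the problem to $\mathbb{R}^n$ via the map $L$ built from a basis of $C^{\vee}$, as in the proof of \cref{prop:mono_conv}, and apply the scalar squeeze theorem to each coordinate. You correctly use only that $L$ is monotone, i.e.\ that $L(C)\subseteq\mathbb{R}^n_{\geq 0}$; this is the precise form of the paper's reduction, which states it as $C=\mathbb{R}^n_{\geq 0}$ although only the inclusion holds.
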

\begin{proof}
    As in the proof of \cref{prop:mono_conv}, we may assume that $V=\mathbb{R}^n$ and $C=\mathbb{R}^n_{\geq 0}$. Then it suffices to apply the squeeze theorem to the components.
\end{proof}

\subsection{Cones in cohomology}
Let $X$ be a connected compact Kähler manifold of dimension $n$. 

In \cref{sec:posform}, we shall briefly recall Lelong's theory of positive forms. 

\begin{definition}\label{def:poscoh}
    We say a class $\alpha\in \mathrm{H}^{p,p}(X,\mathbb{R})$ is \emph{positive} if for all weakly positive closed $(n-p,n-p)$-form $F$ on $X$, we have
    \[
    \alpha\cap \{F\}\geq 0.
    \]
    We write $\alpha\geq_X 0$ in this case. 
\end{definition}
Here $\{F\}$ refers to the cohomology class in $\mathrm{H}^{n-p,n-p}(X,\mathbb{R})$ represented by $F$. The cap notation $\cap$ refers to the cohomology pairing.

When we have two classes $\alpha,\beta\in \mathrm{H}^{p,p}(X,\mathbb{R})$, we write $\alpha\geq_X \beta$ if $\alpha-\beta\geq_X 0$. Similarly, the notation $\leq_X$ has the obvious meaning.

If $\alpha$ contains a closed strongly positive $(p,p)$-current, it is clearly positive. The author does not know if the converse holds in general. When $p=n-1$, the converse follows from  \cite[Theorem~0.1]{DP04}. See \cite{CT15} for a proof without relying on the dubious Demailly regularization on singular complex spaces.
When $p=1$, the converse also holds, namely a class in $\mathrm{H}^{1,1}(X,\mathbb{R})$ is positive if and only if it is pseudoeffective. In this case, we usually omit the subindex $X$ and write $\alpha\geq \beta$.
The non-trivial implication is a theorem of Lamari, see \cite[Lemme~3.3]{Lam99}.

An example of positive classes is given by non-pluripolar products.
\begin{proposition}\label{prop:npp_strongpos}
    Let $T_1,\ldots,T_p$ be closed positive $(1,1)$-currents on $X$. Then $T_1\wedge \cdots \wedge T_p$ is strongly positive, and hence $\{T_1\wedge \cdots \wedge T_p\}$ is positive.
\end{proposition}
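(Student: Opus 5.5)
The plan has two parts: first prove strong positivity of the non-pluripolar product, then deduce positivity of its class by pairing against weakly positive forms.

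\textbf{Step 1: strong positivity.} Recall that the non-pluripolar product is defined as follows. Write $T_i=\theta_i+\ddc\varphi_i$ with $\theta_i$ smooth, and consider the plurifine open sets
\[
O_k=\bigcap_{i}\{\varphi_i>V_{\theta_i}-k\}.
\]
(Locally we may use $\{\varphi_i>-k\}$ after subtracting smooth local potentials.) On $O_k$ the product $T_1\wedge\cdots\wedge T_p$ agrees with the Bedford--Taylor product of the bounded truncations
\[
\theta_i+\ddc\max\{\varphi_i,V_{\theta_i}-k\},
\]
and the non-pluripolar product is the increasing limit
\[
T_1\wedge\cdots\wedge T_p=\lim_{k\to\infty}\mathbf{1}_{O_k}\,\bigwedge_i\Bigl(\theta_i+\ddc\max\{\varphi_i,V_{\theta_i}-k\}\Bigr).
\]
So it suffices to check two things:
\begin{enumerate}
\item Bedford--Taylor products of closed positive $(1,1)$-currents with locally bounded potentials are strongly positive.
\item Strong positivity is preserved under restriction to Borel sets and under increasing (weak) limits.
\end{enumerate}

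For (1), work locally, where each $T_i=\ddc u_i$ with $u_i$ bounded plurisubharmonic. Approximate each $u_i$ by a decreasing sequence of smooth plurisubharmonic functions $u_i^\epsilon$ obtained by convolution. Each $\ddc u_i^\epsilon$ is a positive smooth $(1,1)$-form. A wedge product of positive $(1,1)$-forms is strongly positive, since it is a product of decomposable positive forms. By the Bedford--Taylor continuity theorem, $\bigwedge_i \ddc u_i^\epsilon$ converges weakly to $\bigwedge_i\ddc u_i$. Strong positivity is a closed condition under weak convergence: it means that pairing against every weakly positive test form of complementary bidegree is nonnegative. Hence the limit is strongly positive.

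For (2), a strongly positive current has measure coefficients, so multiplying by $\mathbf{1}_{O_k}$ preserves nonnegativity of all pairings with weakly positive forms (compare with the trace measure). Weak limits preserve strong positivity as above.

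\textbf{Step 2: positivity of the class.} The non-pluripolar product is closed, by the theorem of \cite{BEGZ10} on compact Kähler manifolds, so its class $\{T_1\wedge\cdots\wedge T_p\}$ is well defined. For any closed weakly positive $(n-p,n-p)$-form $F$, the cohomology pairing is computed by integration:
\[
\{T_1\wedge\cdots\wedge T_p\}\cap\{F\}=\int_X T_1\wedge\cdots\wedge T_p\wedge F .
\]
This integral is nonnegative because a strongly positive current paired with a weakly positive form is nonnegative, by the duality of the two cones. This verifies \cref{def:poscoh}.

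\textbf{Main obstacle.} The most delicate point is the restriction step in (2). One has to make sure that multiplying by $\mathbf{1}_{O_k}$, which is only plurifine-open and not open, does not destroy strong positivity. I would handle this through the measure-coefficient description: a current is strongly positive if and only if its pairing with each weakly positive form with measurable coefficients gives a positive measure, and this property is evidently stable under multiplication by indicator functions. Closedness itself is cited from \cite{BEGZ10}.
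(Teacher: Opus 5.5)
Your proposal is correct and follows essentially the same route as the paper. The paper localizes, obtains strong positivity of bounded Bedford--Taylor products by convolution, and reaches the general case through the truncations on the plurifine open sets $\{\varphi_i>-k\}$, testing directly against a weakly positive form $\eta$ via \cite[Lemma~1.5]{BEGZ10}; your measure-coefficient remark supplies exactly the justification the paper leaves implicit for integrating over a plurifine open set. The only slip is the global formula with $\max\{\varphi_i,V_{\theta_i}-k\}$, which need not be bounded when $V_{\theta_i}$ is singular, but your parenthetical local version (subtracting smooth local potentials) is what the argument actually uses, and that suffices since the claim is local.
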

Here and in the sequel $T_1\wedge \cdots \wedge T_p$ denotes the non-pluripolar product as before.

Although this result is definitely known to experts, it seems missing from the literature. We take this opportunity to give a proof. 

\begin{proof}
    It suffices to show that $T_1\wedge \cdots \wedge T_p$ is strongly positive. The problem is then local. We are reduced to the following: Suppose that $U$ is a domain set in $\mathbb{C}^n$ and $\varphi_1,\ldots,\varphi_p$ are plurisubharmonic functions on $U$. Assume that $\ddc \varphi_1\wedge \cdots \wedge \ddc \varphi_p$ is well-defined, then it is strongly positive.

    \textbf{Step~1}. We first assume that each $\varphi_i$ is bounded. Then taking convolution with Friedrichs mollifiers, we may assume that each $\varphi_i$ is smooth. In this case, the strong positivity of $\ddc \varphi_1\wedge \cdots \wedge \ddc \varphi_p$ follows from \cite[Page~169, Proposition~1.11]{Dembook2}.

    \textbf{Step~2}. We handle the general case.

    Let $\eta$ be a compactly supported smooth closed weakly positive $(n-p,n-p)$-form on $U$. We need to show that
    \begin{equation}\label{eq:strong_pos_npp}
    \int_U \left( \ddc \varphi_1\wedge \cdots \wedge \ddc \varphi_p \right) \wedge \eta\geq 0.
    \end{equation}
    For this purpose, we apply \cite[Lemma~1.5]{BEGZ10} and get
    \[
    \int_U \left( \ddc \varphi_1\wedge \cdots \wedge \ddc \varphi_p \right) \wedge \eta=\lim_{C\to\infty} \int_{\{\varphi_i>-C\}} \left( \bigwedge_{i=1}^p \ddc \left(\varphi_i\lor (-C) \right) \right)\wedge \eta.
    \]
    But each term is non-negative by Step~1, and hence \eqref{eq:strong_pos_npp} follows.
\end{proof}

\begin{proposition}\label{prop:Posclass_goodcone}
    The cone of positive classes in $\mathrm{H}^{p,p}(X,\mathbb{R})$ satisfy \ref{cond:c1}, \ref{cond:c2} and \ref{cond:c3}.
\end{proposition}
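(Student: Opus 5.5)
The cone $\mathcal{P}\subseteq \mathrm{H}^{p,p}(X,\mathbb{R})$ of positive classes is, by \cref{def:poscoh}, the dual cone (under the perfect Poincaré pairing $\mathrm{H}^{p,p}\times \mathrm{H}^{n-p,n-p}\to\mathbb{R}$) of the convex cone $K\subseteq \mathrm{H}^{n-p,n-p}(X,\mathbb{R})$ spanned by the classes $\{F\}$ of weakly positive closed $(n-p,n-p)$-forms. I will use this description to verify the three conditions directly, using only elementary duality.

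\ref{cond:c1}: $\mathcal{P}$ is an intersection of closed half-spaces $\{\alpha:\alpha\cap\{F\}\geq 0\}$, so it is a closed convex cone. This part is immediate.

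\ref{cond:c2}: I need $\mathcal{P}\cap(-\mathcal{P})=\{0\}$. If $\alpha$ lies in both, then $\alpha\cap\{F\}=0$ for every weakly positive closed $F$. It therefore suffices to show that $K$ spans $\mathrm{H}^{n-p,n-p}(X,\mathbb{R})$; Poincaré duality then forces $\alpha=0$. To prove the spanning, fix a Kähler form $\omega$. Any class in $\mathrm{H}^{n-p,n-p}(X,\mathbb{R})$ has a closed real smooth representative $G$. Since $\omega^{n-p}$ is strictly (strongly, hence weakly) positive and $X$ is compact, for $C\gg 0$ the form $G+C\omega^{n-p}$ is weakly positive: weak positivity is an open pointwise condition around the interior point $\omega^{n-p}$ of the positive cone in each fiber, and compactness gives a uniform constant. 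Both $G+C\omega^{n-p}$ and $C\omega^{n-p}$ are closed, so $\{G\}=\{G+C\omega^{n-p}\}-\{C\omega^{n-p}\}\in K-K$. This is the only step with real content: one must make sure that $\omega^{n-p}$ is an interior point of the fiberwise cone of weakly positive forms, which follows from Lelong's theory recalled in \cref{sec:posform}.

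\ref{cond:c3}: I need $\mathcal{P}-\mathcal{P}=\mathrm{H}^{p,p}(X,\mathbb{R})$. I argue dually with the same trick in degree $(p,p)$. For a closed real $(p,p)$-form $H$, the form $H+C\omega^{p}$ is strongly positive for $C\gg 0$, because $\omega^p$ is interior to the strongly positive cone at each point. A closed strongly positive form pairs non-negatively with every weakly positive form, since a strongly positive form wedged with a weakly positive form is a positive top-degree form. Hence $\{H+C\omega^p\}$ and $\{C\omega^p\}$ both lie in $\mathcal{P}$, and $\{H\}$ is their difference.

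So the proof reduces to the fiberwise fact that $\omega^k$ is an interior point of both the weakly and the strongly positive cones, which is standard linear algebra: locally $\omega$ is the standard form, and $\omega^k$ dominates a multiple of every simple positive form. Together with compactness of $X$, this gives the uniform constants $C$ used above.
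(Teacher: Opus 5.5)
Your proposal is correct and follows the paper's proof essentially verbatim. (C1) holds because the positive cone is an intersection of closed half-spaces. (C2) follows by writing any class $\{G\}$ as $\{G+C\omega^{n-p}\}-\{C\omega^{n-p}\}$, where both terms are represented by closed weakly (indeed strongly) positive forms, and (C3) follows from the same decomposition in bidegree $(p,p)$, using that closed strongly positive forms define positive classes.

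The only step with real content is that $\omega^k$ lies, uniformly over $X$, in the interior of the fiberwise strongly positive cone. The paper invokes the same fact, attributing it to local linear algebra and to the Harvey--Knapp basis theorem (\cref{thm:HK}).
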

In particular, the abstract results proved in \cref{subsec:polinear} can be applied. In the sequel of this paper, we shall apply them without further mentioning.
\begin{proof}
    The condition \ref{cond:c1} is obvious. 
    
    Let us prove \ref{cond:c2}. Let $\alpha\in \mathrm{H}^{p,p}(X,\mathbb{R})$ be a class with vanishing intersection with all classes of the form $\{F\}$, where $F$ is a closed weakly positive $(n-p,n-p)$-form. We need to show that $\alpha=0$. 
    
    Fix a general cohomology class $\beta\in \mathrm{H}^{n-p,n-p}(X,\mathbb{R})$, it suffices to show that 
    \begin{equation}\label{eq:alphabetaint0}
    \alpha\cap \beta=0.
    \end{equation}
    For this purpose, take a closed real $(n-p,n-p)$-form $G$ representing $\beta$. Fix a Kähler form $\omega$ on $X$. We observe that
    \[
    G+C\omega^{n-p}
    \]
    is positive when $C$ is large enough. In fact, thanks to the compactness of $X$, this problem is essentially local, we can consider it on a coordinate chart on $X$. Then this is just a simple linear algebra. For later use, let us observe that we can in fact guarantee that $G+C\omega^{n-p}$ is strongly positive, as follows from \cref{thm:HK}.

    Now by our assumption, 
    \[
    \alpha\cap \left\{C\omega^{n-p}\right\}=0,\quad \alpha\cap  \left\{G+C\omega^{n-p}\right\}=0.
    \]
    Therefore, \eqref{eq:alphabetaint0} follows.

    Finally let us prove \ref{cond:c3}. This follows form the observation in proving \ref{cond:c2}: A closed smooth real $(p,p)$-form can always be represented as the difference of two closed smooth strongly positive $(p,p)$-forms. 
\end{proof}

\begin{proposition}\label{prop:pos_push}
    Let $f\colon Y\rightarrow X$ be a proper morphism from a Kähler manifold, if $\alpha\in \mathrm{H}^{p,p}(Y,\mathbb{R})$ is positive, then so is $f_*\alpha$.
\end{proposition}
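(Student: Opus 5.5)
The plan is to argue by duality, using the projection formula. By \cref{def:poscoh}, $f_*\alpha\in \mathrm{H}^{p-d,p-d}(X,\mathbb{R})$ is positive if and only if $f_*\alpha\cap\{F\}\geq 0$ for every closed weakly positive $(n-p+d,n-p+d)$-form $F$ on $X$. Here $d=\dim Y-\dim X$, so that the degrees match; in the bimeromorphic case of interest $d=0$.

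For $\alpha\in\mathrm{H}^{p,p}(Y,\mathbb{R})$ and $\beta\in \mathrm{H}^{*,*}(X,\mathbb{R})$, the projection formula gives
\[
f_*\alpha\cap \beta=\alpha\cap f^*\beta .
\]
This holds because $f_*$ on cohomology is defined as the Poincaré dual of pullback, which is legitimate since $X$ and $Y$ are compact Kähler manifolds and $f$ is proper. Applying it with $\beta=\{F\}$ reduces the claim to
\[
\alpha\cap\{f^*F\}\geq 0 .
\]

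The pulled-back form $f^*F$ is a smooth closed form on $Y$ representing $f^*\{F\}$. It then suffices to check that $f^*F$ is weakly positive, after which the positivity of $\alpha$ gives the inequality directly. Weak positivity is a pointwise linear-algebra statement. Recall from Lelong's theory (\cref{sec:posform}) that a $(q,q)$-form is weakly positive if its restriction to every $q$-dimensional complex subspace is a non-negative volume form. Equivalently, its wedge with every decomposable strongly positive $(m-q,m-q)$-form is non-negative, where $m=\dim Y$.

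At a point $y$, consider the linear map $df_y$ and a complex $q$-plane $L\subseteq T_yY$. The restriction $(f^*F)|_L$ equals the pullback under $df_y|_L\colon L\to T_{f(y)}X$ of $F$ restricted to the image. If $df_y|_L$ is not injective, this pullback is $0$. If it is injective, the pullback of a non-negative volume form on $df_y(L)$ under a complex linear isomorphism is non-negative, since complex linear maps preserve orientation.

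So $f^*F$ is weakly positive, and the proposition follows. The only point needing care is this linear-algebra step: weak positivity is preserved by pullback under holomorphic maps. This is the standard fact, e.g.\ \cite[Chapter~III]{Dembook2}, and I would cite it rather than reprove it.
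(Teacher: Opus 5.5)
Your proposal is correct and takes the same approach as the paper. The paper's proof is the one-line remark that the pullback of a weakly positive form is weakly positive; combined with the projection formula $f_*\alpha\cap\{F\}=\alpha\cap\{f^*F\}$, this is exactly the argument you write out, with the degree bookkeeping via $d=\dim Y-\dim X$ (possibly negative) and the pointwise check on complex $q$-planes made explicit.
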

Note that we do not require that $f$ be bimeromorphic, nor surjective. 
\begin{proof}
    This follows from the fact that the pull-back of a weakly positive form is weakly positive.
\end{proof}

\begin{proposition}\label{prop:nefint_pre_pos}
    Let $\alpha\in \mathrm{H}^{1,1}(X,\mathbb{R})$ be a nef class and $\beta\in \mathrm{H}^{p,p}(X,\mathbb{R})$ be a positive class. Then $\alpha\cap \beta$ is positive.
\end{proposition}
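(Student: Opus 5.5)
We have to show that $(\alpha\cap\beta)\cap\{F\}\geq 0$ for every closed weakly positive $(n-p-1,n-p-1)$-form $F$ on $X$. Rewrite the pairing as
\[
(\alpha\cap \beta)\cap \{F\}=\beta\cap\bigl(\alpha\cap\{F\}\bigr).
\]
Since $\beta$ is positive, it then suffices to represent $\alpha\cap\{F\}$, up to an arbitrarily small error, by closed weakly positive $(n-p,n-p)$-forms. A limiting argument finishes the proof. The plan has three steps: Kähler approximation, a pointwise algebraic fact, and passing to the limit.

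\textbf{Step 1: Kähler approximation.} Fix a Kähler form $\omega$ on $X$. Because $\alpha$ is nef, for every $\epsilon>0$ the class $\alpha+\epsilon\{\omega\}$ is Kähler. Choose a Kähler form $\omega_\epsilon$ representing it.

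\textbf{Step 2: the pointwise fact.} Then $\omega_\epsilon\wedge F$ is a closed $(n-p,n-p)$-form representing $(\alpha+\epsilon\{\omega\})\cap\{F\}$. The key claim is that $\omega_\epsilon\wedge F$ is weakly positive. This is purely pointwise linear algebra: the wedge product of a weakly positive form with a positive $(1,1)$-form is weakly positive. I would prove it by duality. Weak positivity of a $(q,q)$-form means its pairing against every strongly positive $(n-q,n-q)$-form is nonnegative. Strongly positive forms are generated by products $i\gamma_1\wedge\bar\gamma_1\wedge\cdots$ of decomposable $(1,1)$-forms. Diagonalize $\omega_\epsilon$ at a point as $\sum_k i\,dz_k\wedge d\bar z_k$. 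Wedging a strongly positive $(n-p,n-p)$-form with $\omega_\epsilon$ is then a sum of strongly positive forms, which gives
\[
(\omega_\epsilon\wedge F)\wedge S=F\wedge(\omega_\epsilon\wedge S)\geq 0
\]
for every strongly positive $(p,p)$-form $S$. Alternatively, one can cite the standard fact from Demailly's book (\cite[Chapter~III]{Dembook2}) that the product of a weakly positive form with a strongly positive form is weakly positive, together with the fact that a $(1,1)$-positive form is strongly positive.

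\textbf{Step 3: conclusion.} By the positivity of $\beta$,
\[
(\alpha+\epsilon\{\omega\})\cap\beta\cap\{F\}=\beta\cap\{\omega_\epsilon\wedge F\}\geq 0.
\]
Letting $\epsilon\to 0$ and using linearity of the cup/cap product in the first variable yields $(\alpha\cap\beta)\cap\{F\}\geq 0$. Equivalently, $(\alpha+\epsilon\{\omega\})\cap\beta$ is positive, and the cone of positive classes is closed by \cref{prop:Posclass_goodcone}.

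The only step with real content is the pointwise claim in Step 2. The subtlety is that the weakly positive and strongly positive cones differ, so one must check that multiplying by a $(1,1)$-positive form preserves weak positivity. The duality argument above handles this, since the strongly positive cone is stable under multiplication by positive $(1,1)$-forms.
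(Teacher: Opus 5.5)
Your proof is correct and follows the paper's route. You reduce to the Kähler case by replacing $\alpha$ with $\alpha+\epsilon\{\omega\}$ and then use that wedging a weakly positive form with a positive $(1,1)$-form preserves weak positivity; the paper leaves this reduction implicit as ``without loss of generality''. The only blemish is a degree slip in Step~2: the strongly positive test forms $S$ should have bidegree $(p,p)$, as in your display, not $(n-p,n-p)$.
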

\begin{proof}
    Without loss of generality, we may assume that $\alpha$ is a Kähler class. Then our assertion follows from the fact that the wedge product of a positive $(1,1)$-form with a weakly positive form is weakly positive.
\end{proof}

\begin{lemma}\label{lma:posclass_vanish}
    Let $\alpha\in \mathrm{H}^{p,p}(X,\mathbb{R})$ be a positive class. Assume that for each Kähler class $\beta$, we have $\alpha\cap \beta^{n-p}=0$, then $\alpha=0$.
\end{lemma}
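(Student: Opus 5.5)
Since $\alpha$ is positive and the cone of positive classes is pointed (\cref{prop:Posclass_goodcone}, condition \ref{cond:c2}), it suffices to show that $\alpha\cap\{F\}=0$ for every closed weakly positive $(n-p,n-p)$-form $F$. Indeed, then both $\alpha$ and $-\alpha$ pair non-negatively with all such $\{F\}$, so $\alpha\in C\cap(-C)=\{0\}$.

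The idea is to dominate $F$ by a multiple of a power of a Kähler form. Fix a Kähler form $\omega$ on $X$. By compactness of $X$ and pointwise linear algebra (the same observation used in the proof of \cref{prop:Posclass_goodcone}), there is a constant $C>0$ such that $C\omega^{n-p}-F$ is a weakly positive form. This form is closed, so positivity of $\alpha$ gives
\[
\alpha\cap\{C\omega^{n-p}-F\}\geq 0,\qquad \alpha\cap\{F\}\geq 0 .
\]
The hypothesis applied to the Kähler class $\beta=\{\omega\}$ gives $\alpha\cap\{\omega\}^{n-p}=0$. Combining these,
\[
0\leq \alpha\cap\{F\}\leq C\,\alpha\cap\{\omega\}^{n-p}=0,
\]
so $\alpha\cap\{F\}=0$, which is what we needed.

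The only point requiring care is the domination step: that for a continuous weakly positive $(n-p,n-p)$-form $F$ on a compact manifold, $C\omega^{n-p}-F$ is weakly positive for $C$ large. Pointwise, $\omega^{n-p}$ lies in the interior of the weakly positive cone, since it is strongly positive and pairs strictly positively with every nonzero strongly positive $(p,p)$-form. Because $F$ is continuous and $X$ is compact, a uniform $C$ can be chosen over a finite atlas. This is routine, so I do not expect a genuine obstacle. Note that only a single Kähler class is actually used.
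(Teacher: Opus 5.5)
Your argument is correct and is essentially the paper's proof. The paper argues by contradiction: it takes an arbitrary closed real $(n-p,n-p)$-form $F$ with $\alpha\cap\{F\}<0$ and notes that $F+C\omega^{n-p}$ is positive for large $C$, so $\alpha\cap\{F+C\omega^{n-p}\}=\alpha\cap\{F\}<0$ contradicts positivity. You split this into your domination step plus the pointedness condition (C2), and the paper's proof of (C2) uses exactly the same ``add a large multiple of $\omega^{n-p}$'' trick, so the two routes coincide up to packaging.
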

\begin{proof}
    Suppose that $\alpha\neq 0$, then we can find a closed real $(n-p,n-p)$-form $F$ so that $\alpha\cap \{F\}<0$. 
    As in the proof of \cref{prop:Posclass_goodcone}, we can find a Kähler form $\omega$ on $X$ so that $F+\omega^{n-p}$ is positive. Then by our assumption, we know that $\alpha\cap \{F+\omega^{n-p}\}<0$, which is a contradiction.
\end{proof}

A source of positive classes is provided by the following monotonicity theorem proved in increasing order of generality by \cite{WN19, DDNL18mono, Vu20}:
\begin{theorem}\label{thm:mono_thm_partial}
    Let $\alpha_1,\ldots,\alpha_p\in \mathrm{H}^{1,1}(X,\mathbb{R})$ be pseudoeffective classes. Consider closed positive $(1,1)$-currents $T_i,S_i\in \alpha_i$ for $i=1,\ldots,p$. Assume that $T_i\preceq_P S_i$ for all $i$. Then 
    \begin{equation}\label{eq:mono_thm_partial}
        \left\{ T_1\wedge \cdots \wedge T_p \right\} \leq_X \left\{ S_1\wedge \cdots \wedge S_p \right\}.
    \end{equation}
\end{theorem}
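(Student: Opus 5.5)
We must prove Theorem mono_thm_partial: if $T_i\preceq_P S_i$ in the same class, then $\{T_1\wedge\cdots\wedge T_p\}\leq_X\{S_1\wedge\cdots\wedge S_p\}$. By \cref{def:poscoh}, the claim is that
\[
\int_X T_1\wedge\cdots\wedge T_p\wedge F\leq \int_X S_1\wedge\cdots\wedge S_p\wedge F
\]
for every closed weakly positive $(n-p,n-p)$-form $F$. Here the non-pluripolar products are closed by \cite{BEGZ10} and strongly positive by \cref{prop:npp_strongpos}, so both sides make sense.

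\textbf{Step~1: reduce to model potentials.} Write $T_i=\theta_i+\ddc\varphi_i$ and $S_i=\theta_i+\ddc\psi_i$. Two standard facts about the envelope $P_{\theta_i}$ do the work.
\begin{enumerate}
\item Replacing $\varphi_i$ by $P_{\theta_i}[\varphi_i]$ does not change the cohomology class of the mixed product. This is the known statement that the non-pluripolar mixed product is unchanged under $\sim_P$, \cite{DDNL18mono}. Its proof approximates via $(\varphi_i+C)\land 0$ as $C\to\infty$ and uses the locality of the non-pluripolar product in the plurifine topology.
\item Since $P_{\theta_i}[\varphi_i]\leq P_{\theta_i}[\psi_i]$, after applying (1) to both sides we may assume $\varphi_i\leq\psi_i$ pointwise.
\end{enumerate}
The case where some mass vanishes is handled by replacing $\theta_i$ with $\theta_i+\epsilon\omega$ and letting $\epsilon\to0$. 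The products are multilinear in the $\epsilon\omega$ pieces (one expands binomially, and the $\omega$ terms are smooth), so this perturbation is harmless.

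\textbf{Step~2: the case $\varphi_i\leq\psi_i$.} This is the core, following Witt Nyström and Darvas--Di Nezza--Lu, adapted to a test form $F$.
\begin{enumerate}
\item Set $\varphi_i^t=\max(\varphi_i,\psi_i-t)$ for $t>0$. Then $\varphi_i^t$ has the same singularity type as $\psi_i$, and $\varphi_i^t\searrow\varphi_i$ as $t\to\infty$.
\item For potentials with identical singularity type, show $\int\bigwedge(\theta_i+\ddc\varphi_i^t)\wedge F=\int\bigwedge S_i\wedge F$. Multilinearity lets us change one slot at a time. For a single slot, write the difference as $\int\ddc(\varphi_i^t-\psi_i)\wedge R\wedge F$ with $R$ the remaining non-pluripolar product, where $\varphi_i^t-\psi_i$ is bounded. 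This vanishes by the integration by parts formula for relative non-pluripolar products of \cite{Vu20} (or the \cite{BEGZ10}-type argument through truncations $\max(\cdot,-C)$). The integrand is closed because $F$ is closed, and this is where closedness of $F$ is used.
\item Then let $t\to\infty$. Along the decreasing family, the non-pluripolar product converges on the open plurifine sets $\{\varphi_i>\psi_i-t\}$, where it coincides with $\bigwedge T_i$. Lower semicontinuity along monotone sequences for non-pluripolar products tested against positive forms gives
\[
\liminf_{t\to\infty}\int\bigwedge(\theta_i+\ddc\varphi_i^t)\wedge F\geq\int\bigwedge T_i\wedge F,
\]
which is the desired inequality because the left side is constant in $t$.
\end{enumerate}
Positivity of the products $\bigwedge_i(\theta_i+\ddc\varphi_i^t)\wedge F$ follows from strong positivity, \cref{prop:npp_strongpos}, and weak positivity of $F$.

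\textbf{Main obstacle.} The hardest step is the equality of classes for equal singularity type in Step 2(2), where the test form is a general closed weakly positive $F$ rather than a power of a Kähler form, and the currents are not Kähler. I would first reduce to $F$ smooth and strictly positive, which we may do by adding $C\omega^{n-p}$ as in \cref{prop:Posclass_goodcone} and using linearity. I would then handle the vanishing of boundary terms by truncating with $\max(\cdot,V_{\theta}-C)$ and invoking plurifine locality.

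Alternatively, one may simply cite \cite{Vu20}, where the full statement for relative non-pluripolar products is proved.
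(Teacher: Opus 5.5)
Your Step~2 is fine once 2(2) is taken from \cite{Vu20}. It is in effect a re-derivation of Vu's monotonicity for the ordinary order $\preceq$. In 2(3), plurifine locality on the sets $\{\varphi_i>\psi_i-t\}$ already suffices, because $\bigwedge_i T_i\wedge F$ is a positive measure charging no pluripolar set.

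The gap is Step~1(1), which is exactly where the content of the theorem beyond \cite{Vu20} lies. The statement is equivalent to Vu's monotonicity for $\preceq$ together with the invariance of the class $\{(\theta_1+\ddc\varphi_1)\wedge\cdots\wedge(\theta_p+\ddc\varphi_p)\}$ under $\varphi_i\mapsto P_{\theta_i}[\varphi_i]$. For $p<n$ this invariance is a statement in $\mathrm{H}^{p,p}(X,\mathbb{R})$, i.e.\ about $\int(\cdots)\wedge F$ for every closed weakly positive $F$. By contrast, \cite{DDNL18mono} only treats total masses of $n$-fold products. Indeed, the paper records the $\sim_P$-invariance of $\{T_1\wedge\cdots\wedge T_p\}$ as a \emph{consequence} of this theorem, so citing it assumes the substantive half.

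Your sketch does not close this. Each $(\varphi+C)\wedge 0$ has the singularity type of $\varphi$ and hence the right class. But passing to the increasing limit $P_\theta[\varphi]$ needs $\int(\theta+\ddc P_\theta[\varphi])\wedge R\wedge F\leq\liminf_{C\to\infty}\int\bigl(\theta+\ddc((\varphi+C)\wedge 0)\bigr)\wedge R\wedge F$. That is a lower-degree, test-form version of the lower semicontinuity of non-pluripolar products under convergence in capacity, which \cite{DDNL18mono} only uses in top degree. Plurifine locality alone does not give it: unlike in your Step~2(3), the increasing approximants need not agree with their limit on plurifine open sets exhausting $X$ up to a pluripolar set.

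The missing idea, which is the one the paper uses, is a reduction to top degree.
\begin{enumerate}
\item Since $\psi_i-\sup_X\psi_i\leq P_{\theta_i}[\psi_i]$, Vu's theorem gives $\{\bigwedge_i(\theta_i+\ddc\psi_i)\}\leq_X\{\bigwedge_i(\theta_i+\ddc P_{\theta_i}[\psi_i])\}$, so the difference is a positive class.
\item For $\beta$ Kähler, the pairing of this difference with $\beta^{n-p}$ is a difference of top-degree mixed masses: add $n-p$ copies of a Kähler form in $\beta$ as extra factors. This vanishes by the top-degree result.
\item \cref{lma:posclass_vanish} then forces the difference to be $0$.
\item Finally $\varphi_i\preceq P_{\theta_i}[\varphi_i]\leq P_{\theta_i}[\psi_i]$, so one more application of \cite{Vu20} (or your Step~2) concludes.
\end{enumerate}
With this in place of Step~1(1) your argument is complete. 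Note that the step you flag as the main obstacle, equality of classes for equal singularity type, is the part already covered by \cite{Vu20}.
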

The notion of $P$-partial order is recalled in  \cref{subsec:qpsh}. In particular, the cohomology class
\[
\{T_1\wedge \cdots \wedge T_p\}
\]
depends only on the $P$-equivalence classes of the $T_i$'s and the cohomology classes of the $T_i$'s.

\begin{proof}
    Take a Kähler form $\omega$ on $X$ and replacing $T_i$ and $S_i$ by $T_i+\epsilon\omega$ and $S_i+\epsilon\omega$ respectively for some small $\epsilon>0$, we can reduce to the case where the $T_i$'s and the $S_i$'s are all Kähler currents.

    Take a smooth closed real $(1,1)$-form $\theta_i\in \alpha_i$ and write
    \[
    T_i=\theta_i+\ddc \varphi_i,\quad S_i=\theta_i+\ddc \psi_i.
    \]
    Then $\varphi_i\preceq_P \psi_i$.

    \textbf{Step~1}. We first prove that
    \[
        \Bigl\{ \left(\theta_1+\ddc \psi_1\right)\wedge \cdots \wedge \left(\theta_p+\ddc \psi_p\right) \Bigr\}=\Bigl\{ \left(\theta_1+\ddc P_{\theta_1}[\psi_1]\right)\wedge \cdots \wedge \left(\theta_p+\ddc P_{\theta_p}[\psi_p]\right) \Bigr\}.
    \]
    The $\leq_X$ direction is proved in \cite[Theorem~4.4]{Vu20}. Therefore, using \cref{lma:posclass_vanish}, in order to prove the equality, we may assume that $p=n$. In this case, the assertion follows from the usual monotonicity theorem. See \cite[Proposition~6.1.4]{Xiabook}.

    \textbf{Step~2}. By Step~1, we may replace $\psi_i$ by $P_{\theta_i}[\psi_i]$ and assume that $\varphi_i\preceq \psi_i$ for each $i=1,\ldots,p$. 
    By Vu's result \cite[Theorem~4.4]{Vu20} again, we conclude \eqref{eq:mono_thm_partial}.
\end{proof}
Let us also recall another result for later use.
\begin{theorem}\label{thm:dscontnpp1}
    Let $\alpha_1,\ldots,\alpha_p\in \mathrm{H}^{1,1}(X,\mathbb{R})$ be big classes. Consider nets of closed positive $(1,1)$-currents $(T_i^j)_{j\in J}$ in $\alpha_i$ for each $i=1,\ldots,p$. Assume that $T_i^j\xrightarrow{d_S} T_i\in \alpha_i$ for each $i=1,\ldots,p$. Then
    \begin{equation}\label{eq:dscontnpp1}
        \lim_{j\in J} \left\{T_1^j\wedge \cdots \wedge T_p^j \right\}=\left\{T_1 \wedge \cdots \wedge T_p \right\}.
    \end{equation}
\end{theorem}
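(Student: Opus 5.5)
The plan is to reduce to a statement about top-degree masses tested against smooth forms, and then invoke the $d_S$-continuity of non-pluripolar masses. Since each $\alpha_i$ is big, the first reduction is to pass to Kähler currents. Fix a Kähler form $\omega$. The $d_S$-convergence $T_i^j\to T_i$ is stable under adding $\epsilon\omega$, and the non-pluripolar product is multilinear: expanding $\bigwedge_i(T_i^j+\epsilon\omega)$ gives a polynomial in $\epsilon$ whose coefficients are classes of partial products of the $T_i^j$ wedged with powers of $\omega$. It is nevertheless cleaner to work directly with big classes.

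Cohomology classes in $\mathrm{H}^{p,p}(X,\mathbb{R})$ are determined by their pairings with classes of closed smooth $(n-p,n-p)$-forms. As in the proof of \cref{prop:Posclass_goodcone}, every such form is a difference of strongly positive closed forms of the type $G+C\omega^{n-p}$. By multilinearity and polarization, it suffices to show
\[
\int_X T_1^j\wedge\cdots\wedge T_p^j\wedge \beta_{p+1}\wedge\cdots\wedge\beta_n\to \int_X T_1\wedge\cdots\wedge T_p\wedge\beta_{p+1}\wedge\cdots\wedge\beta_n
\]
for Kähler forms $\beta_k$. By \cref{lma:posclass_vanish}-type reasoning, pairing with powers of Kähler classes spans the dual; concretely, the classes $\beta^{n-p}$ with $\beta$ Kähler span $\mathrm{H}^{n-p,n-p}$ when we allow mixed products $\beta_{p+1}\cdots\beta_n$. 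That mixed products of Kähler classes span is not automatic in general, so I would instead test against arbitrary smooth closed strongly positive forms $F$. Such $F$ can be locally dominated by $C\omega^{n-p}$, which gives an equicontinuity/domination argument.

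The core step is then the mixed-mass continuity. Regard each $\beta_k$ as a current $\theta_k+\ddc 0$ in a Kähler class. Then $\int_X T_1^j\wedge\cdots\wedge T_p^j\wedge\beta_{p+1}\wedge\cdots\wedge\beta_n$ is a mixed non-pluripolar mass of $n$ currents in big classes, and the constant sequences trivially $d_S$-converge. The known result that mixed masses are continuous under $d_S$-convergence (Darvas--Di Nezza--Lu, and its mixed version in \cite{Xiabook}) gives convergence of these numbers. For a general strongly positive $F$, I would write $F$ as a positive combination of products of closed positive $(1,1)$-forms (\cref{thm:HK}). If such closed decompositions are unavailable, I would instead use weak convergence of the measures $T_1^j\wedge\cdots\wedge T_p^j$ in a local argument. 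Uniform mass bounds come from the monotonicity theorem (\cref{thm:mono_thm_partial}), comparing with currents of minimal singularities.

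The main obstacle is exactly this passage from numerical mixed masses to full classes. Mixed masses with Kähler forms may not determine a $(p,p)$-class, so I expect to need the fact that $d_S$-convergence implies weak convergence of the currents $T_1^j\wedge\cdots\wedge T_p^j$ up to subsequence, together with no loss of mass. The approach would be to extract a weak limit $\Theta$ by compactness, using the mass bound above. Lower semicontinuity of non-pluripolar products along $d_S$-convergent (capacity-type) sequences would give $\Theta\geq T_1\wedge\cdots\wedge T_p$ in the sense of positive currents, or at least $\{\Theta\}\geq_X\{T_1\wedge\cdots\wedge T_p\}$. The difference $\{\Theta\}-\{T_1\wedge\cdots\wedge T_p\}$ is then a positive class whose pairing with $\omega^{n-p}$ vanishes by the mass convergence above. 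Finally, \cref{lma:posclass_vanish}, applied with all Kähler classes $\beta$, forces the difference to be zero. Since every subsequential limit equals $\{T_1\wedge\cdots\wedge T_p\}$, the net converges.
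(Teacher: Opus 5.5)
Your endgame is sound. The classes $\{T_1^j\wedge\cdots\wedge T_p^j\}$ lie in the compact order interval between $0$ and $\langle\alpha_1\wedge\cdots\wedge\alpha_p\rangle$. Their pairings with $\beta^{n-p}$ ($\beta$ Kähler) converge by the $d_S$-continuity of mixed masses. \cref{lma:posclass_vanish} then identifies every cluster value $\gamma$ with $\{T_1\wedge\cdots\wedge T_p\}$ \emph{provided} $\gamma\geq_X\{T_1\wedge\cdots\wedge T_p\}$. You are also right that products of Kähler classes need not span $\mathrm{H}^{n-p,n-p}(X,\mathbb{R})$, so this positivity cannot be bypassed.

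The gap is that this one-sided inequality carries essentially all the content, and your justification for it does not hold up. $d_S$-convergence is not convergence in capacity: $d_S$ only sees $P$-equivalence classes, so it gives no control over the actual potentials of the $T_i^j$. In particular, the current-level claim $\Theta\geq T_1\wedge\cdots\wedge T_p$ is false. For example, take $p=n$: Yau's theorem produces Kähler forms $T^j=\omega+\ddc\varphi_j$ whose Monge--Amp\`ere measures $(T^j)^n$ converge weakly to $\bigl(\int_X\omega^n\bigr)\delta_x$. Here $d_S(T^j,\omega)=0$ for all $j$, yet the limit is not $\geq\omega^n$. The class-level inequality $\gamma\geq_X\{T_1\wedge\cdots\wedge T_p\}$ is true, but it is one half of the theorem itself, and nothing in the proposal proves it.

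What is missing is an input from the $d_S$-theory that relates $T_i^j$ to $T_i$ in the singularity order, combined with the \emph{class-level} monotonicity \cref{thm:mono_thm_partial}. This is how the paper argues. It reruns the proof of \cite[Theorem~6.2.1]{Xiabook}, where a $d_S$-convergent family is compared under $\preceq$ with monotone approximations from above and below that have the same $d_S$-limit. The only change is that \cref{thm:mono_thm_partial} replaces the numerical monotonicity theorem. This squeezes the classes $\{T_1^j\wedge\cdots\wedge T_p^j\}$ between convergent monotone families of classes, and \cref{prop:squeeze} concludes.

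Even the lower half of such a sandwich would repair your argument. Suppose $\eta_i^k$ increases in $k$, with $\eta_i^k\preceq_P T_i^{j_k}$, $\eta_i^k\preceq_P T_i$ and $\eta_i^k\xrightarrow{d_S}T_i$. Then \cref{thm:mono_thm_partial} gives $\{T_1^{j_k}\wedge\cdots\wedge T_p^{j_k}\}\geq_X\{\eta_1^k\wedge\cdots\wedge\eta_p^k\}$. The increasing limit of the right-hand side equals $\{T_1\wedge\cdots\wedge T_p\}$ by your own mass-continuity plus \cref{lma:posclass_vanish} argument. Some such nontrivial input is indispensable, whether monotone approximations or a way to turn $d_S$-convergence into convergence in capacity of suitable representatives. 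The proposal does not supply it.
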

This follows \emph{verbatim} from the proof of \cite[Theorem~6.2.1]{Xiabook}, with \cref{thm:mono_thm_partial} in place of the usual monotonicity theorem.

For later use, we recall the behavior of the non-pluripolar products under bimeromorphic morphisms:
\begin{proposition}\label{prop:bime_npp}
    Let $\pi\colon Y\rightarrow X$ be a proper bimeromorphic morphism from a Kähler manifold. Let $T_1,\ldots,T_p$ be closed positive $(1,1)$-currents on $X$. Then
    \[
    \begin{aligned}
    \pi^{\lozenge}(T_1\wedge \cdots \wedge T_p)=&\pi^*T_1\wedge \cdots \wedge \pi^*T_p,\\
    \pi_*\left( \pi^*T_1\wedge \cdots \wedge \pi^*T_p \right)=&T_1\wedge \cdots \wedge T_p.
    \end{aligned}
    \]
\end{proposition}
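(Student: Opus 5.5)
The plan is to reduce both identities to two facts: the non-pluripolar product is local with respect to biholomorphisms, and it puts no mass on pluripolar sets (in particular on proper analytic subsets). Fix a proper analytic subset $Z\subsetneq X$ such that $\pi$ restricts to a biholomorphism $Y\setminus E\to X\setminus Z$, where $E=\pi^{-1}(Z)$. I interpret $\pi^{\lozenge}$ as follows: pull back a current on $X\setminus Z$ via this biholomorphism, then take the trivial extension across $E$. This extension is meaningful as soon as the pulled-back current has locally finite mass near $E$.

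\textbf{Step 1 (locality).} Write locally $T_i=\ddc\varphi_i$. By the construction of \cite{BEGZ10}, on the plurifine open set $\bigcap_i\{\varphi_i>-k\}$ the non-pluripolar product coincides with the Bedford--Taylor product $\bigwedge_i\ddc\max(\varphi_i,-k)$. Bedford--Taylor products of bounded potentials commute with pullback under biholomorphisms, and $\pi^*\varphi_i=\varphi_i\circ\pi$ is a local potential for $\pi^*T_i$ on $Y\setminus E$. Letting $k\to\infty$ then gives
\[
\left.\left(\pi^*T_1\wedge\cdots\wedge\pi^*T_p\right)\right|_{Y\setminus E}=\left(\pi|_{Y\setminus E}\right)^*\left(\left.T_1\wedge\cdots\wedge T_p\right|_{X\setminus Z}\right).
\]

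\textbf{Step 2 (first identity).} By \cite{BEGZ10}, the product $\pi^*T_1\wedge\cdots\wedge\pi^*T_p$ is a well-defined closed positive current on $Y$ with locally finite mass, and it charges no pluripolar set, in particular not $E$. Hence it equals the trivial extension of its restriction to $Y\setminus E$. Combined with Step 1, this restriction is the pullback of $T_1\wedge\cdots\wedge T_p$ over $X\setminus Z$. This proves that the trivial extension defining $\pi^{\lozenge}(T_1\wedge\cdots\wedge T_p)$ has locally finite mass, and that it equals $\pi^*T_1\wedge\cdots\wedge\pi^*T_p$.

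\textbf{Step 3 (second identity).} Set $S=\pi^*T_1\wedge\cdots\wedge\pi^*T_p$. Since $\pi$ is proper, $\pi_*S$ is a positive current on $X$, and its coefficients are measures. For a Borel set $B\subseteq X$, the mass of $\pi_*S$ on $B$ is computed from the mass of $S$ on $\pi^{-1}(B)$. Since $S$ gives zero mass to $E=\pi^{-1}(Z)$, the current $\pi_*S$ does not charge $Z$. On $X\setminus Z$, Step 1 shows that $\pi_*S$ agrees with $T_1\wedge\cdots\wedge T_p$. The latter also does not charge $Z$, so the two currents coincide on all of $X$.

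I expect the main obstacle to be Step 1: making the plurifine locality precise. This means checking that the sets $\{\varphi_i\circ\pi>-k\}$ correspond exactly under $\pi$ on $Y\setminus E$. It also means checking that the non-pluripolar product on $Y$ computed from the global potentials $\pi^*\varphi_i$ agrees with the one computed in local charts on $Y\setminus E$. Both checks are routine once one uses that the \cite{BEGZ10} product is independent of the choice of local potentials.
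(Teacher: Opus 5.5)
Your proof is correct and follows the same approach as the paper. The paper's entire argument is the remark that both identities follow because non-pluripolar products put no mass on proper analytic subsets. Your Steps 1--3 (biholomorphic invariance of the product on $Y\setminus E$ with $E=\pi^{-1}(Z)$, trivial extension across $E$, and $\pi_*S$ not charging $Z$) are exactly the details behind that remark.
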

Here $\pi^{\lozenge}$ is defined as follows: Take a smallest Zariski closed subset $Z\subseteq X$ so that $\pi$ is an isomorphism outside $Z$. Then given a closed positive $(p,p)$-current $T$ on $X$, we let $\pi^{\lozenge}T$ be the zero-extension of $\pi|_{Y\setminus \pi^{-1}(Z)}^*(T)$ to $Y$. 

Both assertions follow easily from the fact that the non-pluripolar products put no mass on proper analytic sets.

\begin{definition}\label{def:stric_mov}
A class $\alpha\in \mathrm{H}^{p,p}(X,\mathbb{R})$ is \emph{strictly movable} if there is a proper bimeromorphic morphism $\pi\colon Y\rightarrow X$ from a Kähler manifold $Y$, and Kähler classes $\beta_1,\ldots,\beta_p\in \mathrm{H}^{1,1}(Y,\mathbb{R})$ so that 
\begin{equation}\label{eq:stric_mov}
\alpha=\pi_*\left( \beta_1\cap \cdots\cap \beta_p \right).
\end{equation}
A class in the closed convex cone generated by strictly movable classes is called a \emph{movable class}.
\end{definition}
The cone of movable classes in $\mathrm{H}^{p,p}(X,\mathbb{R})$ is a closed convex cone. A movable class is clearly positive.

\begin{example}\label{ex:movable_special}
    When $p=1$, a class $\alpha\in \mathrm{H}^{1,1}(X,\mathbb{R})$ is movable if and only if it is modified nef. This is \cite[Proposition~2.1.2]{Bou02}.

    When $p=0$ or $p=n$, we have canonical identifications $\mathrm{H}^{p,p}(X,\mathbb{R})\cong \mathbb{R}$. A class corresponding to $t\in \mathbb{R}$ is movable if and only if $t\geq 0$.
\end{example}
\begin{example}\label{ex:mov_pushnef}
    In \eqref{eq:stric_mov}, if $\beta_1,\ldots,\beta_p\in \mathrm{H}^{1,1}(Y,\mathbb{R})$ are nef, $\alpha$ is still movable. This follows immediately from the continuity of $\pi_*$.
\end{example}

\begin{proposition}\label{prop:movable_push}
    Let $\pi\colon Y\rightarrow X$ be a proper bimeromorphic morphism from a Kähler manifold $Y$. Suppose that $\alpha\in \mathrm{H}^{p,p}(Y,\mathbb{R})$ is movable, then so is $\pi_*\alpha$.
\end{proposition}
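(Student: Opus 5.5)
The plan is to reduce to strictly movable classes and then use the fact that pushforward is linear and continuous. The cone of movable classes on $Y$ is, by \cref{def:stric_mov}, the closed convex cone generated by strictly movable classes. The map $\pi_*\colon \mathrm{H}^{p,p}(Y,\mathbb{R})\rightarrow \mathrm{H}^{p,p}(X,\mathbb{R})$ is linear and continuous, since these are finite-dimensional spaces. Hence it sends the closed convex cone generated by a set $S$ into the closure of the convex cone generated by $\pi_*S$. The movable cone on $X$ is closed and convex. So it suffices to show that $\pi_*\alpha$ is movable whenever $\alpha$ is strictly movable on $Y$.

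Let $\alpha$ be strictly movable on $Y$. Then there are a proper bimeromorphic morphism $\rho\colon Z\rightarrow Y$ from a Kähler manifold $Z$ and Kähler classes $\beta_1,\ldots,\beta_p$ on $Z$ with $\alpha=\rho_*(\beta_1\cap\cdots\cap\beta_p)$. By functoriality of pushforward in cohomology, $\pi_*\alpha=(\pi\circ\rho)_*(\beta_1\cap\cdots\cap\beta_p)$. The composition $\pi\circ\rho\colon Z\rightarrow X$ is again a proper bimeromorphic morphism from a Kähler manifold. Therefore $\pi_*\alpha$ is strictly movable by definition, and in particular movable.

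There is no real obstacle here. The only points to check are $(\pi\circ\rho)_*=\pi_*\rho_*$ on cohomology, which is the standard functoriality of Gysin pushforward for proper maps between compact manifolds, and that a composition of proper bimeromorphic morphisms is proper bimeromorphic. Both are routine.
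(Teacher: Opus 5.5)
Your proposal is correct and follows the paper's proof: reduce to strictly movable classes using linearity and continuity of $\pi_*$ together with closedness of the movable cone, then observe that $\pi_*\rho_*(\beta_1\cap\cdots\cap\beta_p)=(\pi\circ\rho)_*(\beta_1\cap\cdots\cap\beta_p)$ is strictly movable by definition. The paper does the same, only compressed to ``we may assume $\alpha$ is strictly movable; then it follows from the definition.''
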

\begin{proof}
    We may assume that $\alpha$ is strictly movable. Then our assertion follows immediately from the definition.
\end{proof}

\begin{proposition}\label{prop:pos_relation}Let $\alpha\in \mathrm{H}^{1,1}(X,\mathbb{R})$ be a pseudoeffective class, and $\beta\in \mathrm{H}^{p,p}(X,\mathbb{R})$ be a movable class. Then
\begin{enumerate}
    \item $\alpha\cap \beta$ is positive;
    \item if furthermore $\alpha$ is nef, then $\alpha\cap \beta$ is movable.
\end{enumerate}
\end{proposition}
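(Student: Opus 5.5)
Both parts reduce to the case where $\beta$ is strictly movable. The set of $\beta$ for which (1) holds, and the set for which (2) holds, are closed convex cones. This is because $\beta\mapsto\alpha\cap\beta$ is linear and continuous, and the cones of positive and of movable classes are closed convex cones (\cref{prop:Posclass_goodcone} and the remark after \cref{def:stric_mov}). So we may assume $\beta=\pi_*(\beta_1\cap\cdots\cap\beta_p)$, where $\pi\colon Y\rightarrow X$ is a proper bimeromorphic morphism from a Kähler manifold and the $\beta_i$ are Kähler classes on $Y$. By the projection formula,
\[
\alpha\cap\beta=\pi_*\left(\pi^*\alpha\cap\beta_1\cap\cdots\cap\beta_p\right).
\]

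\textbf{Part (2).} If $\alpha$ is nef, then $\pi^*\alpha$ is nef on $Y$. Hence $\pi^*\alpha\cap\beta_1\cap\cdots\cap\beta_p$ is the push-forward under the identity of a product of $p+1$ nef classes, and is therefore movable on $Y$ by \cref{ex:mov_pushnef}. Applying \cref{prop:movable_push}, its push-forward $\alpha\cap\beta$ is movable on $X$. Alternatively, one can apply \cref{ex:mov_pushnef} directly to $\pi$ with the nef classes $\pi^*\alpha,\beta_1,\ldots,\beta_p$.

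\textbf{Part (1).} Now $\alpha$ is only pseudoeffective. By \cref{prop:pos_push} it suffices to show that $\pi^*\alpha\cap\beta_1\cap\cdots\cap\beta_p$ is positive on $Y$, and $\pi^*\alpha$ is still pseudoeffective. Take a closed positive $(1,1)$-current $T\in\pi^*\alpha$ and smooth Kähler forms $\omega_i\in\beta_i$. The current $T\wedge\omega_1\wedge\cdots\wedge\omega_p$ is well defined because the $\omega_i$ are smooth. It represents $\pi^*\alpha\cap\beta_1\cap\cdots\cap\beta_p$.

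It remains to see that this current pairs non-negatively with every closed weakly positive $(n-p-1,n-p-1)$-form $F$ on $Y$, since this pairing computes the cohomology pairing. The form $\omega_1\wedge\cdots\wedge\omega_p\wedge F$ is a weakly positive $(n-1,n-1)$-form, because a wedge with positive $(1,1)$-forms preserves weak positivity (as in \cref{prop:nefint_pre_pos}). In bidegree $(n-1,n-1)$, weak and strong positivity agree. Therefore $\int_Y T\wedge\omega_1\wedge\cdots\wedge\omega_p\wedge F\geq 0$, since $T$ is a positive $(1,1)$-current.

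An alternative route for part (1) is to argue via \cref{prop:npp_strongpos} applied to $T$ and the $\omega_i$; this gives the same conclusion.

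\textbf{Main obstacle.} The only delicate point is the linear algebra fact that weak and strong positivity coincide in bidegrees $(1,1)$ and $(n-1,n-1)$. This can be quoted from Lelong's theory recalled in \cref{sec:posform}.
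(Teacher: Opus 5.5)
Your proposal is correct and follows the paper's proof. Both reduce to strictly movable $\beta$, apply the projection formula, and conclude (2) via \cref{prop:movable_push} and (1) via \cref{prop:pos_push}. For (1) the paper simply cites \cref{prop:nefint_pre_pos}: the pseudoeffective class $\pi^*\alpha$ is positive, and capping with the nef classes $\beta_i$ preserves positivity; your argument with the current $T$ and weakly positive $(n-1,n-1)$-forms is that proposition unpacked at the level of forms.
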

\begin{proof}
    We may assume that $\beta$ is strictly movable. Take a proper bimeromorphic morphism $\pi\colon Y\rightarrow X$ from a Kähler manifold $Y$, and Kähler classes $\gamma_1,\ldots,\gamma_p\in \mathrm{H}^{1,1}(Y,\mathbb{R})$ with
    \[
    \beta=\pi_*\left( \gamma_1\cap \cdots \cap \gamma_p \right).
    \]
    Then
    \[
    \alpha\cap \beta=\pi_*\left( \pi^*\alpha\cap \gamma_1\cap \cdots \cap \gamma_p \right).
    \]
    
    (1) By \cref{prop:pos_push}, it suffices to show that $ \pi^*\alpha\cap \gamma_1\cap \cdots \cap \gamma_p$ is positive, which follows from \cref{prop:nefint_pre_pos}.

    (2) Thanks to \cref{prop:movable_push}, it suffices to show that $\pi^*\alpha\cap \gamma_1\cap \cdots \cap \gamma_p$ is movable. But this is obvious.
\end{proof}
For mnemonic purposes, we summarize \cref{prop:pos_relation} and \cref{prop:nefint_pre_pos} in \cref{table:product_pos}.
\begin{table}[ht]
\centering
\begin{tabular}{|c|c|c|}
\hline
$\alpha \in \mathrm{H}^{1,1}(X,\mathbb{R})$ 
& $\beta \in \mathrm{H}^{p,p}(X,\mathbb{R})$ 
& $\alpha \cap \beta$ \\
\hline
nef & positive & positive \\
\hline
nef & movable & movable \\
\hline
positive & movable & positive \\
\hline
\end{tabular}
\caption{Positivity properties of intersection products}
\label{table:product_pos}
\end{table}

\subsection{Movable intersection theory}\label{subsec:mov_int}
Let $X$ be a connected compact Kähler manifold of dimension $n$. Consider pseudoeffective $(1,1)$-classes $\alpha_1,\ldots,\alpha_p\in \mathrm{H}^{1,1}(X,\mathbb{R})$.

\begin{definition}
    The \emph{movable intersection} $\langle \alpha_1\wedge \cdots \wedge \alpha_p\rangle \in \mathrm{H}^{p,p}(X,\mathbb{R})$ is defined as follows:
    \begin{enumerate}
        \item When the $\alpha_i$'s are all big, we take closed positive $(1,1)$-currents with minimal singularities $T_{i,\min}$ in each $\alpha_i$ and let
        \[
        \langle \alpha_1\wedge \cdots \wedge \alpha_p\rangle =\left\{ T_{1,\min} \wedge \cdots \wedge T_{p,\min} \right\};
        \]
        \item in general, define
        \[
        \langle \alpha_1\wedge \cdots \wedge \alpha_p\rangle =\lim_{\epsilon\to 0+}\Bigl\langle (\alpha_1+\epsilon\beta)\wedge \cdots \wedge (\alpha_p+\epsilon\beta)\Bigr\rangle, 
        \]
        where $\beta\in \mathrm{H}^{1,1}(X,\mathbb{R})$ is a Kähler class.
    \end{enumerate}
\end{definition}
The movable intersection is independent of the choices we made. Moreover, it is always a positive class. The current definition is taken from \cite{BEGZ10}. A more traditional (but equivalent) definition can be found in \cite{Bou02, BDPP13}. 

\begin{proposition}\label{prop:mov_int_prop}
Let $\alpha_1,\ldots,\alpha_p,\alpha_1',\alpha,\beta_1,\ldots,\beta_p\in \mathrm{H}^{1,1}(X,\mathbb{R})$ be pseudoeffective classes, and $\lambda\geq 0$. We have the following properties:
\begin{enumerate}
    \item The movable intersection is symmetric: For each permutation $\sigma$ of $\{1,\ldots,p\}$, we have
    \[
    \left\langle \alpha_{\sigma(1)}\wedge \cdots \wedge \alpha_{\sigma(p)}\right\rangle=\langle \alpha_1\wedge \cdots \wedge \alpha_p\rangle.
    \]
    \item The movable intersection is superadditive in each variable: 
    \[
    \left\langle (\alpha_1+\alpha_1')\wedge \alpha_2\wedge  \cdots \wedge \alpha_p\right\rangle\geq_X \langle \alpha_1\wedge \alpha_2\wedge  \cdots \wedge \alpha_p\rangle+ \langle \alpha'_1\wedge\alpha_2\wedge  \cdots \wedge \alpha_p\rangle.
    \]
    \item The movable intersection is homogeneous in each variable: 
    \[
    \langle \lambda\alpha_1\wedge \alpha_2\wedge \cdots \wedge \alpha_p\rangle=\lambda\langle \alpha_1\wedge \alpha_2 \wedge \cdots \wedge \alpha_p\rangle.
    \]
    \item The movable intersection is increasing in each variable: Suppose that $\alpha_1\geq \alpha'_1$, then
    \[
    \langle \alpha_1\wedge \cdots \wedge \alpha_p\rangle\geq_X \langle \alpha'_1\wedge \alpha_2\wedge \cdots \wedge \alpha_p\rangle.
    \]
    \item We have
    \begin{equation}\label{eq:div_Zar}
    \alpha=\langle \alpha \rangle + \sum_{D\subseteq X} \nu(\alpha,D)\{D\},
    \end{equation}
    where $D$ runs over the set of prime divisors on $X$.
    \item We have
    \[
    \langle \alpha_1\wedge \cdots \wedge \alpha_p\rangle=\lim_{\epsilon\to 0+}\Bigl\langle (\alpha_1+\epsilon\beta_1)\wedge \cdots \wedge (\alpha_p+\epsilon\beta_p)\Bigr\rangle.
    \]
    \item When $\alpha_1$ is nef, we have
    \[
     \langle \alpha_1\wedge \cdots \wedge \alpha_p\rangle=\alpha_1\cap  \langle \alpha_2\wedge \cdots \wedge \alpha_p\rangle
    \]
\end{enumerate}
\end{proposition}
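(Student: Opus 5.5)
The plan is to reduce every item to the big case, where the movable intersection is the class $\{T_{1,\min}\wedge\cdots\wedge T_{p,\min}\}$, and then pass to the limit $\epsilon\to 0+$. We use \cref{thm:mono_thm_partial} for comparisons between currents, and the closedness of the positive cone (\cref{prop:Posclass_goodcone}) to keep inequalities $\geq_X$ in the limit. Item (6) comes first, since the general-case reductions rest on it.

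\textbf{Item (6).} Fix a Kähler class $\beta$ and take $C>0$ with $C\beta-\beta_i$ Kähler for all $i$ (enlarging $\beta$ and the $\beta_i$ if necessary to make them Kähler). Then
\[
\alpha_i+\epsilon\beta_i\leq \alpha_i+C\epsilon\beta .
\]
Using the monotonicity of the big case (item (4), proved below for big classes), the approximations by $\beta_i$ are squeezed between the $\epsilon\beta$-approximations at parameters $\epsilon$ and $C\epsilon$. Reversing the roles of the two approximations and applying \cref{prop:squeeze} gives (6). The only subtlety is the lower bound: it needs $\beta_i\geq c\beta$ for some $c>0$, so I would first reduce to Kähler $\beta_i$.

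\textbf{Items (1), (3), (4).} Symmetry (1) holds because the non-pluripolar product is symmetric. For homogeneity (3), note that $\lambda T_{\min}$ has minimal singularities in $\lambda\alpha$ when $\lambda>0$. When $\lambda=0$, the approximating products carry the factor $\epsilon\beta$, and the term $\{\epsilon T_{\beta}\wedge\cdots\}$ tends to $0$ because its mass is bounded via monotonicity by $\epsilon\,\beta\cap\langle\ldots\rangle$. For monotonicity (4) in the big case, let $T'_{1,\min}\in\alpha_1'$ and let $R$ be a closed positive current in $\alpha_1-\alpha'_1$. Then $T'_{1,\min}+R$ lies in $\alpha_1$ and is more singular than $T_{1,\min}$. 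Applying \cref{thm:mono_thm_partial} together with the superadditivity of non-pluripolar products (which holds because $R\wedge\cdots$ is positive by \cref{prop:npp_strongpos}) gives
\[
\{T_{1,\min}\wedge\cdots\}\geq_X \{(T'_{1,\min}+R)\wedge\cdots\}\geq_X\{T'_{1,\min}\wedge\cdots\}.
\]
The general case follows by letting $\epsilon\to 0$.

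\textbf{Item (2).} Superadditivity follows the same pattern. The sum $T_{1,\min}+T'_{1,\min}$ lies in $\alpha_1+\alpha'_1$ and is more singular than its minimal current. The non-pluripolar product is multilinear, and \cref{thm:mono_thm_partial} gives the inequality in the big case. For the general case, the perturbation $2\epsilon\beta$ on the left is handled by (6).

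\textbf{Item (5).} For $p=1$ this is the divisorial Zariski decomposition. In the big case, write Siu's decomposition \eqref{eq:Siudec} of $T_{\min}$. The non-pluripolar part $\langle T_{\min}\rangle$ coincides with $\Reg T_{\min}$, since a $(1,1)$-current has no non-pluripolar mass on a proper analytic set only after removing divisorial parts, and a closed positive $(1,1)$-current charges no pluripolar set except through its Lelong number along divisors. Hence
\[
\alpha=\{\Reg T_{\min}\}+\sum_D\nu(T_{\min},D)\{D\},
\]
and $\nu(T_{\min},D)=\nu(\alpha,D)$ by definition. For general pseudoeffective $\alpha$, let $\epsilon\to0$. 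The sum converges by \cref{prop:incdom}, using the fact that $\nu(\alpha+\epsilon\omega,D)$ increases to $\nu(\alpha,D)$ and is dominated termwise.

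\textbf{Item (7).} If $\alpha_1$ is Kähler and the other classes are big, pick a Kähler form $\omega_1$ in $\alpha_1$; it is a current with minimal singularities, and $\omega_1\wedge(\cdots)$ has class $\alpha_1\cap\{\cdots\}$. For nef $\alpha_1$, apply (6) with $\alpha_1+\epsilon\beta$ and use the continuity of the cup product.

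\textbf{Main obstacle.} I expect the $\lambda=0$ case of (3) and the convergence in (5) to be the delicate limiting steps. For (5), I would try \cref{prop:Levi}, with the increasing sequence $\nu(\alpha+\epsilon\omega,D)$ bounded by $\alpha+\omega$.
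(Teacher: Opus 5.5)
Your route differs from the paper's. The paper does not prove this proposition; it cites \cite[Theorem~3.5]{BDPP13} and \cite[Proposition~3.2.4]{Bou02}. Deriving everything directly from the definition, using minimal-singularity currents, \cref{thm:mono_thm_partial} and limits in the closed positive cone, is reasonable, and items (1)--(4) and (7) go through essentially as you describe. However, the big case of (5) rests on a false claim.

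It is not true that a closed positive $(1,1)$-current charges pluripolar sets only through its generic Lelong numbers along divisors. On $\mathbb{P}^1$, take an atomless probability measure $\mu$ carried by a compact polar set, such as a Cantor set of logarithmic capacity zero. Then $T=\mu$ is a closed positive $(1,1)$-current with $\nu(T,x)=0$ for every $x$, so $\Reg T=T$, yet its non-pluripolar part is $0$. So the identity "non-pluripolar part $=\Reg$" fails in general, and your argument gives no reason why it should hold for $T_{\min}$.

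What makes it true for $T_{\min}$ is a property you did not use. Since $\alpha$ is big, it contains a Kähler current with analytic singularities, and $T_{\min}$ is less singular than that current. Hence $T_{\min}$ has locally bounded potentials outside a proper analytic subset $Z\subseteq X$. The argument then runs as follows:
\begin{enumerate}
\item The non-pluripolar part of $T_{\min}$ is $\mathbf{1}_{X\setminus Z}T_{\min}$, because it puts no mass on $Z$ and agrees with $T_{\min}$ where the potential is bounded.
\item The current $\mathbf{1}_Z T_{\min}$ is closed by Skoda--El Mir and is supported on $Z$. By the support theorem it equals $\sum_{D\subseteq Z}\nu(T_{\min},D)[D]$.
\item Every $D$ with $\nu(T_{\min},D)>0$ lies in $Z$.
\end{enumerate}
Together these give $\{T_{\min}\}$'s non-pluripolar part as $\Reg T_{\min}$. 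Once this is in place, your passage to general pseudoeffective $\alpha$ via \cref{prop:Levi} is fine, using the bound $\sum_D\nu(\alpha+\epsilon\omega,D)\{D\}\leq\alpha+\omega$ supplied by the big case.

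There is a smaller gap in (6). The statement allows arbitrary pseudoeffective $\beta_i$ (e.g.\ $\beta_i=0$). Your remark that you would ``reduce to Kähler $\beta_i$'' is not explained, and you cannot enlarge the given $\beta_i$. In fact you do not need $\beta_i\geq c\beta$ at all:
\begin{itemize}
\item The lower bound $\langle(\alpha_1+\epsilon\beta_1)\wedge\cdots\wedge(\alpha_p+\epsilon\beta_p)\rangle\geq_X\langle\alpha_1\wedge\cdots\wedge\alpha_p\rangle$ is simply the general case of (4). You derive that case from the big case and the definition alone, without using (6), so there is no circularity.
\item The upper bound $\leq_X\langle(\alpha_1+C\epsilon\beta)\wedge\cdots\wedge(\alpha_p+C\epsilon\beta)\rangle$ also requires the general (4), since $\alpha_i+\epsilon\beta_i$ need not be big.
\end{itemize}
With these two bounds, \cref{prop:squeeze} finishes the argument.
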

See \cite[Theorem~3.5]{BDPP13} for the proof of most parts. Part (6) is proved in \cite[Proposition~3.2.4]{Bou02}.

The decomposition \eqref{eq:div_Zar} is usually referred to as the divisorial Zariski decomposition or the Boucksom--Nakayama decomposition.

The volume of pseudoeffective class $\alpha\in \mathrm{H}^{1,1}(X,\mathbb{R})$ is defined as $\vol(\alpha)\coloneqq \langle \alpha^n \rangle$.

\section{Generalized intersection theory}
Let $X$ be a connected compact Kähler manifold of dimension $n$. In the whole section, $p,q$ will denote two non-negative integers.

We remind the readers that the word \emph{modification} takes a very non-standard meaning in this paper, as we recalled in \cref{def:modif}.
\subsection{The notion of b-classes}
We introduce the central object of interest in this paper --- The b-classes.
\begin{definition}\label{def:bclass}
    A \emph{(Weil) b-class} $\mathbb{T}$ of degree $p$ over $X$ is an element in 
    \begin{equation}\label{eq:projlimbclass}
    \varprojlim_{\pi\colon Y\rightarrow X} \mathrm{H}^{p,p}(Y,\mathbb{R}),
    \end{equation}
    where $\pi\colon Y\rightarrow X$ runs over the directed set of modifications of $X$. Here the limit is taken in the category of real vector spaces.
\end{definition}
    Equivalently, thanks to Hironaka's Chow lemma \cite[Corollary~2]{Hir75}, we could ask $\pi$ to run over the set of proper bimeromorphic morphisms from Kähler manifolds.
    
    The components of $\mathbb{T}$ with respect to \eqref{eq:projlimbclass} will be denoted by $\mathbb{T}_Y\in \mathrm{H}^{p,p}(Y,\mathbb{R})$. The vector space of b-classes of degree $p$ over $X$ is denoted by $\bDiv^p(X)$.
    
    The vector space $\bDiv^p(X)$ is endowed with the projective limit topology. In other words, the convergence of a net of b-classes $(\mathbb{T}^i)_{i\in I}$ to a b-class $\mathbb{T}$ means the convergence of $(\mathbb{T}^i_Y)_{i\in I}$ to $\mathbb{T}_Y$ with respect to the Euclidean topology for each modification $\pi\colon Y\rightarrow X$.  

\begin{definition}\label{def:bclassnef}
    A b-class $\mathbb{T}\in \bDiv^p(X)$ is \emph{nef} if the component $\mathbb{T}_Y$ is movable in the sense of \cref{def:stric_mov} for each modification $\pi\colon Y\rightarrow X$. 

    A b-class $\mathbb{T}\in \bDiv^p(X)$ is \emph{pseudoeffective} if the component $\mathbb{T}_Y$ is positive in the sense of \cref{def:poscoh} for each modification $\pi\colon Y\rightarrow X$. 
\end{definition}
\begin{remark}
    There is a subtlety here: We can ask $\pi\colon Y\rightarrow X$ to run over either all modifications or all proper bimeromorphic morphisms from Kähler manifolds. The corresponding notions of nef or pseudoeffective b-classes are identical thanks to \cref{prop:pos_push} and \cref{prop:movable_push}.
\end{remark}

Note that nef b-classes and pseudoeffective b-classes are both closed conditions, namely they are preserved under limits of nets, since the movable cone and the positive cone on a compact Kähler manifold are both closed. A nef b-class is pseudoeffective.

From the projective limit definition \eqref{eq:projlimbclass}, there is a canonical identification between $\bDiv^p(X)$ and $\bDiv^p(Y)$ for each proper bimeromorphic morphism $\pi\colon Y\rightarrow X$ between Kähler manifolds. We shall implicitly use this identification all the time. 

All notions related to b-divisors defined in this paper are invariant under changing $X$ to its modifications. We shall explicitly check this whenever the statement is not obvious. 

Furthermore, using this identification, we can make sense of $\bDiv^p(X)$ for an arbitrary reduced compact Kähler space $X$: Simply take a projective resolution $Y\rightarrow X$, and set $\bDiv^p(X)\coloneqq \bDiv^p(Y)$. 

\begin{example}\label{ex:bclass_specialcase}
    When $p=0$, there is a canonical identification $\bDiv^0(X) \cong \mathbb{R}$. An element $t\in \bDiv^0(X)$ is nef or pseudoeffective if and only if $t\geq 0$.
    Similarly, when $p=n$, there is also a canonical identification  $\bDiv^n(X) \cong \mathbb{R}$. An element $t\in \bDiv^n(X)$ is nef or pseudoeffective if and only if $t\geq 0$.

    When $p=1$, $\bDiv^1(X)$ is just the set of b-divisors over $X$, as we studied in the previous paper. An element in  $\bDiv^1(X)$ is nef (resp. pseudoeffective) in the sense of \cref{def:bclassnef} if and only if it is a nef (resp. pseudoeffective) b-divisor in the sense of \cite{Xiabdiv}. See \cite[Corollary~11.1.1]{Xiabook} for the details.
\end{example}

\begin{definition}\label{def:orderbclass}
    Given $\mathbb{T},\mathbb{T}'\in \bDiv^p(X)$, we write $\mathbb{T}\geq \mathbb{T}'$ if for each modification $\pi\colon Y\rightarrow X$, we have $\mathbb{T}_Y\geq_Y \mathbb{T}'_Y$. 
\end{definition}
Recall that $\geq_Y$ is defined right after \cref{def:poscoh}.

Thanks to \cref{prop:pos_push}, it suffices to check this condition for a cofinal set of modifications. Note that $\geq$ defines a partial order on $\bDiv^p(X)$.

\begin{remark}
    In \cref{def:orderbclass} we could equivalent ask $\pi$ to run over the set of proper bimeromorphic morphisms $\pi\colon Y\rightarrow X$ from Kähler manifolds, as a consequence of \cref{prop:pos_push}.
\end{remark}

Let us recall the following construction from \cite{XiaPPT, Xiabdiv}:
\begin{definition}\label{def:Dcurrent}
    Let $T$ be a closed positive $(1,1)$-current on $X$, we define a nef b-divisor $\mathbb{D}(T)$ over $X$ as follows:
    \begin{equation}\label{eq:DTY}
    \mathbb{D}(T)_Y=\left\{ \Reg\pi^*T \right\}\in \mathrm{H}^{1,1}(Y,\mathbb{R}),
    \end{equation}
    for all modification $\pi\colon Y\rightarrow X$.
\end{definition}
Here $\Reg$ is defined in \eqref{eq:Siudec}.

\begin{remark}
    From the obvious functoriality of Siu's decomposition, for a proper bimeromorphic morphism $\pi\colon Y\rightarrow X$ from a Kähler manifold $Y$, the component $\mathbb{D}(T)_Y$ of $\mathbb{D}(T)$ is given by exactly the same formula \eqref{eq:DTY}.
\end{remark}
Note that $\mathbb{D}(T)$ depends linearly on $T$ in the following sense: If $T'$ is another closed positive $(1,1)$-current on $X$, and $\lambda \geq 0$, then
\begin{equation}\label{eq:D_curr_linear}
\mathbb{D}(T+T')=\mathbb{D}(T)+\mathbb{D}(T'),\quad \mathbb{D}(\lambda T)=\lambda \mathbb{D}(T).
\end{equation}

Recall that a nef b-divisor $\mathbb{D}$ is big if its volume
\[
\vol \mathbb{D}\coloneqq \lim_{\pi\colon Y\rightarrow X} \vol \mathbb{D}_Y
\]
is positive.

The main result in \cite{Xiabdiv} says
\begin{theorem}\label{thm:main_paper1}
Fix a modified nef and big class $\alpha \in \mathrm{H}^{1,1}(X,\mathbb{R})$.
    The map $\mathbb{D}$ in \cref{def:Dcurrent} induces a canonical bijection between
    \begin{enumerate}
        \item the set of non-divisorial closed positive $(1,1)$-currents in $\alpha$ with positive volumes modulo $\mathcal{I}$-equivalence, and
        \item the set of nef and big b-divisors $\mathbb{D}$ over $X$ with $\mathbb{D}_X=\alpha$.
    \end{enumerate}
    The correspondence preserves the volumes.
    
    Moreover, given any nef and big b-divisor $\mathbb{D}$ over $X$, we can always find a non-divisorial \emph{$\mathcal{I}$-good} closed positive $(1,1)$-current $T$ with $\mathbb{D}=\mathbb{D}(T)$.
\end{theorem}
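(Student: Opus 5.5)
The plan is to prove well-definedness and injectivity by comparing Lelong numbers along exceptional divisors. Surjectivity, which also gives the $\mathcal{I}$-good representative, will come from a decreasing limit of currents with minimal singularities on higher and higher models. Volumes will be handled last by a $d_S$-continuity argument.

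\textbf{Well-definedness and injectivity.} Let $T\in\alpha$ be non-divisorial and let $\pi\colon Y\rightarrow X$ be a modification. By Siu's decomposition \eqref{eq:Siudec},
\[
\mathbb{D}(T)_Y=\pi^*\alpha-\sum_{E}\nu(T,E)\{E\},
\]
where $E$ runs over the $\pi$-exceptional prime divisors. No non-exceptional divisor appears because $T=\Reg T$. Each class $\{\Reg\pi^*T\}$ is modified nef, and the classes are compatible under pushforward by the functoriality of Siu's decomposition. Hence $\mathbb{D}(T)$ is a nef b-divisor with $\mathbb{D}(T)_X=\alpha$.
\begin{itemize}
\item If $T\sim_{\mathcal{I}}T'$, then all generic Lelong numbers over $X$ agree, by the reformulation of \cref{def:Iequiv}, so $\mathbb{D}(T)=\mathbb{D}(T')$.
\item Conversely, the classes of the exceptional divisors of a modification are linearly independent; this follows from the negativity lemma, or from an explicit computation for iterated smooth blow-ups. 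So $\mathbb{D}(T)_Y=\mathbb{D}(T')_Y$ forces $\nu(T,E)=\nu(T',E)$ for every exceptional $E$ on $Y$.
\item Every prime divisor over $X$ is either exceptional on some $Y$ or a divisor on $X$. Along divisors on $X$ both Lelong numbers vanish. Hence $T\sim_{\mathcal{I}}T'$.
\end{itemize}
Bigness will follow from the volume identity proved at the end.

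\textbf{Surjectivity.} Let $\mathbb{D}$ be nef and big with $\mathbb{D}_X=\alpha$.

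First, construct a current on each model. Each $\mathbb{D}_Y$ is modified nef, and it is big because $\vol\mathbb{D}_Y\geq\vol\mathbb{D}>0$. Take $T_Y\in\mathbb{D}_Y$ with minimal singularities; it is non-divisorial. Then $\pi_*T_Y$ lies in $\pi_*\mathbb{D}_Y=\alpha$, so we may write $\pi_*T_Y=\theta+\ddc\varphi_Y$ with $\varphi_Y$ normalized by $\sup_X\varphi_Y=0$.

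Second, show monotonicity. Let $g\colon Y'\rightarrow Y$ be a dominating modification. Then $g^*\mathbb{D}_Y-\mathbb{D}_{Y'}$ is $g$-exceptional, and it is effective by the negativity lemma, since $\mathbb{D}_{Y'}$ is modified nef. It follows that $g_*T_{Y'}$ is more singular than $T_Y$, so the net $(P_\theta[\varphi_Y])_Y$ is decreasing.

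Third, pass to the limit. Let $\varphi$ be the limit of this net and set $T=\theta+\ddc P_\theta[\varphi]_{\mathcal{I}}$. The volumes $\int_X\theta_{P[\varphi_Y]}^n=\vol\mathbb{D}_Y$ decrease to $\vol\mathbb{D}>0$. Hence $\varphi\not\equiv-\infty$, and the decreasing net converges in $d_S$, by the characterization \eqref{eq:ds_biineq}, with $\int_X\theta_\varphi^n=\vol\mathbb{D}$.

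\textbf{Main obstacle: identifying $\mathbb{D}(T)$ with $\mathbb{D}$.} We must show that
\[
\nu(T,E)=\operatorname{ord}_E\Bigl(\pi^*\alpha-\mathbb{D}_Y\Bigr)
\]
for every exceptional $E$ on every $Y$.
\begin{itemize}
\item The inequality $\geq$ comes from the decreasing limit: on each model, $\nu(\varphi_{Y'},E)$ is already this coefficient once $Y'$ dominates $Y$.
\item The reverse inequality cannot come from upper semicontinuity, since Lelong numbers can jump up in decreasing limits. To get it, I would first try to compare volumes. On one hand, the volume of $\theta+\ddc P[\varphi]_{\mathcal{I}}$ equals $\lim_Y\vol\mathbb{D}(T)_Y$, which is the volume statement for currents from \cite{XiaPPT}. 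On the other hand, $\mathbb{D}(T)\leq\mathbb{D}$, and this volume equals $\vol\mathbb{D}$. If the inequality $\mathbb{D}(T)\leq\mathbb{D}$ were strict at some model, a strict-monotonicity argument for volumes of nef b-divisors, via a perturbation by a Kähler class, would give a contradiction.
\item Finally, show that $P[\varphi]=P[\varphi]_{\mathcal{I}}$, i.e.\ that $T$ is $\mathcal{I}$-good, using $\int_X\theta_\varphi^n=\vol T$. The same volume comparison then gives that the correspondence preserves volumes.
\end{itemize}
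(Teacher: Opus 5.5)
Your well-definedness and injectivity arguments are fine, and so are the following steps of the surjectivity part: the decreasing net $P_\theta[\varphi_Y]$, the inequality $\mathbb{D}(T)\le\mathbb{D}$, and the chain $\vol\mathbb{D}=\int_X\theta_{P_\theta[\varphi]}^n\le\vol T=\vol\mathbb{D}(T)\le\vol\mathbb{D}$. That chain does give $\mathcal{I}$-goodness. One small correction: convergence of masses along a decreasing net of model potentials does not follow from \eqref{eq:ds_biineq} alone; it is \cite[Corollary~6.2.6]{Xiabook}.

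The gap is the final step. There you need: if $\mathbb{D}'\le\mathbb{D}$ are nef b-divisors with $\mathbb{D}'_X=\mathbb{D}_X$ and $\vol\mathbb{D}'=\vol\mathbb{D}>0$, then $\mathbb{D}'=\mathbb{D}$. A perturbation by a Kähler class does not give this. The formal difference $\mathbb{D}-\mathbb{D}(T)$ is not nef, so the intersection theory gives no expansion of $\vol\mathbb{D}-\vol\mathbb{D}(T)$. Adding $\epsilon\mathbb{D}(\omega)$ shifts both sides equally and cannot detect a discrepancy concentrated on one exceptional divisor. On each model $Y'$ above $Y$ you do have $\mathbb{D}(T)_{Y'}\le\mathbb{D}_{Y'}-a\{E'\}$ with $a>0$, hence a strict inequality of volumes of classes. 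But both b-divisor volumes are infima over $Y'$, so you need a gap bounded below uniformly in $Y'$, and nothing in your sketch supplies one. Moreover, the usual proof of this strict monotonicity represents both b-divisors by $\mathcal{I}$-good currents, which is the statement you are proving, and then uses that model potentials $u\le v$ with equal positive mass coincide. So as written the argument is circular. (The paper does not reprove the theorem; it quotes it from \cite{Xiabdiv}.)

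The missing ingredient is the $d_S$-continuity of generic Lelong numbers, \cite[Theorem~6.2.4]{Xiabook}. The paper uses it in the proof of Corollary~\ref{cor:mixed_loss_mass}, and a limit statement of the same kind, \cite[Proposition~4.9]{Xiabdiv}, enters the proof of Theorem~\ref{thm:DTandnpp}. With it, no volume argument is needed for the identification:
\begin{enumerate}
\item Since $P_\theta[\varphi_{Y'}]\to P_\theta[\varphi]$ in $d_S$ with limit mass $\vol\mathbb{D}>0$, you get $\nu(T,E)=\nu(\varphi,E)=\lim_{Y'}\nu(\pi'_*T_{Y'},E)$.
\item For $Y'$ dominating $Y$ through $g$, one has $\pi'^*\pi'_*T_{Y'}=T_{Y'}+[F']$, with $T_{Y'}$ non-divisorial and $F'$ an effective exceptional divisor of class $\pi'^*\alpha-\mathbb{D}_{Y'}$.
\item Since $g_*\mathbb{D}_{Y'}=\mathbb{D}_Y$, this gives $\nu(\pi'_*T_{Y'},E)=\operatorname{ord}_E(\pi^*\alpha-\mathbb{D}_Y)$ for every such $Y'$.
\end{enumerate}
The net is therefore eventually constant, and $\mathbb{D}(T)=\mathbb{D}$ follows in both directions at once. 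Your volume chain is then needed only for $\mathcal{I}$-goodness and preservation of volumes. If you prefer to keep your volume route, you must supply a lower bound for $\vol\mathbb{D}_{Y'}-\vol(\mathbb{D}_{Y'}-a\{E'\})$ that is independent of $Y'$. An Okounkov-body estimate in the spirit of Remark~\ref{rmk:alphai_equal}, using $\nu_{\max}(\mathbb{D}_{Y'},E')\le\nu_{\max}(\alpha,E)$, can provide one; a Kähler perturbation cannot.
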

The relevant notions are recalled in \cref{subsec:qpsh}. In general, we cannot take $T$ as a Kähler current, however, we have the following useful observation: If $\mathbb{D}$ is a nef b-divisor, $\omega$ is a Kähler form on $X$, then $\mathbb{D}+\mathbb{D}(\omega)$ can be represented as $\mathbb{D}(T)$ where $T$ is an $\mathcal{I}$-good non-divisorial Kähler current. It suffices to write $\mathbb{D}+\mathbb{D}(\omega)$ as $(\mathbb{D}+2^{-1}\mathbb{D}(\omega))+2^{-1}\mathbb{D}(\omega)$ and apply \cref{thm:main_paper1} to the term in the parentheses.

Recall that a nef b-divisor $\mathbb{D}$ over $X$ is Cartier if it admits a \emph{realization}: A pair $(\pi\colon Y\rightarrow X,\alpha)$ consisting of a proper bimeromorphic morphism $\pi\colon Y\rightarrow X$ from a Kähler manifold $Y$ and a nef class $\alpha\in \mathrm{H}^{1,1}(Y,\mathbb{R})$, so that for each modification $\pi'\colon Y'\rightarrow X$ dominating $\pi$, the value $\mathbb{D}_{Y'}$ is the pull-back of $\alpha$ through the morphism $Y'\rightarrow Y$. We also say $\mathbb{D}$ is realized on $Y$ by $\alpha$.

Note that we can always take $\pi$ to be a modification.

As a corollary, we have:
\begin{corollary}\label{cor:Demapp}
    For any nef b-divisor $\mathbb{D}$ over $X$, we can find a decreasing sequence $(\mathbb{D}^i)_i$ of nef Cartier b-divisors over $X$ with limit $\mathbb{D}$.
\end{corollary}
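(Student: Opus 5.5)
The plan is to use \cref{thm:main_paper1} to represent $\mathbb{D}$ (after a Kähler perturbation) by a current, and then approximate that current by quasi-equisingular approximations, which have analytic singularities and hence give Cartier b-divisors.

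\textbf{Reduction to Kähler currents.} Fix a Kähler form $\omega$ on $X$. By the observation following \cref{thm:main_paper1}, for each $i\geq 1$ the b-divisor $\mathbb{D}+2^{-i}\mathbb{D}(\omega)$ is of the form $\mathbb{D}(T_i)$ for some $\mathcal{I}$-good non-divisorial Kähler current $T_i$. Since $\mathbb{D}(\omega)$ is the Cartier b-divisor realized on $X$ by $\{\omega\}$, we have $\mathbb{D}+2^{-i}\mathbb{D}(\omega)\to \mathbb{D}$ decreasingly. It therefore suffices to produce, for a single such current $T=\theta+\ddc\varphi$, a decreasing sequence of nef Cartier b-divisors converging to $\mathbb{D}(T)$. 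We can then combine these with a diagonal argument, using the monotone convergence results of \cref{subsec:polinear} componentwise on each $Y$.

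\textbf{Approximating a single current.} Take a quasi-equisingular approximation $\varphi^j$ of $\varphi$ in the same class (possible because $T$ is a Kähler current), and set $T^j=\theta+\ddc\varphi^j$. Since $\varphi^j$ has analytic singularities, there is a modification $Y_j\to X$ on which $\pi_j^*T^j$ has log singularities along an effective divisor $F_j$. Then $\Reg \pi_j^*T^j=\pi_j^*T^j-[F_j]$ has bounded potentials, so its class is nef. On any $Y'$ dominating $Y_j$, the regular part of the pullback is the pullback of this bounded-potential current, so $\mathbb{D}(T^j)$ is Cartier, realized on $Y_j$. Because $\varphi^j$ decreases to $\varphi$, Lelong numbers along every divisor over $X$ increase in $j$ with $\nu(T^j,E)\leq\nu(T,E)$. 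As the classes are all equal, $\mathbb{D}(T^j)_Y-\mathbb{D}(T)_Y=\sum_E(\nu(T,E)-\nu(T^j,E))\{E\}$ is pseudoeffective and decreasing. Hence $\mathbb{D}(T^j)$ is decreasing, with limit some $\mathbb{D}'\geq\mathbb{D}(T)$.

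\textbf{Identifying the limit (main obstacle).} We need $\mathbb{D}'=\mathbb{D}(T)$, which amounts to $\nu(T^j,E)\to\nu(T,E)$ for every prime divisor $E$ over $X$, together with convergence of the countable sums. Pointwise convergence of Lelong numbers follows from condition (3) of quasi-equisingular approximation, via the valuative characterization of $\mathcal{I}$-equivalence recalled in \cref{subsec:qpsh}: $\mathcal{I}(\lambda'\varphi^j)\subseteq\mathcal{I}(\lambda\varphi)$ forces $\lambda'\nu(\varphi^j,E)\geq\lambda\nu(\varphi,E)-O(1)$ asymptotically. Passing from these pointwise statements to convergence of $\sum_E(\nu(T,E)-\nu(T^j,E))\{E\}$ in $\mathrm{H}^{1,1}(Y)$ is the delicate point. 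I would handle it with \cref{prop:incdom}: the terms are dominated by $\nu(T,E)\{E\}$, whose sum converges because it is bounded by $\{\pi^*T\}$ in the pseudoeffective order.

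Finally, the diagonal step is to choose $j_i$ so that $\mathbb{D}(T_i^{j_i})$ is close to $\mathbb{D}(T_i)$. To keep the sequence decreasing, we can replace it by running minima, or simply note that the $T_i$ can be chosen decreasing in singularity type.
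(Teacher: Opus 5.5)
Your first three steps are fine and match what the paper relies on (cf.\ the use of \cite[Proposition~4.9]{Xiabdiv} in the proof of \cref{thm:DTandnpp}). $\mathbb{D}(T^j)$ is nef and Cartier. It decreases in $j$ because the class is fixed and the Lelong numbers increase. Condition~(3) gives $\lambda'\nu(\varphi^j,E)\geq\lambda\nu(\varphi,E)-A(E)$, where $A(E)$ is the log discrepancy of $E$ and does not depend on $\lambda$. Letting $\lambda\to\infty$ with $\lambda'/\lambda\to 1$ and applying \cref{prop:incdom} then identifies the limit.

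The gap is the final diagonal step, which is where the real content lies when $\mathbb{D}$ is not big. Neither of your devices produces a \emph{decreasing} sequence.
\begin{itemize}
\item ``Running minima'' do not exist. The order on $\mathrm{H}^{1,1}(Y,\mathbb{R})$ given by the pseudoeffective cone is not a lattice order in general. Nothing produces, from two nef Cartier b-divisors, a nef Cartier b-divisor lying below both; on potentials this would be a minimum of quasi-psh functions.
\item Choosing the $T_i$ monotone in singularity type is vacuous. Compare $\mathbb{D}(T_i)_Y=\mathbb{D}_Y+2^{-i}\pi^*\{\omega\}$ on each model and use the linear independence of exceptional classes: the potentials $\varphi_i$ (relative to $\theta+2^{-i}\omega$, with $\theta\in\mathbb{D}_X$) all have the same Lelong numbers along every divisor over $X$. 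What you actually need is $\nu(\varphi_{i+1}^{j_{i+1}},E)\geq\nu(\varphi_i^{j_i},E)$ for every $E$ over $X$, for the \emph{approximants}. Independently chosen quasi-equisingular approximations of different currents satisfy no such relation. There is also no control uniform over all models that would let the slack $2^{-i-1}\pi^*\{\omega\}$ absorb a failure.
\item ``Choose $j_i$ with $\mathbb{D}(T_i^{j_i})$ close to $\mathbb{D}(T_i)$'' has no meaning as stated. The directed set of modifications is uncountable, and the componentwise results of \cref{subsec:polinear} only give indices that depend on $Y$.
\end{itemize}

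The missing idea is a \emph{coherent} choice of approximants.
\begin{itemize}
\item Since the $\varphi_i$ are $\mathcal{I}$-equivalent, $\mathcal{I}(m\varphi_i)=\mathcal{I}(m\varphi_1)$ for all $i$ and $m$.
\item Hence Demailly's Bergman kernel approximation of $\varphi_i$ at level $m=2^k$ has the singularities of $2^{-k}\log|\mathcal{I}(2^k\varphi_1)|$, independently of $i$. It is $(\theta+2^{-i}\omega)$-psh once $k\geq k_i$, because $T_i$ is a Kähler current.
\item Take $k_i$ increasing. Subadditivity $\mathcal{I}(2m\varphi_1)\subseteq\mathcal{I}(m\varphi_1)^2$ makes $2^{-k_i}\mathrm{ord}_E\mathcal{I}(2^{k_i}\varphi_1)$ increase in $i$ for all $E$ simultaneously, so the associated nef Cartier b-divisors decrease.
\item The bounds $m\nu(\varphi_1,E)-A(E)\leq\mathrm{ord}_E\mathcal{I}(m\varphi_1)\leq m\nu(\varphi_1,E)$, together with \cref{prop:incdom}, then give convergence to $\mathbb{D}$ on every model.
\end{itemize}

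When $\mathbb{D}$ is big, no diagonal is needed at all. Write $\mathbb{D}=\mathbb{D}(T)$ with $T$ $\mathcal{I}$-good and take a quasi-equisingular approximation in the classes $\{T\}+\epsilon_j\{\omega\}$. Since $\epsilon_j$ decreases, $\mathbb{D}(T^j)$ is automatically decreasing. Your reduction to Kähler currents is what forces the diagonal in that case.
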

See \cite[Corollary~4.15]{Xiabdiv}.

\begin{definition}\label{def:bdiv_psefclass}
    Given a pseudoeffective class $\alpha\in \mathrm{H}^{1,1}(X,\mathbb{R})$, we define a nef b-divisor $\mathbb{D}(\alpha)$ over $X$ as follows:
    \[
    \mathbb{D}(\alpha)_Y=\langle \pi^*\alpha \rangle,
    \]
    where $\pi\colon Y\rightarrow X$ is a modification.
\end{definition}
Recall that $\langle \bullet \rangle$ refers to the movable intersection product as we recalled in \cref{subsec:mov_int}. Namely, $\mathbb{D}(\alpha)_Y$ is the movable part of $\pi^*\alpha$ with respect to the Boucksom--Nakayama decomposition.

When $\alpha$ is big, $\mathbb{D}(\alpha)=\mathbb{D}(T_{\min})$, where $T_{\min}\in \alpha$ is a current with minimal singularities. 
When $\alpha$ is nef, we have 
\begin{equation}\label{eq:Dalpha_nef}
\mathbb{D}(\alpha)_{Y}=\pi^*\alpha
\end{equation}
for each modification $\pi\colon Y\rightarrow X$.

The operator is concave in the following sense:
\begin{proposition}\label{prop:Dsupadd}
    Let $\alpha,\beta\in \mathrm{H}^{1,1}(X,\mathbb{R})$ be pseudoeffective classes. Then
    \begin{equation}\label{eq:Dsupadd}
    \mathbb{D}(\alpha)+\mathbb{D}(\beta)\leq \mathbb{D}(\alpha+\beta).
    \end{equation}
    Equality holds when $\alpha,\beta$ are both nef.
\end{proposition}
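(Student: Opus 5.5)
The plan is to check the inequality on each modification $\pi\colon Y\rightarrow X$. By \cref{def:bdiv_psefclass}, we need
\[
\langle \pi^*\alpha\rangle+\langle \pi^*\beta\rangle\leq \langle \pi^*\alpha+\pi^*\beta\rangle
\]
in $\mathrm{H}^{1,1}(Y,\mathbb{R})$. Recall that for $p=1$ the order $\leq_Y$ is simply the pseudoeffective order. The claim is then exactly the superadditivity of the movable intersection in the case $p=1$, namely \cref{prop:mov_int_prop}(2) applied on $Y$ to the pseudoeffective classes $\pi^*\alpha$ and $\pi^*\beta$. Pullbacks of pseudoeffective classes are pseudoeffective, so that result applies.

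If one prefers a more hands-on argument, the following reasoning gives the same conclusion. Use the divisorial Zariski decomposition \eqref{eq:div_Zar} on $Y$. It then suffices to show
\[
\nu(\pi^*\alpha+\pi^*\beta,D)\leq \nu(\pi^*\alpha,D)+\nu(\pi^*\beta,D)
\]
for each prime divisor $D$ on $Y$. The difference of the two sides of \eqref{eq:Dsupadd} on $Y$ is then $\sum_D(\nu(\pi^*\alpha,D)+\nu(\pi^*\beta,D)-\nu(\pi^*(\alpha+\beta),D))\{D\}$, which is an effective combination and hence pseudoeffective.

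To prove this subadditivity, first perturb by $\epsilon\{\omega\}$ to reduce to the big case. Take currents with minimal singularities $T_\alpha\in\pi^*\alpha$ and $T_\beta\in\pi^*\beta$. The sum $T_\alpha+T_\beta$ is a positive current in $\pi^*(\alpha+\beta)$, so it is at least as singular as a minimal one. Lelong numbers are additive, so
\[
\nu(\pi^*(\alpha+\beta),D)\leq\nu(T_\alpha,D)+\nu(T_\beta,D).
\]
Finally let $\epsilon\to0$, splitting the perturbation as $\epsilon/2$ on each summand.

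For the equality case, suppose $\alpha$ and $\beta$ are nef. Then $\alpha+\beta$ is nef, and \eqref{eq:Dalpha_nef} gives $\mathbb{D}(\alpha)_Y=\pi^*\alpha$, $\mathbb{D}(\beta)_Y=\pi^*\beta$ and $\mathbb{D}(\alpha+\beta)_Y=\pi^*(\alpha+\beta)$. Linearity of $\pi^*$ then gives equality.

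The only delicate point is the limiting argument in the non-big case, where we need $\nu(\cdot,D)$ to be defined as the limit over perturbations. This works because the perturbed Lelong numbers increase as $\epsilon\downarrow 0$, so the inequality passes to the limit.
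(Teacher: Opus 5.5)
Your proposal is correct, and your hands-on argument is essentially the paper's proof: the paper also checks the inequality model by model via the divisorial Zariski decomposition \eqref{eq:div_Zar} and the subadditivity $\nu(\alpha+\beta,D)\leq \nu(\alpha,D)+\nu(\beta,D)$, and it handles the nef case via \eqref{eq:Dalpha_nef}. Your first route, quoting the $p=1$ case of \cref{prop:mov_int_prop}(2) on each modification, is an equally valid one-line shortcut to the same inequality.
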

\begin{proof}
    In order to prove \eqref{eq:Dsupadd}, it suffices to prove 
    \[
    \mathbb{D}(\alpha)_X+\mathbb{D}(\beta)_X\leq \mathbb{D}(\alpha+\beta)_X.
    \]
    In other words,
    \[
    \sum_{D\subseteq X} \Bigl( \nu(\alpha,D)+\nu(\beta,D) \Bigr) \{D\}\geq \sum_{D\subseteq X} \nu(\alpha+\beta,D)\{D\},
    \]
    which follows from the simple observation: For any prime divisor $D$ on $X$, we have
    \[
    \nu(\alpha,D)+\nu(\beta,D) \geq \nu(\alpha+\beta,D).
    \]

    When $\alpha$ and $\beta$ are nef, the equality holds in \eqref{eq:Dsupadd} by our explicit description of the associated b-divisors in \eqref{eq:Dalpha_nef}.
\end{proof}

\subsection{The intersection theory}
We begin with the intersection between a Cartier nef b-divisor and a nef b-class. 
\begin{definition}\label{def:b_div_int_general}
Let $\mathbb{D}$ be a nef Cartier b-divisor over $X$ and $\mathbb{T}\in \bDiv^q(X)$ be a nef b-class. Then we define the nef class $\mathbb{D} \cap \mathbb{T}\in \bDiv^{q+1}(X)$ as follows: Let $(\pi\colon Y\rightarrow X, \alpha)$ be a realization of $\mathbb{D}$, then we define for each modification $\pi'\colon Y'\rightarrow X$ dominating $\pi$ through $g\colon Y'\rightarrow Y$,
\begin{equation}\label{eq:DcapTY'}
    \left( \mathbb{D} \cap \mathbb{T} \right)_{Y'}\coloneqq g^*\alpha \cap \mathbb{T}_{Y'}\in \mathrm{H}^{p+1,p+1}(Y,\mathbb{R}).
\end{equation}
Here as before, $\cap$ on the right-hand side denotes the intersection product of cohomology.
The class defined in \eqref{eq:DcapTY'} is movable by \cref{prop:pos_relation}.
The relevant notations are summarized in the following commutative diagram:
\[
    \begin{tikzcd}
        Y' \arrow[rd, "\pi'"'] \arrow[rr, "g"] &   & Y \arrow[ld, "\pi"] \\
                                       & X. &                    
        \end{tikzcd}
\]

\end{definition}
By the projection formula, these classes are compatible under pushforwards. To  be more precise, if $\pi''\colon Y''\rightarrow X$ is a modification dominating $\pi'$ through $h\colon Y''\rightarrow Y'$, as in the following commutative diagram:
\begin{equation}\label{eq:three_modif}
\begin{tikzcd}
Y'' \arrow[rd, "\pi''"'] \arrow[r, "h"] & Y' \arrow[d, "\pi'"] \arrow[r, "g"] & Y \arrow[ld, "\pi"] \\
                                        & X,                                   &                    
\end{tikzcd}
\end{equation}
then
\[
h_*\Bigl( (g\circ h)^*\alpha\cap \mathbb{T}_{Y''} \Bigr)=g^*\alpha\cap \mathbb{T}_{Y'}.
\]
Since the set of the $\pi'$'s dominating $\pi$ is cofinal in the directed set of modifications of $X$, the above definition indeed yields a nef b-class $\mathbb{D}\cap \mathbb{T}$ over $X$.
\begin{remark}
    By exactly the same argument, if $\pi'\colon Y'\rightarrow X$ is a proper bimeromorphic map from a Kähler manifold dominating $\pi$ through $g\colon Y'\rightarrow Y$, then the value $(\mathbb{D}\cap \mathbb{T})_{Y'}$ is given by exactly the same formula \eqref{eq:DcapTY'}.
\end{remark}

\begin{lemma}\label{lma:int_indep_real}
    The product $\mathbb{D} \cap \mathbb{T}$ in \cref{def:b_div_int_general} is independent of the choice of the realization $(\pi\colon Y\rightarrow X,\alpha)$ of $\mathbb{D}$.
\end{lemma}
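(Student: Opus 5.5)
Take two realizations $(\pi_1\colon Y_1\rightarrow X,\alpha_1)$ and $(\pi_2\colon Y_2\rightarrow X,\alpha_2)$ of $\mathbb{D}$. The plan is to reduce to a common refinement, where the two realizations visibly define the same classes, and then let the compatibility of $\mathbb{T}$ under pushforward handle the remaining modifications.

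Since modifications form a directed set, I will choose a modification $\pi_3\colon Y_3\rightarrow X$ dominating both $\pi_1$ and $\pi_2$, through $g_1\colon Y_3\rightarrow Y_1$ and $g_2\colon Y_3\rightarrow Y_2$. By the definition of a realization, $\mathbb{D}_{Y'}$ equals the pull-back of $\alpha_1$, and also the pull-back of $\alpha_2$, for every modification $Y'$ dominating $\pi_3$. In particular $g_1^*\alpha_1=\mathbb{D}_{Y_3}=g_2^*\alpha_2$. It follows that $(\pi_3,g_1^*\alpha_1)$ is again a realization of $\mathbb{D}$; note that $g_1^*\alpha_1$ is nef because pull-backs of nef classes are nef. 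Moreover, for any $Y'$ dominating $\pi_3$ through $h\colon Y'\rightarrow Y_3$, formula \eqref{eq:DcapTY'} computed with either original realization gives the same value,
\[
(g_1\circ h)^*\alpha_1\cap \mathbb{T}_{Y'}=h^*\mathbb{D}_{Y_3}\cap \mathbb{T}_{Y'}=(g_2\circ h)^*\alpha_2\cap \mathbb{T}_{Y'}.
\]
So the two b-classes agree on the cofinal set of modifications dominating $\pi_3$.

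It remains to see that a b-class is determined by its components on a cofinal set. Let $Y$ be an arbitrary modification and choose $Y'$ dominating both $Y$ and $\pi_3$, through $k\colon Y'\rightarrow Y$. Each construction yields an element of the projective limit, by the projection-formula compatibility checked right after \cref{def:b_div_int_general}. Hence each component on $Y$ equals $k_*$ of its component on $Y'$, and these agree by the previous step. The same argument shows that the components on modifications dominating $\pi_1$ but not $\pi_3$ are consistent, so the resulting b-classes coincide everywhere.

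I do not expect a real obstacle here. The only subtlety is to make sure the construction in \cref{def:b_div_int_general} really extends uniquely from the cofinal family to all modifications, namely by pushforward from a dominating model. Concretely, one checks that \eqref{eq:DcapTY'} is stable under $h_*$ via the projection formula $h_*(h^*\beta\cap \gamma)=\beta\cap h_*\gamma$ together with $h_*\mathbb{T}_{Y''}=\mathbb{T}_{Y'}$.
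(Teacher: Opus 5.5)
Your proof is correct and follows essentially the same route as the paper's. The paper also passes to a common dominating model, phrased as ``we may assume $\pi'$ dominates $\pi$ through $g$ with $\alpha'=g^*\alpha$,'' and then notes that the two formulas agree on all further modifications; you simply spell out explicitly the common refinement and the cofinality/pushforward step that the paper leaves implicit.
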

\begin{proof}    
    Let $(\pi\colon Y'\rightarrow X,\alpha')$ be another realization of $\mathbb{D}$. We want to show that $\mathbb{D}\cap \mathbb{T}$ defined with respect to the two realizations are the same. For this purpose, we may assume that $\pi'$ dominates $\pi$ through a morphism $g\colon Y'\rightarrow Y$ and $\alpha'=g^*\alpha$. 
    
    Consider a modification $\pi''\colon Y''\rightarrow X$ dominating $Y'$ through a morphism $h\colon Y''\rightarrow Y'$. Then have a commutative diagram as \eqref{eq:three_modif}.
    Our assertion now means
    \[
        \left(h^*\alpha'\right)\cap \mathbb{T}_{Y''}=\left((g\circ h)^*\alpha\right)\cap \mathbb{T}_{Y''},
    \]
    which is clear.
\end{proof}
As a consequence, we have:
\begin{corollary}\label{cor:Cbdivint_indepmodel}
Let $\mathbb{D}$ be a nef Cartier b-divisor over $X$ and $\mathbb{T}\in \bDiv^q(X)$ be a nef b-class.
    Let $\pi\colon Y\rightarrow X$ be a proper bimeromorphic morphism from a Kähler manifold $Y$. Then $\mathbb{D}\cap \mathbb{T}$ in \cref{def:int_general1} is the same as $\mathbb{D}\cap \mathbb{T}$ when $\mathbb{D}$ and $\mathbb{T}$ are regarded as elements in $\bDiv^1(Y)$ and $\bDiv^p(Y)$.
\end{corollary}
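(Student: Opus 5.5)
We have to check that the b-class $\mathbb{D}\cap\mathbb{T}$ does not change when it is computed over $Y$ rather than over $X$, using the canonical identifications $\bDiv^1(X)\cong\bDiv^1(Y)$ and $\bDiv^q(X)\cong\bDiv^q(Y)$. The plan is to compare the two constructions on a common cofinal family of models and then use \cref{lma:int_indep_real}.

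First, fix a realization $(\pi_0\colon Y_0\rightarrow X,\alpha)$ of $\mathbb{D}$ over $X$. By Hironaka's Chow lemma we can choose a modification $Y_1\rightarrow X$ dominating both $Y_0$ and $Y$. Write $g_0\colon Y_1\rightarrow Y_0$ for the factoring map. Then $(Y_1\rightarrow X, g_0^*\alpha)$ is again a realization of $\mathbb{D}$ over $X$. Regarded over $Y$, this same pair gives a realization $(Y_1\rightarrow Y, g_0^*\alpha)$ of $\mathbb{D}\in\bDiv^1(Y)$. The point is that, under the identification, the components of $\mathbb{D}$ on any manifold dominating $Y_1$ are the same classes whichever base we use.

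Second, evaluate both products on a model $Y'$ that dominates $Y_1$ and is simultaneously a modification of $X$ and a proper bimeromorphic Kähler manifold over $Y$. Such $Y'$ form a cofinal family for both directed systems, again by the Chow lemma. Let $h\colon Y'\rightarrow Y_1$ be the factoring map. By \eqref{eq:DcapTY'}, together with the remark following it that the formula also holds on proper bimeromorphic models, both products are equal on $Y'$ to
\[
h^*g_0^*\alpha\cap \mathbb{T}_{Y'}.
\]
Here $\mathbb{T}_{Y'}$ denotes the same class under either viewpoint. By \cref{lma:int_indep_real}, the product over $Y$ does not depend on which realization is used, so this comparison is legitimate.

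Finally, a b-class is determined by its components on a cofinal family, since the component on any other model is obtained by pushforward. The two b-classes therefore coincide.

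The main obstacle is to make sure the cofinal family really serves both directed sets: modifications of $Y$ are blow-up sequences over $Y$ and need not be modifications of $X$. This is resolved by using the version of the construction over proper bimeromorphic Kähler models, as allowed by the remark after \eqref{eq:DcapTY'}.
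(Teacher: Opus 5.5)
Your proof is correct and follows essentially the argument the paper intends. The paper gives no written proof and states the corollary as an immediate consequence of \cref{lma:int_indep_real}: one picks a realization of $\mathbb{D}$ on a model dominating both the original realization and $Y$, so that the same formula \eqref{eq:DcapTY'} computes both products on a common cofinal family of models. Your cofinality check for the two directed systems, via the Chow lemma and the remark after \eqref{eq:DcapTY'}, makes explicit what the paper leaves implicit.
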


We prove a few elementary properties of the intersection product. These properties will soon be replaced by the more general versions.
\begin{lemma}\label{lma:Dcapmono1}
    Let $\mathbb{D},\mathbb{D}'$ be Cartier nef b-divisors over $X$ and $\mathbb{T},\mathbb{T}'\in \bDiv^p(X)$ be nef b-classes. 
    \begin{enumerate}
        \item Assume that $\mathbb{D}\geq\mathbb{D}'$, then
        \[
            \mathbb{D}\cap \mathbb{T}\geq   \mathbb{D}'\cap \mathbb{T}.
        \]
        \item Assume that $\mathbb{T}\geq \mathbb{T}'$, then
        \[
            \mathbb{D}\cap \mathbb{T}\geq \mathbb{D}\cap \mathbb{T}'.
        \]
    \end{enumerate}
\end{lemma}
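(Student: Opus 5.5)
Both parts will be checked one modification at a time. By \cref{def:orderbclass} and the remark after it (via \cref{prop:pos_push}), it suffices to verify the inequalities on a cofinal family of modifications. The plan is to choose realizations of $\mathbb{D}$ and $\mathbb{D}'$ on a common model. Concretely, take realizations $(\pi_1\colon Y_1\rightarrow X,\alpha_1)$ of $\mathbb{D}$ and $(\pi_2\colon Y_2\rightarrow X,\alpha_2)$ of $\mathbb{D}'$. Then pick a modification $\pi\colon Y\rightarrow X$ dominating both, and pull both classes back to $Y$. By \cref{lma:int_indep_real}, we may assume that $\mathbb{D}$ and $\mathbb{D}'$ are realized on the same $Y$ by nef classes $\alpha,\alpha'\in \mathrm{H}^{1,1}(Y,\mathbb{R})$. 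Every modification $\pi'\colon Y'\rightarrow X$ dominating $\pi$ through $g\colon Y'\rightarrow Y$ then satisfies
\[
(\mathbb{D}\cap\mathbb{T})_{Y'}=g^*\alpha\cap \mathbb{T}_{Y'},\qquad (\mathbb{D}'\cap\mathbb{T})_{Y'}=g^*\alpha'\cap \mathbb{T}_{Y'}.
\]

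For (1), the difference on $Y'$ is $(g^*\alpha-g^*\alpha')\cap \mathbb{T}_{Y'}=(\mathbb{D}_{Y'}-\mathbb{D}'_{Y'})\cap \mathbb{T}_{Y'}$. The hypothesis $\mathbb{D}\geq \mathbb{D}'$ says that $\mathbb{D}_{Y'}-\mathbb{D}'_{Y'}$ is positive in $\mathrm{H}^{1,1}(Y',\mathbb{R})$, i.e.\ pseudoeffective by Lamari's theorem. Since $\mathbb{T}$ is nef, $\mathbb{T}_{Y'}$ is movable. The third row of \cref{table:product_pos}, i.e.\ \cref{prop:pos_relation}(1), shows that a pseudoeffective $(1,1)$-class capped with a movable class is positive, which gives $(\mathbb{D}\cap\mathbb{T})_{Y'}\geq_{Y'}(\mathbb{D}'\cap\mathbb{T})_{Y'}$.

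For (2), the difference on $Y'$ is $g^*\alpha\cap(\mathbb{T}_{Y'}-\mathbb{T}'_{Y'})$. Here $g^*\alpha$ is nef, being the pullback of a nef class, and $\mathbb{T}_{Y'}-\mathbb{T}'_{Y'}$ is positive by assumption. So \cref{prop:nefint_pre_pos} gives positivity directly.

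I expect no real obstacle. The only point needing care is that (1) uses the order $\mathbb{D}\geq\mathbb{D}'$ on every $Y'$, not just on $Y$. This matters because $\mathbb{D}'_{Y'}$ equals $g^*\alpha'$ only for $Y'$ above $Y$, which is exactly why we restrict to the cofinal family of modifications dominating $\pi$.
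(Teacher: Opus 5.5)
Your proof is correct and matches the paper's: the paper uses \cref{cor:Cbdivint_indepmodel} to assume $\mathbb{D}$ and $\mathbb{D}'$ are realized by nef classes on $X$ itself, then applies \cref{prop:pos_relation} for (1) and \cref{prop:nefint_pre_pos} for (2), exactly as you do. Your choice to keep $X$ as the base and check only on the cofinal family of modifications dominating a common realization model is just an equivalent rephrasing of that change of base.
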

\begin{proof}
    Thanks to \cref{cor:Cbdivint_indepmodel}, we may assume that $\mathbb{D}$ and $\mathbb{D}'$ are both realized on $X$ by nef classes $\alpha,\alpha'\in \mathrm{H}^{1,1}(X,\mathbb{R})$. Fix a modification $\pi\colon Y\rightarrow X$.

    (1) We need to show that if $\alpha\geq \alpha'$, then
    \[
    \pi^*\alpha\cap \mathbb{T}_Y \geq_Y \pi^*\alpha'\cap \mathbb{T}_Y.
    \]
    This follows from \cref{prop:pos_relation}.

    (2) We need to show that if $\mathbb{T}_Y\geq_Y \mathbb{T}'_Y$, then
    \[
    \pi^*\alpha\cap \mathbb{T}_Y\geq_Y \pi^*\alpha\cap \mathbb{T}'_Y.
    \]
    This follows from \cref{prop:nefint_pre_pos}.
\end{proof}

\begin{lemma}\label{lma:Cartnefblinear}
    Let $\mathbb{D},\mathbb{D}'$ be nef Cartier b-divisors over $X$, $\lambda\geq 0$ and $\mathbb{T},\mathbb{T}'\in \bDiv^p(X)$ be nef b-classes. Then
    \begin{enumerate}
        \item $(\mathbb{D}+\mathbb{D}')\cap \mathbb{T}=\mathbb{D}\cap \mathbb{T}+\mathbb{D}'\cap \mathbb{T}$;
        \item $\mathbb{D}\cap (\mathbb{T}+\mathbb{T}')=\mathbb{D}\cap \mathbb{T}+\mathbb{D}\cap \mathbb{T}'$;
        \item $(\lambda\mathbb{D})\cap \mathbb{T}=\lambda\left( \mathbb{D}\cap \mathbb{T} \right)$;
        \item $\mathbb{D}\cap (\lambda\mathbb{T})=\lambda\left( \mathbb{D}\cap \mathbb{T} \right)$.
    \end{enumerate}
\end{lemma}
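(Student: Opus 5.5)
The plan is to reduce everything to a single common model and then use bilinearity of the cohomological cup product. First, a preliminary observation: the sums $\mathbb{D}+\mathbb{D}'$ and $\lambda\mathbb{D}$ are again nef Cartier b-divisors. If $(\pi\colon Y\rightarrow X,\alpha)$ and $(\pi'\colon Y'\rightarrow X,\alpha')$ realize $\mathbb{D}$ and $\mathbb{D}'$, choose a modification $\pi''\colon Y''\rightarrow X$ dominating both, through $g\colon Y''\rightarrow Y$ and $g'\colon Y''\rightarrow Y'$. Then $\mathbb{D}$ is realized on $Y''$ by $g^*\alpha$ and $\mathbb{D}'$ by $g'^*\alpha'$, since pull-backs compose. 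Hence $(\pi'', g^*\alpha+g'^*\alpha')$ realizes $\mathbb{D}+\mathbb{D}'$, the class $g^*\alpha+g'^*\alpha'$ being nef as a sum of nef classes. Similarly $(\pi,\lambda\alpha)$ realizes $\lambda\mathbb{D}$, and $\lambda\alpha$ is nef because $\lambda\geq 0$. Likewise, $\mathbb{T}+\mathbb{T}'$ and $\lambda\mathbb{T}$ are nef b-classes, because the movable cone is a convex cone. So all the products in the statement are defined by \cref{def:b_div_int_general}.

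Next, by \cref{lma:int_indep_real}, we may compute every product using the realizations on the common model $Y''$; equivalently, by \cref{cor:Cbdivint_indepmodel}, we may replace $X$ by $Y''$ and assume that $\mathbb{D}$ and $\mathbb{D}'$ are realized on $X$ by nef classes $\alpha,\alpha'$. Since it suffices to check equality of b-classes on a cofinal family of modifications, we fix an arbitrary modification $\pi\colon Y\rightarrow X$. The four identities then reduce, by \eqref{eq:DcapTY'}, to
\[
\pi^*(\alpha+\alpha')\cap\mathbb{T}_Y=\pi^*\alpha\cap\mathbb{T}_Y+\pi^*\alpha'\cap\mathbb{T}_Y,\quad \pi^*\alpha\cap(\mathbb{T}_Y+\mathbb{T}'_Y)=\pi^*\alpha\cap\mathbb{T}_Y+\pi^*\alpha\cap\mathbb{T}'_Y,
\]
together with the analogous statements for $\lambda$. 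These follow from the linearity of $\pi^*$ and the bilinearity of the cup product. Here we also use that the components of a sum or scalar multiple of b-classes are the sums or scalar multiples of the components, which holds by the definition of the vector space structure on the projective limit \eqref{eq:projlimbclass}.

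There is no real obstacle in this argument. The only point needing care is the first step: confirming that the sum of two realizations, pulled back to a common dominating model, is again a realization of the sum b-divisor. This requires the compatibility $(g\circ h)^*=h^*\circ g^*$ on every model dominating $Y''$.
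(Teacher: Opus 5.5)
Your proposal is correct and follows the paper's proof. The paper uses \cref{cor:Cbdivint_indepmodel} to pass to a model on which both $\mathbb{D}$ and $\mathbb{D}'$ are realized by nef classes, after which all four identities are just linearity of pull-back and bilinearity of the cup product; your checks that $\mathbb{D}+\mathbb{D}'$, $\lambda\mathbb{D}$ are nef Cartier and $\mathbb{T}+\mathbb{T}'$, $\lambda\mathbb{T}$ are nef b-classes spell out what the paper leaves implicit under ``obvious''.
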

\begin{proof}
Thanks to \cref{cor:Cbdivint_indepmodel}, we may assume that $\mathbb{D}$ and $\mathbb{D}'$ are both realized on $X$, then all these assertions are obvious.
\end{proof}

Next we come to the general intersection theory between a nef b-divisor and a nef b-class.
\begin{definition}\label{def:int_general1}
    Let $\mathbb{D}$ be a nef b-divisor over $X$ and $\mathbb{T}\in \bDiv^q(X)$ be a nef b-class. Then we define the nef b-class $\mathbb{D} \cap \mathbb{T}\in \bDiv^{q+1}(X)$ as follows: Take a decreasing sequence $(\mathbb{D}_i)_i$ of nef Cartier b-divisors over $X$ converging to $\mathbb{D}$ whose existence is guaranteed by \cref{cor:Demapp}, then let
    \begin{equation}\label{eq:DcapT_gen}
        \mathbb{D}\cap \mathbb{T}\coloneqq \lim_{i\to\infty}\mathbb{D}_i\cap \mathbb{T}.
    \end{equation}
\end{definition}
Thanks to \cref{lma:Dcapmono1}, the sequence $(\mathbb{D}_i\cap \mathbb{T})_i$ is decreasing. Since we consider the projective limit topology on the space of b-classes, the convergence in \eqref{eq:DcapT_gen} means the convergence of the components on each model. The existence of the limit in \eqref{eq:DcapT_gen} then follows from \cref{prop:mono_conv}. 

\begin{lemma}\label{lma:int_well_de}
    The intersection $\mathbb{D}\cap \mathbb{T}$ in \cref{def:int_general1} is independent of the choice of the sequence $(\mathbb{D}_i)_i$.
\end{lemma}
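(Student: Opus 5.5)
Given two decreasing sequences $(\mathbb{D}_i)_i$ and $(\mathbb{D}'_j)_j$ of nef Cartier b-divisors with limit $\mathbb{D}$, set
\[
\mathbb{S}\coloneqq\lim_{i}\mathbb{D}_i\cap\mathbb{T},\qquad \mathbb{S}'\coloneqq\lim_j \mathbb{D}'_j\cap\mathbb{T}.
\]
By symmetry it suffices to show $\mathbb{S}\leq \mathbb{S}'$, and antisymmetry of $\geq$ on each $\mathrm{H}^{q+1,q+1}(Y,\mathbb{R})$ (pointedness, \cref{prop:Posclass_goodcone}) then gives equality. Since $\mathbb{S}\leq \mathbb{D}_i\cap \mathbb{T}$ for every $i$ by \cref{prop:mono_conv}, it is enough to show that for every fixed $j$, every modification $Y$, every $\epsilon>0$ and every Kähler form $\omega$ on $X$, we have
\[
(\mathbb{D}_i\cap\mathbb{T})_Y\leq_Y (\mathbb{D}'_j\cap\mathbb{T})_Y+\epsilon\,(\mathbb{D}(\omega)\cap\mathbb{T})_Y
\]
for $i$ large. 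Letting $i\to\infty$, then $\epsilon\to0$, then $j\to\infty$ gives the claim, using that the positive cone is closed.

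\textbf{Step 1: an approximate inequality $\mathbb{D}_i\leq\mathbb{D}'_j+\epsilon\mathbb{D}(\omega)$.} Fix $j$ and $\epsilon$. Both $\mathbb{D}_i$ and $\mathbb{D}'_j+\epsilon\mathbb{D}(\omega)$ are Cartier. We may realize $\mathbb{D}'_j$ on a single model $Y_0$ by a nef class $\beta$, and replace $X$ by $Y_0$ using \cref{cor:Cbdivint_indepmodel}. Because $\mathbb{D}'_j\geq \mathbb{D}$ and $\mathbb{D}_i\downarrow\mathbb{D}$, we want to show that $\mathbb{D}_i\leq \mathbb{D}'_j+\epsilon\mathbb{D}(\omega)$ for $i\gg0$. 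I would argue via currents. By \cref{thm:main_paper1} and the remark after it, represent $\mathbb{D}+\epsilon\mathbb{D}(\omega)$ and its Cartier approximants by $\mathcal{I}$-good Kähler currents. The Cartier divisors $\mathbb{D}_i$ arise, as in \cite[Corollary~4.15]{Xiabdiv}, from quasi-equisingular approximations $T^i$ of a current $T$ with $\mathbb{D}(T)=\mathbb{D}$. On the other side, $\mathbb{D}'_j+\epsilon\mathbb{D}(\omega)$ is realized by a Kähler class on a model, hence equals $\mathbb{D}(S)$ for a smooth Kähler form $S$ there. The inequality $\mathbb{D}(T)\leq\mathbb{D}(S)$ means the Lelong numbers of $T$ dominate those of $S$ (which vanish). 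The defining property (3) of quasi-equisingular approximations, combined with the fact that $S$ has strictly positive cohomological room $\epsilon\omega$, should then give $\mathbb{D}(T^i)\leq \mathbb{D}(S)$ for large $i$. The cohomology classes $\mathbb{D}(T^i)_X\to\mathbb{D}_X$ converge, and the relevant Lelong numbers are those of $T^i$ against those of the smooth $S$.

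\textbf{Step 2: conclude.} Given Step 1, \cref{lma:Dcapmono1}(1) and the linearity in \cref{lma:Cartnefblinear} yield
\[
\mathbb{D}_i\cap\mathbb{T}\leq\mathbb{D}'_j\cap\mathbb{T}+\epsilon\,\mathbb{D}(\omega)\cap\mathbb{T}.
\]
Then pass to the limits in the order above. The term $\mathbb{D}(\omega)\cap\mathbb{T}$ is a fixed b-class independent of $\epsilon$, so it vanishes as $\epsilon\to0$.

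\textbf{Main obstacle.} The hard step is Step 1. It is a Dini-type statement: a decreasing sequence of Cartier b-divisors with limit $\mathbb{D}$ eventually falls below any Cartier b-divisor lying strictly above $\mathbb{D}$. The difficulty is that the order is tested on all infinitely many models. If the direct current argument fails, my fallback is to compare both sequences against a single canonical decreasing Cartier sequence, for instance the one built from a quasi-equisingular approximation. For each $i$ I would find $j(i)$ and $\epsilon_i\to0$ with $\mathbb{D}'_{j(i)}\leq \mathbb{D}_i+\epsilon_i\mathbb{D}(\omega)$, and the reverse, using the $\mathcal{I}$-goodness characterization via Lelong numbers on every model.
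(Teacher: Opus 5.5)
Your architecture matches the paper's. You reduce to $\lim_i \mathbb{D}_i\cap\mathbb{T}\leq \mathbb{D}'\cap\mathbb{T}$ for a single nef Cartier $\mathbb{D}'\geq\mathbb{D}$, then use the approximate inequality $\mathbb{D}_i\leq \mathbb{D}'+\epsilon\mathbb{D}(\omega)$ for $i\gg 0$ together with monotonicity and linearity. Your Step~2 and your order of limits are fine.

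The gap is Step~1, which carries the entire content of the lemma. Your justification assumes the $\mathbb{D}_i$ are of the form $\mathbb{D}(T^i)$ for a quasi-equisingular approximation $(T^i)$ of some $T$ with $\mathbb{D}(T)=\mathbb{D}$. But the lemma concerns an \emph{arbitrary} decreasing sequence of nef Cartier b-divisors with limit $\mathbb{D}$. The sequence built in \cite[Corollary~4.15]{Xiabdiv} is only one such sequence, so you cannot assume this structure.

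Even granting it, the argument is only heuristic ("should then give"). For currents in different classes, $\mathbb{D}(T)\leq\mathbb{D}(S)$ is not a comparison of Lelong numbers. It means pseudoeffectivity of $\pi^*(\{S\}-\{T\})+\sum_E \nu(T,E)\{E\}$ on every model, and property~(3) of quasi-equisingular approximations does not visibly yield that. Your fallback has the same defect. Finding $j(i)$ with $\mathbb{D}'_{j(i)}\leq \mathbb{D}_i+\epsilon_i\mathbb{D}(\omega)$ for an arbitrary sequence $(\mathbb{D}'_j)$ is the very Dini-type statement you have not proved.

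The missing idea, which makes Step~1 elementary and free of currents, is \cref{lma:mod_nef_dom}. Every component of a nef b-divisor is modified nef, so for a model $g\colon Y\rightarrow Y_0$ one has $\mathbb{D}_{i,Y}\leq g^*g_*\mathbb{D}_{i,Y}=g^*\mathbb{D}_{i,Y_0}$. Models dominating $Y_0$ are cofinal, so testing the order on infinitely many models reduces to the single model $Y_0$ on which $\mathbb{D}'$ is realized by a nef class $\beta$.

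On $Y_0$ you have $\mathbb{D}_{i,Y_0}\to \mathbb{D}_{Y_0}\leq \beta$. Take $\omega$ Kähler on $Y_0$; openness of the Kähler cone gives $\mathbb{D}_{i,Y_0}\leq \beta+\epsilon\{\omega\}$ for $i\gg0$. Hence $\mathbb{D}_i\leq \mathbb{D}(\beta+\epsilon\{\omega\})=\mathbb{D}'+\epsilon\mathbb{D}(\omega)$ by \cref{prop:Dsupadd}.

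This is exactly how the paper proves its claim. With this replacement for Step~1, the rest of your proposal goes through.
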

In particular, when $\mathbb{D}$ is Cartier, the b-class $\mathbb{D}\cap \mathbb{T}$ defined in \cref{def:int_general1} coincides with that in \cref{def:b_div_int_general}.
\begin{proof}
    We continue to use the notations in \cref{def:int_general1}.

    We claim that for each nef Cartier b-divisor $\mathbb{D}'$ over $X$ with $\mathbb{D}'\geq \mathbb{D}$, we have
    \begin{equation}\label{eq:DcapTdom_temp1}
    \lim_{i\to\infty} \mathbb{D}_i\cap \mathbb{T} \leq \mathbb{D}'\cap \mathbb{T}.
    \end{equation}
    From this assertion, the lemma trivially follows.

    In order to prove \eqref{eq:DcapTdom_temp1}, it suffices to establish
    \begin{equation}\label{eq:DcapTdom_temp2}
    \lim_{i\to\infty} \left( \mathbb{D}_i\cap \mathbb{T}\right)_X \leq_X \left( \mathbb{D}'\cap \mathbb{T} \right)_X.
    \end{equation}
    In fact, applying \eqref{eq:DcapTdom_temp2} to $\mathbb{D},\mathbb{D}'$, $\mathbb{D}_i$ and $\mathbb{T}$ regarded as b-classes over a modification of $X$, we conclude the general statement \eqref{eq:DcapTdom_temp1}.

    Thanks to \cref{prop:pos_push}, in order to prove \eqref{eq:DcapTdom_temp2}, we may further assume that $\mathbb{D}'$ is realized on $X$ by a nef class $\alpha\in \mathrm{H}^{1,1}(X,\mathbb{R})$. Then we are left with
    \begin{equation}\label{eq:DcapTdom_temp3}
    \lim_{i\to\infty} \left( \mathbb{D}_i\cap \mathbb{T}\right)_X \leq_X \alpha\cap \mathbb{T}_X.
    \end{equation}

    Fix a Kähler class $\beta\in \mathrm{H}^{1,1}(X,\mathbb{R})$. For each $\epsilon>0$, we can find $i_0>0$ so that when $i>i_0$,
    \[
    \mathbb{D}_{i,X}\leq \alpha+\epsilon \beta.
    \]
    Fix an $i>i_0$. Then
    \[
    \mathbb{D}_i\leq \mathbb{D}(\alpha+\epsilon\beta)= \mathbb{D}(\alpha)+\epsilon \mathbb{D}(\beta),
    \]
    where the inequality follows from \cref{lma:mod_nef_dom} and the equality follows from \cref{prop:Dsupadd}.
    Therefore, by \cref{lma:Dcapmono1} and \cref{lma:Cartnefblinear},
    \[
        \left( \mathbb{D}_i\cap \mathbb{T} \right)_X\leq_X \left( \mathbb{D}(\alpha)\cap \mathbb{T} \right)_X +\epsilon \left( \mathbb{D}(\beta)\cap \mathbb{T}\right)_X.
    \]
    Letting $i\to \infty$ and then $\epsilon \to 0+$, \eqref{eq:DcapTdom_temp3} follows.
\end{proof}

\begin{lemma}\label{lma:mod_nef_dom}
    Let $\pi\colon Y\rightarrow X$ be a proper bimeromorphic morphism from a Kähler manifold $Y$. Then for any modified nef class $\alpha\in \mathrm{H}^{1,1}(Y,\mathbb{R})$, we have
    \[
    \alpha\leq \pi^*\pi_*\alpha.
    \]
\end{lemma}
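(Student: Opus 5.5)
The idea is to represent $\alpha$, up to an error $\epsilon$, by closed positive currents without divisorial part. We then push such a current forward to $X$, pull it back again, and compare the two on $Y$. The comparison will differ only by an effective combination of $\pi$-exceptional divisors, and letting $\epsilon\to 0$ uses closedness of the pseudoeffective cone. This avoids any appeal to a general negativity lemma.

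Fix a Kähler form $\omega$ on $Y$ and $\epsilon>0$. Since $\alpha$ is modified nef, the definition in \cref{subsec:modif} gives a closed $(1,1)$-current $T_\epsilon\in\alpha$ with two properties: $S_\epsilon\coloneqq T_\epsilon+\epsilon\omega\geq 0$, and $\nu(S_\epsilon,D)=0$ for every prime divisor $D$ on $Y$. The pushforward $\pi_*S_\epsilon$ is a closed positive $(1,1)$-current on $X$ in the class $\pi_*\alpha+\epsilon\pi_*\{\omega\}$. Closed positive $(1,1)$-currents can be pulled back via local quasi-psh potentials, so $R_\epsilon\coloneqq\pi^*\pi_*S_\epsilon$ is a closed positive $(1,1)$-current on $Y$. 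Its class is $\pi^*\pi_*\alpha+\epsilon\,\pi^*\pi_*\{\omega\}$.

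Let $Z\subseteq X$ be the smallest Zariski closed subset outside which $\pi$ is an isomorphism, and let $E=\pi^{-1}(Z)$. On $Y\setminus E$ the currents $R_\epsilon$ and $S_\epsilon$ coincide. Hence $R_\epsilon-S_\epsilon$ is a closed real $(1,1)$-current of order zero supported on $E$. By the support theorem (or directly from Siu's decomposition \eqref{eq:Siudec}) it equals $\sum_j c_j[E_j]$, where the $E_j$ are the divisorial components of $E$. Comparing generic Lelong numbers along $E_j$ gives
\[
c_j=\nu(R_\epsilon,E_j)-\nu(S_\epsilon,E_j)=\nu(R_\epsilon,E_j)\geq 0.
\]
This step needs care on two points. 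First, the codimension $\geq 2$ part of $E$ carries no $(1,1)$-current mass. Second, the Lelong number comparison is valid because both currents are positive and agree generically off $E$. I expect this to be the main technical point, but it is standard.

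Passing to cohomology, we obtain
\[
\pi^*\pi_*\alpha+\epsilon\,\pi^*\pi_*\{\omega\}=\alpha+\epsilon\{\omega\}+\sum_j c_j\{E_j\}\geq \alpha+\epsilon\{\omega\}.
\]
Letting $\epsilon\to 0+$, the classes $\epsilon\,\pi^*\pi_*\{\omega\}$ and $\epsilon\{\omega\}$ tend to $0$. Since the pseudoeffective cone is closed, it follows that $\pi^*\pi_*\alpha-\alpha$ is pseudoeffective, which is the claim.
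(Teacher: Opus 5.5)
Your proof is correct and follows essentially the paper's route. The paper reduces to $\alpha$ big (i.e.\ perturbs by $\epsilon\{\omega\}$), takes a current $T$ with minimal singularities, which is non-divisorial because the class is modified nef, and observes that $\pi^*\pi_*T$ has Siu regular part $T$, so the difference is an effective $\pi$-exceptional divisor; you instead take the non-divisorial positive current supplied directly by the definition of modified nef, and you write out the support-theorem and Lelong-number comparison that the paper compresses into one sentence.
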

\begin{proof}
    Without loss of generality, we may assume that $\alpha$ is big. Take a current $T$ with minimal singularities $T\in \alpha$. Then $\pi^*\pi_*T$ is a closed positive $(1,1)$-current in  $\pi^*\pi_*\alpha$ whose regular part with respect to Siu's decomposition coincides with $T$. Our assertion follows.
\end{proof}

\begin{proposition}\label{prop:int_indp_model}
    Let $\pi\colon Y\rightarrow X$ be a modification. Let $\mathbb{D}$ be a nef b-divisor over $X$ and $\mathbb{T}\in \bDiv^q(X)$ be a nef b-class.
    Then $\mathbb{D}\cap \mathbb{T}$ in \cref{def:b_div_int_general} remains invariant if we regard $\mathbb{D}$ and $\mathbb{T}$ as elements in $\bDiv^1(Y)$ and $\bDiv^p(Y)$.
\end{proposition}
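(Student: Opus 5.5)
The plan is to reduce the statement to the Cartier case, which is \cref{cor:Cbdivint_indepmodel}, and then pass to the limit. The one thing to check is that the approximating sequence used over $X$ can also serve over $Y$.

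Take a decreasing sequence $(\mathbb{D}_i)_i$ of nef Cartier b-divisors over $X$ with limit $\mathbb{D}$, as provided by \cref{cor:Demapp}. By definition,
\[
\mathbb{D}\cap\mathbb{T}=\lim_{i\to\infty}\mathbb{D}_i\cap\mathbb{T}
\]
computed over $X$. Now regard everything through the canonical identifications $\bDiv^1(X)\cong\bDiv^1(Y)$ and $\bDiv^q(X)\cong\bDiv^q(Y)$. We need three facts about the sequence over $Y$:
\begin{enumerate}
    \item each $\mathbb{D}_i$ is still a nef Cartier b-divisor over $Y$;
    \item the sequence is still decreasing;
    \item it still converges to $\mathbb{D}$.
\end{enumerate}
For (2) and (3): the modifications of $Y$, composed with $\pi$, form a cofinal subset of the modifications of $X$. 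Hence both the order and the projective-limit convergence may be tested on that subset. For the order this uses the remark after \cref{def:orderbclass}, which in turn rests on \cref{prop:pos_push}. Nefness is a condition on each component, so it is preserved.

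For (1), suppose $\mathbb{D}_i$ is realized by $(\pi_i\colon Y_i\rightarrow X,\alpha_i)$. Choose a modification $Z\to X$ dominating both $Y_i$ and $Y$, with $Z\to Y$ itself a modification of $Y$. If $Z\to Y$ is not literally a modification in the restricted sense of \cref{def:modif}, Hironaka's Chow lemma lets us allow proper bimeromorphic maps from Kähler manifolds instead. The pullback of $\alpha_i$ to $Z$ then realizes $\mathbb{D}_i$ over $Y$.

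Once (1)--(3) hold, \cref{lma:int_well_de}, applied over $Y$, shows that $\mathbb{D}\cap\mathbb{T}$ computed over $Y$ equals $\lim_i\mathbb{D}_i\cap\mathbb{T}$ computed over $Y$. By \cref{cor:Cbdivint_indepmodel}, each term $\mathbb{D}_i\cap\mathbb{T}$ is the same whether computed over $X$ or over $Y$. Since the limits agree componentwise on the cofinal set of models, the two b-classes coincide.

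The main obstacle is bookkeeping rather than analysis. Elements of $\bDiv(Y)$ only have components on modifications of $Y$, and these are not all modifications of $X$ in the restricted sense. One must use the fact, noted after \cref{def:bclass}, that we may pass equivalently to proper bimeromorphic morphisms from Kähler manifolds. Together with the remark after \cref{def:b_div_int_general}, this ensures the Cartier formula \eqref{eq:DcapTY'} holds on such models too, so both sides can be compared on a common cofinal family.
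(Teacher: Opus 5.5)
Your proposal is correct and is essentially the paper's own argument, which the paper states in one line: approximate $\mathbb{D}$ by a decreasing sequence of nef Cartier b-divisors and combine the Cartier case \cref{cor:Cbdivint_indepmodel} with the independence of the approximating sequence (\cref{lma:int_well_de}). One remark: a modification of $Y$ composed with $\pi$ is again a finite composition of smooth blow-ups, hence already a modification of $X$, so your properties (2) and (3) are inherited directly and need no cofinality argument.
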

\begin{proof}
    This is a simple consequence of the Cartier case proved in \cref{cor:Cbdivint_indepmodel}.
\end{proof}

\begin{proposition}\label{prop:b_int_lin}
    Let $\mathbb{D},\mathbb{D}'$ be nef b-divisors over $X$, $\lambda\geq 0$ and $\mathbb{T},\mathbb{T}'\in \bDiv^p(X)$ be nef b-classes. Then
    \begin{enumerate}
        \item $(\mathbb{D}+\mathbb{D}')\cap \mathbb{T}=\mathbb{D}\cap \mathbb{T}+\mathbb{D}'\cap \mathbb{T}$;
        \item $\mathbb{D}\cap (\mathbb{T}+\mathbb{T}')=\mathbb{D}\cap \mathbb{T}+\mathbb{D}\cap \mathbb{T}'$;
        \item $(\lambda\mathbb{D})\cap \mathbb{T}=\lambda\left( \mathbb{D}\cap \mathbb{T} \right)$;
        \item $\mathbb{D}\cap (\lambda\mathbb{T})=\lambda\left( \mathbb{D}\cap \mathbb{T} \right)$.
    \end{enumerate}
\end{proposition}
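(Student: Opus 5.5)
The plan is to reduce everything to the Cartier case, \cref{lma:Cartnefblinear}, by approximation, and then pass to the limit. The tools are the definition \eqref{eq:DcapT_gen}, its independence of the approximating sequence (\cref{lma:int_well_de}), and the fact that limits in $\bDiv^{q+1}(X)$ are computed componentwise on each model.

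For (1), I would use \cref{cor:Demapp} to choose decreasing sequences $(\mathbb{D}_i)_i$ and $(\mathbb{D}'_i)_i$ of nef Cartier b-divisors converging to $\mathbb{D}$ and $\mathbb{D}'$. Then $(\mathbb{D}_i+\mathbb{D}'_i)_i$ is a decreasing sequence of nef Cartier b-divisors converging to $\mathbb{D}+\mathbb{D}'$. Indeed, a common realization exists by passing to a modification dominating both realizations, the sum of nef classes is nef, and monotonicity holds componentwise. By \cref{lma:int_well_de} we may use this sequence to compute $(\mathbb{D}+\mathbb{D}')\cap\mathbb{T}$. By \cref{lma:Cartnefblinear}(1),
\[
(\mathbb{D}_i+\mathbb{D}'_i)\cap\mathbb{T}=\mathbb{D}_i\cap\mathbb{T}+\mathbb{D}'_i\cap\mathbb{T},
\]
and taking limits model by model gives (1).

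For (3) the argument is the same. If $\lambda>0$, then $(\lambda\mathbb{D}_i)_i$ is a decreasing Cartier approximation of $\lambda\mathbb{D}$, and we apply \cref{lma:Cartnefblinear}(3). If $\lambda=0$, both sides vanish, since $0$ is Cartier (realized by the zero class) and its intersection is $0$.

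For (2) and (4), I would fix a single approximating sequence $(\mathbb{D}_i)_i$ for $\mathbb{D}$. Then $\mathbb{D}\cap(\mathbb{T}+\mathbb{T}')=\lim_i \mathbb{D}_i\cap(\mathbb{T}+\mathbb{T}')$, and $\mathbb{T}+\mathbb{T}'$ is nef because the movable cone is convex. By \cref{lma:Cartnefblinear}(2),
\[
\mathbb{D}_i\cap(\mathbb{T}+\mathbb{T}')=\mathbb{D}_i\cap\mathbb{T}+\mathbb{D}_i\cap\mathbb{T}',
\]
and the limit of the right-hand side is the sum of the limits, since each term converges by \cref{prop:mono_conv}. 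Part (4) follows in the same way from \cref{lma:Cartnefblinear}(4).

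The only point requiring care is in (1): checking that the sum of two nef Cartier b-divisors is again nef Cartier, and that sums of decreasing sequences are decreasing in the b-divisor order. Both are immediate from the definitions, since pullback is linear and pseudoeffectivity is preserved under sums. The well-definedness lemma then removes any dependence on the chosen sequences, so I expect no real obstacle.
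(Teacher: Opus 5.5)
Your proposal is correct and follows the paper's route: the paper's proof simply says these identities are straightforward consequences of the Cartier case \cref{lma:Cartnefblinear}. You supply the routine details behind that one line, namely using a common Cartier approximation of $\mathbb{D}+\mathbb{D}'$ and the independence of the approximating sequence from \cref{lma:int_well_de}, and then passing to the limit model by model.
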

\begin{proof}
     These are straightforward consequences of the Cartier case proved in  \cref{lma:Cartnefblinear}.
\end{proof}

If we have nef b-divisors $\mathbb{D}_1,\ldots,\mathbb{D}_p$ over $X$ and a nef b-class $\mathbb{T}\in \bDiv^q(X)$, then we can define
\[
\mathbb{D}_1\cap \cdots \cap \mathbb{D}_p\cap \mathbb{T}\coloneqq \mathbb{D}_1\cap \left( \mathbb{D}_2\cap \cdots \cap \mathbb{D}_p\cap \mathbb{T}\right)\in \bDiv^{p+q}(X)
\]
inductively.

\begin{example}
    Let $\mathbb{D}_1,\ldots,\mathbb{D}_p$ be nef b-divisors over $X$. Consider $1\in \bDiv^0(X)$ with the identification in \cref{ex:bclass_specialcase} in mind. We have
    \[
    \mathbb{D}_1\cap \dots \cap \mathbb{D}_p \cap 1=\mathbb{D}_1\cap \dots \cap \mathbb{D}_p.
    \]
    To see this, by induction on $p$, we may assume that $p=1$, then our assertion becomes
    \[
    \mathbb{D}_1\cap 1=\mathbb{D}_1.
    \]
    By approximation, it suffices to prove this when $\mathbb{D}_1$ is Cartier.
    Furthermore, \cref{cor:Cbdivint_indepmodel} allows us to reduce to the case where $\mathbb{D}_1$ is realized on $X$. Then our assertion is trivial.

    When $p=n$, the product $\mathbb{D}_1\cap \dots \cap \mathbb{D}_n$ recovers the intersection product
    \[
    \left(\mathbb{D}_1,\ldots,\mathbb{D}_n \right)
    \]
    studied in \cite{Xiabdiv} after taking the canonical identification $\bDiv^n(X)=\mathbb{R}$ in \cref{ex:bclass_specialcase} into account.
\end{example}

We prove a monotonicity result:
\begin{proposition}\label{prop:mono_int_b}
    Let $\mathbb{D},\mathbb{D}'$ be nef b-divisors over $X$ and $\mathbb{T},\mathbb{T}'\in \bDiv^p(X)$ be nef b-classes. 
    \begin{enumerate}
        \item Assume that $\mathbb{D}\geq\mathbb{D}'$, then
    \[
        \mathbb{D}\cap \mathbb{T}\geq   \mathbb{D}'\cap \mathbb{T}.
    \]
    \item Assume that $\mathbb{T}\geq\mathbb{T}'$, then
    \[
        \mathbb{D}\cap \mathbb{T}\geq   \mathbb{D}\cap \mathbb{T}'.
    \]
    \end{enumerate}
\end{proposition}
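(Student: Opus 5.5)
Both parts will be reduced to the Cartier case, \cref{lma:Dcapmono1}, by approximating the general nef b-divisors through \cref{def:int_general1}. Limits of nets preserve inequalities because the positive cones are closed (\cref{prop:Posclass_goodcone}).

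Part (2) is the easier one. By \cref{cor:Demapp}, choose a decreasing sequence $(\mathbb{D}_i)_i$ of nef Cartier b-divisors converging to $\mathbb{D}$. \cref{lma:Dcapmono1}(2) gives $\mathbb{D}_i\cap\mathbb{T}\geq \mathbb{D}_i\cap\mathbb{T}'$ for each $i$. By \cref{lma:int_well_de}, the same sequence computes both $\mathbb{D}\cap\mathbb{T}$ and $\mathbb{D}\cap\mathbb{T}'$. On each modification $Y$ the difference $(\mathbb{D}_i\cap\mathbb{T})_Y-(\mathbb{D}_i\cap\mathbb{T}')_Y$ is positive, and the positive cone is closed. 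Letting $i\to\infty$ yields (2).

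For part (1), take decreasing nef Cartier approximations $\mathbb{D}_i\searrow\mathbb{D}$ and $\mathbb{D}'_i\searrow\mathbb{D}'$. The issue is that $\mathbb{D}_i\geq\mathbb{D}'_i$ need not hold, so I will modify the approximation of $\mathbb{D}'$. The key claim is the one already established inside the proof of \cref{lma:int_well_de}: for every nef Cartier $\mathbb{E}\geq\mathbb{D}'$, one has $\mathbb{D}'\cap\mathbb{T}=\lim_j \mathbb{D}'_j\cap\mathbb{T}\leq \mathbb{E}\cap\mathbb{T}$, which is \eqref{eq:DcapTdom_temp1}. Each $\mathbb{D}_i$ is nef Cartier and satisfies $\mathbb{D}_i\geq\mathbb{D}\geq\mathbb{D}'$. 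Applying the claim with $\mathbb{E}=\mathbb{D}_i$ gives $\mathbb{D}'\cap\mathbb{T}\leq\mathbb{D}_i\cap\mathbb{T}$ for all $i$. Letting $i\to\infty$ and using closedness of the positive cone on each model then gives $\mathbb{D}'\cap\mathbb{T}\leq\mathbb{D}\cap\mathbb{T}$.

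The main point to check is that \eqref{eq:DcapTdom_temp1} was proved for an arbitrary nef Cartier $\mathbb{D}'\geq\mathbb{D}$ and an arbitrary decreasing Cartier approximation of $\mathbb{D}$, with no extra hypotheses. The argument there reduces, via \cref{prop:int_indp_model} and \cref{prop:pos_push}, to the model $X$ with $\mathbb{E}$ realized on $X$. It then uses \cref{lma:mod_nef_dom}, \cref{prop:Dsupadd} and the Cartier monotonicity, none of which needs more than $\mathbb{E}\geq\mathbb{D}'$. So I expect no real obstacle beyond restating that claim as a standalone inequality. I would record it as a remark so that both parts cite it cleanly.
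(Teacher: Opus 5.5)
Your proposal is correct and follows the paper's proof. For (1), the paper likewise reduces to comparing $\mathbb{D}''\cap\mathbb{T}$ with $\mathbb{D}'\cap\mathbb{T}$ for an arbitrary nef Cartier $\mathbb{D}''\geq\mathbb{D}$ and cites \eqref{eq:DcapTdom_temp1} from the proof of \cref{lma:int_well_de}; for (2), it reduces by definition to the Cartier case \cref{lma:Dcapmono1}, and your check that \eqref{eq:DcapTdom_temp1} needs no hypotheses beyond domination is what justifies that citation.
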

\begin{proof}
     (1) Take a nef Cartier b-divisor $\mathbb{D}''$ over $X$ with $\mathbb{D}''\geq \mathbb{D}$. We need to show that
     \[
     \mathbb{D}''\cap \mathbb{T}\geq \mathbb{D}'\cap \mathbb{T}.
     \]
     This is already proved during the proof of \cref{lma:int_well_de}.

    (2) By definition, we may assume that $\mathbb{D}$ is Cartier. Then the assertion follows from \cref{lma:Dcapmono1}.
\end{proof}

\begin{theorem}\label{thm:bdiv_int_cont1}
    Let $(\mathbb{D}_i^j)_{j\in J}$ be decreasing nets of nef b-divisors over $X$ with limits $\mathbb{D}_i$ for each $i=1,\ldots,p$. Let $(\mathbb{T}^j)_{j\in J}$ be a decreasing net of nef b-classes in $\bDiv^q(X)$ with limit $\mathbb{T}$. Then
    \[
        \lim_{j\in J} \mathbb{D}_1^j\cap \cdots \cap \mathbb{D}_p^j\cap \mathbb{T}^j=\mathbb{D}_1\cap \cdots \cap \mathbb{D}_p\cap \mathbb{T}.
    \]
\end{theorem}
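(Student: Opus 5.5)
By induction on $p$ it suffices to treat $p=1$. Suppose the case $p=1$ is known. Apply the inductive hypothesis to the nets $\mathbb{D}_2^j,\ldots,\mathbb{D}_p^j,\mathbb{T}^j$; this gives that $\mathbb{S}^j\coloneqq \mathbb{D}_2^j\cap\cdots\cap\mathbb{D}_p^j\cap \mathbb{T}^j$ converges to $\mathbb{S}\coloneqq\mathbb{D}_2\cap\cdots\cap\mathbb{D}_p\cap\mathbb{T}$. By \cref{prop:mono_int_b} (applied in each slot), $(\mathbb{S}^j)_j$ is a decreasing net of nef b-classes, so the case $p=1$ applies to $\mathbb{D}_1^j$ and $\mathbb{S}^j$. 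So the theorem reduces to this claim: if $\mathbb{D}^j\searrow \mathbb{D}$ and $\mathbb{T}^j\searrow\mathbb{T}$, then $\mathbb{D}^j\cap\mathbb{T}^j\to \mathbb{D}\cap\mathbb{T}$.

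For the claim, the net $(\mathbb{D}^j\cap\mathbb{T}^j)_j$ is decreasing by \cref{prop:mono_int_b}, and it is bounded below by $\mathbb{D}\cap\mathbb{T}$. By \cref{prop:mono_conv} it converges on each model to some $\mathbb{L}$ with $\mathbb{L}\geq \mathbb{D}\cap\mathbb{T}$. The work is the reverse inequality $\mathbb{L}\leq\mathbb{D}\cap\mathbb{T}$. As in the proof of \cref{lma:int_well_de}, using \cref{prop:int_indp_model} to pass to modifications and \cref{prop:pos_push}, it suffices to prove $\mathbb{L}_X\leq_X(\mathbb{D}\cap\mathbb{T})_X$. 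We split
\[
\mathbb{D}^j\cap\mathbb{T}^j\leq \mathbb{D}'\cap\mathbb{T}^j
\]
for any nef Cartier $\mathbb{D}'$ with $\mathbb{D}'\geq\mathbb{D}^j$. This reduces us to two separate continuity statements: one in the $\mathbb{T}$-variable with $\mathbb{D}$ Cartier, and one in the $\mathbb{D}$-variable with $\mathbb{T}$ fixed.

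The $\mathbb{T}$-variable with $\mathbb{D}'$ Cartier is easy. Realize $\mathbb{D}'$ on a modification $Y$ by a nef class $\alpha$. Then $(\mathbb{D}'\cap \mathbb{T}^j)_{Y'}=g^*\alpha\cap\mathbb{T}^j_{Y'}$, which converges to $g^*\alpha\cap\mathbb{T}_{Y'}$ by continuity of the cup product. The $\mathbb{D}$-variable is the main obstacle, because $\mathbb{D}^j$ need not dominate a fixed Cartier divisor near $\mathbb{D}$. I would follow the argument in \cref{lma:int_well_de}. Fix $\epsilon>0$ and a Kähler class $\beta$, and choose a nef Cartier $\mathbb{D}'\geq\mathbb{D}$ with $(\mathbb{D}\cap\mathbb{T})_X$ and $(\mathbb{D}'\cap\mathbb{T})_X$ within $\epsilon$; this is possible by \cref{cor:Demapp} and \cref{def:int_general1}. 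Let $\alpha=\mathbb{D}'_X$ after passing to a model where $\mathbb{D}'$ is realized. Since $\mathbb{D}^j_X\to\mathbb{D}_X\leq\alpha$, we get $\mathbb{D}^j_X\leq\alpha+\epsilon\beta$ for $j$ large. Then \cref{lma:mod_nef_dom} and \cref{prop:Dsupadd} give $\mathbb{D}^j\leq \mathbb{D}(\alpha)+\epsilon\mathbb{D}(\beta)$, which is Cartier.

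Therefore
\[
(\mathbb{D}^j\cap\mathbb{T}^j)_X\leq_X \alpha\cap\mathbb{T}^j_X+\epsilon\,\beta\cap\mathbb{T}^j_X .
\]
Let $j\to\infty$ using the Cartier continuity above; note that $\beta\cap\mathbb{T}^j_X$ stays bounded since it is decreasing. This gives $\mathbb{L}_X\leq_X(\mathbb{D}'\cap\mathbb{T})_X+\epsilon\,\beta\cap\mathbb{T}_X$. Letting $\mathbb{D}'\searrow\mathbb{D}$ and then $\epsilon\to0$ yields $\mathbb{L}_X\leq_X(\mathbb{D}\cap\mathbb{T})_X$. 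This argument works on every model, using that all notions are invariant under modifications (\cref{prop:int_indp_model}), so $\mathbb{L}=\mathbb{D}\cap\mathbb{T}$.

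One subtlety to double-check is that the choice of model where $\mathbb{D}'$ is realized may need to be refined. Pushing forward via \cref{prop:pos_push} handles this, exactly as in \cref{lma:int_well_de}.
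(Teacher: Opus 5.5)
Your proof is correct and uses the same ingredients as the paper's. These are the reduction to $p=1$, the domination $\mathbb{D}^j\leq \mathbb{D}'+\epsilon\,\mathbb{D}(\beta)$ for large $j$ obtained from \cref{lma:mod_nef_dom} and \cref{prop:Dsupadd}, and continuity of the cup product once the b-divisor is Cartier. The only difference is organizational: the paper proves continuity in $\mathbb{D}$ with $\mathbb{T}$ fixed and in $\mathbb{T}$ with $\mathbb{D}$ fixed separately and then interchanges limits, whereas you apply the Cartier domination directly to $\mathbb{D}^j\cap\mathbb{T}^j$, which handles both variables in one estimate and removes the need for the paper's Step~3.
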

\begin{proof}
    By induction on $p\geq 1$ and using \cref{prop:mono_int_b}, it suffices to prove our assertion in the case $p=1$.
    In this case, we omit the subindex $i$ and we are left with
    \begin{equation}\label{eq:mon_convthm_temp1}
        \lim_{j\in J} \mathbb{D}^j\cap \mathbb{T}^j=\mathbb{D}\cap \mathbb{T}.
    \end{equation}
    We first observe that the limit on the left-hand side of \eqref{eq:mon_convthm_temp1} exists, as a consequence of \cref{prop:mono_int_b} and \cref{prop:mono_conv}. Furthermore, the $\geq$ direction in \eqref{eq:mon_convthm_temp1} holds.

    \textbf{Step~1}. We first assume that $\mathbb{T}^j=\mathbb{T}$ for all $j\in J$. In this case, take a decreasing sequence $(\mathbb{D}'^{k})_{k>0}$ of Cartier nef b-divisors with limit $\mathbb{D}$. Fix $k>0$, we need to show that
    \begin{equation}\label{eq:limDjTleq}
    \lim_{j\in J}\mathbb{D}^j\cap \mathbb{T} \leq \mathbb{D}'^{k}\cap \mathbb{T}.
    \end{equation}
   
    For this purpose, we may further assume that $\mathbb{D}'^k$ is realized on $X$ by a nef class $\alpha\in \mathrm{H}^{1,1}(X,\mathbb{R})$. Fix a Kähler class $\beta$ on $X$, then for each $\epsilon>0$, we can find $j_0\in J$ so that when $j\geq j_0$, we have
    \[
    \mathbb{D}^j_X\leq \alpha+\epsilon\beta.
    \]
    Therefore, thanks to \cref{lma:mod_nef_dom} and \cref{prop:Dsupadd},
    \[
    \mathbb{D}^j \leq \mathbb{D}'^k+\epsilon\mathbb{D}(\beta),\quad j\geq j_0.
    \]
    Using \cref{prop:mono_int_b} and \cref{prop:b_int_lin}, we find
    \[
    \mathbb{D}^j\cap \mathbb{T}\leq \mathbb{D}'^k\cap \mathbb{T}+\epsilon\mathbb{D}(\beta)\cap \mathbb{T},\quad j\geq j_0.
    \]
    Taking limit with respect to $j$ and then letting $\epsilon\to 0+$, we derive \eqref{eq:limDjTleq}.

    \textbf{Step~2}. We assume that $\mathbb{D}^j=\mathbb{D}$ for all $j\in J$. Then we need to show that
    \begin{equation}\label{eq:limDcapTjlim_temp1}
    \lim_{j\in J} \mathbb{D}\cap \mathbb{T}^j\leq \mathbb{D}\cap \mathbb{T}.
    \end{equation}
    In this case, take a decreasing sequence $(\mathbb{D}'^{k})_{k>0}$ of nef Cartier b-divisors with limit $\mathbb{D}$. Suppose that we can prove \eqref{eq:limDcapTjlim_temp1} with $\mathbb{D}'^k$ in place of $\mathbb{D}$, then due to \cref{prop:mono_int_b} we have
    \[
        \lim_{j\in J} \mathbb{D}\cap \mathbb{T}^j\leq  \lim_{j\in J} \mathbb{D}'^k\cap \mathbb{T}^j\\
        = \mathbb{D}'^k\cap \mathbb{T}
    \]
    for each $k>0$. Letting $k\to\infty$, we conclude \eqref{eq:limDcapTjlim_temp1}.

    Therefore, we may assume that $\mathbb{D}$ is Cartier when proving \eqref{eq:limDcapTjlim_temp1}. Replacing $X$ by a modification, we may further assume that $\mathbb{D}$ is realized on $X$ by a nef class $\alpha\in \mathrm{H}^{1,1}(X,\mathbb{R})$. Then \eqref{eq:limDcapTjlim_temp1} means the following: Let $\pi\colon Y\rightarrow X$ be a modification, then
    \[
    \lim_{j\in J} \pi^*\alpha \cap \mathbb{T}^j_Y\leq_Y \pi^*\alpha \cap \mathbb{T}_Y,
    \]
    which is obvious.

    \textbf{Step~3}. We prove the general case. 
    
    For each fixed $j_0\in J$, by Step~1 and \cref{prop:mono_int_b}, we have
    \[
    \mathbb{D}\cap \mathbb{T}^{j_0}=\lim_{j\in J} \mathbb{D}^j\cap \mathbb{T}^{j_0}
    \geq  \lim_{j\in J} \mathbb{D}^j\cap \mathbb{T}^{j}.
    \]
    Taking the limit with respect to $j_0$, we conclude \eqref{eq:mon_convthm_temp1} using Step~2.
    
\end{proof}

We summarize the properties of our intersection product. These properties generalize the corresponding statements for movable intersection product as in  \cref{prop:mov_int_prop}. They also generalize the corresponding properties when $p=n$ proved in \cite{Xiabdiv}.
\begin{proposition}\label{prop:bdiv_int_prop}
    Let $\mathbb{D}_1,\ldots,\mathbb{D}_p,\mathbb{D}_1',\mathbb{D}'$ be nef b-divisors over $X$, $\mathbb{T},\mathbb{T}'\in \bDiv^q(X)$ be nef b-classes, $\alpha\in \mathrm{H}^{1,1}(X,\mathbb{R})$ be a nef class, and $\lambda\geq 0$. We have the following properties:
    \begin{enumerate}
        \item The product is symmetric: For each permutation $\sigma$ of $\{1,\ldots,p\}$,  we have
        \[
            \mathbb{D}_1\cap \cdots \cap \mathbb{D}_p \cap \mathbb{T}=\mathbb{D}_{\sigma(1)}\cap \cdots \cap \mathbb{D}_{\sigma(p)} \cap \mathbb{T}.
        \]
        \item The product is additive in each variable: 
        \[
        \begin{aligned}
            \left( \mathbb{D}_1+\mathbb{D}_1'\right)\cap\mathbb{D}_2\cap  \cdots \cap \mathbb{D}_p \cap \mathbb{T}=& \mathbb{D}_1\cap\mathbb{D}_2\cap  \cdots \cap \mathbb{D}_p \cap \mathbb{T}+\mathbb{D}'_1\cap\mathbb{D}_2\cap  \cdots \cap \mathbb{D}_p \cap \mathbb{T},\\
            \mathbb{D}_1\cap \cdots \cap \mathbb{D}_p \cap \left(\mathbb{T}+\mathbb{T}'\right)=&\mathbb{D}_1\cap \cdots \cap \mathbb{D}_p \cap \mathbb{T}+\mathbb{D}_1\cap \cdots \cap \mathbb{D}_p \cap \mathbb{T}'.
        \end{aligned}
        \]
        \item The product is homogeneous in each variable:
        \[
        \begin{aligned}
            \left(\lambda\mathbb{D}_1\right)\cap\mathbb{D}_2\cap  \cdots \cap \mathbb{D}_p \cap \mathbb{T}=&\lambda \left(\mathbb{D}_1\cap\mathbb{D}_2\cap  \cdots \cap \mathbb{D}_p \cap \mathbb{T}\right),\\
            \mathbb{D}_1\cap \cdots \cap \mathbb{D}_p \cap \left(\lambda\mathbb{T}\right)=&\lambda\left(\mathbb{D}_1\cap \cdots \cap \mathbb{D}_p \cap \mathbb{T} \right).
        \end{aligned}
        \]
        \item The product is increasing in each variable: If we assume $\mathbb{D}_1\geq \mathbb{D}_1'$, we have
        \[
        \mathbb{D}_1\cap\mathbb{D}_2\cap  \cdots \cap \mathbb{D}_p \cap \mathbb{T}\geq \mathbb{D}_1'\cap\mathbb{D}_2\cap  \cdots \cap \mathbb{D}_p \cap \mathbb{T}.
        \]
        Similarly, if we assume $\mathbb{T}\geq \mathbb{T}'$, then
        \[
        \mathbb{D}_1\cap \cdots \cap \mathbb{D}_p \cap \mathbb{T}\geq \mathbb{D}_1\cap \cdots \cap \mathbb{D}_p \cap \mathbb{T}'.
        \]
        \item We have
        \[
        \Bigl(\mathbb{D}_1\cap \cdots \cap \mathbb{D}_p\cap \mathbb{D}(\alpha)\Bigr)_X=\Bigl(\mathbb{D}_1\cap \cdots \cap \mathbb{D}_p\Bigr)_X\cap \alpha.
        \]
    \end{enumerate}
\end{proposition}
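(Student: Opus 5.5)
Most of these properties should reduce to the Cartier case, where everything is realized on a single model and becomes ordinary cohomology. The passage to the general case then goes through the monotone continuity \cref{thm:bdiv_int_cont1}. Concretely, using \cref{cor:Demapp}, I will take decreasing sequences $(\mathbb{D}_i^k)_k$ of nef Cartier b-divisors converging to $\mathbb{D}_i$, for each $i$. By \cref{thm:bdiv_int_cont1},
\[
\mathbb{D}_1\cap\cdots\cap\mathbb{D}_p\cap\mathbb{T}=\lim_{k\to\infty}\mathbb{D}^k_1\cap\cdots\cap\mathbb{D}^k_p\cap\mathbb{T}.
\]
An identity that holds for all Cartier approximants therefore passes to the limit, provided the approximants on the two sides are chosen compatibly.

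\textbf{Symmetry (1).} It suffices to treat an adjacent transposition, and by induction only the swap of the first two factors, $\mathbb{D}_1\cap(\mathbb{D}_2\cap\mathbb{S})=\mathbb{D}_2\cap(\mathbb{D}_1\cap\mathbb{S})$ with $\mathbb{S}=\mathbb{D}_3\cap\cdots\cap\mathbb{T}$. When $\mathbb{D}_1$ and $\mathbb{D}_2$ are Cartier, \cref{prop:int_indp_model} lets me pass to a common model $Y$ on which both are realized, by nef classes $\alpha_1,\alpha_2$. On any model $Y'$ dominating $Y$ through $g$, both sides then equal $g^*\alpha_1\cap g^*\alpha_2\cap\mathbb{S}_{Y'}$, by commutativity of the cup product. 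For the general case I approximate $\mathbb{D}_1$ and $\mathbb{D}_2$ simultaneously by decreasing Cartier sequences and apply \cref{thm:bdiv_int_cont1} to both sides. The iterated product is jointly continuous along decreasing nets, because the inner products $\mathbb{D}_2^k\cap\mathbb{S}$ form a decreasing net of nef b-classes by \cref{prop:mono_int_b}.

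\textbf{Additivity, homogeneity and monotonicity (2)--(4).} For the variable $\mathbb{D}_1$, these follow directly from \cref{prop:b_int_lin} and \cref{prop:mono_int_b} applied with $\mathbb{T}$ replaced by $\mathbb{D}_2\cap\cdots\cap\mathbb{T}$. For the variable $\mathbb{T}$, I argue by induction on $p$, using parts (2), (4) of \cref{prop:b_int_lin} and part (2) of \cref{prop:mono_int_b} at each stage. Monotonicity in $\mathbb{T}$ needs the inner products to be nef b-classes with the inequality preserved, and \cref{prop:mono_int_b}(2) supplies exactly this. By symmetry (1), linearity and monotonicity in the first variable then give the same properties in every $\mathbb{D}_i$.

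\textbf{Part (5).} Since $\alpha$ is nef, $\mathbb{D}(\alpha)$ is Cartier and realized on $X$ by $\alpha$, by \eqref{eq:Dalpha_nef}. Using symmetry, I move $\mathbb{D}(\alpha)$ to the front. Definition \cref{def:b_div_int_general} with the realization $(\mathrm{id},\alpha)$ then gives
\[
\bigl(\mathbb{D}(\alpha)\cap\mathbb{S}\bigr)_X=\alpha\cap\mathbb{S}_X,\qquad \mathbb{S}=\mathbb{D}_1\cap\cdots\cap\mathbb{D}_p\cap 1.
\]
Here $\mathbb{S}$ coincides with $\mathbb{D}_1\cap\cdots\cap\mathbb{D}_p$ by the earlier example.

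\textbf{Main obstacle.} I expect the delicate step to be the symmetry argument. The difficulty is making sure that the iterated product along a simultaneous decreasing approximation of all factors converges to the iterated product of the limits, since \cref{thm:bdiv_int_cont1} is phrased for a single common index net. I will handle this by taking the $\mathbb{D}_i^k$ all indexed by the same $k$, which is allowed because each $\mathbb{D}_i$ has its own decreasing sequence.
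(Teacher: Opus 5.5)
Your proposal is correct and follows the paper's own proof essentially step by step. Symmetry is reduced to swapping two factors, which is handled by Cartier approximation through \cref{thm:bdiv_int_cont1} and commutativity of the cup product on a common model; (2)--(4) come from \cref{prop:b_int_lin} and \cref{prop:mono_int_b}; and (5) follows from the definition with the realization $(\mathrm{id},\alpha)$ after moving $\mathbb{D}(\alpha)$ to the front. The ``main obstacle'' you flag is not actually one, because \cref{thm:bdiv_int_cont1} is already stated for several decreasing nets sharing a common index set.
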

\begin{proof}
    (1) We can easily reduce to the case $p=2$, and then we need to show
    \[
    \mathbb{D}_1\cap \mathbb{D}_2\cap \mathbb{T}=\mathbb{D}_2\cap \mathbb{D}_1\cap \mathbb{T}.
    \]
    By approximation using \cref{thm:bdiv_int_cont1}, we can easily reduce to the case where $\mathbb{D}_1$ and $\mathbb{D}_2$ are both Cartier.
    Thanks to \cref{cor:Cbdivint_indepmodel}, we may further assume that $\mathbb{D}_1$ and $\mathbb{D}_2$ are realized on $X$ by nef classes $\alpha_1,\alpha_2\in \mathrm{H}^{1,1}(X,\mathbb{R})$. In this case our assertion becomes the following: For any modification $\pi\colon Y\rightarrow X$, we have
    \[
    \pi^*\alpha_1\cap \left( \pi^*\alpha_2\cap \mathbb{T}_Y\right)=\pi^*\alpha_2\cap \left( \pi^*\alpha_1\cap \mathbb{T}_Y\right),
    \]
    which is obvious.

    (2) (3) These are consequences of \cref{prop:b_int_lin}.

    (4) This is a consequence of \cref{prop:mono_int_b}.

    (5) This follows immediately from the definition \cref{def:b_div_int_general}.
\end{proof}

\begin{example}\label{ex:int_nef_classes}
    Let $\alpha_1,\ldots,\alpha_p\in \mathrm{H}^{1,1}(X,\mathbb{R})$ be nef classes. Then for any modification $\pi\colon Y\rightarrow X$, we have
    \[
        \Bigl( \mathbb{D}(\alpha_1)\cap \dots \cap \mathbb{D}(\alpha_p) \Bigr)_Y=\pi^*\alpha_1\cap \cdots \cap \pi^*\alpha_p.
    \]
    This is an immediate consequence of \cref{prop:bdiv_int_prop}(5). We shall generalize this statement in \cref{cor:mov_int_generalized}.
\end{example}

The intersection theory of b-divisors is closely related to the non-pluripolar product.
\begin{theorem}\label{thm:DTandnpp}
    Let $T_1,\ldots,T_p$ be $\mathcal{I}$-good closed positive $(1,1)$-currents. Then
    \begin{equation}\label{eq:DTandnpp}
        \Bigl( \mathbb{D}(T_1)\cap \dots \cap \mathbb{D}(T_p) \Bigr)_Y= \left\{ \pi^*T_1\wedge \cdots \wedge \pi^*T_p \right\}
    \end{equation}
    for all modifications $\pi\colon Y\rightarrow X$.
\end{theorem}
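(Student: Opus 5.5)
First I reduce to $Y=X$. By \cref{prop:int_indp_model} the left-hand side of \eqref{eq:DTandnpp} does not change when we pass to a modification. Moreover $\pi^*T_i$ is again $\mathcal{I}$-good, because $\mathcal{I}$-goodness is a bimeromorphic notion: $P[\bullet]$ and $P[\bullet]_{\mathcal{I}}$ commute with pull-back. It therefore suffices to prove
\[
\Bigl(\mathbb{D}(T_1)\cap\dots\cap\mathbb{D}(T_p)\Bigr)_X=\{T_1\wedge\cdots\wedge T_p\}.
\]
Next I perturb. Fix a Kähler form $\omega$ and replace $T_i$ by $T_i+\epsilon\omega$. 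The result is still $\mathcal{I}$-good, and it is a Kähler current. The left-hand side is multilinear by \cref{prop:bdiv_int_prop}, using $\mathbb{D}(T+\epsilon\omega)=\mathbb{D}(T)+\epsilon\mathbb{D}(\omega)$ from \eqref{eq:D_curr_linear}, so it is polynomial in $\epsilon$. The right-hand side is continuous as $\epsilon\to0+$; for instance the non-pluripolar product is multilinear in the currents. So we may assume every $T_i$ is an $\mathcal{I}$-good Kähler current.

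For the approximation step, write $T_i=\theta_i+\ddc\varphi_i$ and take quasi-equisingular approximations $T_i^j=\theta_i+\ddc\varphi_i^j$ in the same class. Each $T_i^j$ has analytic singularities, so $\mathbb{D}(T_i^j)$ is Cartier. Indeed, on a log resolution $\pi_j\colon Y_j\to X$ we have $\pi_j^*T_i^j=R+[F]$ with $R$ having bounded potentials. Then $\mathbb{D}(T_i^j)$ is realized on $Y_j$ by the nef class $\{R\}$, since further pull-backs of $R$ carry no divisorial part. On the Cartier level, \cref{def:b_div_int_general} gives
\[
\Bigl(\mathbb{D}(T_1^j)\cap\cdots\cap\mathbb{D}(T_p^j)\Bigr)_X=\pi_{j*}\bigl(\{R_1\}\cap\cdots\cap\{R_p\}\bigr).
\]
By Bedford--Taylor theory for bounded potentials this equals $\pi_{j*}\{R_1\wedge\cdots\wedge R_p\}$. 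Since the non-pluripolar product puts no mass on the divisor, this is $\pi_{j*}\{\pi_j^*T_1^j\wedge\cdots\wedge\pi_j^*T_p^j\}=\{T_1^j\wedge\cdots\wedge T_p^j\}$ by \cref{prop:bime_npp}. The same argument also gives the statement on every further modification.

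To pass to the limit $j\to\infty$ I need two facts. The first concerns the b-divisors. The sequence $\mathbb{D}(T_i^j)$ is decreasing in $j$, because $\varphi_i^j$ decreases. Its limit is $\mathbb{D}(T_i)$: by condition (3) of quasi-equisingular approximation, the Lelong numbers $\nu(T_i^j,E)$ increase to $\nu(T_i,E)$ for every divisor $E$ over $X$. Then the components $\{\pi^*T_i^j\}-\sum_E\nu(T_i^j,E)\{E\}$ converge, which uses the monotone convergence of \cref{prop:Levi} on the countable sums. \cref{thm:bdiv_int_cont1} then says the left-hand sides converge to $(\mathbb{D}(T_1)\cap\cdots\cap\mathbb{D}(T_p))_X$. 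The second fact concerns the currents. Quasi-equisingular approximations converge in $d_S$, and $d_S$-convergence is governed by the $P$-envelopes. Because $T_i$ is $\mathcal{I}$-good, $P[\varphi_i]=P[\varphi_i]_{\mathcal{I}}$, and $P[\varphi_i^j]$ decreases to $P[\varphi_i]_{\mathcal{I}}$, which gives $T_i^j\xrightarrow{d_S}T_i$. \cref{thm:dscontnpp1} then gives $\{T_1^j\wedge\cdots\wedge T_p^j\}\to\{T_1\wedge\cdots\wedge T_p\}$, which concludes the proof.

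The main obstacle is this $d_S$-convergence of the quasi-equisingular approximation to an $\mathcal{I}$-good current, together with keeping the approximants in the same cohomology class. The latter uses the Kähler current reduction; the former I would cite from \cite{Xiabook}, where $\mathcal{I}$-goodness is characterized exactly by $d_S$-convergence of quasi-equisingular approximations. A secondary technical point is the identification of the Cartier realization of $\mathbb{D}(T_i^j)$ on the log resolution.
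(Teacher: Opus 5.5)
Your proposal is correct and follows essentially the same route as the paper: reduce to $Y=X$, perturb by $\epsilon\omega$ to get $\mathcal{I}$-good Kähler currents, pass to quasi-equisingular approximations using $T_i^j\xrightarrow{d_S}T_i$ together with \cref{thm:dscontnpp1} and \cref{thm:bdiv_int_cont1}, and settle the analytic-singularity case on a log resolution via Bedford--Taylor. The differences are cosmetic: you compute the Cartier case directly from \cref{def:b_div_int_general} rather than replacing $T_i$ by $\Reg T_i$ and invoking \cref{ex:int_nef_classes}, and you justify $\mathbb{D}(T_i^j)\searrow\mathbb{D}(T_i)$ by hand where the paper cites \cite[Proposition~4.9]{Xiabdiv}.
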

\begin{proof}
    First observe that the right-hand side of \eqref{eq:DTandnpp} does define a nef b-class, thanks to \cref{prop:bime_npp} and \cref{prop:npp_strongpos}. Since the assumptions of the theorem remain valid after replacing the $T_i$'s by $\pi^*T_i$'s as well, it then suffices to prove 
    \begin{equation}\label{eq:DTandnpp_temp2}
        \Bigl( \mathbb{D}(T_1)\cap \dots \cap \mathbb{D}(T_p) \Bigr)_X= \left\{ T_1\wedge \cdots \wedge T_p \right\}
    \end{equation}

    In order to prove \eqref{eq:DTandnpp_temp2}, we can easily reduce to the case where the $T_i$'s are Kähler currents: Take a Kähler form $\omega$ and replace $T_i$ by $T_i+\epsilon \omega$ for a small enough $\epsilon>0$. If we manage to prove
    \[
     \Bigl( \mathbb{D}(T_1+\epsilon\omega)\cap \dots \cap \mathbb{D}(T_p+\epsilon\omega) \Bigr)_X= \left\{ (T_1+\epsilon\omega)\wedge \cdots \wedge (T_p+\epsilon\omega) \right\},
    \]
    letting $\epsilon\to 0+$, \eqref{eq:DTandnpp_temp2} follows, since $\mathbb{D}$ and the intersection product are both linear, see \eqref{eq:D_curr_linear} and \cref{prop:bdiv_int_prop}.

    Now assume that the $T_i$'s are Kähler currents. Take quasi-equisingular approximations $(T_i^j)_{j>0}$ of $T_i$ for each $i=1,\ldots,p$. The $\mathcal{I}$-goodness of $T_i$ guarantees that $T_i^j\xrightarrow{d_S}T_i$ as $j\to\infty$ for each $i$.
    Suppose that we can prove
    \[
    \left( \mathbb{D}(T_1^j)\cap \dots \cap \mathbb{D}(T_p^j) \right)_X= \left\{ T_1^j\wedge \cdots \wedge T_p^j \right\},\quad j>0.
    \]
    Thanks to \cite[Proposition~4.9]{Xiabdiv}, for each $i=1,\ldots,p$, the sequence $(\mathbb{D}(T_i^j))_j$ is decreasing with limit $\mathbb{D}(T_i)$. Letting $j\to\infty$ and applying \cref{thm:dscontnpp1} and \cref{thm:bdiv_int_cont1}, we conclude \eqref{eq:DTandnpp_temp2}.

    So we may assume that the $T_i$'s have analytic singularities. We may replace $X$ by a modification and assume that $T_1,\ldots,T_p$ in fact have log singularities. But then since both sides of \eqref{eq:DTandnpp} remain unchanged if we replace $T_i$ by $\Reg T_i$, it follows that we may assume that $T_i$ has bounded potential for each $i=1,\ldots,p$. In this case, $\{T_i\}$ is nef and $\mathbb{D}(T_i)=\mathbb{D}(\{T_i\})$ for $i=1,\ldots,p$. 
    In particular, by \cref{ex:int_nef_classes},
    \[
        \Bigl( \mathbb{D}(T_1)\cap \cdots \cap \mathbb{D}(T_p)\Bigr)_X=\{T_1\}\cap \cdots \cap \{T_p\}.
    \]
    By Bedford--Taylor theory, the right-hand side of \eqref{eq:DTandnpp_temp2} is the same class, and \eqref{eq:DTandnpp_temp2} follows.
\end{proof}

\begin{corollary}\label{cor:mov_int_generalized}
    Let $\alpha_1,\ldots,\alpha_p\in \mathrm{H}^{1,1}(X,\mathbb{R})$ be pseudoeffective classes, then for any modification $\pi\colon Y\rightarrow X$, we have
    \[
        \Bigl( \mathbb{D}(\alpha_1)\cap \dots \cap \mathbb{D}(\alpha_p) \Bigr)_Y= \langle \pi^*\alpha_1\wedge \cdots \wedge \pi^*\alpha_p \rangle.
    \]
\end{corollary}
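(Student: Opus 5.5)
The plan is to reduce to \cref{thm:DTandnpp} by first passing to big classes and then using currents with minimal singularities.

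\textbf{Step 1: reduction to $Y=X$.} By \cref{prop:int_indp_model}, the left-hand side does not change if we regard everything as b-classes over $Y$. We also need that $\mathbb{D}(\alpha_i)$, viewed over $Y$, equals $\mathbb{D}(\pi^*\alpha_i)$. This is immediate from \cref{def:bdiv_psefclass}, since $\langle (\pi\circ\pi')^*\alpha\rangle=\langle \pi'^*\pi^*\alpha\rangle$ for every modification $\pi'$ of $Y$. So it suffices to prove
\[
\Bigl( \mathbb{D}(\alpha_1)\cap \dots \cap \mathbb{D}(\alpha_p) \Bigr)_X= \langle \alpha_1\wedge \cdots \wedge \alpha_p \rangle.
\]

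\textbf{Step 2: the big case.} Assume every $\alpha_i$ is big, and choose currents $T_{i,\min}\in\alpha_i$ with minimal singularities. Then $\mathbb{D}(\alpha_i)=\mathbb{D}(T_{i,\min})$, as remarked after \cref{def:bdiv_psefclass}. A current with minimal singularities is model, so $P_\theta[\varphi_{\min}]=V_\theta$. Moreover $V_\theta$ is $\mathcal{I}$-model: it is the maximal element, and $P_\theta[V_\theta]_{\mathcal{I}}\le 0$ is still $\theta$-psh, hence equals $V_\theta$. Therefore $T_{i,\min}$ is $\mathcal{I}$-good. Applying \cref{thm:DTandnpp} gives
\[
\Bigl( \mathbb{D}(\alpha_1)\cap \dots \cap \mathbb{D}(\alpha_p) \Bigr)_X=\{T_{1,\min}\wedge\cdots\wedge T_{p,\min}\}=\langle \alpha_1\wedge \cdots \wedge \alpha_p \rangle,
\]
where the last equality is the definition of the movable intersection in the big case.

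\textbf{Step 3: the general case by approximation.} Fix a Kähler class $\beta$ and apply Step 2 to $\alpha_i+\epsilon\beta$. As $\epsilon\to 0+$, the right-hand side converges to $\langle\alpha_1\wedge\cdots\wedge\alpha_p\rangle$ by definition. For the left-hand side I would use \cref{thm:bdiv_int_cont1}, which requires the nets $\mathbb{D}(\alpha_i+\epsilon\beta)$ to be decreasing in $\epsilon$ with limit $\mathbb{D}(\alpha_i)$.

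\emph{Monotonicity.} For $\epsilon>\epsilon'$, \cref{prop:Dsupadd} gives
\[
\mathbb{D}(\alpha_i+\epsilon\beta)\ge \mathbb{D}(\alpha_i+\epsilon'\beta)+\mathbb{D}((\epsilon-\epsilon')\beta)\ge \mathbb{D}(\alpha_i+\epsilon'\beta),
\]
because $\mathbb{D}$ of a nef class is nef and hence pseudoeffective.

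\emph{Limit.} On a modification $Y$ we have $\mathbb{D}(\alpha_i+\epsilon\beta)_Y=\langle\pi^*\alpha_i+\epsilon\pi^*\beta\rangle$. The class $\pi^*\beta$ is only nef and big, not Kähler, so we cannot quote the definition directly. Instead, \cref{prop:mov_int_prop}(6), applied with $p=1$ and perturbing class $\pi^*\beta$, shows that this tends to $\langle\pi^*\alpha_i\rangle=\mathbb{D}(\alpha_i)_Y$.

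Combining the two gives the equality in general.

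The main obstacle I expect is in Step 3. \cref{thm:bdiv_int_cont1} is stated for nets, so the real parameter $\epsilon\to0+$ must be treated as a directed set, which is harmless. The more delicate point is the convergence on each model, which relies on \cref{prop:mov_int_prop}(6) accepting the non-Kähler class $\pi^*\beta$. If that causes trouble, I would instead squeeze $\pi^*\beta$ between Kähler classes on $Y$ and use the monotonicity in \cref{prop:mov_int_prop}(4).
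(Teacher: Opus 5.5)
Your proof is correct and follows the paper's route. You reduce to $Y=X$, perturb by $\epsilon\beta$ to make the classes big, and apply \cref{thm:DTandnpp} to currents with minimal singularities. Your Step~3 is actually more careful than the paper, which only cites \cref{prop:bdiv_int_prop}: what is really needed is the continuity \cref{thm:bdiv_int_cont1} along the decreasing net $\mathbb{D}(\alpha_i+\epsilon\beta)$. Your worry about $\pi^*\beta$ being only nef and big is unfounded, since \cref{prop:mov_int_prop}(6) is stated for arbitrary pseudoeffective perturbing classes $\beta_i$.
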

In other words, the intersection theory of nef b-divisors generalizes the movable intersection theory, as promised in the introduction.
\begin{proof}
    We may assume that $Y=X$.

    Fix a Kähler class $\beta$ on $X$ and replace each $\alpha_i$ by $\alpha_i+\epsilon\beta$, we may assume that each $\alpha_i$ is big. 
    This is allowed thanks to \cref{prop:mov_int_prop}(6) and \cref{prop:bdiv_int_prop}.
    
    Then it suffices to apply \cref{thm:DTandnpp} to the currents $T_i\in \alpha_i$ with minimal singularities.
\end{proof}

As an application, we prove the following fundamental result regarding the movable intersection.
\begin{proposition}\label{prop:mov_int_push_ineq}
    Let $\pi\colon Y\rightarrow X$ be a proper bimeromorphic morphism from a Kähler manifold $Y$. Let $\alpha_1,\ldots,\alpha_p\in \mathrm{H}^{1,1}(Y,\mathbb{R})$ be pseudoeffective classes. Then
    \begin{equation}\label{eq:mov_int_push_ineq}
    \pi_*\langle \alpha_1\wedge \cdots \wedge \alpha_p\rangle\leq_X \langle \pi_*\alpha_1\wedge \cdots \wedge \pi_*\alpha_p\rangle.
    \end{equation}
\end{proposition}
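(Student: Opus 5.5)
The plan is to translate both sides of \eqref{eq:mov_int_push_ineq} into the b-divisor intersection theory via \cref{cor:mov_int_generalized}, and then compare the nef b-divisors involved using monotonicity. Regard everything over $Y$; since $\bDiv^\bullet(Y)=\bDiv^\bullet(X)$ canonically, the b-divisors $\mathbb{D}(\alpha_i)$ (built on $Y$) and $\mathbb{D}(\pi_*\alpha_i)$ (built on $X$) live in the same space. By \cref{cor:mov_int_generalized} applied on $Y$, we have
\[
\langle \alpha_1\wedge\cdots\wedge\alpha_p\rangle=\Bigl(\mathbb{D}(\alpha_1)\cap\cdots\cap\mathbb{D}(\alpha_p)\Bigr)_Y,
\]
and by compatibility of b-class components under pushforward, the left-hand side of \eqref{eq:mov_int_push_ineq} equals $\bigl(\mathbb{D}(\alpha_1)\cap\cdots\cap\mathbb{D}(\alpha_p)\bigr)_X$. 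Applying the same corollary on $X$ (with the identity modification) identifies the right-hand side with $\bigl(\mathbb{D}(\pi_*\alpha_1)\cap\cdots\cap\mathbb{D}(\pi_*\alpha_p)\bigr)_X$. Here we use that \cref{cor:mov_int_generalized} and the intersection product are invariant under passing to proper bimeromorphic models (\cref{prop:int_indp_model}, and Hironaka's Chow lemma to replace $Y$ by a modification of $X$ dominating it if needed).

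By the monotonicity in \cref{prop:bdiv_int_prop}(4), applied one variable at a time, it then suffices to prove the b-divisor inequality
\[
\mathbb{D}(\alpha)\leq \mathbb{D}(\pi_*\alpha)
\]
for any pseudoeffective $\alpha\in\mathrm{H}^{1,1}(Y,\mathbb{R})$. This is the key step. I would first reduce to $\alpha$ big by adding $\epsilon\beta$ with $\beta$ Kähler on $Y$ and letting $\epsilon\to0+$. Note that $\pi_*\beta$ is big and $\mathbb{D}(\alpha+\epsilon\beta)$ decreases to $\mathbb{D}(\alpha)$, with continuity coming from \cref{prop:mov_int_prop}(6) componentwise. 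On the other side, $\mathbb{D}(\pi_*\alpha+\epsilon\pi_*\beta)\to\mathbb{D}(\pi_*\alpha)$ for the same reason.

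For big $\alpha$, I would take a current $T\in\alpha$ with minimal singularities. Then $\pi_*T\in\pi_*\alpha$ is a closed positive current, and since $\pi_*\alpha$ has a current $S_{\min}$ with minimal singularities, we get $\pi_*T\preceq S_{\min}$. Consequently, on every modification $Z$ of $X$, the Lelong numbers satisfy $\nu(\pi_*T,E)\geq\nu(S_{\min},E)$ for all prime divisors $E$ on $Z$. This gives $\mathbb{D}(\pi_*T)\leq\mathbb{D}(S_{\min})=\mathbb{D}(\pi_*\alpha)$, the inequality holding componentwise because the difference of the classes is the effective divisor class $\sum(\nu(\pi_*T,E)-\nu(S_{\min},E))\{E\}$ (both pull-backs lie in the same class). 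Finally, on any modification $Z$ dominating $Y$ via $h\colon Z\to Y$, we have $(\pi\circ h)^*\pi_*T=h^*T+(\text{effective divisor})$, because $\pi^*\pi_*T-T$ is a positive divisorial current supported on the exceptional locus. Hence $\Reg h^*T\leq \Reg (\pi h)^*\pi_*T$ in cohomology, i.e.\ $\mathbb{D}(T)\leq\mathbb{D}(\pi_*T)$. This is the same argument as \cref{lma:mod_nef_dom}. Since $\mathbb{D}(\alpha)=\mathbb{D}(T)$, the claim follows.

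The main obstacle I expect is the bookkeeping in this last comparison: verifying $\Reg h^*T\leq\Reg(\pi h)^*\pi_*T$ requires Siu's decomposition to behave well, namely that adding an effective divisor current can only add to the divisorial part, while the regular parts are compared via $\nu$. A cleaner route is to note directly that for every prime $E$ over $Y$, $\nu(T,E)\leq\nu(\pi^*\pi_*T,E)$, and that $\mathbb{D}(\bullet)_Z$ equals the class minus $\sum_E\nu(\bullet,E)\{E\}$. The difference then splits as the pull-back of the (effective) exceptional divisor minus the Lelong-number differences, and I would check that this combination is effective. The limiting step for non-big classes is routine by \cref{thm:bdiv_int_cont1}.
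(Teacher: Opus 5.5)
Your overall route is the paper's: reduce to big classes, establish $\mathbb{D}(\alpha)\le\mathbb{D}(\pi_*\alpha)$, apply monotonicity of the b-divisor product, and translate both sides with \cref{cor:mov_int_generalized}. The paper does not prove the key inequality; it cites Lemma~3.12 of \cite{Xiabdiv}. Your self-contained proof of that inequality has a false step that needs repair.

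\textbf{The false step.} You claim that $\pi^*\pi_*T-T$ is a positive divisorial current. This fails as soon as $T$ charges a $\pi$-exceptional divisor, and a current with minimal singularities in a big class can do so. Take $\pi$ to be the blow-up of a point with exceptional divisor $E$, let $\omega$ be Kähler on $X$, and set $\alpha=\pi^*\{\omega\}+\{E\}$. By the support theorem, every closed positive $S\in\alpha$ satisfies $S=\pi^*\pi_*S+[E]$, hence $S\preceq \pi^*\omega+[E]$. So $T=\pi^*\omega+[E]$ has minimal singularities, yet $\pi^*\pi_*T-T=-[E]$. The same example defeats your ``cleaner route'': $\nu(T,E)=1>0=\nu(\pi^*\pi_*T,E)$. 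The argument of \cref{lma:mod_nef_dom} works only because there the class is modified nef, so $T$ is non-divisorial.

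\textbf{The repair.} No sign condition is needed. By the support theorem, $h^*\pi^*\pi_*T-h^*T=\sum_F e_F[F]$ for finitely many prime divisors $F$ on $Z$ and real coefficients $e_F$. Subtracting the Siu decompositions gives $\Reg\, h^*\pi^*\pi_*T-\Reg\, h^*T=\sum_F g_F[F]$ for some real $g_F$. Move the terms with negative $g_F$ to the other side and take generic Lelong numbers along each $F$. Since neither regular part has any divisorial Lelong numbers, this forces $g_F=0$. Hence $\Reg\,(\pi h)^*\pi_*T=\Reg\, h^*T$, which means $\mathbb{D}(T)=\mathbb{D}(\pi_*T)$ with equality.

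Your comparison $\mathbb{D}(\pi_*T)\le\mathbb{D}(S_{\min})=\mathbb{D}(\pi_*\alpha)$ via Lelong numbers is correct. Combined with the repaired step, it gives $\mathbb{D}(\alpha)\le\mathbb{D}(\pi_*\alpha)$, and the rest of your argument goes through as written.
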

\begin{proof}
    Fix a Kähler class $\beta$ on $Y$ and replacing $\alpha_i$ by $\alpha_i+\epsilon \beta$ for some small enough $\epsilon>0$, we may assume that the $\alpha_i$'s are big. This is allowed thanks to \cref{prop:mov_int_prop}(6).

    By \cite[Lemma~3.12]{Xiabdiv}, we have
    \[
        \mathbb{D}(\pi_*\alpha_i)\geq \mathbb{D}(\alpha_i),\quad i=1,\ldots,p.
    \]
    
    Now it follows from \cref{prop:bdiv_int_prop} that
    \[
        \mathbb{D}(\pi_*\alpha_1)\cap \cdots \cap \mathbb{D}(\pi_*\alpha_p)\geq \mathbb{D}(\alpha_1)\cap \cdots \cap \mathbb{D}(\alpha_p).
    \]
    In particular,
    \[
        \Bigl( \mathbb{D}(\pi_*\alpha_1)\cap \cdots \cap \mathbb{D}(\pi_*\alpha_p) \Bigr)_X\geq_X \Bigl(\mathbb{D}(\alpha_1)\cap \cdots \cap \mathbb{D}(\alpha_p)\Bigr)_X.
    \]
    Using \cref{cor:mov_int_generalized}, we can rewrite this as \eqref{eq:mov_int_push_ineq}.
\end{proof}
\begin{theorem}\label{thm:nefbint_aslimit}
    Let $\mathbb{D}_1,\ldots,\mathbb{D}_p$ be nef b-divisors over $X$. Then
    \begin{equation}\label{eq:int_b_div_mov_limit}
        \Bigl( \mathbb{D}_1 \cap \dots \cap \mathbb{D}_p \Bigr)_X= \lim_{\pi\colon Y\rightarrow X} \pi_* \Bigl\langle \mathbb{D}_{1,Y}\wedge \dots \wedge \mathbb{D}_{p,Y} \Bigr\rangle,
    \end{equation}
    where $\pi\colon Y\rightarrow X$ runs over all modifications of $X$.
\end{theorem}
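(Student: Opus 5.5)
The plan is to sandwich the net $\pi_*\langle \mathbb{D}_{1,Y}\wedge\cdots\wedge\mathbb{D}_{p,Y}\rangle$ between two quantities. From below it will be bounded by $(\mathbb{D}_1\cap\cdots\cap\mathbb{D}_p)_X$; from above it will be bounded by something that converges to the same limit. We then conclude by monotonicity and \cref{prop:squeeze}.

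\textbf{Lower bound and monotonicity of the net.} Fix a modification $\pi\colon Y\rightarrow X$. The components satisfy $\mathbb{D}_{i,Y}$ modified nef, and for every further modification $Y'\rightarrow Y$ we have $\mathbb{D}_{i,Y'}\leq \langle g^*\mathbb{D}_{i,Y}\rangle = \mathbb{D}(\mathbb{D}_{i,Y})_{Y'}$ by \cref{lma:mod_nef_dom}, since $\mathbb{D}_{i,Y'}$ is modified nef with $g_*\mathbb{D}_{i,Y'}=\mathbb{D}_{i,Y}$. Hence, regarding everything over $Y$,
\[
\mathbb{D}_i\leq \mathbb{D}(\mathbb{D}_{i,Y}) \quad\text{as b-divisors over } Y.
\]
By \cref{prop:bdiv_int_prop}(4) and \cref{cor:mov_int_generalized} applied on $Y$, this gives
\[
(\mathbb{D}_1\cap\cdots\cap\mathbb{D}_p)_Y\leq_Y \langle \mathbb{D}_{1,Y}\wedge\cdots\wedge\mathbb{D}_{p,Y}\rangle .
\]
Pushing forward via \cref{prop:pos_push}, and using \cref{prop:int_indp_model} to see that $\pi_*$ of the $Y$-component is the $X$-component, we obtain the lower bound. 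The same argument for $Y'\rightarrow Y$, combined with \cref{prop:mov_int_push_ineq} (with $g_*\mathbb{D}_{i,Y'}=\mathbb{D}_{i,Y}$), shows that the net is decreasing. By \cref{prop:mono_conv}, applied in the closed pointed cone of positive classes from \cref{prop:Posclass_goodcone}, the limit exists and is $\geq_X(\mathbb{D}_1\cap\cdots\cap\mathbb{D}_p)_X$.

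\textbf{Upper bound.} Use \cref{cor:Demapp} to choose decreasing sequences of nef Cartier b-divisors $\mathbb{D}_i^k\downarrow\mathbb{D}_i$, and fix $k$. Choose $Y$ on which all $\mathbb{D}_i^k$, $i=1,\ldots,p$, are realized by nef classes $\beta_i$. For any $Y'$ dominating $Y$ we have $\mathbb{D}_{i,Y'}\leq \mathbb{D}^k_{i,Y'}=g^*\beta_i$, and the latter is nef. Monotonicity of the movable intersection (\cref{prop:mov_int_prop}(4)) then gives
\[
\langle \mathbb{D}_{1,Y'}\wedge\cdots\wedge\mathbb{D}_{p,Y'}\rangle\leq_{Y'} g^*\beta_1\cap\cdots\cap g^*\beta_p = (\mathbb{D}_1^k\cap\cdots\cap\mathbb{D}^k_p)_{Y'}.
\]
Here the equality is \cref{ex:int_nef_classes}, applied over $Y$. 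Pushing forward to $X$ shows that the limit is $\leq_X(\mathbb{D}_1^k\cap\cdots\cap\mathbb{D}_p^k)_X$. Letting $k\to\infty$ and invoking \cref{thm:bdiv_int_cont1} gives the reverse inequality. Equality then follows from pointedness of the positive cone, \ref{cond:c2}.

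\textbf{Main obstacle.} The delicate step is the comparison $\mathbb{D}_i\leq\mathbb{D}(\mathbb{D}_{i,Y})$ over $Y$. It uses \cref{lma:mod_nef_dom} together with the monotonicity of $\langle\cdot\rangle$ on the pullback; this is exactly the argument of \cite[Lemma~3.12]{Xiabdiv}. The other point requiring care is that the upper bound also requires $\mathbb{D}_{i,Y'}\leq g^*\beta_i$, which holds because $\mathbb{D}_i\leq\mathbb{D}_i^k$ componentwise.
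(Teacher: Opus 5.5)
Your proof is correct. It uses the same two-sided sandwich as the paper, but the hard direction is carried out differently.

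\textbf{The inequality $(\mathbb{D}_1\cap\cdots\cap\mathbb{D}_p)_X\leq_X$ the limit.} The paper argues with currents. It perturbs by $\epsilon\mathbb{D}(\omega)$ and writes $\mathbb{D}_i=\mathbb{D}(T_i)$ with $\mathcal{I}$-good Kähler currents. It then compares $\{T_1\wedge\cdots\wedge T_p\}$ with the product of currents with minimal singularities, via \cref{thm:DTandnpp} and \cref{thm:mono_thm_partial}. Your comparison $\mathbb{D}_i\leq\mathbb{D}(\mathbb{D}_{i,Y})$, followed by \cref{prop:bdiv_int_prop}(4) and \cref{cor:mov_int_generalized}, is the same idea in b-divisorial language. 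Two small remarks:
\begin{itemize}
\item The step $\mathbb{D}_{i,Y'}\leq\langle g^*\mathbb{D}_{i,Y}\rangle$ needs more than \cref{lma:mod_nef_dom}: it also needs the monotonicity of $\langle\cdot\rangle$ and the identity $\langle\mathbb{D}_{i,Y'}\rangle=\mathbb{D}_{i,Y'}$, as you note at the end.
\item The net is decreasing directly by \cref{prop:mov_int_push_ineq}; the ``same argument'' is not needed there.
\end{itemize}

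\textbf{The reverse inequality.} The paper proceeds as follows:
\begin{itemize}
\item it reduces to $p=n$ through \cref{lma:posclass_vanish}, and then to the big case;
\item it represents $\mathbb{D}_i=\mathbb{D}(T_i)$ and proves the bound for currents with analytic singularities on a log resolution;
\item it passes to the limit along quasi-equisingular approximations, using the $d_S$-continuity of \cref{thm:dscontnpp1}.
\end{itemize}
You instead take Cartier approximants $\mathbb{D}_i^k$ from \cref{cor:Demapp}. Above a model realizing them, every term of the net is dominated by the cup product $g^*\beta_1\cap\cdots\cap g^*\beta_p=(\mathbb{D}_1^k\cap\cdots\cap\mathbb{D}_p^k)_{Y'}$. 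You then conclude with \cref{thm:bdiv_int_cont1}.

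\textbf{Comparison.} Your argument is the b-divisorial version of the paper's Step~3: analytic singularities correspond to Cartier b-divisors, quasi-equisingular approximation to \cref{cor:Demapp}, and $d_S$-continuity to \cref{thm:bdiv_int_cont1}. It works directly in every degree $p$, needs no reduction to top degree or to big classes, and only reuses results already established, so it is shorter. The paper's version makes the link with non-pluripolar masses explicit, but that is not needed for the statement itself.
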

We first observe that the right-hand side of \eqref{eq:int_b_div_mov_limit} is decreasing with respect to $\leq_X$ thanks to \cref{prop:mov_int_push_ineq}, and hence the limit exists by \cref{prop:mono_conv}.

The striking point is that the left-hand side of \eqref{eq:int_b_div_mov_limit} is a linear intersection product, while each term of the right-hand side is non-linear!
\begin{proof}
    \textbf{Step~1}. We first prove the $\leq_X$ inequality. 
    
    For this purpose, it suffices to prove the following:
    \[
        \left( \mathbb{D}_1 \cap \dots \cap \mathbb{D}_p \right)_X \leq_X \langle \mathbb{D}_{1,X}\wedge \dots \wedge \mathbb{D}_{p,X} \rangle.
    \]
    After adding a small multiple of $\mathbb{D}(\omega)$ to each $\mathbb{D}_i$, where $\omega$ is a Kähler form on $X$, we may assume that the $\mathbb{D}_i$'s are big and $\mathbb{D}_i=\mathbb{D}(T_i)$ for some $\mathcal{I}$-good non-divisorial Kähler currents $T_i$ for each $i=1,\ldots,p$. Then due to \cref{thm:DTandnpp}, it suffices to show that
    \[
        \left\{ T_1\wedge \cdots \wedge T_p \right\} \leq_X \left\{ T_{1,\min}\wedge \cdots \wedge T_{p,\min} \right\},
    \]
    where $T_{i,\min}\in \mathbb{D}_{i,X}$ is a current with minimal singularities. Then our assertion follows from the monotonicity theorem \cref{thm:mono_thm_partial}.

    \textbf{Step~2}. We reduce \eqref{eq:int_b_div_mov_limit} to the case of $p=n$ and the $\mathbb{D}_i$'s are big and $\mathbb{D}_i=\mathbb{D}(T_i)$ for some non-divisorial $\mathcal{I}$-good Kähler current $T_i$ for each $i=1,\ldots,p$.

    Thanks to \cref{lma:posclass_vanish} and Step~1, we may assume $p=n$. If suffices to handle the case where the $\mathbb{D}_i$'s are big as well. In fact, take a Kähler form $\omega$ on $X$. Suppose that for $\epsilon>0$ we have established
    \[
    \biggl( \Bigl( \mathbb{D}_1+\epsilon\mathbb{D}(\omega)\Bigr) \cap \dots \cap \Bigl( \mathbb{D}_n+\epsilon\mathbb{D}(\omega)\Bigr) \biggr)_X\geq  \lim_{\pi\colon Y\rightarrow X}\Bigl\langle (\mathbb{D}_{1,Y}+\epsilon\pi^*\{\omega\})\wedge \dots \wedge (\mathbb{D}_{n,Y}+\epsilon\pi^*\{\omega\}) \Bigr\rangle,
    \]
    then from the monotonicity \cref{prop:mov_int_prop}(4), we have
    \[
   \biggl( \Bigl( \mathbb{D}_1+\epsilon\mathbb{D}(\omega)\Bigr) \cap \dots \cap \Bigl( \mathbb{D}_n+\epsilon\mathbb{D}(\omega)\Bigr) \biggr)_X\geq  \lim_{\pi\colon Y\rightarrow X} \langle \mathbb{D}_{1,Y}\wedge \dots \wedge \mathbb{D}_{n,Y} \rangle.
    \]
    Letting $\epsilon \to 0+$, our assertion \eqref{eq:int_b_div_mov_limit} follows.

    \textbf{Step~3}. 
    We need to show that
    \begin{equation}\label{eq:intTiD_temp1}
    \int_X T_1\wedge \cdots \wedge T_n \geq \lim_{\pi\colon Y\rightarrow X} \Bigl\langle \mathbb{D}(T_1)_Y \wedge \dots \wedge \mathbb{D}(T_n)_Y \Bigr\rangle.
    \end{equation}
    When the $T_i$'s have analytic singularities, we may replace $X$ by a modification and reduce to the case where the $T_i$'s have log singularities. Then replacing $T_i$ by $\Reg T_i$, we may assume that the $T_i$'s have bounded local potentials. Then the $\{T_i\}$'s are indeed nef and
    \[
    \int_X T_1\wedge \cdots \wedge T_n = \{T_1\}\cap \cdots \cap \{T_n\} =\Bigl\langle \{T_1\} \wedge \dots \wedge \{T_n\} \Bigr\rangle.
    \]

    In general, take quasi-equisingular approximations $(T_i^j)_{j>0}$ of each $T_i$. Then we know that for each $j>0$,
    \[
    \int_X T_1^j\wedge \cdots \wedge T_n^j \geq \lim_{\pi\colon Y\rightarrow X} \Bigl\langle \mathbb{D}(T_1^j)_Y \wedge \dots \wedge \mathbb{D}(T_n^j)_Y \Bigr\rangle\geq \lim_{\pi\colon Y\rightarrow X} \Bigl\langle \mathbb{D}(T_1)_Y \wedge \dots \wedge \mathbb{D}(T_n)_Y \Bigr\rangle.
    \]
    Letting $j\to\infty$ and applying \cref{thm:dscontnpp1}, we conclude \eqref{eq:intTiD_temp1}.
\end{proof}

\section{Restricted volume}\label{sec:resvol}
Let $X$ be a connected compact Kähler manifold of dimension $n$. In this section, $p$ will denote a non-negative integer.
\subsection{The trace operator}
We first recall the notion of trace operators introduced in \cite{DX24}. For the details, see \cite[Chapter~8]{Xiabook}.

Assume that $D$ is a prime divisor on $X$.

\begin{definition}
    Let $T$ be a closed positive $(1,1)$-current on $X$ with $\nu(T,D)=0$. We define $\Tr_D T$ as a closed positive $(1,1)$-current on $\widetilde{D}$, the normalization of $D$, modulo $P$-equivalence. The definition goes as follows:
    \begin{enumerate}
        \item When $T$ has analytic singularities, we can simply define $\Tr_D T$ as $T|_D$;
        \item in general, take a quasi-equisingular approximation $(T_j)_j$ of $T$ and define $\Tr_D T$ as an arbitrary $d_S$-limit of $(T_j|_{\tilde{D}})_j$.
    \end{enumerate}
\end{definition}
When $T$ is a Kähler current on $X$ with $\nu(T,D)=0$, we can take a representative $\Tr_D T$ as a Kähler current in $\{T\}|_{\widetilde{D}}$. 
When $\widetilde{D}$ is not smooth, this means that we can find a smooth closed real $(1,1)$-form $\theta\in \{T\}$ so that $\Tr_D T$ can be represented by $\theta|_{\widetilde{D}}+\ddc \varphi$ for some $\theta|_{\widetilde{D}}$-psh function $\varphi$, and the current $\Tr_D T$ dominates a Kähler form on $\tilde{D}$.
We always choose such a representative. 

In this case, we define the \emph{restricted volume} of $T$ on $X$ as
\[
\vol_{X|D}(T)\coloneqq \vol \left( \Tr_D T \right).
\]
Note that $\vol_{X|D}(T)$ is independent of the choice of $\Tr_D T$, since any two choices are $P$-equivalent.

More generally, if $T$ is a closed positive $(1,1)$-current on $X$ with $\nu(T,D)=0$, we take a Kähler form $\omega$ on $X$, then we can define
\begin{equation}\label{eq:resvolcurr}
\vol_{X|D}(T)\coloneqq \lim_{\epsilon\to 0+}\vol_{X|D}(T+\epsilon\omega)
\end{equation}
This definition agrees with the previous definition when $T$ is a Kähler current. When $\vol_{X|D}(T)>0$, we can always find a representative of $\Tr_D T$ in $\{T\}|_{\widetilde{D}}$, and the restricted volume $\vol_{X|D}(T)$ is just the volume of such a representative. See \cite[Example~8.1.6]{Xiabook} for the details.

When $\nu(T,D)>0$, we define
\[
\vol_{X|D}(T)\coloneqq 0.
\]

Next we recall that there is a notion of trace operators of nef b-divisors defined in \cite{Xiabdiv}. Given a nef b-divisor $\mathbb{D}$ over $X$, a prime divisor $D$ on $X$, then there is a canonical way to define a nef b-divisor $\Tr_D\mathbb{D}$ over $D$. Instead of recalling the lengthy definition, we use the following assertion proved in \cite{Xiabdiv} as the definition:
\begin{definition}\label{def:trbdiv}
Let $\mathbb{D}$ be a nef b-divisor over $X$. We define $\Tr_D \mathbb{D}$ as a nef b-divisor over $D$ as follows:
\begin{enumerate}
    \item When $\mathbb{D}=\mathbb{D}(T)$ for a closed positive Kähler current $T$ on $X$, then we let
    \[
    \Tr_D\mathbb{D}\coloneqq \mathbb{D}(\Tr_D T).
    \]
    \item In general, we define
    \[
        \Tr_D \mathbb{D}\coloneqq \lim_{\epsilon\to 0+} \Tr_D \Bigl( \mathbb{D}+\epsilon\mathbb{D}(\omega) \Bigr)
    \]
    for any Kähler form $\omega$ on $X$.
\end{enumerate}
\end{definition}
Note that in (2), $\mathbb{D}+\epsilon \mathbb{D}(\omega)$ for any $\epsilon>0$ satisfies the condition in (1), as a consequence of \cref{thm:main_paper1}.

A priori, $\Tr_D T$ is a current on $\widetilde{D}$, so $\Tr_D \mathbb{D}$ is just a nef b-divisor over $\widetilde{D}$. But by definition, nef b-divisors over $D$ are the same as those over $\widetilde{D}$, so we can also regard $\Tr_D \mathbb{D}$ as a nef b-divisor over $D$.

Whenever $\Tr_D T$ has a representative in $\{T\}|_{\tilde{D}}$ (in particular when $\vol_{X|D}(T)>0$), we always have
\begin{equation}\label{eq:TrDmathbbD}
\Tr_D \mathbb{D}=\mathbb{D}(\Tr_D T),
\end{equation}
where $\Tr_D T$ is understood as a representative in $\{T\}|_{\tilde{D}}$. This is proved in \cite[Theorem~7.5]{Xiabdiv}.

\begin{proposition}\label{prop:trace_indepmodel}
    Let $\pi\colon Y\rightarrow X$ be a modification with $D'$ as the strict transform of $D$, then for any nef b-divisor $\mathbb{D}$ over $X$, we have
    \begin{equation}\label{eq:trd=trd}
        \Tr_D \mathbb{D}=\Tr_{D'} \mathbb{D}.
    \end{equation}
\end{proposition}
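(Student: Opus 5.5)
We first reduce to the case where $\mathbb{D}=\mathbb{D}(T)$ for a non-divisorial $\mathcal{I}$-good Kähler current $T$ on $X$. By \cref{def:trbdiv}(2), both sides of \eqref{eq:trd=trd} are defined as limits as $\epsilon\to 0+$ of the traces of $\mathbb{D}+\epsilon\mathbb{D}(\omega)$, where $\omega$ is a Kähler form on $X$. On $Y$ the natural perturbation is $\mathbb{D}+\epsilon\mathbb{D}(\omega_Y)$ for a Kähler form $\omega_Y$ on $Y$. Since $\pi^*\omega\le C\omega_Y$ and $\omega_Y\le$ a Kähler current in $\pi^*\{C'\omega\}$ up to exceptional divisors, we expect the two limits to agree. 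Concretely, we would use the monotonicity and linearity of the trace, plus the independence of the choice of Kähler form in \cref{def:trbdiv}(2), applied on $Y$. This lets us perturb by $\epsilon\mathbb{D}(\pi^*\omega)$ on $Y$ as well, after noting that $\pi^*\omega+\delta\omega_Y$ is Kähler and letting $\delta\to0$. With this in hand, it suffices to prove $\Tr_D\mathbb{D}(T)=\Tr_{D'}\mathbb{D}(\pi^*T)$ for a Kähler current $T$ with $\mathbb{D}(T)=\mathbb{D}(\pi^*T)$ as b-divisors; the latter equality is tautological from \cref{def:Dcurrent}.

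\textbf{Step 1.} On $Y$, $\pi^*T$ is no longer a Kähler current. However, $\pi^*T-\sum a_iE_i$ is a Kähler current for suitable small $a_i>0$ on the exceptional divisors, which differ from $D'$. We would therefore first check the case where $T$ has analytic singularities. There $\Tr_DT=T|_{\tilde D}$ and $\Tr_{D'}\pi^*T=(\pi^*T)|_{\tilde D'}=(\pi|_{\tilde D'})^*(T|_{\tilde D})$, since $\tilde D'\to\tilde D$ is a modification. Hence $\mathbb{D}(\Tr_{D'}\pi^*T)=\mathbb{D}(\Tr_DT)$ as b-divisors over $D$, because $\mathbb{D}(\bullet)$ is invariant under pull-back by modifications (the remark after \cref{def:Dcurrent}). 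Here we need $\pi^*T$ to be handled via the $\epsilon\mathbb{D}(\omega_Y)$ perturbation in \eqref{eq:resvolcurr}-style limits, which we plan to control by monotonicity.

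\textbf{Step 2.} For general $T$, take a quasi-equisingular approximation $(T_j)$ of $T$. Then $(\pi^*T_j)$ is a quasi-equisingular approximation of $\pi^*T$ up to the harmless $\epsilon_j\omega$ error. The traces are $d_S$-limits of the restrictions, and by \cite[Proposition~4.9]{Xiabdiv} the b-divisors $\mathbb{D}(T_j|_{\tilde D})$ decrease to $\mathbb{D}(\Tr_DT)$ in view of \eqref{eq:TrDmathbbD}. Combining this with Step 1 for each $j$ and passing to the limit yields the claim.

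The main obstacle is the compatibility of Kähler perturbations on $X$ versus $Y$. In particular, we must ensure that the limits in \cref{def:trbdiv}(2) computed with $\pi^*\omega$, which is only semipositive, coincide with those for a genuine Kähler form on $Y$. We would handle this by squeezing: bound $\pi^*\omega\le\omega_Y'\le C\pi^*\omega+[\text{exceptional}]$, use the monotonicity of $\Tr$ in $\mathbb{D}$, and observe that exceptional divisors other than $D'$ do not affect the trace b-divisor after restriction.
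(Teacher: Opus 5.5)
Your route works in outline but differs from the paper's in where the work is done, and two of your justifications fail as written.

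\textbf{Comparison with the paper.} After the same reduction to $\mathbb{D}=\mathbb{D}(T)$, with $T$ a non-divisorial $\mathcal{I}$-good Kähler current, the paper cites \cite[Lemma~8.2.1]{Xiabook}: $\tilde p^*\Tr_DT\sim_P\Tr_{D'}\pi^*T$, where $\tilde p\colon\widetilde{D'}\to\widetilde{D}$. It notes $\nu(\pi^*T,D')=0$ and then applies $\mathbb{D}$. Implicitly it uses \eqref{eq:TrDmathbbD} on $Y$ to identify $\Tr_{D'}\mathbb{D}(\pi^*T)$ with $\mathbb{D}(\Tr_{D'}\pi^*T)$, even though $\pi^*T$ is not a Kähler current. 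Your Steps~1--2 (restriction commutes with pull-back for analytic singularities, then approximate) amount to a proof of that lemma. This makes the argument more self-contained.

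\textbf{The ``main obstacle'' is not an obstacle.} The squeeze you sketch needs monotonicity of $\Tr_{D'}$ between b-divisors with different root classes. \cref{prop:trace_basic}(1) assumes $\mathbb{D}_X=\mathbb{D}'_X$, and without that assumption monotonicity fails in general. For example, on the blow-up of $\mathbb{P}^2$ at a point, with $H$ the pulled-back line class and $E$ the exceptional curve, $\mathbb{D}(2H-E)\le\mathbb{D}(2H)$, yet their traces on $E$ have degrees $1$ and $0$. None of this is needed. Additivity (\cref{prop:trace_basic}(2)) applied on $Y$ gives $\Tr_{D'}(\mathbb{D}+\epsilon\mathbb{D}(\omega))=\Tr_{D'}\mathbb{D}+\epsilon\Tr_{D'}\mathbb{D}(\omega)\to\Tr_{D'}\mathbb{D}$, which is all the reduction requires. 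The non-Kähler current $\pi^*T$ is handled by \eqref{eq:TrDmathbbD}. In the analytic case plain linearity suffices: $\Tr_{D'}\mathbb{D}(\pi^*T+\epsilon\omega_Y)=\mathbb{D}\bigl((\pi^*T)|_{\widetilde{D'}}\bigr)+\epsilon\mathbb{D}\bigl(\omega_Y|_{\widetilde{D'}}\bigr)$.

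\textbf{Step~2 needs different tools.}
\begin{enumerate}
\item \cite[Proposition~4.9]{Xiabdiv} is about quasi-equisingular approximations of a current, not about the restrictions $T_j|_{\widetilde D}$. So it does not give $\mathbb{D}(T_j|_{\widetilde D})\searrow\mathbb{D}(\Tr_DT)$.
\item You assert without proof that $(\pi^*T_j)_j$ is quasi-equisingular for $\pi^*T$; the multiplier-ideal condition does not obviously transfer to $Y$.
\item Passing to the limit at the b-divisor level on $Y$ is also delicate. The roots $\mathbb{D}(\pi^*T_j)_Y=\{\Reg\pi^*T_j\}$ may move with $j$ when $T$ has positive Lelong numbers along $\pi$-exceptional divisors, so \cref{cor:conttrace} does not apply there.
\end{enumerate}
Work with currents instead. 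The masses in \eqref{eq:ds_biineq} are invariant under $\pi^*$ and under $\tilde p^*$. Hence $\pi^*T_j\xrightarrow{d_S}\pi^*T$ in the fixed big class $\pi^*\{T\}$, and \cref{thm:trace_cont} on $Y$ gives $(\pi^*T_j)|_{\widetilde{D'}}\xrightarrow{d_S}\Tr_{D'}\pi^*T$. On the other side, $\tilde p^*(T_j|_{\widetilde D})\xrightarrow{d_S}\tilde p^*\Tr_DT$. Your Step~1 says the two sequences coincide, so $\tilde p^*\Tr_DT\sim_P\Tr_{D'}\pi^*T$. You then finish exactly as the paper does.
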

Here on the right-hand side, we regard $\mathbb{D}$ as a nef b-divisor over $Y$ and $\Tr_{D'}\mathbb{D}$ is a nef b-divisor over $D'$, which can be canonically identified with a nef b-divisor over $D$.
\begin{proof}
    Take a Kähler form $\omega$ on $X$. It suffice to prove \eqref{eq:trd=trd} with $\mathbb{D}+\epsilon\mathbb{D}(\omega)$ in place of $\mathbb{D}$ for any $\epsilon>0$. We may then assume that  there is a non-divisorial $\mathcal{I}$-good Kähler current $T$ on $X$ so that $\mathbb{D}=\mathbb{D}(T)$. Then $\Tr_D T$ can be represented by a current in $\{T\}|_{\widetilde{D}}$. We fix such a representative.

    Consider the following commutative diagram 
    \[
    \begin{tikzcd}
\widetilde{D'} \arrow[r] \arrow[d, "\tilde{p}"] & D' \arrow[r, hook] \arrow[d, "p"] & Y \arrow[d, "\pi"] \\
\widetilde{D} \arrow[r]                             & D \arrow[r, hook]                 & X.                 
\end{tikzcd}
    \]
    Observe that $\nu(\pi^*T,D')=0$ by Zariski's main theorem.
    From the basic properties of the trace operator \cite[Lemma~8.2.1]{Xiabook}, we have
    \[
    \tilde{p}^*\Tr_D T \sim_{P} \Tr_{D'}\pi^*T.
    \]
    In particular, $\Tr_{D'}\pi^*T$ can be represented by a current in $\pi^*\{T\}|_{\widetilde{D'}}$, we fix such a representative.
    
    Taking the induced b-divisors, we get
    \[
        \mathbb{D}\left(\tilde{p}^*\Tr_D T\right) = \mathbb{D} \left( \Tr_{D'}\pi^*T \right).
    \]
    This is exactly \eqref{eq:trd=trd}.
\end{proof}

We need a few basic properties of the trace operator.
\begin{proposition}\label{prop:trace_basic}
    Let $\mathbb{D}$, $\mathbb{D}'$ be nef b-divisors over $X$, and $\lambda\geq 0$. Then we have the following properties:
    \begin{enumerate}
        \item When $\mathbb{D}\leq \mathbb{D}'$ and $\mathbb{D}'_X=\mathbb{D}_X$, we have 
        \[
        \Tr_D\mathbb{D}\leq \Tr_D \mathbb{D}'.
        \]
        \item The trace operator is additive:
        \[
        \Tr_D \left( \mathbb{D}+\mathbb{D}'\right)=\Tr_D \mathbb{D}+\Tr_D \mathbb{D}'.
        \]
        \item The trace operator is homogeneous:
        \[
        \Tr_D \left( \lambda\mathbb{D}\right)=\lambda\Tr_D \mathbb{D}.
        \]
    \end{enumerate}
\end{proposition}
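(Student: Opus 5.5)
The plan is to reduce all three statements to the Kähler-current case of \cref{def:trbdiv}(1), and there use the corresponding properties of the trace operator on currents. Fix a Kähler form $\omega$ on $X$. By \cref{def:trbdiv}(2), $\Tr_D\mathbb{D}$ is the limit as $\epsilon\to 0+$ of $\Tr_D(\mathbb{D}+\epsilon\mathbb{D}(\omega))$. Being nef and pseudoeffective are closed conditions, so inequalities and linear identities pass to this limit. It therefore suffices to prove (1)--(3) with $\mathbb{D}+\epsilon\mathbb{D}(\omega)$ and $\mathbb{D}'+\epsilon\mathbb{D}(\omega)$ in place of $\mathbb{D},\mathbb{D}'$. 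For (2) we use $\epsilon$ and $\epsilon$ on the summands, which gives $2\epsilon$ on the sum; this is harmless in the limit. By the remark after \cref{thm:main_paper1}, we may then write $\mathbb{D}=\mathbb{D}(T)$ and $\mathbb{D}'=\mathbb{D}(T')$ with $T,T'$ non-divisorial $\mathcal{I}$-good Kähler currents. Their traces admit Kähler representatives in $\{T\}|_{\widetilde D}$ and $\{T'\}|_{\widetilde D}$, and \eqref{eq:TrDmathbbD} applies.

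\textbf{Homogeneity (3).} For $\lambda>0$, $\lambda T$ is again a Kähler current with $\mathbb{D}(\lambda T)=\lambda\mathbb{D}(T)$ by \eqref{eq:D_curr_linear}. Restricting a quasi-equisingular approximation of $T$ to $\widetilde D$ gives $\Tr_D(\lambda T)\sim_P\lambda\Tr_D T$, since scaling preserves $d_S$-limits up to scaling. The case $\lambda=0$ is trivial.

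\textbf{Additivity (2).} Take quasi-equisingular approximations $T_j$ and $T'_j$ of $T$ and $T'$. Then $T_j+T'_j$ has analytic singularities, and I expect it to be a quasi-equisingular approximation of $T+T'$, or at least $d_S$-equivalent to one: the multiplier-ideal condition for sums follows from the $\mathcal{I}$-goodness of $T$ and $T'$ via the valuative characterization $\nu(T_j+T'_j,E)\to\nu(T+T',E)$. Restricting, $(T_j+T'_j)|_{\widetilde D}=T_j|_{\widetilde D}+T'_j|_{\widetilde D}$. The $d_S$-limit of a sum is the sum of the limits, using \cref{thm:dscontnpp1}-type continuity, or more simply additivity of $\mathbb{D}$ together with the monotone convergence in \cref{thm:bdiv_int_cont1}. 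Hence $\Tr_D(T+T')\sim_{\mathcal{I}}\Tr_DT+\Tr_DT'$. Applying $\mathbb{D}$ and using \eqref{eq:D_curr_linear} on $\widetilde D$ gives (2).

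\textbf{Monotonicity (1).} This is the step I expect to be the main obstacle. The hypothesis $\mathbb{D}\le\mathbb{D}'$ with $\mathbb{D}_X=\mathbb{D}'_X$ means $T$ and $T'$ lie in the same cohomology class. The valuative description then gives $\nu(T,E)\ge\nu(T',E)$ for all $E$ over $X$, that is, $T\preceq_{\mathcal{I}}T'$. Since both are $\mathcal{I}$-good, we may replace them by their $\mathcal{I}$-model envelopes $P_\theta[\cdot]_{\mathcal{I}}$ and assume $T\preceq T'$ in the sense of potentials. Take quasi-equisingular approximations with $\varphi_j\le\varphi'_j$; for instance, replace $\varphi'_j$ by $\max(\varphi_j,\varphi'_j)$, which still has analytic singularities and remains quasi-equisingular. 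Restricting to $\widetilde D$ preserves $\le$, and $d_S$-limits preserve $\preceq_P$. Hence $\Tr_DT\preceq_P\Tr_DT'$ within the same class $\{T\}|_{\widetilde D}$, so all their Lelong numbers over $\widetilde D$ are ordered. The induced b-divisors are then ordered, since their components are $\{\Reg\}$ of pullbacks in the same class, and the difference is a divisor with nonnegative coefficients. The delicate points are checking that $\max(\varphi_j,\varphi'_j)$ is quasi-equisingular for $T'$ and keeping the equal-class assumption: it is exactly what lets us compare representatives in the same class $\{T\}|_{\widetilde D}$.
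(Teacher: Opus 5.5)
Your proposal follows the paper's proof. You reduce via $\mathbb{D}+\epsilon\mathbb{D}(\omega)$ to $\mathbb{D}=\mathbb{D}(T)$ and $\mathbb{D}'=\mathbb{D}(T')$ with $T,T'$ non-divisorial $\mathcal{I}$-good Kähler currents, then transfer monotonicity, additivity and homogeneity of the trace operator on currents to the induced b-divisors via \eqref{eq:TrDmathbbD}. The only difference is that the paper cites these current-level properties from \cite[Proposition~8.2.1]{Xiabook}, and the passage between $\mathbb{D}(T)\le\mathbb{D}(T')$ and $T\preceq_{\mathcal{I}}T'$ from \cite[Corollary~4.1.4]{Xiabdiv}. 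So you do not need to establish the facts you leave as expectations, namely that $T_j+T'_j$ and $\max(\varphi_j,\varphi'_j)$ are quasi-equisingular approximations.
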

\begin{proof}
    These properties all follow from the corresponding properties of the trace operator of currents. See \cite[Proposition~8.2.1]{Xiabook}.

    We only give a detailed proof to (1), and the other two assertions are similar. Take a Kähler form $\omega$ on $X$. It suffices to show that for each $\epsilon>0$, we have
    \[
    \Tr_D(\mathbb{D}+\epsilon\mathbb{D}(\omega))\leq \Tr_D(\mathbb{D}'+\epsilon\mathbb{D}(\omega)).
    \]
    Therefore, thanks to \cref{thm:main_paper1}, we may assume that 
    \[
    \mathbb{D}=\mathbb{D}(T),\quad \mathbb{D}'=\mathbb{D}(T')
    \]
    for $\mathcal{I}$-good non-divisorial Kähler currents $T$ and $T'$ on $X$. Observe that $T\preceq_{\mathcal{I}}T'$ as a consequence of \cite[Corollary~4.1.4]{Xiabdiv}. But then, thanks to \cite[Proposition~8.2.1]{Xiabook}, we have
    \[
    \Tr_D T\preceq_{P}\Tr_D T'.
    \]
    Then by \cite[Corollary~4.1.4]{Xiabdiv} again, our assertion follows.
\end{proof}

\begin{theorem}\label{thm:trace_cont}
    Let $\alpha\in \mathrm{H}^{1,1}(X,\mathbb{R})$ be a big class. Assume that $(T_i)_{i\in I}$ is a net of closed positive $(1,1)$-currents in $\alpha$, decreasing with respect to the $\mathcal{I}$-partial order. Assume that $T_i\xrightarrow{d_S} T$ for some closed positive $(1,1)$-current $T\in \alpha$ satisfying $\nu(T,\alpha)=0$.
    Then
    \[
    \Tr_D T_i\xrightarrow{d_S} \Tr_D T.
    \]
    Furthermore,
    \begin{equation}\label{eq:resvol_cont1}
    \lim_{i\in I} \vol_{X|D}(T_i)=\vol_{X|D}(T).
    \end{equation}
\end{theorem}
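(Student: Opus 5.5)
The plan is to reduce everything to a comparison of volumes: for a net that is monotone in the $P$-order, $d_S$-convergence is equivalent to convergence of volumes. The key input is the monotonicity of the trace operator \cite[Proposition~8.2.1]{Xiabook}. I read the hypothesis $\nu(T,\alpha)=0$ as $\nu(T,D)=0$.

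\textbf{Step~1 (monotonicity and the easy inequality).} Since $(T_i)_i$ is $\mathcal{I}$-decreasing and $T_i\xrightarrow{d_S}T$, I first show $T\preceq_{\mathcal{I}}T_i$ for every $i$. The route is through the $\mathcal{I}$-model envelopes: these form a decreasing net, their volumes converge, and I would use the standard fact that $d_S$-convergence along a decreasing net of model potentials identifies the limit with the decreasing limit. In particular $\nu(T_i,D)\le\nu(T,D)=0$, so every $\Tr_D T_i$ is defined. By \cite[Proposition~8.2.1]{Xiabook}, $(\Tr_D T_i)_i$ is then decreasing in the $P$-order and dominates $\Tr_D T$. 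Hence $\vol_{X|D}(T_i)$ is decreasing and
\[
\lim_i \vol_{X|D}(T_i)\ge \vol_{X|D}(T).
\]
If the volumes are zero, I would first add $\epsilon\omega$ and pass to the limit at the end using \eqref{eq:resvol_cont1}-type definitions such as \eqref{eq:resvolcurr}. So I assume representatives in $\alpha|_{\widetilde D}$ with positive volume.

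\textbf{Step~2 (the reverse inequality, main obstacle).} Take a quasi-equisingular approximation $(T^j)_j$ of $T$. By definition $\vol\Tr_D T=\lim_j\vol(T^j|_{\widetilde D})$. Fix $j$ and a small $\delta>0$. Pass to a modification on which $T^j$ has log singularities along a divisor $\sum_k a_kE_k$; its singularity type is then determined by the finitely many numbers $\nu(T^j,E_k)$. Condition~(3) of quasi-equisingular approximations gives the strict inequality $\nu(T,E_k)\ge(1-\delta')\nu(T^j,E_k)$. Since $d_S$-convergence implies convergence of generic Lelong numbers along each fixed divisor over $X$, for $i$ large we get $\nu(T_i,E_k)\ge(1-\delta)\nu(T^j,E_k)$ for all $k$.

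I then want to conclude
\[
T_i\preceq_{\mathcal{I}}(1-\delta)T^j+\delta T_{\min}
\]
up to an $O(\delta)$ smooth error in cohomology. This holds because a current with analytic singularities is the $\mathcal{I}$-largest current with given Lelong numbers along its finitely many exceptional divisors. Applying trace monotonicity once more gives
\[
\vol_{X|D}(T_i)\le \vol\Bigl(\bigl((1-\delta)T^j+\delta T_{\min}\bigr)\big|_{\widetilde D}\Bigr)
\]
for $i$ large. Letting $\delta\to0$ and using continuity of the volume in this convex combination, we get $\lim_i\vol_{X|D}(T_i)\le\vol(T^j|_{\widetilde D})$. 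Letting $j\to\infty$ yields \eqref{eq:resvol_cont1}. The delicate part is making the comparison with $(1-\delta)T^j+\delta T_{\min}$ rigorous while keeping the cohomology class fixed. I would try absorbing the discrepancy with a small multiple of a Kähler current in $\alpha$, which exists because $\alpha$ is big.

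\textbf{Step~3 ($d_S$-convergence).} The net $(\Tr_D T_i)_i$ lies in the fixed class $\alpha|_{\widetilde D}$, is $P$-decreasing, and dominates $\Tr_D T$. By Step~2 its volumes converge to $\vol\Tr_D T$. The same argument, applied to the mixed masses against $V_\theta$ that appear in \eqref{eq:ds_biineq} (all controlled by the same sandwich), shows these converge too. By \eqref{eq:ds_biineq} this gives $\Tr_D T_i\xrightarrow{d_S}\Tr_D T$.
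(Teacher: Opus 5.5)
Your route inverts the paper's. The paper takes $\Tr_D T_i\xrightarrow{d_S}\Tr_D T$ directly from \cite[Proposition~8.2.2]{Xiabook}. It then gets \eqref{eq:resvol_cont1} from the $d_S$-continuity of the volume, handling $\vol_{X|D}(T)=0$ by adding $\delta\omega$. You instead prove the volume convergence by hand and upgrade it to $d_S$-convergence. That plan can work, but the central comparison in Step~2 is not valid as stated.

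The maximality property you invoke is this: if $\pi^*T^j=[F]+R$ with $F=\sum_k a_kE_k$ and $R$ with locally bounded potentials, then every closed positive current \emph{in the same class} with $\nu(\cdot,E_k)\ge a_k$ for all $k$ is $\preceq T^j$. You only know $\nu(T_i,E_k)\ge(1-\delta)a_k$, and $(1-\delta)T^j+\delta T_{\min}$ is not of analytic type. Concretely, $\pi^*T_i-(1-\delta)[F]$ is only known to be some positive current in $\{R\}+\delta\{F\}$. By contrast, $(1-\delta)R+\delta\pi^*T_{\min}$ need not have minimal singularities in that class. To get $T_i\preceq_{\mathcal{I}}(1-\delta)T^j+\delta T_{\min}$ you would need $\nu(T_i,E)\ge(1-\delta)\nu(T^j,E)+\delta\nu(T_{\min},E)$ for \emph{all} divisors $E$ over $Y$ simultaneously. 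Pointwise convergence of Lelong numbers gives this only for finitely many $E$ at a given $i$. So ``keeping the class fixed'' is the wrong target. (Incidentally, $\nu(T,E_k)\ge\nu(T^j,E_k)$ is automatic because $T^j$ is less singular than $T$; condition~(3) plays no role there.)

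The repair is to let the class move by $O(\delta)$ and compare only with a current of analytic type. Take $T^j\in\alpha$, which is possible after your reduction to Kähler currents. Choose a smooth form $\kappa\in\{F\}$, a Kähler form $\omega_Y$ on $Y$, and $C>0$ with $\kappa+C\omega_Y\ge 0$. For $i$ large, Siu's decomposition gives $\pi^*T_i\ge(1-\delta)[F]$. The current $R+\delta(\kappa+C\omega_Y)$ has bounded potentials, hence minimal singularities in its class, so
\[
\pi^*T_i+\delta C\omega_Y\preceq (1-\delta)[F]+R+\delta\left(\kappa+C\omega_Y\right).
\]
Since $D'$ is not a component of $F$, you can combine three facts: monotonicity of the restricted volume under $\preceq$, monotonicity under adding a Kähler form, and Bedford--Taylor. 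This gives
\[
\vol_{X|D}(T_i)\le \bigl(\{R\}+\delta\{F\}+\delta C\{\omega_Y\}\bigr)^{n-1}\cap\{D'\}\xrightarrow{\delta\to 0}\{R\}^{n-1}\cap\{D'\}=\vol\bigl(T^j|_{\widetilde D}\bigr),
\]
and your limit in $j$ then goes through.

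In Step~3, the claim that the same sandwich controls the mixed masses against $V_\theta$ is unsupported, because the comparison currents live in different classes. Argue instead as follows. The $P$-decreasing net $(\Tr_D T_i)_i$ has a $d_S$-limit by \cite[Corollary~6.2.6]{Xiabook}. This limit is $P$-above $\Tr_D T$. By Step~2 and $d_S$-continuity of the volume, it has the same positive volume as $\Tr_D T$, hence is $P$-equivalent to it.

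Likewise, $T\preceq_{\mathcal{I}}T_i$ in Step~1 follows most directly from the $d_S$-continuity of generic Lelong numbers that you already use in Step~2, since $\nu(T_i,E)$ increases in $i$. Your route via $\mathcal{I}$-model envelopes does not connect to $T$ without $\mathcal{I}$-goodness.
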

\begin{proof}
    The former statement is proved in \cite[Proposition~8.2.2]{Xiabook}. If $\vol_{X|D}(T)>0$, then $\Tr_D T_i$ and $\Tr_D T$ can both be represented by currents in $\alpha|_{\widetilde{D}}$ with positive masses. In this case, \eqref{eq:resvol_cont1} follows from the $d_S$-continuity of the volume. See \cite[Theorem~6.2.5]{Xiabook}.

    If $\vol_{X|D}(T)=0$, fix a Kähler form $\omega$ on $X$. Fix $\epsilon>0$, we can find $\delta>0$ so that
    \[
    \vol_{X|D}(T+\delta\omega)\leq \epsilon.
    \]
    In this case, applying \eqref{eq:resvol_cont1} to the net $(T_i+\delta\omega)_i$, in view of \cite[Corollary~6.2.8]{Xiabook} we find that
    \[
    \varlimsup_{i\in I} \vol_{X|D}(T_i)\leq \lim_{i\in I} \vol_{X|D}(T_i+\delta\omega)=\vol_{X|D}(T+\delta\omega)\leq \epsilon.
    \]
    Since $\epsilon>0$ is arbitrary, we conclude that
    \[
    \lim_{i\in I} \vol_{X|D}(T_i)=0.
    \]
\end{proof}

\begin{corollary}\label{cor:conttrace}
    Let $(\mathbb{D}^i)_{i\in I}$ be a decreasing net of nef b-divisors over $X$ with limit $\mathbb{D}$. Assume that $\mathbb{D}^i_X$ is independent of the choice of $i\in I$. Then 
    \begin{equation}\label{eq:limTrDi}
        \lim_{i\in I} \Tr_D \mathbb{D}^i=\Tr_D \mathbb{D}. 
    \end{equation}
\end{corollary}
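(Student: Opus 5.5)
The plan is to reduce to the situation of \cref{thm:trace_cont} by adding a small Kähler form, and then pass to the limit $\epsilon\to 0+$ using the monotonicity of the trace.

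\textbf{Step~1: monotonicity of the net.} Write $\alpha\coloneqq \mathbb{D}^i_X$, which is independent of $i$. Since $\mathbb{D}\leq \mathbb{D}^i$ and, by closedness of the pseudoeffective cone, $\mathbb{D}_X=\lim_i \mathbb{D}^i_X=\alpha$, \cref{prop:trace_basic}(1) gives $\Tr_D\mathbb{D}\leq \Tr_D\mathbb{D}^i$, and the net $(\Tr_D\mathbb{D}^i)_i$ is decreasing. By \cref{prop:mono_conv} applied on each model of $D$, the limit on the left-hand side of \eqref{eq:limTrDi} exists and dominates $\Tr_D\mathbb{D}$. It remains to prove the reverse inequality.

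\textbf{Step~2: the Kähler case.} Fix a Kähler form $\omega$ and $\epsilon>0$. By \cref{thm:main_paper1} (and the remark after it), choose non-divisorial $\mathcal{I}$-good Kähler currents $T_i, T\in \alpha+\epsilon\{\omega\}$ with $\mathbb{D}^i+\epsilon\mathbb{D}(\omega)=\mathbb{D}(T_i)$ and $\mathbb{D}+\epsilon\mathbb{D}(\omega)=\mathbb{D}(T)$. All these b-divisors have the same $X$-component, so by \cite[Corollary~4.1.4]{Xiabdiv} the net $(T_i)_i$ is decreasing in the $\mathcal{I}$-order with $T\preceq_{\mathcal{I}} T_i$. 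I would then show $T_i\xrightarrow{d_S} T$. Since these are $\mathcal{I}$-good, volumes coincide with b-divisor volumes, and \cref{thm:bdiv_int_cont1} (top degree) gives $\vol \mathbb{D}(T_i)\to \vol\mathbb{D}(T)$. Because of $\mathcal{I}$-model monotonicity, convergence of volumes along a decreasing net should give $d_S$-convergence via \eqref{eq:ds_biineq}, applying it with the mixed terms, each of which also converges by \cref{thm:bdiv_int_cont1}. Moreover $\nu(T,D)=0$ since $T$ is non-divisorial. Then \cref{thm:trace_cont} yields $\Tr_D T_i\xrightarrow{d_S}\Tr_D T$. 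Since $\Tr_D$ of a Kähler current is represented in $(\alpha+\epsilon\{\omega\})|_{\widetilde D}$, \eqref{eq:TrDmathbbD} together with the $d_S$-continuity of $\mathbb{D}(\bullet)$ (again through \cite[Proposition~4.9]{Xiabdiv}-type arguments, or by pulling back to models and using \cref{thm:dscontnpp1} for degree-one products) gives
\[
\lim_i \Tr_D\bigl(\mathbb{D}^i+\epsilon\mathbb{D}(\omega)\bigr)=\Tr_D\bigl(\mathbb{D}+\epsilon\mathbb{D}(\omega)\bigr).
\]

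\textbf{Step~3: letting $\epsilon\to0+$.} By \cref{prop:trace_basic}(2)(3), $\Tr_D(\mathbb{D}^i+\epsilon\mathbb{D}(\omega))=\Tr_D\mathbb{D}^i+\epsilon\Tr_D\mathbb{D}(\omega)$, and similarly for $\mathbb{D}$. Subtracting the common term $\epsilon\Tr_D\mathbb{D}(\omega)$ already gives the conclusion, without even needing to take $\epsilon\to0$.

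\textbf{Main obstacle.} The delicate step is Step~2: upgrading the b-divisor convergence $\mathbb{D}(T_i)\searrow \mathbb{D}(T)$ to $d_S$-convergence of the currents, and then translating $d_S$-convergence of the traces back into convergence of the b-divisors on every model of $\widetilde D$. For the first part I would first try \eqref{eq:ds_biineq}, noting that $T\preceq_P$-type comparisons follow from $\mathcal{I}$-goodness. For the second part, I would use that on each model of $\widetilde D$ the components of $\mathbb{D}(\Tr_D T_i)$ are determined by the Lelong numbers, which behave semicontinuously under decreasing $d_S$-limits.
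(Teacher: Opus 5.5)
Your proof is correct. Its skeleton is the paper's: cancel $\Tr_D\mathbb{D}(\omega)$ by additivity (\cref{prop:trace_basic}), represent the shifted b-divisors by non-divisorial $\mathcal{I}$-good Kähler currents $T_i,T$ via \cref{thm:main_paper1}, get $P$-monotonicity from \cite[Corollary~4.1.4]{Xiabdiv} together with $\mathcal{I}$-goodness, and conclude with \cref{thm:trace_cont}.

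The genuine difference is how you obtain $T_i\xrightarrow{d_S}T$.

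\textbf{The paper's route.} It invokes \cite[Corollary~6.2.6]{Xiabook} to produce an abstract $d_S$-limit $S$ of the $P$-decreasing net. It then identifies this limit: $\mathbb{D}(S)=\mathbb{D}(T)$, hence $S\sim_P T$ by \cite[Corollary~4.1.4]{Xiabdiv}.

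\textbf{Your route.} You bound $d_S(T_i,T)$ directly by \eqref{eq:ds_biineq}. For model potentials $\varphi\le\varphi_i$ the maximum is $\varphi_i$, so the bound reduces to $\frac{1}{n+1}\sum_j\bigl(\int_X\theta_{\varphi_i}^j\wedge\theta_{V_\theta}^{n-j}-\int_X\theta_{\varphi}^j\wedge\theta_{V_\theta}^{n-j}\bigr)$. Since $\theta_{V_\theta}$ is also $\mathcal{I}$-good (state this explicitly), \cref{thm:DTandnpp} turns each mixed mass into the b-divisor number $\bigl(\mathbb{D}(T_i)^j\cdot\mathbb{D}(\{\theta\})^{n-j}\bigr)$. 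These converge to the corresponding numbers for $T$ by \cref{thm:bdiv_int_cont1}.

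Your approach is more self-contained: it needs neither the existence of monotone $d_S$-limits nor the identification of the abstract limit. The cost is reliance on the quantitative comparison \eqref{eq:ds_biineq}.

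\textbf{The final translation step.} Going from $\Tr_D T_i\xrightarrow{d_S}\Tr_D T$ to convergence of $\mathbb{D}(\Tr_D T_i)$ is left implicit by the paper, and you should be precise there. Monotonicity already gives $\nu(\Tr_D T_i,E)\leq\nu(\Tr_D T,E)$ for prime divisors $E$ over $\widetilde{D}$. What is missing is $\liminf_i\nu(\Tr_D T_i,E)\geq\nu(\Tr_D T,E)$. This is exactly the $d_S$-continuity of Lelong numbers \cite[Theorem~6.2.4]{Xiabook}. It is not the usual semicontinuity of Lelong numbers under weak limits, which goes the other way. With it, a monotone convergence argument over finite partial sums gives convergence of $\mathbb{D}(\Tr_D T_i)$ on every model of $\widetilde{D}$.
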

Note that thanks to \cref{prop:trace_basic}(1), $(\Tr_D\mathbb{D}^i)_i$ is decreasing, and hence the limit in \eqref{eq:limTrDi} exists.
\begin{proof}
    Fix a Kähler form $\omega$ on $X$. Thanks to the additivity of the trace operator proved in \cref{prop:trace_basic},
    it suffices to prove \eqref{eq:limTrDi} with $\mathbb{D}^i+\mathbb{D}(\omega)$ and $\mathbb{D}+\mathbb{D}(\omega)$ in place of $\mathbb{D}^i$ and $\mathbb{D}$. So by \cref{thm:main_paper1}, we may assume that there are non-divisorial $\mathcal{I}$-good Kähler currents $T_i,T\in \mathbb{D}_X$ with 
    \[
    \mathbb{D}_i=\mathbb{D}(T_i),\quad \mathbb{D}=\mathbb{D}(T)
    \]
    for all $i\in I$.
    
    It follows from \cite[Corollary~4.1.4]{Xiabdiv} and the $\mathcal{I}$-goodness of the $T_i$'s and $T$ that $(T_i)_i$ is decreasing with respect to the $P$-partial order and $T\preceq_P T_i$. 

    Since $T$ has positive mass, it follows from \cite[Corollary~6.2.6]{Xiabook} that $(T_i)_i$ admits a $d_S$-limit $S$ in $\{T\}$, then $\mathbb{D}(S)=\mathbb{D}(T)$. It follows from \cite[Corollary~4.1.4]{Xiabdiv} that $S\sim_P T$. Hence $T_i\xrightarrow{d_S} T$. Our assertion then follows from \cref{thm:trace_cont}. 
\end{proof}

Let us make sense of the trace operator in some simple cases.
\begin{example}\label{ex:trace_nef}
    Let $\alpha\in \mathrm{H}^{1,1}(X,\mathbb{R})$ be a nef class. Then 
    \[
    \Tr_D \mathbb{D}(\alpha)= \mathbb{D}\left(\alpha|_D\right).
    \]
    To see this, we may assume that $\alpha$ is a Kähler class. Take a Kähler form $\omega\in \alpha$, then by definition,
    \[
    \Tr_D \mathbb{D}(\alpha)=\mathbb{D}(\omega|_D)=\mathbb{D}\left(\alpha|_D\right).
    \]
\end{example}
\begin{example}\label{ex:trace_psef}
    Let $\alpha\in \mathrm{H}^{1,1}(X,\mathbb{R})$ be a big class. Assume that $D$ is smooth and not contained in the non-Kähler locus of $\alpha$. 
    Then
    \begin{equation}\label{eq:trace_psef_class}
    \Tr_D \mathbb{D}(\alpha)=\mathbb{D}\left(\Tr_D T_{\min}\right)=\mathbb{D}\left(T_{\min}|_D\right),
    \end{equation}
    where $T_{\min}\in \alpha$ is a current with minimal singularities.

    It follows from \cite[Proposition~8.3.1]{Xiabook} that $\Tr_D T_{\min}$ is represented by $T_{\min}|_D$. Therefore, we can apply \eqref{eq:TrDmathbbD} to conclude \eqref{eq:trace_psef_class}.
\end{example}

\begin{proposition}\label{prop:tracepsef_concave}
Let $\alpha,\beta\in \mathrm{H}^{1,1}(X,\mathbb{R})$ be modified nef classes.
Then
    \begin{equation}
    \Tr_D\mathbb{D}(\alpha)+\Tr_D\mathbb{D}(\beta)\leq \Tr_D\mathbb{D}(\alpha+\beta).
    \end{equation}
\end{proposition}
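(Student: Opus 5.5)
The plan is to compare both sides with the trace of a b-divisor coming from a single current, using linearity of $\mathbb{D}$ on currents \eqref{eq:D_curr_linear}, additivity of the trace (\cref{prop:trace_basic}(2)), and monotonicity of the trace for b-divisors with the same root class (\cref{prop:trace_basic}(1)).

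First reduce to big classes. Fix a Kähler form $\omega$ on $X$ and replace $\alpha,\beta$ by $\alpha+\epsilon\{\omega\}$ and $\beta+\epsilon\{\omega\}$. Since $\mathbb{D}(\alpha+\epsilon\{\omega\})$ decreases to $\mathbb{D}(\alpha)$ as $\epsilon\to 0+$, the traces should converge. To justify this, I would use \cref{prop:Dsupadd} with $\{\omega\}$ nef to write $\mathbb{D}(\alpha+\epsilon\{\omega\})\geq \mathbb{D}(\alpha)+\epsilon\mathbb{D}(\omega)$. Both sides have the same component on $X$, because modified nef classes are fixed by $\langle\bullet\rangle$. \cref{prop:trace_basic}(1),(2),(3) then bound the traces from below by $\Tr_D\mathbb{D}(\alpha)+\epsilon\Tr_D\mathbb{D}(\omega)$. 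So it suffices to prove the inequality for the perturbed classes: the left-hand side only grows under the bound, and the right-hand side tends to $\Tr_D\mathbb{D}(\alpha+\beta)$. For that last convergence I would use \cref{cor:conttrace}, but it requires a fixed root class. I would therefore take the limit with the decomposition $\mathbb{D}(\alpha+\beta+2\epsilon\{\omega\})\geq\mathbb{D}(\alpha+\beta)+2\epsilon\mathbb{D}(\omega)$ and an upper bound obtained from the definition of $\Tr_D$ as a limit in \cref{def:trbdiv}(2).

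Now assume $\alpha,\beta$ are big and modified nef. Let $T_\alpha\in\alpha$ and $T_\beta\in\beta$ be currents with minimal singularities. These are non-divisorial, since a minimal-singularity current in a big modified nef class is non-divisorial, and $\mathbb{D}(\alpha)=\mathbb{D}(T_\alpha)$, $\mathbb{D}(\beta)=\mathbb{D}(T_\beta)$. The current $T_\alpha+T_\beta$ lies in $\alpha+\beta$, so it is more singular than a minimal current $T_{\min}\in\alpha+\beta$. Hence $\mathbb{D}(T_\alpha)+\mathbb{D}(T_\beta)=\mathbb{D}(T_\alpha+T_\beta)\leq\mathbb{D}(T_{\min})=\mathbb{D}(\alpha+\beta)$, recovering \cref{prop:Dsupadd}. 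The $X$-components agree: both equal $\alpha+\beta$, because all Lelong numbers along divisors on $X$ vanish. Therefore \cref{prop:trace_basic}(1) gives $\Tr_D\mathbb{D}(T_\alpha+T_\beta)\leq\Tr_D\mathbb{D}(\alpha+\beta)$, and \cref{prop:trace_basic}(2) rewrites the left side as $\Tr_D\mathbb{D}(\alpha)+\Tr_D\mathbb{D}(\beta)$.

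The main obstacle is the approximation step, namely making sure the trace behaves continuously under $\epsilon\to0$ when the root class changes with $\epsilon$. If this proves delicate, the first thing I would try is to avoid it altogether. Define traces of arbitrary nef b-divisors directly via \cref{def:trbdiv}(2), and run the big-case argument above on $\mathbb{D}(\alpha)+\epsilon\mathbb{D}(\omega)$ and $\mathbb{D}(\beta)+\epsilon\mathbb{D}(\omega)$, each of which is $\mathbb{D}(T)$ for an $\mathcal{I}$-good non-divisorial Kähler current. Their sum is then dominated by $\mathbb{D}(\alpha+\beta)+2\epsilon\mathbb{D}(\omega)$ with equal root classes, and letting $\epsilon\to0+$ gives the inequality using only the definition.
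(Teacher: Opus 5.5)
Your core step is the paper's proof. By \cref{prop:Dsupadd}, $\mathbb{D}(\alpha)+\mathbb{D}(\beta)\leq\mathbb{D}(\alpha+\beta)$, and the $X$-components both equal $\alpha+\beta$ because $\alpha$, $\beta$ and $\alpha+\beta$ are modified nef. Then \cref{prop:trace_basic}(1),(2) conclude. The difference is that the paper applies this directly, since \cref{prop:trace_basic} and \cref{prop:Dsupadd} are already stated for arbitrary nef b-divisors and pseudoeffective classes. No bigness, no minimal-singularity currents and no approximation are needed.

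Your reduction to big classes is therefore superfluous, and its first version does not work as written. The inequality $\mathbb{D}(\alpha+\beta+2\epsilon\{\omega\})\geq\mathbb{D}(\alpha+\beta)+2\epsilon\mathbb{D}(\omega)$ only bounds $\Tr_D\mathbb{D}(\alpha+\beta+2\epsilon\{\omega\})$ from \emph{below}. What you need is an upper bound tending to $\Tr_D\mathbb{D}(\alpha+\beta)$. \cref{def:trbdiv}(2) does not supply one: it concerns $\mathbb{D}(\alpha+\beta)+\epsilon\mathbb{D}(\omega)$, which is again dominated by $\mathbb{D}(\alpha+\beta+\epsilon\{\omega\})$. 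Continuity of the trace when the root class moves is not available, since \cref{cor:conttrace} requires a fixed root class.

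Your fallback is correct. There, however, the perturbation contributes $2\epsilon\Tr_D\mathbb{D}(\omega)$ to both sides by \cref{prop:trace_basic}(2),(3), so it simply cancels. What remains is exactly the paper's two-line argument.
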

\begin{proof}
Since $\alpha+\beta$ is also modified nef, we have
\[
\mathbb{D}(\alpha)_X+\mathbb{D}(\beta)_X=\alpha+\beta= \mathbb{D}(\alpha+\beta)_X.
\]
Thanks to \cref{prop:trace_basic}, it suffices to show that
\[
\mathbb{D}(\alpha)+\mathbb{D}(\beta)\leq \mathbb{D}(\alpha+\beta),
\]
which is just \cref{prop:Dsupadd}.
\end{proof}

Next recall the notion of restricted volumes of a cohomology class studied in \cite{CT22, Mat13}. 
Fix a pseudoeffective class $\alpha\in \mathrm{H}^{1,1}(X,\mathbb{R})$. If $\alpha$ is big and $D$ is not contained in the non-Kähler locus of $X$, then we set
\[
\begin{aligned}
\vol_{X|D}(\alpha)\coloneqq \sup & \left\{ \int_D T|_D^{n-1} :  T\in \alpha \textup{ is a Kähler current with analytic singularities, } \right.\\
&\left.\nu(T,D)=0\right\}.
\end{aligned}
\]
Here $\int_D T|_D^{n-1}$ could be understood as the non-pluripolar product $\int_{\tilde{D}}T|_{\tilde{D}}^{n-1}$ on the normalization $\tilde{D}$ of $D$.

In general, if $\alpha$ is pseudoeffective and $D$ is not contained in the non-nef locus of $\alpha$, we take a Kähler form $\omega$ on $X$ and set
\[
\vol_{X|D}(\alpha)\coloneqq \lim_{\epsilon\to 0+}\vol_{X|D}(\alpha+\epsilon\{\omega\}).
\]
If $D$ is contained in the non-nef locus of $\alpha$, we set
\[
\vol_{X|D}(\alpha)\coloneqq 0.
\]

The restricted volume can be expressed as an intersection number of b-divisors.
\begin{theorem}\label{thm:coinc_class}
    Let $D$ be a prime divisor of $X$. Given a big class $\alpha\in \mathrm{H}^{1,1}(X,\mathbb{R})$ with $\nu(\alpha,D)=0$, then
    \begin{equation}\label{eq:res_vol_eq_volbd}
    \vol_{X|D}(\alpha)= \vol \Tr_D\mathbb{D}(\alpha).
    \end{equation}
\end{theorem}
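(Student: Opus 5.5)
We prove the two inequalities separately, using a current with minimal singularities $T_{\min}\in\alpha$. Since $\alpha$ is big, $\mathbb{D}(\alpha)=\mathbb{D}(T_{\min})$. The hypothesis $\nu(\alpha,D)=0$ gives $\nu(T_{\min},D)=0$, so $\Tr_D T_{\min}$ is defined. Since $T_{\min}$ is $\mathcal{I}$-good, we first compare both sides of \eqref{eq:res_vol_eq_volbd} with $\vol_{X|D}(T_{\min})=\vol(\Tr_D T_{\min})$.

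\textbf{Step 1: $\vol\Tr_D\mathbb{D}(\alpha)=\vol_{X|D}(T_{\min})$.} If $\vol_{X|D}(T_{\min})>0$, then by \eqref{eq:TrDmathbbD} we may take $\Tr_D T_{\min}$ in $\alpha|_{\widetilde D}$ with $\Tr_D\mathbb{D}(\alpha)=\mathbb{D}(\Tr_D T_{\min})$. \cref{thm:main_paper1} on $\widetilde D$ (volume preservation, after passing to the regular part) then gives the equality. If $\vol_{X|D}(T_{\min})=0$, we perturb: replace $\alpha$ by $\alpha+\epsilon\{\omega\}$, note that $T_{\min}+\epsilon\omega\preceq T_{\min,\epsilon}$, and use \cref{def:trbdiv}(2), monotonicity (\cref{prop:trace_basic}(1)) and the limit \eqref{eq:resvolcurr}. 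Here we also need $\vol\mathbb{D}(T_{\min,\epsilon})\to$ the unperturbed value, which should follow from \cref{cor:conttrace} together with continuity of volumes of nef b-divisors along decreasing nets.

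\textbf{Step 2: $\vol_{X|D}(\alpha)\le\vol_{X|D}(T_{\min})$.} Let $T\in\alpha$ be a Kähler current with analytic singularities and $\nu(T,D)=0$. Then $T\preceq T_{\min}$, so $\Tr_D T\preceq_P\Tr_D T_{\min}$ by \cite[Proposition~8.2.1]{Xiabook}. Since $T$ has analytic singularities, $\Tr_D T=T|_{\widetilde D}$, hence $\int_{\widetilde D}(T|_{\widetilde D})^{n-1}\le\vol(T|_{\widetilde D})\le\vol(\Tr_D T_{\min})$ by the monotonicity theorem on $\widetilde D$. Taking the supremum gives the inequality.

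\textbf{Step 3: the reverse inequality.} Take a quasi-equisingular approximation $(T^j)$ of $T_{\min}$, chosen inside $\alpha$ as Kähler currents; this is possible after first adding $\epsilon\omega$ and letting $\epsilon\to0$ at the end, using the limit definitions. The $T^j$ have analytic singularities and satisfy $\nu(T^j,D)\le\nu(T_{\min},D)=0$, so each is admissible in the supremum defining $\vol_{X|D}(\alpha)$. Moreover $T^j\xrightarrow{d_S}T_{\min}$ and the sequence decreases in the $\mathcal{I}$-order. \cref{thm:trace_cont} then gives $\vol_{X|D}(T^j)\to\vol_{X|D}(T_{\min})$, with $\vol_{X|D}(T^j)=\vol(T^j|_{\widetilde D})$.

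It remains to compare $\vol(T^j|_{\widetilde D})$ with $\int_{\widetilde D}(T^j|_{\widetilde D})^{n-1}$; these agree because currents with analytic singularities are $\mathcal{I}$-good on $\widetilde D$. The same argument applies to $\widetilde D$ via its resolution, using the appendix extension to normal spaces. So $\vol_{X|D}(\alpha)\ge\vol_{X|D}(T_{\min})$. The main obstacle is this $\epsilon$-perturbation bookkeeping when $\vol_{X|D}(T_{\min})$ is small or zero, and more precisely ensuring that the approximants can be taken in $\alpha$ itself rather than in $\alpha+\epsilon_j\{\omega\}$. I would handle it by working with $\alpha+\epsilon\{\omega\}$ throughout and invoking the continuity $\vol_{X|D}(\alpha+\epsilon\{\omega\})\to\vol_{X|D}(\alpha)$ from \cite{CT22}.
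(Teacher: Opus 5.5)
Your argument handles only the case where $D$ is not contained in the non-Kähler locus $E_{\mathrm{nK}}(\alpha)$, and the hypothesis $\nu(\alpha,D)=0$ does not guarantee this. For example, take $\alpha=\pi^*\{H\}$ on the blow-up of $\mathbb{P}^2$ at a point and $D$ the exceptional curve; more generally, any $D$ in the null locus of a nef and big class. When $D\subseteq E_{\mathrm{nK}}(\alpha)$, every Kähler current with analytic singularities in $\alpha$ has positive generic Lelong number along $D$. So the supremum you use in Steps~2 and~3 is over the empty set, and by definition $\vol_{X|D}(\alpha)=\lim_{\epsilon\to 0+}\vol_{X|D}(\alpha+\epsilon\{\omega\})$.

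Let $T_{\min,\epsilon}$ be a current with minimal singularities in $\alpha+\epsilon\{\omega\}$. Your lower bound survives: $T_{\min}+\epsilon\omega\preceq T_{\min,\epsilon}$ gives $\vol_{X|D}(\alpha+\epsilon\{\omega\})\geq\vol_{X|D}(T_{\min}+\epsilon\omega)\to\vol_{X|D}(T_{\min})$. The upper bound does not survive. You would need $\lim_{\epsilon}\vol_{X|D}(T_{\min,\epsilon})\le\vol_{X|D}(T_{\min})$, and every comparison you have points the other way:
\begin{itemize}
\item the $T_{\min,\epsilon}$ lie in varying classes, whereas \cref{thm:trace_cont} and \cref{cor:conttrace} require a fixed class (resp.\ fixed $\mathbb{D}^i_X$);
\item \cref{prop:trace_basic}(1) needs $\mathbb{D}_X=\mathbb{D}'_X$;
\item along decreasing sequences, non-pluripolar masses can only drop in the limit, which is the wrong direction.
\end{itemize}
Your proposed remedy, continuity of $\epsilon\mapsto\vol_{X|D}(\alpha+\epsilon\{\omega\})$ from \cite{CT22}, is vacuous here. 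At such an $\alpha$ that limit is the definition. The genuine continuity statement holds on the open cone of big classes whose non-Kähler locus does not contain $D$, and $\alpha$ is not in that cone.

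The same varying-class issue invalidates your appeal to \cref{cor:conttrace} for $\mathbb{D}(T_{\min,\epsilon})$ in Step~1. There it is harmless: $\mathbb{D}(\alpha)+\epsilon\mathbb{D}(\omega)=\mathbb{D}(T_{\min}+\epsilon\omega)$, together with \cref{def:trbdiv}(2), \cref{thm:bdiv_int_cont1} and \eqref{eq:resvolcurr}, gives Step~1 directly, as in \cref{ex:resvol_DTT}.

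The missing input is the vanishing theorem: $\vol_{X|D}(\alpha)=0$ whenever $D\subseteq E_{\mathrm{nK}}(\alpha)$, which is \cite[Theorem~1.1]{Vu23der}. This is a transcendental analogue of the Ein--Lazarsfeld--Musta\c{t}\u{a}--Nakamaye--Popa vanishing on the augmented base locus, and it cannot be extracted from monotonicity and $d_S$-continuity. It is exactly the paper's Step~2: the paper proves $\vol_{X|D}(\alpha)\geq\vol\Tr_D\mathbb{D}(\alpha)$ by perturbation and then concludes with the vanishing. With this input your own argument also closes, since $0=\vol_{X|D}(\alpha)\geq\vol_{X|D}(T_{\min})\geq 0$.

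In the remaining case $D\not\subseteq E_{\mathrm{nK}}(\alpha)$, your Steps~2--3 are a legitimate alternative to the paper's Step~1. The paper passes to a modification on which the strict transform of $D$ is smooth (via \cite[Lemmas~4.2, 4.3]{CT22} and \cref{prop:trace_indepmodel}) and then quotes \cite[Proposition~8.3.1]{Xiabook}. You instead compare with $T_{\min}$ directly on $\widetilde D$, using trace monotonicity and \cref{thm:trace_cont}, which avoids that reduction. Note that producing Kähler approximants in $\alpha$ with $\nu(\cdot,D)=0$, or using continuity of $\vol_{X|D}$ at $\alpha$, itself requires $D\not\subseteq E_{\mathrm{nK}}(\alpha)$. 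So the case distinction cannot be avoided.
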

From \eqref{eq:res_vol_eq_volbd}, we deduce that $\vol_{X|D}(\alpha)=\vol_{X|D}(\langle \alpha \rangle)$. This is also proved in \cite[Lemma~6.2]{CT22}.

Restricted volumes to higher codimensional subspaces admit similar expressions. Since we are not in need of them in this paper, we omit the details.
\begin{proof}
\textbf{Step~1}. We first assume that $D$ is not contained in the non-Kähler locus of $\alpha$.

Let $\pi\colon Y\rightarrow X$ be a modification so that the strict transform $D'$ of $D$ is smooth. Then by \cite[Lemma~4.2, Lemma~4.3]{CT22},
\[
\vol_{X|D}(\alpha)=\vol_{Y|D'}(\pi^*\alpha)
\]
and $D'$ is not in the non-Kähler locus of $\pi^*\alpha$. On the other hand, 
\[
\Tr_D\mathbb{D}(\alpha)=\Tr_{D'}\mathbb{D}(\pi^*\alpha)
\]
as a consequence of \cref{prop:trace_indepmodel}. So
\[
\Bigl( \Tr_D\mathbb{D}(\alpha) \Bigr)^{n-1}=\Bigl( \Tr_{D'}\mathbb{D}(\pi^*\alpha) \Bigr)^{n-1}.
\]
Therefore, we may further assume that $D$ is smooth. 
In this case our assertion follows from \cite[Proposition~8.3.1]{Xiabook}.

\textbf{Step~2}. Next we assume that $D$ is contained in the non-Kähler locus of $\alpha$. 

Fix a Kähler form $\omega$, then for any $\epsilon>0$, we have
\[
\Tr_D \mathbb{D}(\alpha+\epsilon\{\omega\})\geq \Tr_D \mathbb{D}(\alpha)
\]
by \cref{prop:tracepsef_concave}.
Therefore, our Step~1 and \cref{prop:bdiv_int_prop}(4) imply that
\[
\vol_{X|D}(\alpha)=\lim_{\epsilon\to 0+}\vol_{X|D}(\alpha+\epsilon\{\omega\})=\lim_{\epsilon\to 0+}\Bigl(\Tr_D \mathbb{D}(\alpha+\epsilon\{\omega\}) \Bigr)^{n-1}\geq \Bigl(\Tr_D \mathbb{D}(\alpha)\Bigr)^{n-1}. 
\]
Due to \cite[Theorem~1.1]{Vu23der}, $\vol_{X|D}(\alpha)=0$, so our assertion follows.
\end{proof}

\subsection{The restricted volume}
Let $D$ be a prime divisor over $X$. Motivated by \cref{thm:coinc_class}, we introduce the following definition:
\begin{definition}\label{def:resv}
    Let $\mathbb{D}_1,\ldots,\mathbb{D}_p$ be nef b-divisors over $X$.
    Let $\pi\colon Y\rightarrow X$ be a modification so that $D$ is a prime divisor on $Y$. 
    Then we define the \emph{restricted volume} of $\mathbb{D}_1,\ldots,\mathbb{D}_p$ to $D$ as
    \[
    \vol_{X|D} \left(\mathbb{D}_1,\ldots,\mathbb{D}_p \right)\coloneqq \pi_*\left( \Tr_D \mathbb{D}_1\cap \cdots \cap \Tr_D \mathbb{D}_p \right)_{Y}\in \mathrm{H}^{p+1,p+1}(X,\mathbb{R}).
    \]
\end{definition}
The notation 
\begin{equation}\label{eq:traceprod_explan}
\left(\Tr_D \mathbb{D}_1 \cap \dots \cap \Tr_D \mathbb{D}_p \right)_{Y}
\end{equation}
requires some explanation. The traces $\Tr_D \mathbb{D}_i$'s are nef b-divisors over $D$. For each resolution $W\rightarrow D$, we can then regard $\Tr_D \mathbb{D}_i$ as a nef b-divisor over $W$. The intersection $\Tr_D \mathbb{D}_1 \cap \dots \cap \Tr_D \mathbb{D}_p$ then makes sense as a b-class in $\bDiv^p(W)$. The pushforward of 
\[
\left(\Tr_D \mathbb{D}_1 \cap \dots \cap \Tr_D \mathbb{D}_p \right)_{W}
\]
with respect to $W \rightarrow D\hookrightarrow Y$ is then a well-defined positive class in $\mathrm{H}^{p+1,p+1}(Y,\mathbb{R})$, and it is independent of the choice of $W$. We denote this class by \eqref{eq:traceprod_explan}.

When $p=0$, $\vol_{X|D}(-)$ is $\{D\}$ if $D$ is a prime divisor on $X$ and $0$ if $D$ is exceptional over $X$.

\begin{lemma}\label{lma:resvol_indepres}
    The quantity $\vol_{X|D} \left(\mathbb{D}_1,\ldots,\mathbb{D}_p \right)$ defined in \cref{def:resv} is independent of the choice of $Y$.
\end{lemma}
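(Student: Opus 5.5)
Two modifications $\pi_1\colon Y_1\to X$ and $\pi_2\colon Y_2\to X$ on which $D$ appears as a prime divisor are both dominated by a third such modification, so the plan is to reduce to comparing $Y$ with a modification $\pi'\colon Y'\rightarrow X$ that dominates $\pi$ through $g\colon Y'\rightarrow Y$. On $Y'$ the divisor $D$ appears as its strict transform $D'$. Since $\pi'_*=\pi_*\circ g_*$, the whole lemma comes down to the identity
\[
g_*\left(\Tr_{D'}\mathbb{D}_1\cap\cdots\cap\Tr_{D'}\mathbb{D}_p\right)_{Y'}=\left(\Tr_D\mathbb{D}_1\cap\cdots\cap\Tr_D\mathbb{D}_p\right)_Y
\]
in $\mathrm{H}^{p+1,p+1}(Y,\mathbb{R})$.

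First, \cref{prop:trace_indepmodel}, applied to the modification $g$, gives $\Tr_{D'}\mathbb{D}_i=\Tr_D\mathbb{D}_i$ for each $i$. Both sides are read as nef b-divisors over $D$ through the canonical identification of nef b-divisors over $D'$ with those over $D$, which comes from the bimeromorphic map $p=g|_{D'}\colon D'\rightarrow D$. The b-classes $\mathbb{S}\coloneqq\Tr_D\mathbb{D}_1\cap\cdots\cap\Tr_D\mathbb{D}_p$ computed over $D$ and over $D'$ therefore coincide as elements of a single space of b-classes. By \cref{prop:int_indp_model}, this intersection does not depend on which resolution we use as base.

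Next, choose a resolution $W'\rightarrow \widetilde{D'}$ that also dominates some resolution $W\rightarrow\widetilde{D}$ through a map $h\colon W'\rightarrow W$ compatible with $p$. Such a $W'$ exists by resolving the graph. We then have a commutative square
\[
\begin{tikzcd}
W' \arrow[r, "h"] \arrow[d, "\iota'"'] & W \arrow[d, "\iota"] \\
Y' \arrow[r, "g"] & Y,
\end{tikzcd}
\]
where $\iota,\iota'$ are the compositions $W\rightarrow D\hookrightarrow Y$ and $W'\rightarrow D'\hookrightarrow Y'$. The left-hand side of the target identity is $g_*\iota'_*\mathbb{S}_{W'}=\iota_*h_*\mathbb{S}_{W'}$. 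Since $\mathbb{S}$ is a b-class, its components are compatible under pushforward, so $h_*\mathbb{S}_{W'}=\mathbb{S}_W$. Hence the left-hand side equals $\iota_*\mathbb{S}_W$, which is the right-hand side. The independence of the class \eqref{eq:traceprod_explan} from the choice of $W$, asserted in the text, follows by the same computation. Functoriality of proper pushforward of cohomology classes is used throughout.

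The only delicate point is making sure the identification in \cref{prop:trace_indepmodel} matches the b-divisor components along the bimeromorphic map $D'\rightarrow D$ in the same way the square uses. This holds because $\tilde p^*\Tr_D T\sim_P\Tr_{D'}\pi^*T$ was established exactly through $\tilde p$. Everything else is formal functoriality.
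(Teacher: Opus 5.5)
Your proof is correct and follows the same route as the paper's: pass to a model $Y'\to Y$ dominating $Y$, reduce to the identity $g_*(\cdots)_{Y'}=(\cdots)_Y$, and conclude from \cref{prop:trace_indepmodel} and \cref{prop:int_indp_model}. The only difference is that you spell out the pushforward functoriality through the square with $h\colon W'\to W$ and $g\colon Y'\to Y$, which the paper leaves implicit.
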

\begin{proof}
    Take a different modification $\Pi\colon Z\rightarrow X$ so that $D$ is a prime divisor on $Z$. We want to show that the quantity $\vol_{X|D} \left(\mathbb{D}_1,\ldots,\mathbb{D}_p \right)$ defined with respect to $\pi$ and $\Pi$ are the same. For this purpose, we may assume that $\Pi$ dominates $\pi$ through a morphism $p\colon Z\rightarrow Y$. 
    Let $D'$ be the strict transform of $D$ on $Z$.     
    Then we need to show that
    \[
    \pi_*\left( \Tr_D \mathbb{D}_1\cap \cdots \cap \Tr_D \mathbb{D}_p \right)_{Y}=\Pi_* \left( \Tr_{D'} \mathbb{D}_1\cap \cdots \cap \Tr_{D'} \mathbb{D}_p \right)_{Z}.
    \]
    For this purpose, it suffices to show that 
    \[
    \left(\Tr_D \mathbb{D}_1\cap \cdots \cap \Tr_D \mathbb{D}_p \right)_{Y}=p_* \left( \Tr_{D'} \mathbb{D}_1\cap \cdots \cap \Tr_{D'} \mathbb{D}_p \right)_{Z}.
    \]
    Our assertion then follows from \cref{prop:trace_indepmodel} and \cref{prop:int_indp_model}.
\end{proof}

We first observe how this quantity behaves with respect to modifications:
\begin{proposition}\label{prop:trace_bimeo}
    Let $\pi\colon Y\rightarrow X$ be a modification. Then
    \[
    \vol_{X|D} \left(\mathbb{D}_1,\ldots,\mathbb{D}_p \right)=\pi_*\vol_{Y|D} \left(\mathbb{D}_1,\ldots,\mathbb{D}_p \right).
    \]
    Here on the right-hand side, we regard the $\mathbb{D}_i$'s are nef b-divisors over $Y$.
\end{proposition}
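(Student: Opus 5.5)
The plan is to unwind \cref{def:resv} on a common model and then use functoriality of pushforward. First fix a modification $\Pi\colon Z\rightarrow X$ on which $D$ is a prime divisor, and choose it to dominate $\pi$. This is possible because modifications form a directed set, and any modification dominating one on which $D$ appears still has $D$ as a prime divisor, namely its strict transform. Write $\Pi=\pi\circ g$ with $g\colon Z\rightarrow Y$. Then $Z$ is also a modification of $Y$, because a composition of blow-ups with smooth centers over $X$ that factors through $Y$ can be taken to be such over $Y$. If this is delicate, we can instead allow proper bimeromorphic morphisms from Kähler manifolds, as remarked after \cref{def:bclass}.

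By \cref{lma:resvol_indepres}, we may compute both restricted volumes on $Z$. For the right-hand side, the $\mathbb{D}_i$ are regarded as nef b-divisors over $Y$. By the canonical identification $\bDiv^1(X)=\bDiv^1(Y)$, they have the same components on every model, and in particular on $Z$. The trace $\Tr_D\mathbb{D}_i$ computed with $D\subseteq Z$ is the same nef b-divisor over $D$ whether $\mathbb{D}_i$ is viewed over $X$ or over $Y$. Indeed, by \cref{def:trbdiv} the trace depends only on the b-divisor $\mathbb{D}_i$ together with a Kähler perturbation $\mathbb{D}(\omega)$. A Kähler form on $X$ pulls back to a semipositive form on $Y$, but the limit $\epsilon\to0+$ makes the choice of perturbation irrelevant, and \cref{prop:trace_indepmodel} handles the change of model. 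Consequently, the classes
\[
\left(\Tr_D\mathbb{D}_1\cap\cdots\cap\Tr_D\mathbb{D}_p\right)_Z\in \mathrm{H}^{p+1,p+1}(Z,\mathbb{R})
\]
coincide in the two settings, by \cref{prop:int_indp_model} applied on $D$ and its resolutions.

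It then follows that
\[
\vol_{Y|D}(\mathbb{D}_1,\ldots,\mathbb{D}_p)=g_*\left(\Tr_D\mathbb{D}_1\cap\cdots\cap\Tr_D\mathbb{D}_p\right)_Z
\]
and
\[
\vol_{X|D}(\mathbb{D}_1,\ldots,\mathbb{D}_p)=\Pi_*(\cdots)_Z=\pi_*g_*(\cdots)_Z .
\]
This gives the claim by functoriality of pushforward, $\Pi_*=\pi_*\circ g_*$.

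The main obstacle is the justification that the trace b-divisor is unchanged when the base is changed from $X$ to $Y$, specifically the perturbation issue in \cref{def:trbdiv}. I would handle it as follows. Take a Kähler form $\omega_Y$ on $Y$ and a Kähler form $\omega$ on $X$ with $\pi^*\omega\le C\omega_Y$, and compare using the monotonicity in \cref{prop:trace_basic}(1) together with additivity. The condition on equal $X$-components needed for \cref{prop:trace_basic}(1) can be arranged after adding a suitable nef Cartier correction; alternatively, one can use \cref{cor:conttrace} with decreasing perturbations. Since both perturbations tend to zero, the limits agree.
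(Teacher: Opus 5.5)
Your proposal is correct and follows the paper's argument: compute both restricted volumes on a single model $Z$ carrying $D$ via \cref{lma:resvol_indepres}, then use $\Pi_*=\pi_*\circ g_*$. Two simplifications: the paper takes $\Pi\colon Z\rightarrow Y$ to be a modification of $Y$ from the start, so $\pi\circ\Pi$ is automatically a modification of $X$ and your unjustified claim that the induced $g$ is a chain of smooth blow-ups is never needed; and your ``main obstacle'' does not arise, since in both definitions $\Tr_D\mathbb{D}_i$ is formed on the same $Z$, with $\mathbb{D}_i$ regarded over $Z$ and perturbed by a Kähler form on $Z$, so the traces agree by definition and no comparison of perturbations is needed.
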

In particular, we can regard
\[
\left( \vol_{Y|D} \left(\mathbb{D}_1,\ldots,\mathbb{D}_p \right) \right)_{\pi\colon Y\rightarrow X}
\]
as an element in $\bDiv^{p+1}(X)$. We denote it by $\vol_{|D} \left(\mathbb{D}_1,\ldots,\mathbb{D}_p \right)$. 
\begin{proof}
Take a modification $\Pi\colon Z\rightarrow Y$ so that $D$ is a prime divisor on $Z$. Then thanks to \cref{lma:resvol_indepres} we only need to show that
\[
\pi_* \Pi_*\left( \Tr_D \mathbb{D}_1\cap \cdots \cap \Tr_D \mathbb{D}_p \right)_Z=(\pi\circ \Pi)_* \left( \Tr_D \mathbb{D}_1\cap \cdots \cap \Tr_D \mathbb{D}_p \right)_Z,
\]
which is obvious.
\end{proof}

\begin{proposition}\label{prop:volres_capnef}
    Let $\mathbb{D}_1,\ldots,\mathbb{D}_p$ be nef b-divisors over $X$. Consider a nef class $\alpha\in \mathrm{H}^{1,1}(X,\mathbb{R})$. Then
    \[
    \vol_{X|D}\Bigl(\mathbb{D}_1,\ldots,\mathbb{D}_p,\mathbb{D}(\alpha)\Bigr)=\vol_{X|D}\left(\mathbb{D}_1,\ldots,\mathbb{D}_p\right)\cap \alpha.
    \]
\end{proposition}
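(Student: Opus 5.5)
We reduce to a model where $D$ is a divisor and then compute on that model. Fix a modification $\pi\colon Y\rightarrow X$ such that $D$ is a prime divisor on $Y$. By \cref{def:resv},
\[
\vol_{X|D}\Bigl(\mathbb{D}_1,\ldots,\mathbb{D}_p,\mathbb{D}(\alpha)\Bigr)=\pi_*\Bigl(\Tr_D\mathbb{D}_1\cap\cdots\cap\Tr_D\mathbb{D}_p\cap \Tr_D\mathbb{D}(\alpha)\Bigr)_Y .
\]
Since $\alpha$ is nef, so is $\pi^*\alpha$. Moreover $\mathbb{D}(\alpha)$, regarded over $Y$, equals $\mathbb{D}(\pi^*\alpha)$: by \eqref{eq:Dalpha_nef}, both are realized by $\pi^*\alpha$ on every model dominating $Y$.

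\cref{ex:trace_nef} then gives
\[
\Tr_D\mathbb{D}(\alpha)=\mathbb{D}\bigl(\pi^*\alpha|_D\bigr).
\]
This is a nef Cartier b-divisor over $D$, realized on any resolution $\iota\colon W\rightarrow D$ by the nef class $\iota^*(\pi^*\alpha|_D)$. Here $\iota^*$ is pulled back through $W\rightarrow D\hookrightarrow Y$, and on $W$ everything is smooth.

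We now apply \cref{prop:bdiv_int_prop}(5) on the model $W$, after moving the factor $\mathbb{D}(\alpha)$ into the required position by symmetry (\cref{prop:bdiv_int_prop}(1)). This yields
\[
\Bigl(\Tr_D\mathbb{D}_1\cap\cdots\cap\Tr_D\mathbb{D}_p\cap\Tr_D\mathbb{D}(\alpha)\Bigr)_W=\Bigl(\Tr_D\mathbb{D}_1\cap\cdots\cap\Tr_D\mathbb{D}_p\Bigr)_W\cap \iota^*\pi^*\alpha .
\]
If one wants to avoid invoking (5) for the b-divisor $\mathbb{D}(\iota^*\pi^*\alpha)$ on $W$ directly, it follows from \cref{def:b_div_int_general}, since this b-divisor is realized on $W$ itself.

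Next we push forward along $j\colon W\rightarrow Y$ and then along $\pi$, using the projection formula $j_*(\beta\cap j^*\gamma)=j_*\beta\cap\gamma$, and the same for $\pi$. This gives
\[
\pi_*\Bigl(\bigl(\Tr_D\mathbb{D}_1\cap\cdots\cap\Tr_D\mathbb{D}_p\bigr)_Y\cap\pi^*\alpha\Bigr)=\vol_{X|D}(\mathbb{D}_1,\ldots,\mathbb{D}_p)\cap\alpha,
\]
which is the claim.

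The main point needing care is the identification $\Tr_D\mathbb{D}(\alpha)=\mathbb{D}(\alpha|_D)$ when $D$ is singular and lives on $Y$ rather than $X$. We handle this using \cref{prop:trace_indepmodel} together with \cref{ex:trace_nef}, applied on $Y$ to $\pi^*\alpha$. Granting that, the remainder is bookkeeping with the projection formula and the fact, noted after \eqref{eq:traceprod_explan}, that the class is independent of the choice of $W$.
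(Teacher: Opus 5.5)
Your proof is correct and is essentially the paper's argument. The paper first uses \cref{prop:trace_bimeo} to reduce to $D$ being a smooth prime divisor on $X$, then concludes from \cref{ex:trace_nef}, the projection formula and \cref{prop:bdiv_int_prop}(5); you do the same computation on a model $Y$ and a resolution $W\to D$ with the pushforwards written out, and your appeal to \cref{prop:trace_indepmodel} is not needed, since \cref{ex:trace_nef} applies directly on $Y$ to $\pi^*\alpha$.
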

\begin{proof}
    By \cref{prop:trace_bimeo}, we may replace $X$ by a modification and assume that $D$ is a smooth prime divisor on $X$. Then by projection formula and \cref{ex:trace_nef} our assertion means
    \[
    \Bigl( \Tr_D \mathbb{D}_1\cap \cdots \cap  \Tr_D \mathbb{D}_p\cap \mathbb{D}(\alpha|_D)\Bigr)_X=\left( \Tr_D \mathbb{D}_1\cap \cdots \cap  \Tr_D \mathbb{D}_p\right)_X\cap \alpha
    |_D,
    \]
    which follows readily from \cref{prop:bdiv_int_prop}(5).
\end{proof}

The basic properties of the trace operator imply the following properties of the restricted volume:
\begin{proposition}\label{prop:res_vol}
    Let $\mathbb{D}_1,\ldots,\mathbb{D}_p,\mathbb{D}'_1$ be nef b-divisors over $X$, and $D$ be a prime divisor over $X$. Consider $\lambda\geq 0 $. Then we have the following properties:
    \begin{enumerate}
        \item The restricted volume is symmetric: Let $\sigma$ be a permutation of $\{1,\ldots,p\}$, then
        \[
        \vol_{|D} \left(\mathbb{D}_{\sigma(1)},\ldots,\mathbb{D}_{\sigma(p)} \right)=\vol_{|D} \left(\mathbb{D}_1,\ldots,\mathbb{D}_p \right).
        \]
        \item The restricted volume is additive in each variable:
        \[
        \vol_{|D} \left(\mathbb{D}_1+\mathbb{D}'_1,\mathbb{D}_2,\ldots,\mathbb{D}_p \right)=\vol_{|D} \left(\mathbb{D}_1,\mathbb{D}_2,\ldots,\mathbb{D}_p \right)+\vol_{|D} \left(\mathbb{D}'_1,\mathbb{D}_2,\ldots,\mathbb{D}_p \right).
        \]
        \item The restricted volume is homogeneous in each variable:
        \[
        \vol_{|D} \left(\lambda\mathbb{D}_1,\mathbb{D}_2,\ldots,\mathbb{D}_p \right)=\lambda \vol_{|D} \left(\mathbb{D}_1,\ldots,\mathbb{D}_p \right).
        \]
        \item The restricted volume is increasing in each variable in the following sense: If $\mathbb{D}_1'\geq \mathbb{D}_1$, $\mathbb{D}_{1,X}'=\mathbb{D}_{1,X}$, and $D\subseteq X$, then
        \[
        \vol_{X|D} \left(\mathbb{D}'_1,\ldots,\mathbb{D}_p \right)\geq \vol_{X|D} \left(\mathbb{D}_1,\ldots,\mathbb{D}_p \right). 
        \]
    \end{enumerate}
\end{proposition}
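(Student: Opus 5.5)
The plan is to reduce every assertion to the corresponding property of the trace operator (\cref{prop:trace_basic}) combined with the properties of the intersection product of nef b-divisors over $D$ (\cref{prop:bdiv_int_prop}). Since $\vol_{|D}(\mathbb{D}_1,\ldots,\mathbb{D}_p)$ is a b-class, (1)--(3) are identities of b-classes. Each component $\vol_{Y|D}$ is a pushforward of a component on a higher model (\cref{prop:trace_bimeo}), and pushforward is linear. It therefore suffices to verify (1)--(3) on a modification $\pi\colon Y\rightarrow X$ on which $D$ is a prime divisor. There, by \cref{def:resv} and \cref{lma:resvol_indepres}, $\vol_{Y|D}(\mathbb{D}_1,\ldots,\mathbb{D}_p)$ is the pushforward under $W\rightarrow D\hookrightarrow Y$ of $(\Tr_D\mathbb{D}_1\cap\cdots\cap\Tr_D\mathbb{D}_p)_W$ for a resolution $W\rightarrow D$. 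Since that pushforward is linear, we are reduced to statements about the b-class $\Tr_D\mathbb{D}_1\cap\cdots\cap\Tr_D\mathbb{D}_p$ over $W$.

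For (1), symmetry is \cref{prop:bdiv_int_prop}(1) applied over $W$ to the nef b-divisors $\Tr_D\mathbb{D}_i$. For (2), \cref{prop:trace_basic}(2) gives $\Tr_D(\mathbb{D}_1+\mathbb{D}_1')=\Tr_D\mathbb{D}_1+\Tr_D\mathbb{D}_1'$, and additivity of the product (\cref{prop:bdiv_int_prop}(2)) then yields the claim. One point needs care: the trace must be taken with $D$ regarded as a divisor on $Y$, not on $X$. By \cref{prop:trace_indepmodel} this is harmless, since $\Tr_D$ does not depend on the model on which $D$ appears. Item (3) is identical in structure, using \cref{prop:trace_basic}(3) and \cref{prop:bdiv_int_prop}(3).

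For (4), $D$ is assumed to lie on $X$ itself, so I would work directly on $X$ with $\vol_{X|D}(\cdot)=\iota_*(\Tr_D\mathbb{D}_1\cap\cdots)_{W}$, where $\iota\colon W\rightarrow X$. The hypotheses $\mathbb{D}_1'\geq\mathbb{D}_1$ and $\mathbb{D}'_{1,X}=\mathbb{D}_{1,X}$ are exactly those of \cref{prop:trace_basic}(1), which gives $\Tr_D\mathbb{D}_1'\geq\Tr_D\mathbb{D}_1$ as nef b-divisors over $D$, hence over $W$. Monotonicity of the product (\cref{prop:bdiv_int_prop}(4)) then gives
\[
\Tr_D\mathbb{D}'_1\cap\Tr_D\mathbb{D}_2\cap\cdots\cap\Tr_D\mathbb{D}_p\geq \Tr_D\mathbb{D}_1\cap\cdots\cap\Tr_D\mathbb{D}_p
\]
as b-classes. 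Evaluating on $W$ gives a positive difference in $\mathrm{H}^{p,p}(W,\mathbb{R})$, and \cref{prop:pos_push} shows that its pushforward to $X$ is positive, which is the claimed inequality $\geq_X$.

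The main obstacle is in (4). The hypothesis $\mathbb{D}'_{1,X}=\mathbb{D}_{1,X}$ holds only on $X$, which is why the statement is restricted to $D\subseteq X$ and to the component on $X$. If $D$ is exceptional over $X$, one would have to pass to a model $Y$ where $\mathbb{D}'_{1,Y}\neq\mathbb{D}_{1,Y}$ in general, and \cref{prop:trace_basic}(1) would no longer apply. So the argument must stay on $X$. I would also check that a nef b-divisor over $D$ can legitimately be viewed over $W$, which holds by definition.
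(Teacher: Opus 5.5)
Your proposal is correct and follows essentially the same route as the paper: you pass to a model where $D$ is a prime divisor, then combine the additivity, homogeneity and monotonicity of the trace operator with the corresponding properties of the intersection product of nef b-divisors over a resolution of $D$. Proving (4) directly on $X$, rather than first blowing up to make $D$ smooth as the paper does uniformly, is slightly more careful, because the hypothesis $\mathbb{D}'_{1,X}=\mathbb{D}_{1,X}$ needed for the trace monotonicity is only guaranteed on $X$.
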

\begin{proof}
    By \cref{prop:trace_bimeo}, we may replace $X$ by a modification and assume that $D$ is a smooth prime divisor on $X$.

    (1) This follows immediately from \cref{prop:bdiv_int_prop}(1).

    (2) This follows immediately from \cref{prop:bdiv_int_prop}(2) and \cref{prop:trace_basic}(2).

    (3) This follows immediately from \cref{prop:bdiv_int_prop}(3) and \cref{prop:trace_basic}(3).

    (4) This follows from \cref{prop:trace_basic}(1) and \cref{prop:bdiv_int_prop}(4).
\end{proof}

\begin{proposition}\label{prop:contresvol}
    Let $(\mathbb{D}_i^j)_{j\in J}$ be decreasing nets of nef b-divisors over $X$ ($i=1,\ldots,p$). Assume that for each $i=1,\ldots,p$, the class  $\mathbb{D}_{i,X}^j$ does not depend on the choice of $j\in J$.
    Denote the limit of $(\mathbb{D}_i^j)_j$ by $\mathbb{D}_i$. Consider a prime divisor $D$ on $X$. Then we have a decreasing limit
    \[
        \lim_{j\in J}\vol_{X|D}\left(\mathbb{D}_1^j,\ldots,\mathbb{D}_p^j \right)=\vol_{X|D}\left(\mathbb{D}_1,\ldots,\mathbb{D}_p \right).
    \]
\end{proposition}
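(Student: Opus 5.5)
We combine \cref{cor:conttrace} with the monotone continuity of the intersection product, \cref{thm:bdiv_int_cont1}, working on $X$ itself. Since $D$ is a prime divisor on $X$, the definition \cref{def:resv} lets us take $Y=X$. Then
\[
\vol_{X|D}\left(\mathbb{D}_1^j,\ldots,\mathbb{D}_p^j\right)=\iota_*\left(\Tr_D\mathbb{D}_1^j\cap\cdots\cap\Tr_D\mathbb{D}_p^j\right)_{\widetilde{D}}
\]
for a fixed resolution $\widetilde{D}\to D$ (in fact a fixed resolution $W$ of $D$), where $\iota\colon W\to X$ is the composite map.

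\textbf{Step 1.} For each $i$, the net $(\mathbb{D}_i^j)_j$ is decreasing with constant component $\mathbb{D}^j_{i,X}$. Hence \cref{prop:trace_basic}(1) shows that $(\Tr_D\mathbb{D}_i^j)_j$ is a decreasing net of nef b-divisors over $D$, equivalently over $W$. By \cref{cor:conttrace}, its limit is $\Tr_D\mathbb{D}_i$.

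\textbf{Step 2.} Apply \cref{thm:bdiv_int_cont1} on $W$, taking the trivial nef b-class $1\in\bDiv^0(W)$ as $\mathbb{T}^j$. This gives
\[
\lim_{j}\Tr_D\mathbb{D}_1^j\cap\cdots\cap\Tr_D\mathbb{D}_p^j=\Tr_D\mathbb{D}_1\cap\cdots\cap\Tr_D\mathbb{D}_p
\]
in $\bDiv^p(W)$. By \cref{prop:bdiv_int_prop}(4), this net is decreasing in the order of nef b-classes on $W$. Taking the $W$-component and pushing forward along $\iota$ preserves limits, because $\iota_*$ is linear and continuous. It also preserves the order, by \cref{prop:pos_push}. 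Therefore the restricted volumes form a decreasing net converging to $\vol_{X|D}(\mathbb{D}_1,\ldots,\mathbb{D}_p)$.

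\textbf{Main obstacle.} The only subtle point is that \cref{thm:bdiv_int_cont1} is formulated over a compact Kähler manifold, whereas the traces live over the possibly singular $D$. This is handled by passing to a resolution $W$ of $\widetilde{D}$, which is Kähler as a resolution of a compact Kähler space. The b-divisors over $D$ are by definition those over $W$, and the construction is independent of $W$ by the discussion after \cref{def:resv}. We should also check that the hypothesis of \cref{cor:conttrace}, namely a constant $X$-component, is exactly what is assumed here. It is, so no further work is needed.
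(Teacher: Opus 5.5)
Your proof is correct and follows essentially the same argument as the paper. The paper likewise uses \cref{prop:trace_basic}(1) and \cref{cor:conttrace} to get decreasing convergence of the traces, then concludes with \cref{thm:bdiv_int_cont1}. For the monotonicity of the net it cites \cref{prop:res_vol}(4), whereas you unpack that step directly through \cref{prop:bdiv_int_prop}(4) and \cref{prop:pos_push}.
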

\begin{proof}
    It follows from \cref{cor:conttrace} and \cref{prop:trace_basic}(1) that for each $i=1,\ldots,p$, the net $(\Tr_D \mathbb{D}_i^j)_j$ is decreasing with limit $\Tr_D \mathbb{D}_i$. Our assertion then follows from \cref{thm:bdiv_int_cont1} and \cref{prop:res_vol}(4).
\end{proof}

The restricted volume is easy to understood in some special cases.
\begin{example}\label{ex:res_vol_nef}
    When $\mathbb{D}_i=\mathbb{D}(\alpha_i)$ for some nef classes $\alpha_i\in \mathrm{H}^{1,1}(X,\mathbb{R})$ for all $i=1,\ldots,p$, we have
    \[
    \vol_{X|D}\left(\mathbb{D}_1,\ldots,\mathbb{D}_p\right)=
    \begin{cases}
        0,& \textup{if }D\textup{ is exceptional};\\
        \alpha_1\cap \cdots \cap \alpha_p\cap \{D\},&\textup{otherwise}.
    \end{cases}
    \]
    To see this, let $\pi\colon Y\rightarrow X$ be a modification so that $D$ is a smooth prime divisor on $Y$. Then by definition and \cref{ex:trace_nef},
    \[
    \begin{aligned}
    \vol_{X|D}\left(\mathbb{D}_1,\ldots,\mathbb{D}_p\right)=&\pi_* \left(\Tr_D\mathbb{D}_1 \cap \cdots \cap \Tr_D \mathbb{D}_p \right)_Y\\
    =&\pi_* \Bigl( \mathbb{D}(\pi^*\alpha_1|_D)\cap \cdots \cap \mathbb{D}(\pi^*\alpha_p|_D) \Bigr)_Y\\
    =&\pi_* \left( \pi^*\alpha_1\cap \cdots \cap \pi^*\alpha_p\cap \{D\} \right)\\
    =& \alpha_1\cap \cdots \cap \alpha_p\cap \pi_*\{D\}.
    \end{aligned}
    \]
    Our assertion follows.
\end{example}

\begin{example}\label{ex:volrestDalphavol}
    Assume that $D$ is a prime divisor on $X$. Let $\alpha\in \mathrm{H}^{1,1}(X,\mathbb{R})$ be a big class with $\nu(\alpha,D)=0$. Then 
    \[
    \vol_{X|D}\left(\mathbb{D}(\alpha)^{n-1}\right)=\vol_{X|D}(\alpha).
    \]
    This is what we proved in \cref{thm:coinc_class}.
\end{example}
Motivated by this example, we introduce the following definition:
\begin{definition}\label{def:vol_mix_class}
Assume that $D$ is a prime divisor on $X$.
    Let $\alpha_1,\ldots,\alpha_p\in \mathrm{H}^{1,1}(X,\mathbb{R})$ be pseudoeffective classes with $\nu(\alpha_i,D)=0$ for all $i=1,\ldots,p$. Fix a Kähler class $\beta\in \mathrm{H}^{1,1}(X,\mathbb{R})$.
    We define
    \begin{equation}\label{eq:mix_res_vol_class}
    \vol_{X|D}(\alpha_1,\ldots,\alpha_p)\coloneqq \lim_{\epsilon\to 0+}\vol_{X|D}\Bigl(\mathbb{D}(\alpha_1+\epsilon\beta),\ldots,\mathbb{D}(\alpha_p+\epsilon\beta) \Bigr)\in \mathrm{H}^{p+1,p+1}(X,\mathbb{R}).
    \end{equation}
    If $\nu(\alpha_i,D)>0$ for some $i\in \{1,\ldots,p\}$, we let
    \[
    \vol_{X|D}(\alpha_1,\ldots,\alpha_p)\coloneqq 0.
    \]
\end{definition}
In particular, 
\[
\vol_{X|D}\Bigl(\underbrace{\alpha,\ldots,\alpha}_{n-1}\Bigr)=\vol_{X|D}(\alpha)
\]
for any pseudoeffective class $\alpha\in \mathrm{H}^{1,1}(X,\mathbb{R})$.

We first observe that the restricted volume is well-defined:
\begin{lemma}
    The limit in \eqref{eq:mix_res_vol_class} exists and is independent of the choice of $\beta$. 
\end{lemma}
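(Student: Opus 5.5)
The plan is to show that the net
\[
\epsilon\mapsto \vol_{X|D}\Bigl(\mathbb{D}(\alpha_1+\epsilon\beta),\ldots,\mathbb{D}(\alpha_p+\epsilon\beta)\Bigr)
\]
is decreasing as $\epsilon$ decreases, with respect to $\leq_X$, and bounded below by $0$; the limit will then exist by \cref{prop:mono_conv}. Since $D$ is a prime divisor on $X$ and $\nu(\alpha_i,D)=0$, the classes on $X$ change when $\epsilon$ changes, so \cref{prop:res_vol}(4) cannot be applied directly. Instead, for $0<\epsilon'<\epsilon$, \cref{prop:Dsupadd} gives
\[
\mathbb{D}(\alpha_i+\epsilon\beta)\geq \mathbb{D}(\alpha_i+\epsilon'\beta)+(\epsilon-\epsilon')\mathbb{D}(\beta).
\]
The right-hand side has $X$-component $\langle\alpha_i+\epsilon'\beta\rangle+(\epsilon-\epsilon')\beta$. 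The left-hand side has $X$-component $\langle \alpha_i+\epsilon\beta\rangle$, which in general is larger by a divisorial part supported on divisors other than $D$ (possibly also on $D$).

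To compare the traces despite this mismatch, I would use \cref{prop:tracepsef_concave}, which works directly with modified nef classes. Set $\gamma_i=\langle\alpha_i+\epsilon'\beta\rangle$. Then $\mathbb{D}(\alpha_i+\epsilon'\beta)=\mathbb{D}(\gamma_i)$, because the Boucksom--Nakayama negative part pulls back to effective divisors on every model. Also $\langle\alpha_i+\epsilon\beta\rangle\geq \gamma_i+(\epsilon-\epsilon')\beta$ is a modified nef class. Since $\Tr_D$ only depends on the b-divisor, the key inequality I aim for is
\[
\Tr_D\mathbb{D}(\alpha_i+\epsilon\beta)\geq \Tr_D\mathbb{D}(\alpha_i+\epsilon'\beta)+(\epsilon-\epsilon')\Tr_D\mathbb{D}(\beta).
\]
This is the main obstacle. \cref{prop:trace_basic}(1) requires equal $X$-components, so I would argue at the level of currents. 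Take $T_{\min}$ in $\alpha_i+\epsilon\beta$ and $S_{\min}$ in $\alpha_i+\epsilon'\beta$, and let $\omega\in\beta$ be a Kähler form. Then $S_{\min}+(\epsilon-\epsilon')\omega\preceq T_{\min}$ lie in the same class, and $\nu(\cdot,D)=0$ for both because $\nu(\alpha_i,D)=0$ forces $\nu(\alpha_i+\epsilon'\beta,D)=0$. The monotonicity of $\Tr_D$ under $\preceq_P$ in \cite[Proposition~8.2.1]{Xiabook}, together with its additivity, then gives the displayed inequality once we pass to b-divisors via \eqref{eq:TrDmathbbD}. The restricted volumes here are positive, since bigness persists after adding $\epsilon'\beta$ and $D$ avoids the non-Kähler locus for small adjustments. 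If $D$ lies in the non-Kähler locus, I would instead approximate by adding $\delta\omega$ and use \cref{cor:conttrace}.

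Given this, multilinearity and monotonicity in \cref{prop:bdiv_int_prop}(2),(4) show that the $\epsilon$-term dominates the $\epsilon'$-term plus positive classes, after pushing forward via \cref{prop:pos_push}. This proves the monotonicity of the net, and hence existence of the limit. For independence of $\beta$, take another Kähler class $\beta'$ and choose $C$ with $C^{-1}\beta\leq \beta'\leq C\beta$. Repeating the same comparison, with $\beta'-C^{-1}\beta$ nef in place of $(\epsilon-\epsilon')\beta$, gives that the $\beta'$-net at $\epsilon$ is sandwiched between the $\beta$-nets at $C^{-1}\epsilon$ and $C\epsilon$. Letting $\epsilon\to0+$ then shows that the two limits agree.
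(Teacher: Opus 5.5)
Your proposal is correct and takes essentially the paper's route. In both, the key step is the comparison $S_{\min}+(\epsilon-\epsilon')\omega\preceq T_{\min}$ of minimal-singularity currents in $\alpha_i+\epsilon\beta$, restricted to $D$. The paper feeds this into the monotonicity theorem for non-pluripolar products on $\widetilde{D}$, while you turn it into an inequality of traced b-divisors and then use multilinearity and monotonicity of the b-divisor product. Your sandwich argument, with $C$ chosen so that $\beta'-C^{-1}\beta$ and $C\beta-\beta'$ are Kähler, spells out the paper's ``similarly'' for independence of $\beta$.

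The fallback for $D$ in the non-Kähler locus is unnecessary, since $\nu(\alpha_i,D)=0$ already ensures $D$ avoids the non-Kähler locus of $\alpha_i+\epsilon\beta$ for every $\epsilon>0$. It would also not work as stated: \cref{cor:conttrace} needs a fixed $X$-component, and here that component varies with $\delta$.
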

\begin{proof}
    Since $\nu(\alpha_i,D)=0$, for any $\epsilon'>\epsilon>0$, we know that $D$ is not in the non-Kähler locus of $\alpha_i+\epsilon\beta$ for each $i$. We wish to show that 
    \begin{equation}\label{eq:volXD_temp11}
    \vol_{X|D}\Bigl(\mathbb{D}(\alpha_1+\epsilon\beta),\ldots,\mathbb{D}(\alpha_p+\epsilon\beta) \Bigr)\leq_X \vol_{X|D}\Bigl(\mathbb{D}(\alpha_1+\epsilon'\beta),\ldots,\mathbb{D}(\alpha_p+\epsilon'\beta) \Bigr).    
    \end{equation}
    Let $T_i^{\epsilon}$ be a current with minimal singularities in $\alpha_i+\epsilon\beta$ for each $\epsilon>0$ and $i=1,\ldots,p$. Then
    \[
    \left\{ \bigwedge_{i=1}^p T_i^{\epsilon}|_{\tilde{D}} \right\} \leq_X \left\{ \bigwedge_{i=1}^p T_i^{\epsilon}|_{\tilde{D}}+(\epsilon'-\epsilon)\omega|_{\tilde{D}} \right\}\leq_X \left\{ \bigwedge_{i=1}^p T_i^{\epsilon'}|_{\tilde{D}} \right\},
    \]
    where we have applied \cref{thm:mono_thm_partial} and \cref{prop:npp_strongpos}. Here as usual, we have omitted the obvious pushforwards.
    
    Note that the argument of \cite[Proposition~8.3.1]{Xiabook} shows that 
    \[
    \Tr_D \mathbb{D}(\alpha_i+\epsilon\beta)=\mathbb{D}\left(T_i^{\epsilon}|_{\tilde{D}}\right),
    \]
    where $T_i$ is a current with minimal singularities in $\alpha_i+\epsilon\beta$. Therefore, \eqref{eq:volXD_temp11} follows.

    Similarly, one can show that  \eqref{eq:mix_res_vol_class} is independent of the choice of $\beta$.
\end{proof}

The basic properties of this mixed restricted volume are summarized as below: 
\begin{proposition}\label{prop:res_vol_class}
    Assume that $D$ is a prime divisor on $X$.
    Let $\alpha_1,\alpha'_1,\ldots,\alpha_p\in \mathrm{H}^{1,1}(X,\mathbb{R})$ be pseudoeffective classes, $\lambda\geq 0$. Then we have the following:
    \begin{enumerate}
        \item The restricted volume is symmetric: For any permutation $\sigma$ of $\{1,\ldots,p\}$, we have
        \[
        \vol_{X|D}\left(\alpha_{\sigma(1)},\ldots,\alpha_{\sigma(p)} \right)=\vol_{X|D}\left(\alpha_1,\ldots,\alpha_p \right).
        \]
        \item The restricted volume is homogeneous in each variable:
        \[
        \vol_{X|D}\left(\lambda\alpha_1,\alpha_2,\ldots,\alpha_p \right)=\lambda\vol_{X|D}\left(\alpha_1,\alpha_2,\ldots,\alpha_p \right).
        \]
        \item The restricted volume is concave in each variable in the following sense: Suppose that $\nu(\alpha_1,D)=\nu(\alpha'_1,D)=0$, then
        \[
        \vol_{X|D}\left(\alpha_1,\alpha_2,\ldots,\alpha_p \right)+\vol_{X|D}\left(\alpha'_1,\alpha_2,\ldots,\alpha_p \right)\leq_X \vol_{X|D}\left(\alpha_1+\alpha'_1,\alpha_2,\ldots,\alpha_p \right).
        \]
        If $D$ is not in the non-Kähler loci of the $\alpha_i$'s, then
        \[
        \vol_{X|D}\left(\alpha_1,\ldots,\alpha_p \right)=\vol_{X|D}\Bigl(\mathbb{D}(\alpha_1),\ldots,\mathbb{D}(\alpha_p) \Bigr).
        \]
        \item The restricted volume is upper semi-continuous: Suppose that $(\alpha_i^j)_{j>0}$ are sequences of pseudoeffective classes in $\mathrm{H}^{1,1}(X,\mathbb{R})$ with limits $\alpha_i\in \mathrm{H}^{1,1}(X,\mathbb{R})$. Then for any weakly positive $(n-p-1,n-p-1)$-form $F$ on $X$, we have
        \begin{equation}\label{eq:mix_res_vol_class_usc}
        \varlimsup_{j\to\infty} \vol_{X|D}\left(\alpha_1^j,\ldots,\alpha_p^j \right) \cap \{F\}\leq \vol_{X|D}\left(\alpha_1,\ldots,\alpha_p \right) \cap \{F\}.
        \end{equation}
        \item Assume that $\nu(\alpha_i,D)=0$ for all $i=1,\ldots,p$. Then
        \begin{equation}\label{eq:mix_res_vol_class_inv_dzd}
            \vol_{X|D}\left(\alpha_1,\ldots,\alpha_p \right)=\vol_{X|D}\Bigl(\langle \alpha_1 \rangle,\ldots, \langle \alpha_p \rangle \Bigr).
        \end{equation}
    \end{enumerate}
\end{proposition}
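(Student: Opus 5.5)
Throughout, I reduce each item to statements about the b-divisor restricted volumes $\vol_{X|D}(\mathbb{D}_1,\ldots,\mathbb{D}_p)$, using \cref{def:vol_mix_class}. The governing remark is this. For $\epsilon>0$ and $\nu(\alpha_i,D)=0$, the classes $\alpha_i+\epsilon\beta$ are big and $D$ is not in their non-Kähler loci. So all structural properties can be read off from \cref{prop:res_vol} and \cref{prop:Dsupadd} applied to $\mathbb{D}(\alpha_i+\epsilon\beta)$, and then passed to the limit $\epsilon\to0+$. That limit exists and is decreasing in $\epsilon$ by the preceding lemma.

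\textbf{Items (1), (2) and (3).} Item (1) is immediate from \cref{prop:res_vol}(1), since the $\epsilon$-regularisation is symmetric. If $\nu(\alpha_i,D)>0$ for some $i$, both sides vanish.

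For (2), the case $\lambda=0$ is trivial. For $\lambda>0$ we have $\mathbb{D}(\lambda\alpha_1+\epsilon\beta)=\lambda\mathbb{D}(\alpha_1+(\epsilon/\lambda)\beta)$. The preceding lemma lets us use different rates $\epsilon$ in different slots: the independence argument there actually shows that any regularisation $\alpha_i+\epsilon_i\beta$ with all $\epsilon_i\to0$ gives the same limit, by monotone sandwiching. Item (2) then follows from \cref{prop:res_vol}(3).

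For the inequality in (3), \cref{prop:Dsupadd} gives
\[
\mathbb{D}(\alpha_1+\epsilon\beta)+\mathbb{D}(\alpha_1'+\epsilon\beta)\leq\mathbb{D}(\alpha_1+\alpha_1'+2\epsilon\beta).
\]
Both sides have the same $X$-component, because the traces of modified nef classes are additive in the way used in \cref{prop:tracepsef_concave}. More precisely, $\mathbb{D}(\gamma)_X=\langle\gamma\rangle$, and its trace is what enters. So I would argue directly at the trace level: \cref{prop:tracepsef_concave} gives the inequality of traces, and it remains valid after replacing $\alpha$ by $\langle\alpha\rangle$ via (5). I then combine this with \cref{prop:bdiv_int_prop}(2),(4) on the divisor, and pass $\epsilon\to0$ using the rate-independence above.

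For the equality in (3), when $D$ is outside the non-Kähler loci I apply \cref{prop:contresvol} to the decreasing net $\mathbb{D}(\alpha_i+\epsilon\beta)$. Its limit is $\mathbb{D}(\alpha_i)$, which is a continuity statement for minimal singularity currents. The difficulty is that the $X$-components vary with $\epsilon$, so I would instead work directly with $\Tr_D\mathbb{D}(\alpha_i+\epsilon\beta)=\mathbb{D}(T_i^\epsilon|_{\tilde D})$. These decrease to $\mathbb{D}(T_i|_{\tilde D})=\Tr_D\mathbb{D}(\alpha_i)$ by \cref{ex:trace_psef}-type arguments, after passing to a model where $D$ is smooth. \cref{thm:bdiv_int_cont1} then concludes.

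\textbf{Items (5) and (4).} For (5), note that $\langle\alpha_i+\epsilon\beta\rangle$ and $\langle\alpha_i\rangle+\epsilon\beta$ differ only by divisorial parts. We have $\mathbb{D}(\gamma)=\mathbb{D}(\langle\gamma\rangle)$ whenever the Boucksom--Nakayama negative part is removed, and the traces coincide by \cref{thm:coinc_class}-style reasoning. So I would show
\[
\Tr_D\mathbb{D}(\alpha_i+\epsilon\beta)\quad\text{and}\quad \Tr_D\mathbb{D}(\langle\alpha_i\rangle+\epsilon\beta)
\]
squeeze each other as $\epsilon\to0$, using monotonicity \cref{prop:trace_basic}(1) and \cref{prop:Posclass_goodcone}-type squeezing (\cref{prop:squeeze}).

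For (4), I first note that if the limit has $\nu(\alpha_i,D)>0$, lower semicontinuity of $\nu$ gives nothing directly. Instead I use the bound $\vol_{X|D}(\alpha^j)\le\vol_{X|D}(\alpha^j+\delta\beta)$, which follows from (3)'s monotonicity. For $j$ large, $\alpha^j\le\alpha+\delta\beta$, hence
\[
\vol_{X|D}(\alpha^j+\delta\beta)\le\vol_{X|D}(\alpha+2\delta\beta),
\]
by monotonicity of traces of minimal singularity currents and \cref{thm:mono_thm_partial} on $\tilde D$. Letting $\delta\to0$ gives \eqref{eq:mix_res_vol_class_usc}. Pairing with $\{F\}$ is needed since only positivity is available.

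\textbf{Main obstacle.} The main obstacle is this monotonicity under $\alpha\le\alpha'$ in the big case with $D$ not in the non-Kähler loci. It requires comparing $T_{\min}(\alpha)|_{\tilde D}+(\alpha'-\alpha)$-type currents with $T_{\min}(\alpha')|_{\tilde D}$. The difference $\alpha'-\alpha$ is only pseudoeffective, so I would first add $\delta\beta$ to make it Kähler, then take a Kähler current of analytic singularities in it with zero Lelong number along $D$, and argue as in the preceding lemma.
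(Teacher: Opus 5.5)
\textbf{There is a genuine gap in the inequality of (3).} This inequality is the core of the proposition: the paper derives (4) from it, and (5) from (3) and (4).

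Your claim that $\mathbb{D}(\alpha_1+\epsilon\beta)+\mathbb{D}(\alpha'_1+\epsilon\beta)$ and $\mathbb{D}(\alpha_1+\alpha'_1+2\epsilon\beta)$ have the same $X$-component is false. These components are $\langle\alpha_1+\epsilon\beta\rangle+\langle\alpha'_1+\epsilon\beta\rangle$ and $\langle\alpha_1+\alpha'_1+2\epsilon\beta\rangle$. They differ by an effective divisor class coming from the negative parts. For instance, on $\mathrm{Bl}_p\mathbb{P}^2$ take $\alpha_1=H+E$ and $\alpha'_1=H-E$; then $\langle\alpha_1\rangle+\langle\alpha'_1\rangle=2H-E\neq 2H=\langle\alpha_1+\alpha'_1\rangle$. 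So \cref{prop:trace_basic}(1) is unavailable.

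Your fallback of passing to $\langle\alpha_i\rangle$ and using \cref{prop:tracepsef_concave} only bounds the left side by $\vol_{X|D}(\langle\alpha_1\rangle+\langle\alpha'_1\rangle,\alpha_2,\ldots,\alpha_p)$. You still have to compare this with the value at the strictly larger class $\langle\alpha_1+\alpha'_1\rangle$.

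The fallback also leans on (5). Your proof of (5) again applies \cref{prop:trace_basic}(1), this time to $\mathbb{D}(\langle\alpha_i\rangle+\epsilon\beta)$ versus $\mathbb{D}(\langle\alpha_i+\epsilon\beta\rangle)$. Their $X$-components $\langle\alpha_i\rangle+\epsilon\beta$ and $\langle\alpha_i+\epsilon\beta\rangle$ differ in general, so that step is not justified either.

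\textbf{The monotonicity you isolate as the main obstacle is false as stated.} On $\mathrm{Bl}_p\mathbb{P}^2$ with exceptional curve $E$, the classes $3H-2E\leq 3H-E$ are both ample. Yet $\vol_{X|E}(3H-2E)=2>1=\vol_{X|E}(3H-E)$. Your sketch breaks exactly here: $\{E\}+\delta\beta$ contains no current with zero Lelong number along $E$ for small $\delta$.

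What does hold is monotonicity when $\nu(\alpha'-\alpha,D)=0$. That version covers the Kähler differences in (4) and the effective divisors avoiding $D$ that your route needs. However, proving even that forces you to argue with currents rather than with traces of b-divisors.

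\textbf{The missing idea is the paper's current-level argument.} Work in the convex cone of big classes whose non-Kähler loci avoid $D$. Take minimal-singularity currents $T_1,T'_1,T$ in $\alpha_1,\alpha'_1,\alpha_1+\alpha'_1$. Then $T_1+T'_1\preceq T$, and this relation survives restriction to $\widetilde D$. Apply \cref{thm:mono_thm_partial} on $\widetilde D$ together with additivity of the non-pluripolar product. Translate back via $\Tr_D\mathbb{D}(\gamma)=\mathbb{D}(T_{\min}(\gamma)|_{\widetilde D})$. This bypasses the $X$-component issue entirely.

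\textbf{Two smaller points.} In (4), lower semicontinuity of $\nu(\bullet,D)$ does settle the case $\nu(\alpha_i,D)>0$, since it forces $\nu(\alpha_i^j,D)>0$ for large $j$. In the equality part of (3), the convergence $\mathbb{D}(T_i^\epsilon|_{\widetilde D})\to\mathbb{D}(T_i|_{\widetilde D})$ is not supplied by \cref{ex:trace_psef}. It needs an absorption argument using a Kähler current in $\alpha_i$ that is smooth at the generic point of $D$.
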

The restricted volume is not continuous in general, as shown in \cite[Example~5.6]{CT22}.
\begin{proof}
(1) (2) These are trivial.

(3) We may assume that $\nu(\alpha_i,D)=0$ for $i=2,\ldots,p$ since otherwise there is nothing to prove. 

Let $\mathcal{A}\subseteq \mathrm{H}^{1,1}(X,\mathbb{R})$ be the set of big classes whose non-Kähler loci do not contain $D$. Then $\mathcal{A}$ is open and convex, as proved in \cite[Proposition~4.10]{Mat13}. It suffices to show that for $\alpha_1,\alpha'_1,\ldots,\alpha_p\in \mathcal{A}$, we have
\begin{equation}\label{eq:res_vol_class_con_temp1}
\begin{split}
& \vol_{X|D}\Bigl(\mathbb{D}(\alpha_1),\mathbb{D}(\alpha_2),\ldots,\mathbb{D}(\alpha_p) \Bigr)+\vol_{X|D}\Bigl(\mathbb{D}(\alpha'_1),\mathbb{D}(\alpha_2),\ldots,\mathbb{D}(\alpha_p) \Bigr)\\
\leq_X & \vol_{X|D}\Bigl(\mathbb{D}(\alpha_1+\alpha'_1),\mathbb{D}(\alpha_2),\ldots,\mathbb{D}(\alpha_p) \Bigr).
\end{split}
\end{equation}
Let $T_1,\ldots,T_p,T_1',T$ be currents with minimal singularities in $\alpha_1,\ldots,\alpha_p,\alpha'_1,\alpha_1+\alpha'_1$. Then due to \cref{thm:mono_thm_partial}, we have 
\[
    \left\{ T_1|_{\tilde{D}}\wedge T_2|_{\tilde{D}}\wedge \cdots \wedge T_{p}|_{\tilde{D}} \right\}+\left\{ T'_1|_{\tilde{D}}\wedge T_2|_{\tilde{D}}\wedge \cdots \wedge T_{p}|_{\tilde{D}} \right\} \leq_X \left\{ T|_{\tilde{D}}\wedge T_2|_{\tilde{D}}\wedge \cdots \wedge T_{n-p}|_{\tilde{D}}\right\},
\]
and \eqref{eq:res_vol_class_con_temp1} follows.

(4) We may assume without loss of generality that $\nu(\alpha_i,D)=0$ for each $i=1,\ldots,p$ since there is nothing to prove otherwise. Further, we may assume that $\nu(\alpha_i^j,D)=0$ for all $i=1,\ldots,p$ and all $j>0$.

Fix a Kähler class $\gamma\in \mathrm{H}^{1,1}(X,\mathbb{R})$. Then for any $\epsilon>0$, we can find $j_0>0$ so that when $j\geq j_0$, the classes $\alpha_i+\epsilon \gamma-\alpha_i^j$'s are all Kähler, where $i=1,\ldots,p$. Then due to (3), we find
\[
\vol_{X|D}\left( \alpha_1^j,\ldots,\alpha_p^j \right)\leq_X \vol_{X|D}\left( \alpha_1+\epsilon\gamma,\ldots,\alpha_p+\epsilon\gamma \right).
\]
Therefore, 
\[
\varlimsup_{j\to\infty}\vol_{X|D}\left( \alpha_1^j,\ldots,\alpha_p^j \right) \cap \{F\} \leq \vol_{X|D}\left( \alpha_1+\epsilon\gamma,\ldots,\alpha_p+\epsilon\gamma \right) \cap \{F\}.
\]
Letting $\epsilon \to 0+$, we conclude \eqref{eq:mix_res_vol_class_usc}.

(5) We first assume that the $\alpha_i$'s are all big and $D$ is not in the non-Kähler loci of the $\alpha_i$'s.

In this case, $D$ is also not in the non-Kähler loci of the $\langle \alpha_i \rangle$'s, as shown in \cite[Lemma~6.1]{CT22}.
In this case, let $T_1,\ldots,T_{p}$ be currents with minimal singularities in $\alpha_1,\ldots,\alpha_p$. Then
\[
T_i-\sum_{E\subseteq X} \nu(\alpha_i,E)[E]
\]
has minimal singularities in $\langle \alpha_i \rangle$. Therefore, \eqref{eq:mix_res_vol_class_inv_dzd} translates into
\[
\left\{ \bigwedge_{i=1}^p T_i|_{\tilde{D}} \right\}=\left\{ \bigwedge_{i=1}^p \left.\left( T_i-\sum_{E\subseteq X} \nu(\alpha_i,E)[E]\right)\right|_{\tilde{D}} \right\},
\]
which is obvious.

Next we handle the general case. Fix a Kähler class $\gamma\in \mathrm{H}^{1,1}(X,\mathbb{R})$. We first observe that the $\geq_X$ direction in  \eqref{eq:mix_res_vol_class_inv_dzd} follows readily from (3). It suffices to prove the reverse inequality. From what we have proved, we know that for any $\epsilon>0$, we have
\begin{equation}\label{eq:mix_res_vol_class_temp1}
 \vol_{X|D}\left(\alpha_1+\epsilon\gamma,\ldots,\alpha_p+\epsilon\gamma \right)=\vol_{X|D}\Bigl(\langle \alpha_1+\epsilon\gamma \rangle,\ldots, \langle \alpha_p+\epsilon\gamma \rangle \Bigr).
\end{equation}
From \cref{prop:mov_int_prop}, for each $i=1,\ldots,p$, we have
\[
\lim_{\epsilon\to 0+}\langle \alpha_i+\epsilon\gamma \rangle=\langle \alpha_i\rangle,
\]
so letting $\epsilon\to 0+$ in \eqref{eq:mix_res_vol_class_temp1} and applying (4), the $\leq_X$ direction in \eqref{eq:mix_res_vol_class_inv_dzd} follows.
\end{proof}

\begin{example}\label{ex:resvol_DTT}
    Assume that $D$ is a prime divisor on $X$. Let $T$ be a closed positive $(1,1)$-current on $X$ with $\nu(T,D)=0$. Then
    \begin{equation}\label{eq:volres_DTT}
    \vol_{X|D} \left(\mathbb{D}(T)^{n-1}\right)=\vol_{X|D}(T).
    \end{equation}
    To prove this, we first assume that $T$ is a Kähler current. Take a representative $\Tr_D T\in \{T\}|_D$. Then our assertion amounts to 
    \[
    \mathbb{D}(\Tr_D T)^{n-1}=\vol(\Tr_D T).
    \]
    This is part of \cref{thm:main_paper1}.

    Now let us come back to the general case.
    Take a Kähler form $\omega$ on $X$. Then we know that
    \[
    \vol_{X|D} \biggl(\Bigl(\mathbb{D}(T)+\epsilon \mathbb{D}(\omega)\Bigr)^{n-1}\biggr)=\vol_{X|D}(T+\epsilon \omega).
    \]
    Thanks to \cref{prop:res_vol} and \eqref{eq:resvolcurr}, when $\epsilon\to 0+$, the limit gives \eqref{eq:volres_DTT}.
\end{example}

Motivated by the above example, we introduce the following:
\begin{definition}\label{def:mix_res_vol_curr}
    Suppose that $T_1,\ldots,T_p$ are closed positive $(1,1)$-currents on $X$, $D$ is a prime divisor on $X$. 
    Assume that $\nu(T_i,D)=0$ for all $i$, then we define
    \[
    \vol_{X|D}(T_1,\ldots,T_p)\coloneqq \vol_{X|D}\Bigl( \mathbb{D}(T_1),\ldots,\mathbb{D}(T_p)\Bigr)\in \mathrm{H}^{p+1,p+1}(X,\mathbb{R}).
    \]
    If $\nu(T_i,D)>0$ for some $i$, we simply set
    \[
    \vol_{X|D}(T_1,\ldots,T_p)\coloneqq 0.
    \]
\end{definition}

We need the following Brunn--Minkowski inequality for the sequel.
\begin{proposition}\label{prop:BM}
Assume that $n>1$.
    Let $\mathbb{D}_1,\ldots,\mathbb{D}_{n-1}$ be nef b-divisors over $X$ and $D$ is a prime divisor on $X$. Then
    \[
    \vol_{X|D}\left(\mathbb{D}_1,\ldots,\mathbb{D}_{n-1}\right)\geq \prod_{i=1}^{n-1} \left(\vol \Tr_D \mathbb{D}_i\right)^{1/(n-1)}.
    \]
\end{proposition}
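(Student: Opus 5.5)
By \cref{def:resv}, with $p=n-1$, the left-hand side is the number $\bigl(\Tr_D\mathbb{D}_1\cap\cdots\cap\Tr_D\mathbb{D}_{n-1}\bigr)$. This is the top intersection number of the $n-1$ nef b-divisors $\Tr_D\mathbb{D}_i$ over $D$, or equivalently over the normalization $\widetilde{D}$ or any resolution $W\to D$, which has dimension $n-1$. The plan is therefore to prove the Khovanskii--Teissier/Brunn--Minkowski type inequality
\[
(\mathbb{E}_1\cap\cdots\cap\mathbb{E}_{m})\geq \prod_{i=1}^m (\vol\mathbb{E}_i)^{1/m}
\]
for arbitrary nef b-divisors $\mathbb{E}_1,\ldots,\mathbb{E}_m$ over a compact Kähler manifold $W$ of dimension $m=n-1$. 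We then apply it to $\mathbb{E}_i=\Tr_D\mathbb{D}_i$ pulled back to a resolution $W$ of $\widetilde{D}$; \cref{prop:trace_indepmodel} and \cref{prop:int_indp_model} make this independent of choices.

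First I reduce to the Cartier case. By \cref{cor:Demapp}, each $\mathbb{E}_i$ is the limit of a decreasing sequence $(\mathbb{E}_i^j)_j$ of nef Cartier b-divisors. \cref{thm:bdiv_int_cont1} gives convergence of the left-hand side. The volumes also converge: $\vol\mathbb{E}=(\mathbb{E}^m)$, since the top self-intersection recovers the volume of \cite{Xiabdiv}, and $((\mathbb{E}^j)^m)\to(\mathbb{E}^m)$, again by \cref{thm:bdiv_int_cont1}. So it suffices to treat the Cartier case.

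In the Cartier case, \cref{cor:Cbdivint_indepmodel} lets me pass to a common modification $W'\to W$ on which every $\mathbb{E}_i^j$ is realized by a nef class $\beta_i\in\mathrm{H}^{1,1}(W',\mathbb{R})$. Then the product equals $\beta_1\cap\cdots\cap\beta_m$ (\cref{ex:int_nef_classes}), and $\vol\mathbb{E}_i=\beta_i^m$. The required inequality
\[
\beta_1\cdots\beta_m\geq\prod_i(\beta_i^m)^{1/m}
\]
for nef classes on a compact Kähler manifold is the classical Khovanskii--Teissier inequality (Demailly, Dinh--Nguyên), obtained by perturbing to Kähler classes and using the Hodge index / Calabi--Yau argument. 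The case $m=1$ is trivial equality, which is why $n>1$ is assumed.

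The main obstacle is identifying $\vol$ of a nef b-divisor with its top self-intersection and justifying the limit of volumes along the decreasing Cartier approximation. If that identification is only available for big b-divisors, I would first handle the big case and then reach the degenerate case by adding $\epsilon\mathbb{D}(\omega_W)$ and letting $\epsilon\to0+$, using multilinearity (\cref{prop:bdiv_int_prop}) and monotonicity of the volume. Some care is also needed because $\widetilde{D}$ may be singular. This is handled by working on a resolution $W$, since b-divisors over $D$ and over $W$ coincide.
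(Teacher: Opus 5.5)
Your proposal is correct and follows the paper's route. The paper's proof simply invokes the Brunn--Minkowski inequality for nef b-divisors, \cite[Proposition~5.7]{Xiabdiv}, applied to the traces $\Tr_D\mathbb{D}_i$ on (a resolution of) $D$; you instead reprove that cited inequality via \cref{cor:Demapp}, \cref{thm:bdiv_int_cont1} and the classical Khovanskii--Teissier inequality for nef classes.

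The point you flag as the main obstacle is harmless. Applying \cref{thm:nefbint_aslimit} on $W$ with $p=m=\dim W$ gives $(\mathbb{E}^m)=\lim_{Y}\vol\mathbb{E}_Y=\vol\mathbb{E}$ for every nef b-divisor. In any case, passing to the limit only needs the monotonicity $\vol\mathbb{E}_i^j\geq\vol\mathbb{E}_i$.
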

\begin{proof}
    This follows from the Brunn--Minkowski inequality proved in \cite[Proposition~5.7]{Xiabdiv}.
\end{proof}

\section{Qualitative monotonicity theorem}\label{sec:qual_mono}
Let $X$ be a connected compact Kähler manifold of dimension $n$. Fix a non-negative integer $p$.

We shall derive a qualitative monotonicity theorem using the theory of b-divisors.

\begin{lemma}\label{lma:Tpwedgediff_analy}
    Let $T_1,\ldots,T_{p}$ be closed positive $(1,1)$-currents with analytic singularities on $X$. Suppose that $T,T'$ are $\mathcal{I}$-good closed positive $(1,1)$-currents on $X$ with positive volumes in the same cohomology class. Assume that $T\preceq T'$. 
    Then
    \begin{equation}\label{eq:T1toTp_diff_1}
    \begin{split}
    &\left\{ T_1\wedge \cdots \wedge T_{p}\wedge T'\right\} - \left\{ T_1\wedge \cdots \wedge T_p\wedge T \right\} \\
    \geq_X & \sum_{D\subseteq X}\Bigl( \nu(T,D)-\nu(T',D) \Bigr)\left\{ \bigwedge_{i=1}^{p}\left.\Bigl( T_i-\nu(T_i,D)[D]\Bigr)\right|_{\widetilde{D}} \right\}.    
    \end{split}
    \end{equation}
    Here $D$ runs over the set of prime divisors on $X$, and $\widetilde{D}\rightarrow D$ is the normalization of $D$.
\end{lemma}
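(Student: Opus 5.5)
We argue by reduction to a local computation on the currents. Write $U\coloneqq T_1\wedge\cdots\wedge T_p$ (non-pluripolar) and $\gamma\coloneqq\{T\}=\{T'\}$. The plan is to compare the non-pluripolar products $U\wedge T'$ and $U\wedge T$ with the full (Bedford--Taylor type) products available because the $T_i$ have analytic singularities.

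\textbf{Step~1 (reduction).} Replacing $X$ by a modification is harmless: by \cref{prop:bime_npp} the left-hand side of \eqref{eq:T1toTp_diff_1} pushes forward correctly. The right-hand side only decreases under pushforward, since prime divisors exceptional over $X$ contribute positive classes that are killed by $\pi_*$, and strict transforms give the same terms. So we may assume each $T_i$ has log singularities along an effective divisor. Then $T_i=R_i+\sum_E\nu(T_i,E)[E]$ with $R_i=\Reg T_i$ having bounded local potentials, and the non-pluripolar product only sees the $R_i$. For $D$ a divisor, $(T_i-\nu(T_i,D)[D])|_{\tilde D}$ differs from $R_i|_{\tilde D}$ by divisorial terms, which disappear in the non-pluripolar product on $\tilde D$. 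Hence we may assume the $T_i$ have bounded potentials, so that $U$ is a genuine Bedford--Taylor product, closed, with $\{U\}$ a product of nef classes.

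\textbf{Step~2 (Siu decomposition).} Write $T=\Reg T+\sum_D\nu(T,D)[D]$, and similarly for $T'$. With bounded potentials for the $T_i$, the Bedford--Taylor product $U\wedge[D]$ is defined and has class $\{U\}\cap\{D\}$. It equals the pushforward of $\bigwedge_i T_i|_{\tilde D}$, since bounded potentials restrict to bounded potentials. The product $U\wedge T$ in the full sense has class $\{U\}\cap\gamma$ by Bedford--Taylor theory (the $T_i$ have bounded potentials). Its non-pluripolar part is $U\wedge\Reg T$, because the divisorial part lives on the pluripolar set. Thus
\[
\{U\wedge T\}_{\mathrm{np}}=\{U\}\cap\gamma-\sum_D\nu(T,D)\{U\wedge[D]\}-\{\text{residual pluripolar mass of }U\wedge\Reg T\},
\]
and likewise for $T'$.

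\textbf{Step~3 (comparison of residual terms).} Subtracting the two identities, the divisorial terms give exactly the right-hand side of \eqref{eq:T1toTp_diff_1}. It remains to show that the residual term for $T$ dominates (in $\geq_X$) the one for $T'$. Equivalently, $\{U\wedge\Reg T'\}_{\mathrm{np}}-\{U\wedge \Reg T\}_{\mathrm{np}}$ must be at least the divisorial correction. We expect this to be the main obstacle.

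For this step we would use the hypothesis $T\preceq T'$, meaning $T\le T'+C$ at the potential level, together with monotonicity. Take quasi-equisingular approximations $T^j$ and $T'^j$; by $\mathcal I$-goodness they $d_S$-converge, so \cref{thm:dscontnpp1} lets us pass to the limit. For analytic singularities, after a further modification both $T^j$ and $T'^j$ have log singularities along divisors. The non-pluripolar product then equals the full product with the regular parts, and the residual vanishes. The identity in Step~2 becomes an exact equality, with divisors $D$ on the modification $Y_j$. The discrepancy with the statement on $X$ is then handled by the pushforward inequality from Step~1: exceptional divisors give nonnegative contributions because $\nu(T^j,D)\ge\nu(T'^j,D)$ by domination. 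We can arrange this domination by choosing the approximations with $T^j\preceq T'^j$, for instance by taking $\max$ of the potentials.

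Passing $j\to\infty$ requires $\nu(T^j,D)\to\nu(T,D)$ for the divisors $D\subseteq X$, which holds for quasi-equisingular approximations. It also requires lower semicontinuity of the countable sum, which we get from Fatou-type arguments via \cref{prop:Levi} and \cref{prop:incdom} applied with truncation to finitely many $D$. Together these yield the inequality.
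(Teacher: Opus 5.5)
Your final argument is essentially the paper's proof: reduce the $T_i$ to bounded potentials, get an exact identity on a modification when $T,T'$ have log singularities and push forward discarding the exceptional terms, then pass to the limit along dominated quasi-equisingular approximations using \cref{thm:dscontnpp1} and monotone convergence (your truncation to finitely many $D$ replaces the paper's device of subtracting $\sum_D\nu(T',D)[D]$ to make $T'$ non-divisorial), while the residual-pluripolar-mass discussion in Steps~2--3 only restates the claim and is never used. Two small fixes: exceptional terms are not killed by $\pi_*$ in general, but they are positive with coefficients $\nu(T,E)-\nu(T',E)\geq 0$, which is all you need; and you should first add a Kähler form to $T,T'$ so that $T^j,T'^j$ lie in one fixed big class, as required both by \cref{thm:dscontnpp1} and by the identity $\{\Reg T'^j\}-\{\Reg T^j\}=\sum_E\bigl(\nu(T^j,E)-\nu(T'^j,E)\bigr)\{E\}$.
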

Note that the sum is a countable sum. We have omitted the obvious pushforward maps from $\widetilde{D}$ to $X$.

\begin{proof}

\textbf{Step~1}. We first assume that $T_1,\ldots,T_p$, $T,T'$ all have log singularities. Then we may replace the $T_i$'s by their non-divisorial parts, and hence reduce to the case where $T_1,\ldots,T_p$ all have bounded local potentials. A further regularization then allows us to reduce to the case where $T_1,\ldots,T_p$ are all Kähler forms, say $\omega_1,\ldots,\omega_p$. Then our assertion \eqref{eq:T1toTp_diff_1} means
\[
\begin{split}
& \left\{ \omega_1\wedge \cdots \wedge\omega_p\wedge \Reg T' \right\} - \left\{ \omega_1\wedge \cdots \wedge\omega_p\wedge \Reg T \right\}\\
\geq_X & \sum_{D\subseteq X}\Bigl( \nu(T,D)-\nu(T',D) \Bigr)\left\{ \omega_1|_D\wedge \cdots \wedge \omega_p|_D \right\}. 
\end{split}
\]
This is obvious. We even have equality in this case.

\textbf{Step~2}. We assume that $T$ and $T'$ both have analytic singularities. Replacing $T_i$ by $\Reg T_i$, we may assume that each $T_i$ is non-divisorial.

Let $\pi\colon Y\rightarrow X$ be a modification which resolves the singularities of $T_1,\ldots,T_p$ and $T,T'$.
Then by Step~1 and \cref{prop:npp_strongpos}, we have
\[
\begin{aligned}
& \left\{ \pi^*T_1\wedge \cdots \wedge \pi^*T_{p}\wedge \pi^* T'\right\} - \left\{ \pi^*T_1\wedge \cdots \wedge \pi^* T_p\wedge \pi^*T\right\}\\
\geq_Y & \sum_{E\subseteq Y}\Bigl( \nu(T,E)-\nu(T',E) \Bigr)\left\{ \bigwedge_{i=1}^{p}\left.\Bigl( \pi^*T_i-\nu(T_i,E)[E]\Bigr)\right|_{\tilde{E}} \right\}\\
\geq_Y & \sum_{D\subseteq X}\Bigl( \nu(T,D)-\nu(T',D) \Bigr)\tilde{j}_* \left\{ \bigwedge_{i=1}^{p}\left( \pi^*T_i\right)|_{\widetilde{D'}} \right\}\\
=&\sum_{D\subseteq X}\Bigl( \nu(T,D)-\nu(T',D) \Bigr)\tilde{j}_* \left\{ \bigwedge_{i=1}^{p} \tilde{p}^*\left( T_i|_{\widetilde{D}}\right)\right\}. 
\end{aligned}
\]
Here $D'$ denotes the strict transform of $D$ on $Y$. The notations are summarized in the following commutative diagram:
\[
\begin{tikzcd}
\widetilde{D'} \arrow[r] \arrow[d, "\tilde{p}"] \arrow[rr, "\tilde{j}", bend left=49] & D' \arrow[r, "j", hook] \arrow[d, "p"] & Y \arrow[d, "\pi"] \\
\widetilde{D} \arrow[r] \arrow[rr, "\tilde{i}", bend right=49]                            & D \arrow[r, "i", hook]                 & X.                 
\end{tikzcd}
\]
Taking pushforward and applying \cref{prop:bime_npp}, we find
\[
\begin{aligned}
    & \{T_1\wedge \cdots \wedge T_{p}\wedge T'\}- \{T_1\wedge \cdots \wedge T_p\wedge T\} \\
    \geq_X & \sum_{D\subseteq X}\Bigl( \nu(T,D)-\nu(T',D) \Bigr)\pi_*\tilde{j}_*\left\{ \bigwedge_{i=1}^{p} \tilde{p}^*\left( T_i|_{\widetilde{D}}\right) \right\}\\
    =&\sum_{D\subseteq X}\Bigl( \nu(T,D)-\nu(T',D) \Bigr)\tilde{i}_*\tilde{p}_*\left\{ \bigwedge_{i=1}^{p} \tilde{p}^*\left( T_i|_{\widetilde{D}}\right)\right\}\\
    =&\sum_{D\subseteq X}\Bigl( \nu(T,D)-\nu(T',D) \Bigr)\tilde{i}_*\left\{ \bigwedge_{i=1}^{p} \left( T_i|_{\widetilde{D}}\right) \right\}.
\end{aligned}
\]
The desired inequality \eqref{eq:T1toTp_diff_1} follows.

\textbf{Step~3}. We handle the general case.
    
    Replacing $T$ and $T'$ by
    \[
    T-\sum_{D\subseteq X}\nu(T',D)[D],\quad T'-\sum_{D\subseteq X}\nu(T',D)[D],
    \]
    we may first assume that $T'$ is non-divisorial. We need to show that  
    \begin{equation}\label{eq:wedge_diff_temp2}
    \left\{ T_1\wedge \cdots \wedge T_{p}\wedge T' \right\}- \left\{ T_1\wedge \cdots \wedge T_p\wedge T \right\} \geq_X \sum_{D\subseteq X}\nu(T,D)\left\{ \bigwedge_{i=1}^{p}\left.\Bigl( T_i-\nu(T_i,D)[D]\Bigr)\right|_{\widetilde{D}} \right\}.
    \end{equation}
    Take a Kähler form $\omega$ on $X$.
    Replacing $T$ and $T'$ by $T+\omega$ and $T'+\omega$, we may assume that both are Kähler currents. Take quasi-equisingular approximations $(S_k)_k,(S'_k)_k$ of $T$ and $T'$ so that $S_k\preceq S'_k$. 
    It follows from Step~2 that for each $k>0$,
    \begin{equation}\label{eq:wedge_diff_temp1}
    \left\{ T_1\wedge \cdots \wedge T_{p}\wedge S'_k \right\}- \left\{ T_1\wedge \cdots \wedge T_p\wedge S_k \right\} \geq_X \sum_{D\subseteq X}\nu(S_k,D)\left\{ \bigwedge_{i=1}^{p}\left.\Bigl( T_i-\nu(T_i,D)[D]\Bigr)\right|_{\widetilde{D}} \right\}.
    \end{equation}
    Note that in all expressions like $\sum_{D\subseteq X}$ above, we only need to consider the countable set of prime divisors $D$ with $\nu(T,D)>0$. 
    
    Next, for each $k>0$, we have
    \[
    \sum_{D\subseteq X}\nu(S_k,D)\left\{ \bigwedge_{i=1}^{p}\left.\Bigl( T_i-\nu(T_i,D)[D]\Bigr)\right|_{\widetilde{D}} \right\}\leq_X \left\{ T_1\wedge \cdots \wedge T_{p}\wedge S'_1 \right\}
    \]
    by \eqref{eq:wedge_diff_temp1} and the monotonicity theorem \cref{thm:mono_thm_partial}. Therefore, the monotone convergence theorem
    \cref{prop:Levi} is applicable. Letting $k\to\infty$ in \eqref{eq:wedge_diff_temp1} and applying \cref{thm:dscontnpp1}, we conclude the desired inequality \eqref{eq:wedge_diff_temp2}.
\end{proof}
Now using the language of b-divisors, we derive our main theorem:
\begin{theorem}\label{thm:bdivint_diff_ineq}
     Let $\mathbb{D}_1,\ldots,\mathbb{D}_p$ be nef b-divisors over $X$, and $T,T'$ be closed positive $(1,1)$-currents on $X$ in the same cohomology class such that $T\preceq_{\mathcal{I}}T'$. Then
    \begin{equation}\label{eq:Ddiffineq1}
    \begin{split}
        &\Bigl(\mathbb{D}_1\cap \dots \cap \mathbb{D}_p \cap \mathbb{D}(T')\Bigr)_X-\Bigl(\mathbb{D}_1\cap \dots \cap \mathbb{D}_p \cap \mathbb{D}(T)\Bigr)_X\\
        \geq_X & \sum_{D/X} \Bigl(\nu(T,D)-\nu(T',D) \Bigr) \vol_{X|D}\left(\mathbb{D}_1,\dots, \mathbb{D}_p \right).
    \end{split}
    \end{equation}
    Furthermore, equality holds in \eqref{eq:Ddiffineq1} when $\mathbb{D}_1,\ldots,\mathbb{D}_p$ are Cartier.
\end{theorem}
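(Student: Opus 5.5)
The plan is to prove the Cartier case first, then pass to general nef b-divisors by decreasing Cartier approximation (\cref{cor:Demapp}) together with the continuity results \cref{thm:bdiv_int_cont1} and \cref{prop:contresvol}. Both sides of \eqref{eq:Ddiffineq1} behave well under replacing $X$ by a modification: the left side by \cref{prop:int_indp_model}, the right side by \cref{prop:trace_bimeo}. The sum over $D/X$ then splits into divisors on a model $Y$ and exceptional ones, and the latter contribute $0$ after pushforward.

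\textbf{Step~1: Cartier case.} Suppose each $\mathbb{D}_i$ is Cartier. After passing to a common model, each $\mathbb{D}_i$ is realized on $X$ by a nef class $\alpha_i$, so $\mathbb{D}_i=\mathbb{D}(\alpha_i)$. Adding $\epsilon\mathbb{D}(\omega)$ and using linearity, we may assume the $\alpha_i$ are Kähler. By \cref{prop:bdiv_int_prop}(5), applied $p$ times, the left side becomes
\[
\alpha_1\cap\cdots\cap\alpha_p\cap\bigl(\mathbb{D}(T')_X-\mathbb{D}(T)_X\bigr).
\]
Since $T,T'$ lie in the same class, Siu's decomposition gives
\[
\mathbb{D}(T')_X-\mathbb{D}(T)_X=\sum_{D\subseteq X}\bigl(\nu(T,D)-\nu(T',D)\bigr)\{D\}.
\]
This is a countable sum of effective classes, and it converges. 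By \cref{ex:res_vol_nef}, the right side on $X$ equals $\sum_{D\subseteq X}(\nu(T,D)-\nu(T',D))\,\alpha_1\cap\cdots\cap\alpha_p\cap\{D\}$, since exceptional $D$ contribute zero. So equality holds on $X$. Each coefficient $\nu(T,D)-\nu(T',D)$ is nonnegative because $T\preceq_{\mathcal{I}}T'$. The same computation on any modification $Y$ gives equality there as well, with $\pi^*T,\pi^*T'$ in place of $T,T'$. The linearity step removing $\epsilon$ is harmless because both sides are linear in each $\mathbb{D}_i$ (\cref{prop:bdiv_int_prop}, \cref{prop:res_vol}).

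\textbf{Step~2: general case by approximation.} Take decreasing Cartier sequences $\mathbb{D}_i^j\searrow\mathbb{D}_i$. For the left side, \cref{thm:bdiv_int_cont1} applied to each of the two products gives convergence. The right side is the main obstacle. \cref{prop:contresvol} requires $\mathbb{D}^j_{i,Y}$ to be independent of $j$ on the model $Y$ where $D$ lives, and Cartier approximations generally do not satisfy this. Moreover, we need to pass a limit through an infinite sum, but only in the direction of an inequality.

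My first attempt is to bound the right side from above rather than take its limit. Fix finitely many divisors $D$, all lying on some model $Y$, and work on $Y$. The idea is to compare $\vol_{Y|D}(\mathbb{D}_1,\ldots,\mathbb{D}_p)$ with $\vol_{Y|D}(\mathbb{D}_1^j,\ldots,\mathbb{D}_p^j)$ via a monotonicity statement for traces, where $\mathbb{D}_{i,Y}\leq\mathbb{D}^j_{i,Y}$. \cref{prop:trace_basic}(1) only handles equal classes on $Y$, so this needs extra work. I would reduce to currents: write $\mathbb{D}_i+\epsilon\mathbb{D}(\omega)=\mathbb{D}(S_i)$ with $S_i$ an $\mathcal{I}$-good Kähler current, and take quasi-equisingular approximations $S_i^j$ of $S_i$. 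Then $\mathbb{D}(S_i^j)$ decreases to $\mathbb{D}(S_i)$, and $\Tr_D S_i^j$ is simply a restriction. This brings us back to the situation of \cref{lma:Tpwedgediff_analy}, which already proves the inequality for currents with analytic singularities. Lower semicontinuity of the right side along this approximation follows from \cref{thm:trace_cont}, because $S_i^j\xrightarrow{d_S}S_i$.

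Concretely, I would apply \cref{lma:Tpwedgediff_analy} on $Y$ with $T_i=S_i^j$ and with $T,T'$ replaced by $\mathcal{I}$-good envelopes, after adding $\omega$ to both $T$ and $T'$. \cref{thm:DTandnpp} identifies the left sides. I would then let $j\to\infty$, using \cref{thm:dscontnpp1} on the left and trace continuity on each of the finitely many terms on the right. Next I would let the finite set of divisors grow, push forward to $X$, and let $\epsilon\to0$ using linearity. Finally, $Y$ and the finite set of divisors are arbitrary, and every term is positive, so taking the supremum over finite partial sums yields \eqref{eq:Ddiffineq1}.
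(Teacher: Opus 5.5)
Your proof is correct and is essentially the paper's argument. For the inequality you write $\mathbb{D}_i+\epsilon\mathbb{D}(\omega)=\mathbb{D}(S_i)$ with $\mathcal{I}$-good Kähler currents, which should be taken non-divisorial so that the traces along $D$ are defined. You then apply \cref{lma:Tpwedgediff_analy} to quasi-equisingular approximations $S_i^j$ and to $\mathcal{I}$-good envelopes of $T+\omega$ and $T'+\omega$, pass to the limit via \cref{thm:DTandnpp} and $d_S$-continuity, and let $\epsilon\to0$. The Cartier equality is the same direct computation as in the paper, on a model where the $\mathbb{D}_i$ are realized. Your finite-partial-sum and supremum device simply replaces the paper's use of \cref{prop:incdom}.

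One correction: your opening claim that divisors exceptional over a model contribute zero holds only when the $\mathbb{D}_i$ are realized on that model. In general $\vol_{X|D}(\mathbb{D}_1,\ldots,\mathbb{D}_p)$ can be nonzero for exceptional $D$, for instance for the exceptional divisor of the blow-up of a curve with $p=1$. Step~2 does not rely on this claim, since you take the supremum over all models.
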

The right-hand side of \eqref{eq:Ddiffineq1} should be interpreted as the following limit:
\[
\lim_{\pi\colon Y\rightarrow X} \sum_{D\subseteq Y}\Bigl(\nu(T,D)-\nu(T',D) \Bigr) \vol_{X|D}\left(\mathbb{D}_1,\dots, \mathbb{D}_p \right).
\]
In terms of b-divisors, one can also formulate \eqref{eq:Ddiffineq1} as 
\[
\begin{split}
        &\Bigl(\mathbb{D}_1\cap \dots \cap \mathbb{D}_p \cap \mathbb{D}(T')\Bigr)-\Bigl(\mathbb{D}_1\cap \dots \cap \mathbb{D}_p \cap \mathbb{D}(T)\Bigr)\\
        \geq & \sum_{D/X} \Bigl(\nu(T,D)-\nu(T',D) \Bigr) \vol_{|D}\left(\mathbb{D}_1,\dots, \mathbb{D}_p \right).
\end{split}
\]

\begin{proof}
    From the bimeromorphic invariance of our assumptions, it suffices to prove the following:
    \[
    \begin{split}
        &\Bigl(\mathbb{D}_1\cap \dots \cap \mathbb{D}_p \cap \mathbb{D}(T')\Bigr)_X-\Bigl(\mathbb{D}_1\cap \dots \cap \mathbb{D}_p \cap \mathbb{D}(T)\Bigr)_X \\
       \geq_X & \sum_{D\subseteq X} \Bigl(\nu(T,D)-\nu(T',D) \Bigr) \left(\Tr_D \mathbb{D}_1 \cap \dots \cap \Tr_D \mathbb{D}_p \right)_{X}.
    \end{split}
    \]
    Fix a Kähler form $\omega$ on $X$ and replace $T$ and $T'$ by $T+\omega$ and $T'+\omega$, we may assume that both are Kähler currents.
    
    Take a closed smooth real $(1,1)$-form $\theta\in \{T\}$ and write
    \[
    T=\theta+\ddc \varphi,\quad T'=\theta+\ddc\varphi'.
    \]
    We may then replace $\varphi$ and $\varphi'$ by $P_{\theta}[\varphi]_{\mathcal{I}}$ and $P_{\theta}[\varphi']_{\mathcal{I}}$ respectively and assume that both $T$ and $T'$ are $\mathcal{I}$-good and $T\preceq T'$.
    
    Replacing $T$ and $T'$ by
    \[
    T-\sum_{D\subseteq X}\nu(T',D)[D],\quad T'-\sum_{D\subseteq X}\nu(T',D)[D],
    \]
    we may further assume that $T'$ is non-divisorial and hence it remains to prove
    \begin{equation}\label{eq:DintDdiff_temp1}
    \begin{split}
        &\Bigl(\mathbb{D}_1\cap \dots \cap \mathbb{D}_p \cap \mathbb{D}(T')\Bigr)_X-\Bigl(\mathbb{D}_1\cap \dots \cap \mathbb{D}_p \cap \mathbb{D}(T)\Bigr)_X\\
        \geq_X & 
      \sum_{D\subseteq X} \nu(T,D)\left(\Tr_D \mathbb{D}_1 \cap \dots \cap \Tr_D \mathbb{D}_p \right)_{X} .
    \end{split}
    \end{equation}
    
    We first prove \eqref{eq:DintDdiff_temp1} when $\mathbb{D}_i=\mathbb{D}(T_i)$ for some non-divisorial $\mathcal{I}$-good Kähler current $T_i$ for all $i=1,\ldots,p$. In this case, thanks to \cref{thm:DTandnpp}, \eqref{eq:DintDdiff_temp1} reduces to
    \[
    \left\{ T_1\wedge \cdots \wedge T_p\wedge T' \right\}-\left\{ T_1\wedge \cdots \wedge T_p\wedge T \right\}\geq_X
    \sum_{D\subseteq X} \nu(T,D)\left\{  \Tr_D T_1 \wedge \cdots \wedge \Tr_D T_p \right\}.
    \]
    Here as usual, we omitted the obvious pushforward from $\widetilde{D}$ to $X$.
    
    Taking quasi-equisingular approximations $(T_i^j)_{j>0}$ of $T_i$ for each $i=1,\ldots,p$, we can then apply \cref{lma:Tpwedgediff_analy} to conclude that for each $j>0$,
    \[
    \left\{ T^j_1\wedge \cdots \wedge T^j_p\wedge T' \right\}-\left\{ T^j_1\wedge \cdots \wedge T^j_p\wedge T \right\}\geq_X
    \sum_{D\subseteq X} \nu(T,D)\left\{  \Tr_D T^j_1 \wedge \cdots \wedge \Tr_D T^j_p \right\}.
    \]
    Letting $j\to\infty$, we conclude using \cref{thm:bdiv_int_cont1} and \cref{prop:incdom}.

    Now let us come back to the general situation. For each $\epsilon>0$, we then have
    \[
    \begin{aligned}
        &\biggl(\Bigl( \mathbb{D}_1+\epsilon\mathbb{D}(\omega) \Bigr)\cap \dots \cap \Bigl( \mathbb{D}_p+\epsilon\mathbb{D}(\omega) \Bigr) \cap \mathbb{D}(T')\biggr)_X\\
        &-\biggl(\Bigl( \mathbb{D}_1+\epsilon\mathbb{D}(\omega) \Bigr)\cap \dots \cap \Bigl( \mathbb{D}_p+\epsilon\mathbb{D}(\omega) \Bigr)\cap \mathbb{D}(T)\biggr)_X \\
       \geq_X &\sum_{D\subseteq X} \nu(T,D) \biggl(\Tr_D \Bigl( \mathbb{D}_1+\epsilon\mathbb{D}(\omega) \Bigr) \cap \dots \cap \Tr_D \Bigl( \mathbb{D}_p+\epsilon\mathbb{D}(\omega) \Bigr) \biggr)_{X}\\
       \geq_X &  \sum_{D\subseteq X} \nu(T,D) \left(\Tr_D \mathbb{D}_1 \cap \dots \cap \Tr_D \mathbb{D}_p \right)_{X}.
    \end{aligned}
    \]
    Letting $\epsilon\to 0+$ we conclude \eqref{eq:DintDdiff_temp1}.

    Finally, it remains to prove the final assertion. Assume that $\mathbb{D}_1,\ldots,\mathbb{D}_p$ are all Cartier. Replacing $X$ by a modification, we may assume that $\mathbb{D}_i=\mathbb{D}(\alpha_i)$ for some nef class $\alpha_i\in \mathrm{H}^{1,1}(X,\mathbb{R})$ for each $i$. Then thanks to \cref{prop:bdiv_int_prop}(5) and \cref{ex:res_vol_nef}, the desired equality becomes
    \[
    \alpha_1\cap \cdots \cap \alpha_p \cap \{\Reg T'\}-\alpha_1\cap \cdots \cap \alpha_p \cap \{\Reg T\}=\sum_{D\subseteq X} \Bigl(\nu(T,D)-\nu(T',D) \Bigr) \alpha_1\cap \cdots \cap \alpha_p \cap \{D\},
    \]
    which is obvious.
\end{proof}
Although the final assertion in \cref{thm:bdivint_diff_ineq} is easy to prove after establishing the general theory of b-divisors, it is far from trivial when stated in terms of currents:
\begin{corollary}\label{cor:main_eq_analy}
    Let $T_1,\ldots,T_{n-1}$ be closed positive $(1,1)$-currents with analytic singularities on $X$, and $T,T'$ be closed positive $(1,1)$-currents in the same cohomology class such that $T\preceq_{\mathcal{I}} T'$, then
    \begin{equation}
    \begin{aligned}
        &\vol\left(T_1,\ldots,T_{n-1},T'\right)-\vol\left(T_1,\ldots,T_{n-1},T\right)\\
        =&\lim_{\pi\colon Y\rightarrow X}\sum_{D\subseteq Y} \Bigl(\nu(T,D)-\nu(T',D) \Bigr)\int_{\tilde{D}} \bigwedge_{i=1}^{n-1}\left.\Bigl(\pi^*T_i-\nu(T_i,D)[D]\Bigr)\right|_{\tilde{D}}.
    \end{aligned}
    \end{equation}
    Furthermore, the net on the right-hand side is eventually constant.
\end{corollary}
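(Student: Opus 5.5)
The idea is to apply the equality case of \cref{thm:bdivint_diff_ineq} with $p=n-1$ and $\mathbb{D}_i=\mathbb{D}(T_i)$, after reducing to Cartier b-divisors. Since the $T_i$ have analytic singularities, there is a modification $\pi_0\colon Y_0\rightarrow X$ on which each $\pi_0^*T_i$ has log singularities. Then $\Reg\pi_0^*T_i$ has bounded local potentials, so its class $\alpha_i$ is nef. Moreover, for any modification $Y$ dominating $Y_0$ through $g\colon Y\rightarrow Y_0$, the current $\Reg\pi^*T_i$ is $g^*\Reg\pi_0^*T_i$ up to a divisorial part, and a bounded-potential current pulls back with no divisorial part. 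Hence $\mathbb{D}(T_i)$ is Cartier, realized on $Y_0$ by $\alpha_i$. By the equality statement in \cref{thm:bdivint_diff_ineq}, applied in degree $n$ so that everything is a number,
\[
\Bigl(\mathbb{D}(T_1)\cap\cdots\cap\mathbb{D}(T_{n-1})\cap\mathbb{D}(T')\Bigr)-\Bigl(\mathbb{D}(T_1)\cap\cdots\cap\mathbb{D}(T_{n-1})\cap\mathbb{D}(T)\Bigr)=\sum_{D/X}\Bigl(\nu(T,D)-\nu(T',D)\Bigr)\vol_{X|D}\bigl(\mathbb{D}(T_1),\ldots,\mathbb{D}(T_{n-1})\bigr).
\]

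Next I would identify the left-hand side with the difference of mixed volumes. Currents with analytic singularities are $\mathcal{I}$-good, and after adding $\epsilon\omega$ and replacing $T,T'$ by their $\mathcal{I}$-envelopes $P_\theta[\cdot]_{\mathcal{I}}$, which does not change $\mathbb{D}(T),\mathbb{D}(T')$, all currents become $\mathcal{I}$-good. \cref{thm:DTandnpp} then identifies each top product with $\int_X T_1\wedge\cdots\wedge T_{n-1}\wedge P[T]_{\mathcal I}$. On the other hand, $\vol(T_1,\ldots,T_n)$ is defined via $P$-envelopes, and for $\mathcal{I}$-good currents $P=P_{\mathcal{I}}$, so this equals $\vol$. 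I then let $\epsilon\to0+$, using multilinearity of the b-divisor product (\cref{prop:bdiv_int_prop}) and the definition of $\vol$ as an $\epsilon$-limit. I expect this to be the main obstacle: the precise match between $\vol(\ldots,T)$, which uses $P[\varphi]$, and the b-divisor product, which only sees $\mathcal{I}$-singularities, requires care for $T$ that is not $\mathcal{I}$-good. I would handle it by showing $\vol(T_1,\ldots,T_{n-1},T)=\vol(T_1,\ldots,T_{n-1},P[T]_{\mathcal I})$. If this is not available from the paper's conventions, I would read $\vol$ through $P_{\mathcal I}$ and invoke \cref{thm:nefbint_aslimit}.

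For the right-hand side, the sum $\sum_{D/X}$ is by definition the limit over $\pi\colon Y\rightarrow X$ of sums over $D\subseteq Y$, and it suffices to take $Y$ dominating $Y_0$. For such $Y$ and $D\subseteq Y$, \cref{ex:res_vol_nef} applied on $Y$ gives $\vol_{Y|D}=g^*\alpha_1\cap\cdots\cap g^*\alpha_{n-1}\cap\{D\}$, and this pushes forward to $X$ as the same number. This number equals $\int_{\tilde D}\bigwedge_i(\Reg\pi^*T_i)|_{\tilde D}$. In turn, this equals $\int_{\tilde D}\bigwedge_i(\pi^*T_i-\nu(T_i,D)[D])|_{\tilde D}$, because the remaining divisorial components do not contain $D$ and their restrictions carry no non-pluripolar mass. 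This last identification is the precise sense in which the integrand in the corollary is to be read.

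Finally, eventual constancy follows from the Cartier case. For $Y'$ dominating $Y\succeq Y_0$, a prime divisor on $Y'$ that is exceptional over $Y$ has restricted volume whose pushforward to $Y$ vanishes, because it equals $\alpha_1\cap\cdots\cap\alpha_{n-1}\cap h_*\{E\}$ and $h_*\{E\}=0$. Strict transforms keep the same value. Hence the sums agree for all $Y\succeq Y_0$, so the net is constant from $Y_0$ on, and it equals the left-hand side.
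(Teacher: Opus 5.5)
Your proposal is correct and follows the route the paper intends; the paper gives no separate proof, only the remark preceding the corollary. The $\mathbb{D}(T_i)$ are Cartier, realized on a log resolution $Y_0$ by the nef classes of the bounded-potential parts of $\pi_0^*T_i$, so the equality case of \cref{thm:bdivint_diff_ineq}, together with \cref{thm:DTandnpp} and \cref{ex:res_vol_nef}, gives the formula, and the net is constant for all $Y$ dominating $Y_0$.

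On the point you flagged, the mixed volume has to be read through the envelopes $P_{\theta_i}[\varphi_i]_{\mathcal{I}}$. This reading is what makes $\vol T=\vol(T,\ldots,T)$ hold and matches Cao's definition; with plain $P$-envelopes the identity can genuinely fail for a non-$\mathcal{I}$-good $T$. With that reading, the identification follows directly from \cref{thm:DTandnpp} after adding $\epsilon\omega$, and \cref{thm:nefbint_aslimit} is not needed.
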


We get some interesting new inequalities even for the movable intersection theory:
\begin{corollary}\label{cor:mov_modnef_minus_ineqgen}
    Let $[E]$ be a divisorial closed positive $(1,1)$-current on $X$, say 
    \begin{equation}\label{eq:Edecomp}
    [E]=\sum_i c_i E_i,
    \end{equation}
    where the $E_i$'s are distinct prime divisors on $X$ and $c_i>0$. Consider pseudoeffective classes $\alpha_1,\ldots,\alpha_p,\beta\in \mathrm{H}^{1,1}(X,\mathbb{R})$. Assume that $\beta\geq \{E\}$, then
    \begin{equation}\label{eq:mov_diff1}
    \begin{split}
        &\langle \alpha_1\wedge \cdots \wedge \alpha_p \wedge \beta \rangle-\Bigl\langle \alpha_1\wedge \cdots \wedge \alpha_p \wedge (\beta-\{E\}) \Bigr\rangle\\
        \geq_X &\sum_{i} \Bigl(\nu\left(\beta-\{E\},E_i\right)+c_i-\nu(\beta,E_i) \Bigr) \vol_{X|E_i}\Bigl(\langle \alpha_1\rangle ,\ldots,\langle \alpha_p\rangle \Bigr).
    \end{split}
    \end{equation}
\end{corollary}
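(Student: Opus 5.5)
The plan is to apply \cref{thm:bdivint_diff_ineq} with $\mathbb{D}_i=\mathbb{D}(\alpha_i)$, together with two currents $T\preceq_{\mathcal{I}}T'$ in the class $\beta$. First reduce to the case where all $\alpha_i$ and $\beta-\{E\}$ are big. To do this, replace $\alpha_i$ by $\alpha_i+\epsilon\gamma$ and $\beta$ by $\beta+\epsilon\gamma$ for a Kähler class $\gamma$. In the limit $\epsilon\to0+$, the left-hand side converges by \cref{prop:mov_int_prop}(6). On the right-hand side, $\nu(\beta+\epsilon\gamma,E_i)\to\nu(\beta,E_i)$ and $\nu(\beta-\{E\}+\epsilon\gamma,E_i)\to\nu(\beta-\{E\},E_i)$ by definition. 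For the restricted volume factors, $\langle\alpha_i+\epsilon\gamma\rangle\geq\langle\alpha_i\rangle$ need not be monotone in the b-divisor sense, so I would rather use the lower bound directly. The plan is to show the right-hand side for $\epsilon$ dominates the one with $\langle\alpha_i\rangle$, using \cref{prop:res_vol_class}(3),(5) (concavity, plus $\mathbb{D}(\langle\alpha_i\rangle)+\epsilon\mathbb{D}(\gamma)\le \mathbb{D}(\alpha_i+\epsilon\gamma)$ with equal $X$-components, which follows from \cref{prop:Dsupadd}). This is only needed for the $E_i$ with $\nu(\alpha_j,E_i)=0$, since otherwise the term vanishes. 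There are countably many terms, and termwise lower bounds suffice because all coefficients are non-negative.

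In the big case, let $T'$ be a current with minimal singularities in $\beta$, and let $T=S+[E]$ with $S$ of minimal singularities in $\beta-\{E\}$. Then $T\in\beta$, and $T\preceq_{\mathcal{I}}T'$ because $T'$ has minimal singularities. By \cref{cor:mov_int_generalized} and \cref{thm:DTandnpp}-type reasoning, $(\mathbb{D}(\alpha_1)\cap\cdots\cap\mathbb{D}(\alpha_p)\cap\mathbb{D}(T'))_X=\langle\alpha_1\wedge\cdots\wedge\alpha_p\wedge\beta\rangle$, since $\mathbb{D}(T')=\mathbb{D}(\beta)$. Also $\mathbb{D}(T)=\mathbb{D}(S)=\mathbb{D}(\beta-\{E\})$, because adding a divisor on $X$ does not change the regular parts on any model. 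Indeed, $\pi^*[E]$ is divisorial, so $\Reg\pi^*T=\Reg\pi^*S$. Hence the second intersection gives $\langle\alpha_1\wedge\cdots\wedge\alpha_p\wedge(\beta-\{E\})\rangle$.

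Now \cref{thm:bdivint_diff_ineq} yields the lower bound $\sum_{D/X}(\nu(T,D)-\nu(T',D))\vol_{X|D}(\mathbb{D}(\alpha_1),\ldots,\mathbb{D}(\alpha_p))$. Every term is positive, so we may discard all $D$ except the $E_i$ on $X$. For these, $\nu(T,E_i)=\nu(\beta-\{E\},E_i)+c_i$ and $\nu(T',E_i)=\nu(\beta,E_i)$, which matches the coefficient in the statement. It remains to check that $\vol_{X|E_i}(\mathbb{D}(\alpha_1),\ldots,\mathbb{D}(\alpha_p))\geq_X\vol_{X|E_i}(\langle\alpha_1\rangle,\ldots,\langle\alpha_p\rangle)$. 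I would argue this by writing $\mathbb{D}(\alpha_j)=\mathbb{D}(\langle\alpha_j\rangle)$. This holds because the divisorial part on $X$ pulls back to a divisorial part, so the b-divisors coincide. Then \cref{def:vol_mix_class} expresses the right-hand side as $\lim_\epsilon\vol_{X|E_i}(\mathbb{D}(\langle\alpha_j\rangle+\epsilon\gamma))$. Each term of this limit dominates $\vol_{X|E_i}(\mathbb{D}(\langle\alpha_j\rangle))$ by \cref{prop:res_vol}(4), via $\mathbb{D}(\langle\alpha_j\rangle)+\epsilon\mathbb{D}(\gamma)\leq\mathbb{D}(\langle\alpha_j\rangle+\epsilon\gamma)$ with equal $X$-components. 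That inequality points the wrong way for what we need.

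This comparison is the main obstacle. The fix I would try is to use the already-perturbed big classes. For $\alpha_j$ big with $\nu(\alpha_j,E_i)=0$, the desired equality $\vol_{X|E_i}(\mathbb{D}(\alpha_j))=\vol_{X|E_i}(\alpha_j)$ holds when $E_i$ is not in the non-Kähler locus, by \cref{prop:res_vol_class}(3). Inside the $\epsilon$-perturbation one can arrange this by adding $\epsilon\gamma$ and invoking upper semicontinuity \cref{prop:res_vol_class}(4) in the correct direction. If $E_i$ lies in the non-Kähler locus, the right-hand restricted volume is $0$ by \cite[Theorem~1.1]{Vu23der}, and the bound is trivial.
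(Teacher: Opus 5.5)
There is a genuine gap in how you handle the restricted-volume factors, and it lies exactly at the indices you discard. Your claim that the $E_i$-term vanishes when $\nu(\alpha_j,E_i)>0$ for some $j$ is false. The right-hand side is $\vol_{X|E_i}(\langle\alpha_1\rangle,\ldots,\langle\alpha_p\rangle)$, and $\nu(\langle\alpha_j\rangle,E_i)=0$ always, so \cref{def:vol_mix_class} does not set it to zero. \cref{prop:res_vol_class}(5) identifies it with $\vol_{X|E_i}(\alpha_1,\ldots,\alpha_p)$ only when all $\nu(\alpha_j,E_i)=0$.

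For a concrete example, take $X=\mathrm{Bl}_C\mathbb{P}^3$ with $C$ a line, $E_1$ the exceptional divisor, $h$ the hyperplane class, $p=1$ and $\alpha_1=\pi^*h+\{E_1\}$. Then $\nu(\alpha_1,E_1)=1$ and $\langle\alpha_1\rangle=\pi^*h$. However, $\vol_{X|E_1}(\langle\alpha_1\rangle)=\pi^*h\cap\{E_1\}\neq 0$, since its product with $\{E_1\}$ is $-1$. With $\beta=2\pi^*h$ and $[E]=c_1[E_1]$ for small $c_1>0$, this single term is the entire content of the inequality, which here holds with equality.

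Nor does your perturbation argument control these terms:
\begin{itemize}
\item The inequality $\mathbb{D}(\langle\alpha_j\rangle)+\epsilon\mathbb{D}(\gamma)\leq\mathbb{D}(\alpha_j+\epsilon\gamma)$ holds, but the $X$-components are \emph{not} equal. They differ by $\sum_D\bigl(\nu(\alpha_j,D)-\nu(\alpha_j+\epsilon\gamma,D)\bigr)\{D\}$, which involves $E_i$ itself precisely in the bad case. So \cref{prop:res_vol}(4) is unavailable.
\item \cref{prop:res_vol_class}(4) is an upper bound, whereas you need a lower bound.
\item \cite{Vu23der} concerns $\vol_{X|D}(\alpha)$ of a single big class whose non-Kähler locus contains $D$. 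Here $E_i$ never lies in the non-Kähler locus of $\langle\alpha_j\rangle+\epsilon\gamma$.
\end{itemize}

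The missing idea is the paper's preliminary reduction: replace each $\alpha_j$ by $\langle\alpha_j\rangle$ \emph{before} perturbing. This changes neither side. On the left this follows from your own observation $\mathbb{D}(\alpha_j)=\mathbb{D}(\langle\alpha_j\rangle)$ together with \cref{cor:mov_int_generalized}; the right-hand side is already written in terms of $\langle\alpha_j\rangle$.

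Once the $\alpha_j$ are modified nef, $\nu(\alpha_j,E_i)=0$ for every $i$, and $\alpha_j+\epsilon\gamma$ is modified nef and big with no $E_i$ in its non-Kähler locus. Moreover, $\mathbb{D}(\alpha_j+\epsilon\gamma)$ is exactly the approximant in \eqref{eq:mix_res_vol_class}. So the factor produced by \cref{thm:bdivint_diff_ineq} at level $\epsilon$ is $\vol_{X|E_i}(\mathbb{D}(\alpha_1+\epsilon\gamma),\ldots,\mathbb{D}(\alpha_p+\epsilon\gamma))$. By the monotonicity \eqref{eq:volXD_temp11} this decreases to $\vol_{X|E_i}(\alpha_1,\ldots,\alpha_p)$ and hence dominates it termwise. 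The ``wrong direction'' comparison you ran into never has to be made.

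The rest of your argument agrees with the paper: the choice of $T'$ with minimal singularities in $\beta$, the choice $T=S+[E]$, the identification $\mathbb{D}(T)=\mathbb{D}(\beta-\{E\})$, and discarding the non-$E_i$ terms. For the countable sum, Fatou's lemma applied after pairing with closed weakly positive forms works. This is an alternative to the paper's truncation $E^M$ combined with lower semicontinuity of $\nu(\bullet,E_i)$.
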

Here we used the mixed restricted volume defined in \cref{def:vol_mix_class}. This inequality is stated purely using the traditional language of movable intersection product and restricted volume, at least when $\alpha_1=\dots=\alpha_p$. But it is far from being obvious without the knowledge of b-divisors.
\begin{proof}
    We first make a simple observation: The coefficient in \eqref{eq:mov_diff1} is non-negative:
    \[
    \nu\left(\beta-\{E\},E_i\right)+c_i-\nu(\beta,E_i)\geq 0,
    \]
    as a simple consequence of the convexity of $\nu(\bullet,E_i)$ proved in \cite[Proposition~2.1.8]{Bou02}. Since both sides of \eqref{eq:mov_diff1} remain invariant if we replace $\alpha_j$ by $\langle \alpha_j \rangle$, we may assume that each $\alpha_j$ is modified nef. 

    We may assume that the index set in \eqref{eq:Edecomp} is a set of the form $\{1,2,\ldots,N\}$, where $N$ is possibly $\infty$. 
    
    \textbf{Step~1}.
    We first reduce to the case where $N$ is finite. Assume that this case is known, let us handle the case where $N=\infty$. For each positive integer $M$, we introduce 
    \[
    E^M=\sum_{i=1}^M c_i E_i.
    \]
    Then using \cref{prop:mov_int_prop}(4), we have
    \[
    \begin{aligned}
     &\langle \alpha_1\wedge \cdots \wedge \alpha_p \wedge \beta \rangle-\Bigl\langle \alpha_1\wedge \cdots \wedge \alpha_p \wedge \left(\beta-\{E\}\right) \Bigr\rangle\\
        \geq_X &\langle \alpha_1\wedge \cdots \wedge \alpha_p \wedge \beta \rangle-\Bigl\langle \alpha_1\wedge \cdots \wedge \alpha_p \wedge \left(\beta-\left\{E^M\right\}\right) \Bigr\rangle\\
        \geq_X &\sum_{i=1}^M \Bigl(\nu\left(\beta-\left\{E^M\right\},E_i\right)+c_i-\nu(\beta,E_i) \Bigr) \vol_{X|E_i}\Bigl(\alpha_1,\ldots, \alpha_p\Bigr).
    \end{aligned}
    \]
    Observe that $\nu\left(\beta-\left\{E^M\right\},E_i\right)$ is an increasing function of $M$ as long as $M\geq i$, therefore, we can apply the monotone convergence theorem \cref{prop:Levi} to conclude that
    \[
    \begin{aligned}
     &\langle \alpha_1\wedge \cdots \wedge \alpha_p \wedge \beta \rangle-\Bigl\langle \alpha_1\wedge \cdots \wedge \alpha_p \wedge \left(\beta-\{E\}\right) \Bigr\rangle\\
        \geq_X &\sum_{i=1}^{\infty} \Bigl(\lim_{M\to\infty}\nu\left(\beta-\left\{E^M\right\},E_i\right)+c_i-\nu(\beta,E_i) \Bigr) \vol_{X|E_i}\Bigl(\alpha_1,\ldots,\alpha_p\Bigr).
    \end{aligned}
    \]
    But the function $\nu(\bullet,E_i)$ is lower semi-continuous as proved in \cite[Proposition~2.1.8]{Bou02}, \eqref{eq:mov_diff1} then follows.

    \textbf{Step~2}. We assume that $N$ is finite and prove \eqref{eq:mov_diff1}. Fix a Kähler class $\gamma\in \mathrm{H}^{1,1}(X,\mathbb{R})$
    
    We can replace $\alpha_j$ by $\alpha_j+\epsilon\gamma$ for all $\epsilon>0$. In this way, we reduce to the case where the $\alpha_i$'s are all big and none of the $E_i$'s lies in the non-Kähler loci of the $\alpha_i$'s, and hence  \cref{prop:res_vol_class} is applicable.
    
    Observe that
    \[
    \lim_{\epsilon\to 0+}\nu\left(\beta+\epsilon\gamma-\{E\},E_i\right)=\nu\left(\beta-\{E\},E_i\right),\quad \lim_{\epsilon\to 0+}\nu\left(\beta+\epsilon\gamma,E_i\right)=\nu\left(\beta,E_i\right),\quad \forall i.
    \]
    Therefore, in view of \cref{prop:mov_int_prop}(6), we may therefore replace $\beta$ by $\beta+\epsilon\gamma$ for some $\epsilon>0$ and assume that $\beta$ and $\beta-\{E\}$ are both big.

    We apply \cref{thm:bdivint_diff_ineq} to the following situation: $\mathbb{D}_i=\mathbb{D}(\alpha_i)$, and $T'$ is a current with minimal singularities in $\beta$ and $T-[E]$ is a current with minimal singularities in $\beta-\{E\}$. Then we find
    \[
    \begin{aligned}
        &\Bigl(\mathbb{D}(\alpha_1)\cap \dots \cap \mathbb{D}(\alpha_p) \cap \mathbb{D}(\beta)\Bigr)_X-\Bigl(\mathbb{D}(\alpha_1)\cap \dots \cap \mathbb{D}(\alpha_p) \cap \mathbb{D}\left(\beta-\{E\}\right)\Bigr)_X\\
        \geq_X &\sum_{D\subseteq X} \Bigl(\nu(\beta-\{E\},D)-\nu(\beta,D)+\nu([E],D) \Bigr)\vol_{X|D}\Bigl(\mathbb{D}(\alpha_1),\dots, \mathbb{D}(\alpha_p) \Bigr)\\
        \geq_X &\sum_{i} \Bigl(\nu(\beta-\{E\},E_i)-\nu(\beta,E_i) +c_i\Bigr)\vol_{X|E_i}\Bigl(\mathbb{D}(\alpha_1),\dots, \mathbb{D}(\alpha_p) \Bigr).
    \end{aligned}
    \]
    Thanks to \cref{cor:mov_int_generalized} and \cref{prop:res_vol_class}(3), this inequality translates immediately to \eqref{eq:mov_diff1}.
\end{proof}
The following special case is already very non-trivial:
\begin{corollary}\label{cor:mov_modnef_minus_ineq}
    Let $[E]$ be a divisorial closed positive $(1,1)$-current on $X$, say 
    \[
    [E]=\sum_i c_i E_i,
    \]
    where the $E_i$'s are distinct prime divisors on $X$ and $c_i>0$. Consider modified nef classes $\alpha_1,\ldots,\alpha_p,\beta\in \mathrm{H}^{1,1}(X,\mathbb{R})$. Assume that $\beta\geq \{E\}$, then
    \[
    \langle \alpha_1\wedge \cdots \wedge \alpha_p \wedge \beta \rangle-\Bigl\langle \alpha_1\wedge \cdots \wedge \alpha_p \wedge (\beta-\{E\}) \Bigr\rangle
        \geq_X \sum_{i} \Bigl(\nu(\beta-\{E\},E_i)+c_i\Bigr)\vol_{X|E_i}\left(\alpha_1,\ldots,\alpha_p\right).
    \]
\end{corollary}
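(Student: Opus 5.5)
This will be derived as a direct specialization of \cref{cor:mov_modnef_minus_ineqgen}. That corollary already gives
\[
\langle \alpha_1\wedge \cdots \wedge \alpha_p \wedge \beta \rangle-\Bigl\langle \alpha_1\wedge \cdots \wedge \alpha_p \wedge (\beta-\{E\}) \Bigr\rangle
\geq_X \sum_{i} \Bigl(\nu\left(\beta-\{E\},E_i\right)+c_i-\nu(\beta,E_i) \Bigr) \vol_{X|E_i}\Bigl(\langle \alpha_1\rangle ,\ldots,\langle \alpha_p\rangle \Bigr)
\]
for arbitrary pseudoeffective classes. The modified nef classes $\alpha_1,\ldots,\alpha_p,\beta$ are in particular pseudoeffective, and $\beta\geq\{E\}$ is assumed, so the hypotheses are satisfied. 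It only remains to simplify the two ingredients on the right-hand side.

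First, since $\beta$ is modified nef, the characterization recalled after the definition of $\nu(\alpha,D)$ (a pseudoeffective class is modified nef if and only if all its divisorial Lelong numbers vanish) gives $\nu(\beta,E_i)=0$ for every $i$. The coefficient therefore reduces to $\nu(\beta-\{E\},E_i)+c_i$.

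Second, for a modified nef class we have $\langle \alpha_j\rangle=\alpha_j$, by the divisorial Zariski decomposition \eqref{eq:div_Zar} together with $\nu(\alpha_j,D)=0$ for all prime divisors $D$. Hence $\vol_{X|E_i}(\langle \alpha_1\rangle,\ldots,\langle \alpha_p\rangle)=\vol_{X|E_i}(\alpha_1,\ldots,\alpha_p)$. This is also consistent with \cref{prop:res_vol_class}(5): because $\nu(\alpha_j,E_i)=0$, we are in the nonzero branch of \cref{def:vol_mix_class}, so no case distinction is needed.

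Substituting both simplifications gives exactly the stated inequality. No step is a genuine obstacle. The only point to check is that the modified nef hypothesis on $\beta$ is what makes $\nu(\beta,E_i)$ disappear, and that the coefficients $\nu(\beta-\{E\},E_i)+c_i$ are nonnegative, which is clear in this case. Hence the sum on the right is a sum of positive classes, and it converges by \cref{prop:Levi} as in Step~1 of the proof of \cref{cor:mov_modnef_minus_ineqgen}.
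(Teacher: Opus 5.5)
Your proposal is correct and matches the paper, which gives no separate proof and presents this statement as the special case of \cref{cor:mov_modnef_minus_ineqgen}. The specialization works exactly as you describe: the modified nef hypothesis gives $\nu(\beta,E_i)=0$, and \eqref{eq:div_Zar} gives $\langle\alpha_j\rangle=\alpha_j$.
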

A weaker result with $c_i$ in place of $\nu(\beta-\{E\},E_i)+c_i$ can also be proved via Vu's theory of relative non-pluripolar products. The weaker inequality has already played a significant role in the works of Su and Vu.

Although \eqref{eq:Ddiffineq1} is an equality when the $\mathbb{D}_1,\ldots,\mathbb{D}_p$ are all Cartier, in general, it fails to be an equality as can be easily seen in the toric setup. So we shall formulate a variant of \eqref{eq:Ddiffineq1}.

In order to proceed further, we shall need the celebrated conjectural transcendental Morse inequality \cite{BDPP13}:
\begin{equation}\label{eq:BDPP}
    \left.\frac{\mathrm{d}}{\mathrm{d}t}\right|_{t=0}\vol (\alpha+t\beta)=n\langle \alpha^{n-1} \rangle\cap \beta
\end{equation}
for all classes $\alpha,\beta\in \mathrm{H}^{1,1}(X,\mathbb{R})$ with $\alpha$ big. This conjecture is known when $X$ is projective, as proved by Witt Nyström in \cite{WN19dual}. When $n=1,2$, it is also known, see \cite{Deng17}.

We shall need the following consequence:
\begin{theorem}\label{thm:BDPP1}
    Assume that the transcendental Morse inequality holds on $X$. Then for any prime divisor $D$ on $X$ and any big class $\alpha\in \mathrm{H}^{1,1}(X,\mathbb{R})$, we have
    \begin{equation}\label{eq:resvol_BDPP}
    \vol_{X|D}(\alpha)=\langle \alpha^{n-1} \rangle\cap \{D\}.
    \end{equation}
\end{theorem}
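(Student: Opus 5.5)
The strategy is to compare the derivative of $t\mapsto\vol(\alpha-t\{D\})$ at $t=0$ with both sides of \eqref{eq:resvol_BDPP}. The transcendental Morse inequality \eqref{eq:BDPP}, applied with $\beta=-\{D\}$, identifies the left derivative with $-n\langle\alpha^{n-1}\rangle\cap\{D\}$. \cref{thm:bdivint_diff_ineq} will then provide a lower bound for the finite differences in terms of restricted volumes.

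\textbf{The case $\nu(\alpha,D)>0$.} Here $\vol_{X|D}(\alpha)=0$ by definition, so we must show $\langle\alpha^{n-1}\rangle\cap\{D\}=0$. For $0\le t\le\nu(\alpha,D)$, subtracting $t[D]$ from a current with minimal singularities in $\alpha$ produces a current with minimal singularities in $\alpha-t\{D\}$ of the same volume. Hence $\vol(\alpha-t\{D\})=\vol(\alpha)$ on this interval, so its one-sided derivative at $t=0$ vanishes. By \eqref{eq:BDPP} this derivative is $-n\langle\alpha^{n-1}\rangle\cap\{D\}$, so the claim follows.

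\textbf{The case $\nu(\alpha,D)=0$, inequality $\geq$.} Set $\alpha_t=\alpha-t\{D\}$, which is big for small $t\ge0$. Let $T'$ have minimal singularities in $\alpha$, and let $T=S_t+t[D]$ with $S_t$ of minimal singularities in $\alpha_t$. Then $T\preceq_{\mathcal I}T'$, $\mathbb{D}(T')=\mathbb{D}(\alpha)$, $\mathbb{D}(T)=\mathbb{D}(\alpha_t)$, and $\nu(T,D)-\nu(T',D)\ge t$. We telescope
\[
\vol\alpha-\vol\alpha_t=\sum_{k=0}^{n-1}\Bigl(\mathbb{D}(\alpha)^{k}\cap\mathbb{D}(\alpha_t)^{n-1-k}\cap\mathbb{D}(T')-\mathbb{D}(\alpha)^{k}\cap\mathbb{D}(\alpha_t)^{n-1-k}\cap\mathbb{D}(T)\Bigr).
\]
By \cref{thm:bdivint_diff_ineq}, keeping only the term of the prime divisor $D$, the $k$-th summand is at least $t\,\vol_{X|D}(\mathbb{D}(\alpha)^k,\mathbb{D}(\alpha_t)^{n-1-k})$. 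By \cref{prop:BM} and \cref{thm:coinc_class} this is at least $t\,\vol_{X|D}(\alpha)^{k/(n-1)}\vol_{X|D}(\alpha_t)^{(n-1-k)/(n-1)}$. Dividing by $t$ and letting $t\to0+$ gives $n\langle\alpha^{n-1}\rangle\cap\{D\}\ge n\varliminf_{t\to0+}(\cdots)$. The $\varliminf$ equals $\vol_{X|D}(\alpha)$ provided $\vol_{X|D}(\alpha_t)\to\vol_{X|D}(\alpha)$. This continuity from the inside should follow from the concavity in \cref{prop:res_vol_class}(3), applied on the open convex set $\mathcal{A}$ of big classes whose non-Kähler loci avoid $D$. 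If $D$ lies in the non-Kähler locus of $\alpha$, then $\vol_{X|D}(\alpha)=0$ and there is nothing to prove.

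\textbf{The case $\nu(\alpha,D)=0$, inequality $\leq$ (main obstacle).} We need the reverse bound $\langle\alpha^{n-1}\rangle\cap\{D\}\le\vol_{X|D}(\alpha)$. I would try to prove it directly, without \eqref{eq:BDPP}. Reduce to $D$ smooth and not in the non-Kähler locus of $\alpha$, by passing to a modification and adding $\epsilon\{\omega\}$, using \cref{prop:mov_int_prop}(6) and the upper semicontinuity in \cref{prop:res_vol_class}(4). Then \cref{thm:coinc_class} and \cref{ex:trace_psef} express $\vol_{X|D}(\alpha)$ as $\int_D(T_{\min}|_D)^{n-1}$. On the other side, $\langle\alpha^{n-1}\rangle\cap\{D\}=\{T_{\min}^{n-1}\}\cap\{D\}$. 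The aim is therefore
\[
\{T_{\min}^{n-1}\}\cap\{D\}\le\int_D (T_{\min}|_D)^{n-1}.
\]
I would prove this first for $T_{\min}$ replaced by a quasi-equisingular approximation with analytic singularities. After resolving these singularities on a modification where $D$ has strict transform $D'$, the left side becomes a nef intersection. It differs from the right side only by nonnegative contributions from exceptional divisors, which are controlled by \cref{prop:pos_push} and positivity of the relevant classes. I would then pass to the limit using \cref{thm:dscontnpp1} on the left and \cref{thm:trace_cont} on the right.

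If this direct route fails, the fallback is to run the telescoping argument in the opposite direction, comparing $\vol(\alpha+t\{D\})-\vol(\alpha)$. This would require an upper bound on the difference, which \cref{thm:bdivint_diff_ineq} does not give. That missing upper bound is where I expect the real difficulty to lie.
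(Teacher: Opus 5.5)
\textbf{A gap remains in the main direction.} Your case $\nu(\alpha,D)>0$ is fine. Your telescoping argument with \cref{thm:bdivint_diff_ineq} does give $\langle\alpha^{n-1}\rangle\cap\{D\}\geq\vol_{X|D}(\alpha)$. However, that is the direction the paper calls trivial, and \eqref{eq:BDPP} is not needed for it. The content of the theorem is the reverse inequality $\langle\alpha^{n-1}\rangle\cap\{D\}\leq\vol_{X|D}(\alpha)$ when $\nu(\alpha,D)=0$, and this you do not prove.

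\textbf{The direct route gives the wrong sign.} Suppose $T_j$ has analytic singularities and $\pi_j\colon Y_j\rightarrow X$ resolves them. Write $\pi_j^*T_j=[F_j]+R_j$ with $R_j$ having bounded local potentials, so $\{R_j\}$ is nef. Let $G_j=\pi_j^*D-D'_j$, which is effective and exceptional. Then $\{T_j^{n-1}\}\cap\{D\}=\{R_j\}^{n-1}\cap\pi_j^*\{D\}=\int_{\widetilde{D}}(T_j|_{\widetilde{D}})^{n-1}+\{R_j\}^{n-1}\cap\{G_j\}$. The exceptional term is non-negative. So positivity yields exactly $\{T_j^{n-1}\}\cap\{D\}\geq\int_{\widetilde{D}}(T_j|_{\widetilde{D}})^{n-1}$, which is the easy direction again. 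To reverse it you would need $\{R_j\}^{n-1}\cap\{G_j\}\to 0$ along the approximation. That is an asymptotic orthogonality statement of BDPP type, and no positivity argument supplies it. If your route worked without \eqref{eq:BDPP}, it would prove \eqref{eq:resvol_BDPP} unconditionally, which the paper treats as open.

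\textbf{What is missing, and how the paper proceeds.} Your fallback correctly locates the difficulty: you need an upper bound $\vol\alpha-\vol(\alpha-t\{D\})\leq nt\,\vol_{X|D}(\alpha)+o(t)$. \cref{thm:bdivint_diff_ineq} only bounds such differences from below. \cref{thm:main_thm} would be circular, since its equality case rests on the present statement. The missing ingredient is the unconditional derivative formula $\left.\frac{\mathrm{d}}{\mathrm{d}t}\right|_{t=0}\vol(\alpha+t\{D\})=n\,\vol_{X|D}(\alpha)$ from \cite{WN21, Vu23der}. The paper's proof compares this with \eqref{eq:BDPP} for $\beta=\{D\}$. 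The two expressions for the same derivative give \eqref{eq:resvol_BDPP} at once, in both directions, so the telescoping becomes unnecessary.
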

We note that the $\leq$ direction in \eqref{eq:resvol_BDPP} is trivial. 
\begin{remark}
    It is not clear to the author whether the mixed version holds as well: 
    \[
    \vol_{X|D}\left(\alpha_1,\ldots,\alpha_{n-1}\right)=\left\langle \alpha_1\wedge \cdots \wedge \alpha_{n-1}\right\rangle \cap \{D\}.
    \]
    A seemingly weaker (but in fact equivalent) statement is that $\vol_{X|D}\left(\alpha_1,\ldots,\alpha_{n-1}\right)$ depends only on the linear equivalence class of $D$ when $D$ is ample.
\end{remark}

\begin{proof}
    This is a consequence of \eqref{eq:BDPP} and the main theorems of  \cite{WN21, Vu23der}.
\end{proof}

\begin{theorem}\label{thm:main_thm}
    Let $\mathbb{D}$ be a big and nef b-divisors over $X$, $T,T'$ be closed positive $(1,1)$-currents on $X$ in the same cohomology class such that $T\preceq_{\mathcal{I}}T'$. Then
    \begin{equation}\label{eq:Ddiffprecise}
     \Bigl(\mathbb{D}^{n-1} \cap \mathbb{D}(T')\Bigr)-\Bigl(\mathbb{D}^{n-1} \cap \mathbb{D}(T)\Bigr)
        \geq  \varlimsup_{\pi\colon Y\rightarrow X}\sum_{D\subseteq Y}\Bigl(\nu(T,D)-\nu(T',D) \Bigr) \vol_{Y|D}\left(\mathbb{D}_{Y}\right).
    \end{equation}    
    If furthermore either $X$ is projective or $n=1,2$, then the limit exists and equality holds in \eqref{eq:Ddiffprecise}.
\end{theorem}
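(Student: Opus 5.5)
The plan is to pass through nef Cartier approximations of $\mathbb{D}$, where \cref{thm:bdivint_diff_ineq} gives an equality, and then to use the transcendental Morse inequality in the form of \cref{thm:BDPP1} to identify the limit.

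\textbf{Reductions.} As in the proof of \cref{thm:bdivint_diff_ineq}, I first add a Kähler form to $T$ and $T'$, replace their potentials by $\mathcal{I}$-envelopes so that $T\preceq T'$ with both $\mathcal{I}$-good, and subtract $\sum_D\nu(T',D)[D]$ so that $T'$ is non-divisorial. Since both sides of \eqref{eq:Ddiffprecise} lie in $\bDiv^n(X)\cong\mathbb{R}$, everything is a comparison of real numbers.

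\textbf{The inequality.} By \cref{cor:Demapp}, pick nef Cartier b-divisors $\mathbb{D}^k$ decreasing to $\mathbb{D}$, with $\mathbb{D}^k$ realized on a model $Y_k$.
\begin{itemize}
\item For any model $Y$ dominating $Y_k$, the equality case of \cref{thm:bdivint_diff_ineq} together with \cref{ex:res_vol_nef} gives
\[
\bigl((\mathbb{D}^k)^{n-1}\cap\mathbb{D}(T')\bigr)-\bigl((\mathbb{D}^k)^{n-1}\cap\mathbb{D}(T)\bigr)=\sum_{D\subseteq Y}\bigl(\nu(T,D)-\nu(T',D)\bigr)\,\vol_{Y|D}(\mathbb{D}^k_Y).
\]
Divisors exceptional over $Y$ contribute $0$ here.
\item Since $\mathbb{D}^k_Y\geq\mathbb{D}_Y$, monotonicity of the restricted volume of classes bounds the right-hand side from below by the same sum with $\mathbb{D}_Y$ in place of $\mathbb{D}^k_Y$. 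I would check this monotonicity via \cref{prop:res_vol_class}(3), after adding $\epsilon$ times a Kähler class so that $D$ avoids the non-Kähler loci.
\item Letting $k\to\infty$, \cref{thm:bdiv_int_cont1} sends the left-hand side to that of \eqref{eq:Ddiffprecise}.
\end{itemize}

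\textbf{Main obstacle.} The model $Y$ must dominate $Y_k$, and $Y_k$ changes with $k$. To get $\varlimsup_Y$, I must show that the quantity
\[
R_Y\coloneqq\sum_{D\subseteq Y}\bigl(\nu(T,D)-\nu(T',D)\bigr)\vol_{Y|D}(\mathbb{D}_Y)
\]
does not drop when $Y$ is refined, or at least that its $\varlimsup$ is attained along models dominating every given one.
\begin{itemize}
\item For a non-exceptional $D$, I would compare $\vol_{Y|D}(\mathbb{D}_Y)$ with $\vol_{Y'|D'}(\mathbb{D}_{Y'})$ using the bimeromorphic invariance of \cite[Lemma~4.2]{CT22} and $\mathbb{D}_{Y'}\leq\pi^*\mathbb{D}_Y$ (\cref{lma:mod_nef_dom}). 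This comparison goes the wrong way.
\item I would compensate with the new exceptional terms: the coefficient differences along them are non-negative because $T\preceq_{\mathcal{I}}T'$.
\item If this bookkeeping fails, I would instead fix $Y$ and bound $R_Y$ directly by the $\mathbb{D}^k$-sum on a common refinement, using $\mathbb{D}^k_Y\to\mathbb{D}_Y$ and the upper semicontinuity \cref{prop:res_vol_class}(4).
\end{itemize}

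\textbf{Equality under Morse.} When $X$ is projective or $n=1,2$, \cref{thm:BDPP1} gives $\vol_{Y|D}(\mathbb{D}_Y)=\langle\mathbb{D}_Y^{n-1}\rangle\cap\{D\}$. Since $\sum_{D\subseteq Y}(\nu(T,D)-\nu(T',D))\{D\}=\mathbb{D}(T')_Y-\mathbb{D}(T)_Y$, this gives
\[
R_Y=\langle\mathbb{D}_Y^{n-1}\rangle\cap\bigl(\mathbb{D}(T')_Y-\mathbb{D}(T)_Y\bigr).
\]
I then want to show that $\langle\mathbb{D}_Y^{n-1}\rangle\cap\mathbb{E}_Y\to\bigl(\mathbb{D}^{n-1}\cap\mathbb{E}\bigr)$ for a nef b-divisor $\mathbb{E}$. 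Applied to $\mathbb{E}=\mathbb{D}(T')$ and to $\mathbb{E}=\mathbb{D}(T)$ and subtracted, this shows that $R_Y$ converges to the left-hand side of \eqref{eq:Ddiffprecise}.

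To prove this convergence, I would use \cref{thm:nefbint_aslimit} to write $\vol(\mathbb{D}+t\mathbb{E})=\lim_Y\vol(\mathbb{D}_Y+t\mathbb{E}_Y)$ for $t\geq 0$. By \eqref{eq:BDPP}, $n\,\langle\mathbb{D}_Y^{n-1}\rangle\cap\mathbb{E}_Y$ is the derivative at $t=0$ of $t\mapsto\vol(\mathbb{D}_Y+t\mathbb{E}_Y)$. These functions are $n$-th powers of concave functions, and they converge to the polynomial $\sum_j\binom{n}{j}t^j\,\mathbb{D}^{n-j}\cap\mathbb{E}^j$. Convergence of derivatives at $0$ should then follow from concavity, using one-sided bounds on the difference quotients and the fact that the limit is differentiable. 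I also expect to need the inequality already proved, to control the direction in which concavity gives no bound.
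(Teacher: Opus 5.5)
\textbf{The inequality part has a genuine gap.} The step ``$\mathbb{D}^k_Y\geq\mathbb{D}_Y$, hence $\vol_{Y|D}(\mathbb{D}^k_Y)\geq\vol_{Y|D}(\mathbb{D}_Y)$'' is false. Restricted volumes are not monotone, even among ample classes.

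A counterexample: take $Y=\mathrm{Bl}_p\mathbb{P}^2$, with $H$ the pull-back of a line and $E$ the exceptional curve. For $0<c'<c<1$ the classes $H-c'E\geq H-cE$ are both ample, but $\vol_{Y|E}(H-c'E)=c'<c=\vol_{Y|E}(H-cE)$. Your check via \cref{prop:res_vol_class}(3) fails exactly here: the difference $(c-c')\{E\}$ has positive Lelong number along $E$, and adding $\epsilon$ times a Kähler class does not change this.

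The summed inequality fails as well. Take:
\begin{itemize}
\item $\mathbb{D}=\mathbb{D}(H-cE)$, with the decreasing Cartier approximants $\mathbb{D}^k=\mathbb{D}(H-c_kE)$, $c_k\uparrow c$;
\item $T'$ a smooth form in $\{H\}$ on $\mathbb{P}^2$;
\item $T$ the pull-back of the Fubini--Study form of $\mathbb{P}^1$ under the linear projection from $p$, so that $\nu(T,E)=1$ and $\mathbb{D}(T)=\mathbb{D}(H-E)$.
\end{itemize}
Then the left-hand side of \eqref{eq:Ddiffprecise} with $\mathbb{D}^k$ in place of $\mathbb{D}$ equals $(H-c_kE)\cdot E=c_k$, while $R_Y=c$. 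So your bound ``left-hand side for $\mathbb{D}^k$ $\geq R_Y$'' fails for every $k$, already with $Y=Y_k$.

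Two further remarks on your plan. What you call the main obstacle is not one, since $\varlimsup_Y R_Y\leq\sup_{Y\geq Y_k}R_Y$ for each $k$, so dominating $Y_k$ costs nothing. Your fallback via \cref{prop:res_vol_class}(4) uses semicontinuity in the wrong direction: it bounds the approximants' restricted volumes from above, whereas you need a lower bound.

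The paper avoids comparing restricted volumes of different classes altogether. Instead of Cartier approximants, it uses the b-divisors $\mathbb{D}(\mathbb{D}_Y)$ induced over $Y$ by the modified nef class $\mathbb{D}_Y$. These dominate $\mathbb{D}$, decrease to it as $Y$ increases, and have $Y$-component exactly $\mathbb{D}_Y$. So the general (non-equality) inequality of \cref{thm:bdivint_diff_ineq} on $Y$, combined with \cref{ex:volrestDalphavol}, bounds the left-hand side for $\mathbb{D}(\mathbb{D}_Y)$ from below by $R_Y$. Then \cref{thm:bdiv_int_cont1} passes to the limit.

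\textbf{The equality case.} Your route differs from the paper's and can be made to work, but the tools you name do not give the direction that matters. Concavity of $t\mapsto\vol(\mathbb{D}_Y+t\mathbb{E}_Y)^{1/n}$ only yields $\varliminf_Y\langle\mathbb{D}_Y^{n-1}\rangle\cap\mathbb{E}_Y\geq\mathbb{D}^{n-1}\cap\mathbb{E}$. The inequality already proved gives that same direction again: with $T'$ smooth and under Morse, it reads $\varliminf_Y\langle\mathbb{D}_Y^{n-1}\rangle\cap\mathbb{D}(T)_Y\geq\mathbb{D}^{n-1}\cap\mathbb{D}(T)$. For the more singular current $\mathbb{E}=\mathbb{D}(T)$ you need the $\varlimsup$ bound.

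That bound is easy to supply:
\begin{itemize}
\item Take nef Cartier $\mathbb{E}^j\downarrow\mathbb{E}$ (\cref{cor:Demapp}).
\item $\langle\mathbb{D}_Y^{n-1}\rangle=(\mathbb{D}(\mathbb{D}_Y)^{n-1})_Y$ is a component of a nef b-class (\cref{cor:mov_int_generalized}), hence movable. Since $\mathbb{E}^j_Y-\mathbb{E}_Y$ is pseudoeffective, \cref{prop:pos_relation} gives $\langle\mathbb{D}_Y^{n-1}\rangle\cap\mathbb{E}_Y\leq\langle\mathbb{D}_Y^{n-1}\rangle\cap\mathbb{E}^j_Y$.
\item The right-hand side converges to $\mathbb{D}^{n-1}\cap\mathbb{E}^j$, by \cref{thm:nefbint_aslimit} on a model realizing $\mathbb{E}^j$ together with the projection formula.
\item Let $j\to\infty$ using \cref{thm:bdiv_int_cont1}.
\end{itemize}

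The paper proceeds differently. It uses the inequality, quasi-equisingular approximation of $T$, and a resolution to reduce to $T'$ a Kähler form and $T$ with log singularities on $X$. Then only $\pi_*\langle\mathbb{D}_Y^{n-1}\rangle\cap\{E\}$, for a fixed prime divisor $E\subseteq X$, has to converge, and that is exactly \cref{thm:nefbint_aslimit}.
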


The projectivity assumption (or $n=1,2$ assumption) is only to guarantee that we can apply \cref{thm:BDPP1}. If the transcendental Morse inequality is known (or if the much weaker assertion \eqref{eq:resvol_BDPP} is known unconditionally), the same proof works without this assumption as well.
\begin{proof}
    \textbf{Step~1}. We make some preliminary reductions.
    
    After adding a Kähler form to $T$ and $T'$, neither side of \eqref{eq:Ddiffprecise} changes, so we may assume that $T$ and $T'$ are both Kähler currents. Take a smooth closed real $(1,1)$-form $\theta\in \{T\}$, then we can represent $T=\theta+\ddc \varphi$ and $T'=\theta+\ddc \varphi'$. After replacing $T$ and $T'$ by $\theta+\ddc P_{\theta}[\varphi]_{\mathcal{I}}$ and $\theta+\ddc P_{\theta}[\varphi']_{\mathcal{I}}$, we may assume that they are both $\mathcal{I}$-good. Finally, after adding a Kähler form to them again, we reduce to the case where $T$ and $T'$ are both $\mathcal{I}$-good Kähler currents.     

    Next observe that the $\geq_X$ direction in \eqref{eq:Ddiffprecise} follows from \cref{thm:bdivint_diff_ineq}. In fact, for each fixed modification $\pi\colon Y\rightarrow X$, we have
    \[
    \begin{aligned}
        &\Bigl(\mathbb{D}\left(\mathbb{D}_Y\right)^{n-1} \cap \mathbb{D}(T')\Bigr)_X-\Bigl(\mathbb{D}\left(\mathbb{D}_Y\right)^{n-1} \cap \mathbb{D}(T)\Bigr)_X\\
        \geq & \sum_{D\subseteq Y}  \Bigl(\nu(T,D)-\nu(T',D) \Bigr) \vol_{Y|D}\Bigl(\mathbb{D}\left(\mathbb{D}_Y\right)^{n-1}\Bigr)\\
        = & \sum_{D\subseteq Y}\Bigl(\nu(T,D)-\nu(T',D) \Bigr) \vol_{Y|D}\Bigl(\mathbb{D}_{Y}\Bigr).
    \end{aligned}
    \]
    The proof of \cite[Theorem~4.10]{Xiabdiv} shows that  $( \mathbb{D}(\mathbb{D}_{Y}) )_{\pi\colon Y\rightarrow X}$ is a decreasing net with limit $\mathbb{D}$. Therefore, taking limit with respect to $\pi\colon Y \rightarrow X$ and applying \cref{thm:bdiv_int_cont1}, we conclude the $\geq$ direction in \eqref{eq:Ddiffprecise}.
    Our assertion becomes
    \begin{equation}\label{eq:diff_toprove_temp1}
    \left(\mathbb{D}^{n-1} \cap \mathbb{D}(T')\right)_X-\left(\mathbb{D}^{n-1} \cap \mathbb{D}(T)\right)_X\leq \varliminf_{\pi\colon Y\rightarrow X}\sum_{D\subseteq Y} \Bigl(\nu(T,D)-\nu(T',D) \Bigr) \vol_{Y|D}\left(\mathbb{D}_Y^{n-1} \right).
    \end{equation}

    \textbf{Step~2}. Represent $\mathbb{D}$ as $\mathbb{D}(S)$ for some non-divisorial $\mathcal{I}$-good closed positive $(1,1)$-current $S$ on $X$. This is possible thanks to \cref{thm:main_paper1}.
    In view of \cref{thm:DTandnpp}, it remains to show that
    \begin{equation}\label{eq:intSnm1Tp_temp}
        \int_X S^{n-1}\wedge T'-\int_X S^{n-1}\wedge T\leq \varliminf_{\pi\colon Y\rightarrow X}\sum_{D\subseteq Y} \Bigl(\nu(T,D)-\nu(T',D) \Bigr) \vol_{Y|D}\left(\mathbb{D}(S)_Y\right).
    \end{equation}
    We make a further reduction, we may add freely a Kähler form to $T$ and $T'$, so that we may assume that $\{T\}$ is a Kähler class, say represented by a Kähler form $\omega$. Due to \cref{thm:bdivint_diff_ineq}, we have
    \[
    \begin{aligned}
    &\int_X S^{n-1}\wedge \omega-\int_X S^{n-1}\wedge T \\
    =&\int_X S^{n-1}\wedge \omega-\int_X S^{n-1}\wedge T'+\int_X S^{n-1}\wedge T'-\int_X S^{n-1}\wedge T\\
    \geq &\varlimsup_{\pi\colon Y\rightarrow X}\sum_{D\subseteq Y} \Bigl(\nu(T,D)-\nu(T',D) \Bigr) \vol_{Y|D}\left(\mathbb{D}(S)_Y\right)+ \varlimsup_{\pi\colon Y\rightarrow X}\sum_{D\subseteq Y} \nu(T',D) \vol_{Y|D}\left(\mathbb{D}(S)_Y\right)\\
    =& \varlimsup_{\pi\colon Y\rightarrow X}\sum_{D\subseteq Y} \nu(T,D) \vol_{Y|D}\left(\mathbb{D}(S)_Y\right)\\
    \geq & \varliminf_{\pi\colon Y\rightarrow X}\sum_{D\subseteq Y} \nu(T,D) \vol_{Y|D}\left(\mathbb{D}(S)_Y\right).
    \end{aligned}
    \]
    In order to establish \eqref{eq:intSnm1Tp_temp}, it suffices to prove the outer equality, namely we may assume that $T'=\omega$, and it remains only to prove that
    \begin{equation}\label{eq:reduc_int_diff_temp2}
         \int_X S^{n-1}\wedge \omega-\int_X S^{n-1}\wedge T\leq  \varliminf_{\pi\colon Y\rightarrow X}\sum_{D\subseteq Y} \nu(T,D)\vol_{Y|D}\left(\mathbb{D}(S)_Y\right).
    \end{equation}
    Next take a quasi-equisingular approximations $(T_j)_{j>0}$ of $T$. Suppose that we can prove \eqref{eq:reduc_int_diff_temp2} with $T_j$ in place of $T$, namely
        \[
        \int_X S^{n-1}\wedge \omega-\int_X S^{n-1}\wedge T_j\leq  \varliminf_{\pi\colon Y\rightarrow X}\sum_{D\subseteq Y} \nu(T_j,D)\vol_{Y|D}\left(\mathbb{D}(S)_Y\right).
        \]
        Letting $j\to\infty$ and applying  \cref{prop:Levi} and \cref{thm:dscontnpp1}, we conclude \eqref{eq:reduc_int_diff_temp2}. In particular, we have reduced to the case where $T$ has analytic singularities. 

        Take a modification $\pi\colon Y\rightarrow X$ resolving the singularities of $T$. Then it suffices to prove \eqref{eq:reduc_int_diff_temp2} when $T$ has log singularities.
        
    After all these reductions, we are finally reduced to the following assertion: Suppose that $E$ is a prime divisor on $X$, then
    \begin{equation}\label{eq:Snminus1capE_temp3}
    \{S^{n-1}\}\cap \{E\}\leq \varliminf_{\pi\colon Y\rightarrow X}\sum_{D\subseteq Y}  \nu([E],D) \vol_{Y|D}\left(\mathbb{D}(S)_Y\right).
    \end{equation}
    Here $S$ is an $\mathcal{I}$-good current with positive volume. Note that the reverse inequality is known, namely follows from \cref{thm:bdivint_diff_ineq}.
    
    \textbf{Step~4}. We finally prove \eqref{eq:Snminus1capE_temp3} using the transcendental Morse inequality. Only in this step are we in need of the assumption that $X$ is projective or $n=1,2$.

    If $\pi\colon Y\rightarrow X$ is a modification, then \cref{thm:BDPP1} shows that 
    \begin{equation}\label{eq:cons_BDPP_temp1}
    \pi_*\Bigl\langle \mathbb{D}(S)_{Y}^{n-1} \Bigr\rangle \cap \{E\}=\Bigl\langle \mathbb{D}(S)_{Y}^{n-1} \Bigr\rangle \cap \pi^*\{E\}=\sum_{D\subseteq Y} \nu([E],D)\vol_{Y|D}\Bigl( \mathbb{D}(S)_{Y}\Bigr).
    \end{equation}
    By \cref{thm:nefbint_aslimit} and \cref{thm:DTandnpp}, the limit of the left-hand side of \eqref{eq:cons_BDPP_temp1} is nothing but
    \[
        \left( \mathbb{D}(S)^{n-1} \right)_{X}\cap \{E\}=\{S^{n-1}\}\cap \{E\}.
    \]
    Therefore, \eqref{eq:Snminus1capE_temp3} follows.
\end{proof}

When $n=1$, \cref{thm:main_thm} reduces to the following elegant formula:
\begin{corollary}\label{cor:RS}
    Assume that $X$ is a compact Riemann surface. Let $T,T'$ be closed positive $(1,1)$-currents on $X$ in the same cohomology class such that $T\preceq_{\mathcal{I}}T'$. Then
    \begin{equation}
        \vol T'-\vol T=\sum_{x\in X} \Bigl( \nu(T,x)-\nu(T',x) \Bigr).
    \end{equation}
    In particular,
    \[
    \vol T=\vol\{T\}-\sum_{x\in X} \nu(T,x).
    \]
\end{corollary}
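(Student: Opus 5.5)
The plan is to specialize \cref{thm:main_thm} to $n=1$. The transcendental Morse inequality is trivial on a curve, so the equality case applies unconditionally. On a compact Riemann surface every modification is the identity: blow-ups of smooth centers in dimension one are isomorphisms. Hence the net $\pi\colon Y\rightarrow X$ is constant, the prime divisors over $X$ are exactly the points $x\in X$, and no limit needs to be taken.

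First I would choose $\mathbb{D}$. Since $n-1=0$, the term $\mathbb{D}^{n-1}$ is $1\in\bDiv^0(X)$. To stay within the hypotheses, take $\mathbb{D}=\mathbb{D}(\omega)$ for a Kähler form $\omega$; this is big and nef, and its power $0$ is $1$. The left-hand side of \eqref{eq:Ddiffprecise} then becomes
\[
\mathbb{D}(T')_X-\mathbb{D}(T)_X=\{\Reg T'\}-\{\Reg T\}.
\]
I would identify this with $\vol T'-\vol T$. In dimension one, $\vol T=\int_X \theta+\ddc P_\theta[\varphi]_{\mathcal{I}}$, and the $\mathcal{I}$-model envelope has the same Lelong numbers at points as $\varphi$. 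Its non-pluripolar mass is the total mass $\{T\}$ minus the point masses, which gives $\int_X\Reg T$. Alternatively, one can invoke \cref{thm:main_paper1} (the volume of $\mathbb{D}(T)$ is $\mathbb{D}(T)_X$ when $n=1$, with $\vol\mathbb{D}(T)=\vol T$) together with \cref{thm:DTandnpp} for $\mathcal{I}$-good representatives. For $\vol T=0$, I would add $\epsilon\omega$ and let $\epsilon\to0$.

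Next, the right-hand side of \eqref{eq:Ddiffprecise} is $\sum_{x}(\nu(T,x)-\nu(T',x))\vol_{X|x}(\mathbb{D}_X)$. By the $p=0$ convention stated after \cref{def:resv}, the restricted volume to a point on $X$ is $\{x\}=1$. The edge case is the convention $\vol_{X|D}=0$ when $\nu>0$: the class $\mathbb{D}_X=\{\omega\}$ is Kähler, so $\nu(\{\omega\},x)=0$ and the value is indeed $1$. This yields the first formula.

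For the second formula, take $T'$ to be a smooth form $\theta\in\{T\}$ when $\{T\}$ has positive degree. Then $T\preceq_{\mathcal{I}}T'$, $\nu(T',x)=0$, and $\vol T'=\vol\{T\}$. If $\deg\{T\}=0$, then $T=0$ and both sides vanish. The main obstacle, which is mild, is justifying that the $\mathbb{D}^{n-1}$ factor and the restricted volumes degenerate to $1$ correctly when $n=1$. If that bookkeeping is awkward, I would instead argue directly from Siu's decomposition: $\vol T=\int\Reg T=\deg\{T\}-\sum_x\nu(T,x)$, and subtracting the corresponding identity for $T'$ gives the claim.
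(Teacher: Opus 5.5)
Your proposal is correct and is exactly the paper's route: the paper obtains this corollary by specializing \cref{thm:main_thm} to $n=1$, where all modifications are trivial, $\mathbb{D}^{0}=1$, and the restricted volume to a point is $1$. As you note, in dimension one the left-hand side of \eqref{eq:Ddiffprecise} is just $\{\Reg T'\}-\{\Reg T\}$, so the only real input is $\vol T=\int_X\Reg T$, and your reduction to the $\mathcal{I}$-good envelope together with \cref{thm:DTandnpp} supplies it.
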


\section{Loss of masses in terms of Lelong numbers}\label{sec:lom}
Let $X$ be a connected compact Kähler manifold of dimension $n$.

In a series of papers, Vu and Su \cite{Vu23, Su25, Suthesis} established the following type of estimates: Consider a closed positive $(1,1)$-current $T$ in a big cohomology class $\alpha\in \mathrm{H}^{1,1}(X,\mathbb{R})$, then
\begin{equation}\label{eq:volalphaT_Vu}
\vol \alpha-\vol T\geq c \Bigl( \nu(T,x)-\nu(\alpha,x) \Bigr)^n,\quad \forall x\in X
\end{equation}
and similar estimates bounding the difference of the volumes of two currents from below. In none of these works is the constant $c$ explicit. In this section, based on the monotonicity theorems proved in \cref{sec:qual_mono}, we shall provide an explicit constant in a much more general setup.

Since the Lelong number at a point is the same as the generic Lelong number at the exceptional divisor after blowing-up this point, we shall consider Lelong numbers along divisors instead.

\subsection{The width of a big class}
Let $X$ be a connected compact Kähler manifold of dimension $n$, and $\alpha \in \mathrm{H}^{1,1}(X,\mathbb{R})$ be a big class.

\begin{definition}\label{def:numax}
    Let $D$ be a prime divisor over $X$. We define
    \[
        \nu_{\max}(\alpha,D)\coloneqq \sup \left\{\nu(T,D):T\textup{ is a closed positive }(1,1)\textup{-current}\in \alpha\right\}.
    \]
    Similarly, define the \emph{$D$-width} of $\alpha$ as 
    \[
    \wid(\alpha,D)\coloneqq \nu_{\max}(\alpha,D)-\nu(\alpha,D).
    \]
\end{definition}

The basic properties of these quantities are summarized as follows:
\begin{proposition}\label{prop:numaxalpha}
    Let $D$ be a prime divisor over $X$. Then we have the following:
    \begin{enumerate}
        \item For each modification $\pi\colon Y\rightarrow X$, we have
        \[
        \nu_{\max}(\alpha,D)=\nu_{\max}(\pi^*\alpha,D),\quad \wid(\alpha,D)=\wid(\pi^*\alpha,D).
        \]
        \item We have $\wid(\alpha,D)>0$.
        \item Suppose that $D\subseteq X$, then for any $t\geq 0$, we have
        \begin{equation}\label{eq:numax_addcurrenttoalpha}
            \nu_{\max}(\alpha,D)=\nu_{\max}\left(\alpha+t\{D\},D\right)-t.
        \end{equation}
        \item Suppose that $D\subseteq X$, then
        \[
        \wid(\alpha,D)=\wid\left( \alpha-\nu(\alpha,D)\{D\},D \right).
        \]
        \item $\nu_{\max}(\bullet,D)$ is homogeneous and concave in the big cone: If $\beta\in \mathrm{H}^{1,1}(X,\mathbb{R})$ is another big class, and $\lambda> 0$, then 
        \[
        \nu_{\max}(\lambda\alpha,D)=\lambda \nu_{\max}(\alpha,D),\quad \nu_{\max}(\alpha+\beta,D)\geq \nu_{\max}(\alpha,D)+\nu_{\max}(\beta,D).
        \]
        \item $\wid(\bullet,D)$ is continuous in the big cone.
    \end{enumerate}
\end{proposition}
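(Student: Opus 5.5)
I will prove the six items roughly in order, using only the definition of $\nu_{\max}$ together with the basic behaviour of Lelong numbers under pullback, sums and scaling.

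For (1), recall that $\pi^*$ gives a bijection between closed positive $(1,1)$-currents in $\alpha$ and closed positive $(1,1)$-currents in $\pi^*\alpha$. The inverse is $\pi_*$: any closed positive current $S\in\pi^*\alpha$ satisfies $S=\pi^*\pi_*S$, because $\pi^*\pi_*S-S$ is a closed current supported on the exceptional locus and cohomologous to zero, hence zero. This identification also preserves the generic Lelong number along $D$, since $\nu(T,D)$ for a divisor over $X$ is by definition computed on a model. So $\nu_{\max}$ is invariant under pullback. The same argument applied to currents with minimal singularities gives $\nu(\pi^*\alpha,D)=\nu(\alpha,D)$, and hence the width is invariant too.

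For (3), let $D\subseteq X$ and $T\in\alpha$. Then $T+t[D]\in\alpha+t\{D\}$ has Lelong number $\nu(T,D)+t$, which gives $\ge$. Conversely, if $S\in\alpha+t\{D\}$ has $\nu(S,D)\ge t$, then Siu's decomposition lets us write $S-t[D]\in\alpha$ as a positive current. If $\nu(S,D)<t$, the inequality is trivial provided $\nu_{\max}(\alpha,D)\ge0$; this holds because $\alpha$ is big and therefore contains a positive current. This gives $\le$. Item (4) then follows by taking $t=\nu(\alpha,D)$ and $\alpha'=\alpha-\nu(\alpha,D)\{D\}$, which is big because the minimal current minus $\nu(\alpha,D)[D]$ is positive. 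Indeed, (3) applied to $\alpha'$ gives $\nu_{\max}(\alpha,D)=\nu_{\max}(\alpha',D)+\nu(\alpha,D)$. The same shift identity holds for $\nu(\cdot,D)$, by subtracting $[D]$ from minimal currents, so $\nu(\alpha',D)=0$.

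For (5), homogeneity is immediate from scaling currents. Superadditivity follows from $\nu(T+T',D)=\nu(T,D)+\nu(T',D)$ applied with $T\in\alpha$ and $T'\in\beta$.

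For (2), by (1) I may assume $D\subseteq X$, and by (4) that $\nu(\alpha,D)=0$. Since $\alpha$ is big, it contains a Kähler current $T\ge\epsilon\omega$. Then $\alpha-\delta\{D\}$ is still big, and in particular pseudoeffective, for small $\delta>0$, because $\{D\}$ is represented by a smooth form bounded by $C\omega$. So $\alpha-\delta\{D\}$ contains a positive current $R$, and $R+\delta[D]\in\alpha$ has Lelong number at least $\delta$. Hence the width is at least $\delta>0$.

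For (6), concavity in (5) shows that $\nu_{\max}(\cdot,D)$ is a finite concave function on the open convex big cone, and is therefore continuous there. Finiteness is the one point to check: it follows because $\nu(T,D)$ is bounded by a constant depending only on the class $\alpha$, by comparing masses $\int T\wedge\omega^{n-1}$ on a model where $D$ lives. On the other side, $\nu(\cdot,D)$ is convex and finite on the big cone by \cite[Proposition~2.1.8]{Bou02}, so it is also continuous there. The width is the difference of two continuous functions, which gives (6).

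The main delicate step is (1), specifically the bijection between positive currents in $\alpha$ and in $\pi^*\alpha$ and the equality $S=\pi^*\pi_*S$. I will justify it via the support theorem combined with the negativity of exceptional divisors, or equivalently by comparing potentials: a quasi-psh potential on $Y$ is constant along the connected fibres of $\pi$.
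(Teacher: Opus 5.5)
Your proposal is correct and follows essentially the same route as the paper: the pullback bijection for (1), adding or removing $t[D]$ for (3)--(4), openness of the big cone for (2), and concavity together with continuity of $\nu(\bullet,D)$ for (5)--(6); your case split on $\nu(S,D)\ge t$ in (3) is slightly cleaner, since it avoids the paper's appeal to (2). The one imprecision is in (4): positivity of $T_{\min}-\nu(\alpha,D)[D]$ only shows that $\alpha-\nu(\alpha,D)\{D\}$ is pseudoeffective, so for bigness subtract $\nu(\alpha,D)[D]$ from a Kähler current $T\ge\epsilon\omega$ in $\alpha$ instead, which stays $\ge\epsilon\omega$ because $\nu(T,D)\ge\nu(\alpha,D)$.
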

\begin{proof}
    (1) It suffices to observe that $\pi^*$ is a bijection between the set of closed positive $(1,1)$-currents in $\alpha$ and the set of  closed positive $(1,1)$-currents in $\pi^*\alpha$.

    (2) Thanks to (1), we may assume that $D$ is a prime divisor on $X$. Since $\langle \alpha \rangle$ is big,  we can find $\epsilon>0$ small enough with $\langle\alpha\rangle -\epsilon \{D\}$ big. 
It follows that
\[
\alpha-\left(\nu(\alpha,D)+\epsilon\right)\{D\}
\]
is big.
Adding $(\nu(\alpha,D)+\epsilon)[D]$ to a current with minimal singularities in this class, we obtain a closed positive $(1,1)$-current $T\in \alpha$ with $\nu(T,D)>\nu(\alpha,D)$. 

(3) Fix $t\geq 0$.
    Take a closed positive $(1,1)$-current $T\in\alpha$, then $T+t[D]$ is a current in $\alpha+t\{D\}$. Since
    \[
    \nu(T+t[D],D)=\nu(T,D)+t,
    \]
    we conclude
    \[
    \nu_{\max}(\alpha,D)\leq \nu_{\max}\left(\alpha+t\{D\},D\right)-t.
    \]
    In particular, in view of (2), the right-hand side of \eqref{eq:numax_addcurrenttoalpha} is always positive.

    Next fix $\epsilon\in (0,\nu_{\max}(\alpha+t\{D\},D)-t)$, take a closed positive $(1,1)$-current $T\in \alpha+t\{D\}$ with 
    \[
    \nu(T,D)\geq \nu_{\max}(\alpha+t\{D\},D)-\epsilon.
    \]
    By our choice of $\epsilon$, $\nu(T,D)> t$. Then $T-t[D]\in \alpha$ is a closed positive $(1,1)$-current, and
    \[
    \nu_{\max}(\alpha,D)\geq \nu(T-t[D],D)=\nu(T,D)-t\geq \nu_{\max}(\alpha+t\{D\},D)-t-\epsilon.
    \]
    Letting $\epsilon\to 0+$, the reverse inequality follows.

(4) By (3), we have
\[
\nu_{\max}\left(\alpha-\nu(\alpha,D)\{D\},D\right)=\nu_{\max}(\alpha,D)-\nu(\alpha,D)=\wid(\alpha,D).
\]

(5) This is trivial.

(6) This follows from (5).
\end{proof}

\subsection{The toric setting}\label{subsec:toric}
In order to get a feeling of what kind of inequality we should expect, let us first consider the toric situation. For the details of the setup, we refer to \cite[Chapter~12]{Xiabook}.

Let $T$ be a complex torus of dimension $n$. Let $N$ (resp. $M$) be the cocharacter lattice (resp. character lattice) of $T$. Fix a fan $\Sigma$ in $N_{\mathbb{R}}$ corresponding to a smooth projective toric variety $X$. Fix a toric-invariant big divisor 
\[
H=\sum_{\rho\in \Sigma(1)} a_{\rho} D_{\rho},
\]
where $\Sigma(1)$ is the set of rays in $\Sigma$, $D_{\rho}$ is the toric-invariant divisor on $X$ corresponding to $\rho$ via the orbit-cone correspondence, and $a_{\rho}\in \mathbb{Z}$. Let $P_H$ denote the Newton polytope of $H$:
\[
P_H=\Bigl\{m\in M_{\mathbb{R}} : \langle m,u_{\rho}\rangle\geq -a_{\rho}\quad \forall \rho\in \Sigma(1) \Bigr\},
\]
where $u_{\rho}\in N$ is the ray generator of $\rho$. 

Recall the following key theorem proved in \cite[Chapter~12]{Xiabook}.
\begin{theorem}
    The $\mathcal{I}$-equivalence classes of toric-invariant closed positive $(1,1)$-currents in $\alpha$ are in natural bijection with the convex bodies contained in $P_H$. 
\end{theorem}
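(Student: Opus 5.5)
The plan is to translate toric-invariant currents into convex functions on $N_{\mathbb{R}}$. Then the $\mathcal{I}$-equivalence class is read off from the Lelong numbers along toric divisors over $X$, and those numbers are exactly the support function of a convex body in $P_H$.

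\textbf{Step~1 (currents as convex functions).} Fix a toric-invariant smooth reference form $\theta\in\alpha=\{H\}$. On the open orbit $(\mathbb{C}^*)^n\subseteq X$, write $\theta=\ddc \psi_0$ with $\psi_0(z)=g_0(\mathrm{Log}\, z)$, where $\mathrm{Log}\colon(\mathbb{C}^*)^n\to N_{\mathbb{R}}$ is the coordinatewise $\log|\cdot|$ map. Here $g_0$ is smooth convex and differs from the support-type function $h_H(u)\coloneqq\max_{m\in P_H}\langle m,u\rangle$ by a bounded amount. A toric-invariant current $T=\theta+\ddc\varphi\in\alpha$ then corresponds to $\psi_0+\varphi=g(\mathrm{Log}\, z)$, where $g$ is convex on $N_{\mathbb{R}}$. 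The condition that $\varphi$ is bounded above on the compact space $X$ becomes $g\le h_H+C$.

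\textbf{Step~2 (the convex body).} To $T$ I attach
\[
\Delta(T)\coloneqq\overline{\operatorname{dom} g^*}\subseteq M_{\mathbb{R}},
\]
the closed domain of the Legendre transform of $g$. From $g\le h_H+C$ we get $\Delta(T)\subseteq P_H$. Its support function is the asymptotic slope
\[
h_{\Delta(T)}(u)=\lim_{t\to\infty}\frac{g(tu)}{t}.
\]
A direct computation in a toric chart then gives the Lelong numbers. For every primitive $u\in N$ with associated toric divisor $D_u$ over $X$ (on a toric blow-up),
\[
\nu(T,D_u)=h_H(-u)-h_{\Delta(T)}(-u),
\]
up to the sign convention fixed by the orbit-cone correspondence.

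\textbf{Step~3 (well-definedness and injectivity).} I would show that $\mathcal{I}$-equivalence of toric-invariant currents is detected by the toric divisors $D_u$ alone. Since $\varphi$ is torus-invariant, each $\mathcal{I}(\lambda\varphi)$ is a monomial ideal. I would invoke Guenancia's description of multiplier ideals of toric psh functions: $z^m\in\mathcal{I}(\lambda\varphi)$ locally if and only if $m+\mathbf{1}$ lies in the interior of $\lambda$ times the Newton convex body. This body is cut out by the support function, i.e.\ by the values $\nu(T,D_u)$.

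Hence $T\sim_{\mathcal{I}}T'$ if and only if $h_{\Delta(T)}=h_{\Delta(T')}$ on the primitive vectors of $N$. By positive homogeneity this extends to the rational points, and by continuity of support functions to all of $N_{\mathbb{R}}$. This is equivalent to $\Delta(T)=\Delta(T')$, since a closed convex set is determined by its support function. So the map $[T]\mapsto\Delta(T)$ is well defined and injective.

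\textbf{Step~4 (surjectivity).} Given a convex body $K\subseteq P_H$, set $g\coloneqq h_K$, regularized near the origin if desired. It is convex and satisfies $h_K\le h_H$. The function $\varphi\coloneqq h_K(\mathrm{Log}\, z)-\psi_0$ is bounded above on $(\mathbb{C}^*)^n$, so it extends across the boundary divisors as a $\theta$-psh function. Its body is $\Delta=K$, which gives surjectivity.

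\textbf{Main obstacle.} I expect the hard step to be Step~3: establishing that multiplier ideals of a toric-invariant quasi-psh function are monomial and governed exactly by the convex body. The reverse direction of the body formula is the nontrivial part of Guenancia's result, and it requires care at the boundary divisors of $X$, not just on the open torus. If I cannot cite that result directly, I would instead use the valuative characterization of $\preceq_{\mathcal{I}}$ from \cref{subsec:qpsh}. The key claim would then be that for a torus-invariant $\varphi$ the Lelong number along any divisorial valuation is bounded below by its value along the toric valuation obtained by monomialization. That would reduce comparison over all prime divisors over $X$ to comparison over the $D_u$.
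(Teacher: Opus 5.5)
The paper gives no argument for this statement. It is quoted from \cite[Chapter~12]{Xiabook}, and the paper only uses the accompanying formulas for $\vol T$ and for $\nu(T,D_\rho)=\inf_{m\in\Delta(T)}(\langle m,u_\rho\rangle+a_\rho)$. Your architecture is the natural one and does yield the bijection: convex potentials, $\Delta(T)=\overline{\operatorname{dom} g^*}$, the support function as a recession function, and $h_K\circ\mathrm{Log}$ for surjectivity. Two points need repair.

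\textbf{Steps~1--2 are correct only when $H$ is nef.} This subsection explicitly allows $H$ to be merely big. In general $g_0$ is within bounded distance of the piecewise linear function $\Psi_H$ on the fan $-\Sigma$ with $\Psi_H(-u_\rho)=a_\rho$, not of $h_H$; this is read off from $|s_H|_h\sim |w|^{a_\rho}$ near $D_\rho$. The recession function of $g_0$ is therefore $\Psi_H$, which is convex only for nef $H$, so $g_0$ is not convex either. One always has $h_H\le \Psi_H$, with $\Psi_H(-u_\rho)-h_H(-u_\rho)=\nu(\alpha,D_\rho)$. This gap is positive whenever $P_H$ does not meet the hyperplane $\langle m,u_\rho\rangle=-a_\rho$. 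Consequently your Lelong formula actually computes $\nu(T,D_u)-\nu(\alpha,D_u)$. The correct formula is $\nu(T,D_u)=\Psi_H(-u)-h_{\Delta(T)}(-u)$, which on $X$ is the paper's $a_\rho-h_{\Delta(T)}(-u_\rho)$.

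The bijection survives this correction:
\begin{itemize}
\item Boundedness of $\varphi$ means $g\le \Psi_H+C$, hence $h_{\Delta(T)}=g_\infty\le\Psi_H$, and evaluating at $-u_\rho$ gives $\Delta(T)\subseteq P_H$.
\item In Step~4, $h_K\le h_H\le \Psi_H$.
\item Step~3 only uses differences of Lelong numbers.
\end{itemize}

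\textbf{The fallback in Step~3 does not reduce anything.} The inequality $\nu(T,E)\ge \nu(T,r(\mathrm{ord}_E))$ is the half that comes for free. Knowing it for both $T$ and $T'$ gives no comparison between $\nu(T,E)$ and $\nu(T',E)$; you need equality.

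Equality follows from an elementary sandwich, which also makes Guenancia's theorem unnecessary. In a chart $U_\sigma\cong\mathbb{C}^n$ the local potential of $T$ is $\tilde g(\log|z_1|,\ldots,\log|z_n|)$. Here $\tilde g$ differs from $g$, written in the coordinates attached to $\sigma$, by an affine function independent of $T$. Moreover $\tilde g$ is nondecreasing in each variable, by psh-ness across the coordinate hyperplanes. Hence $\tilde\Delta\coloneqq\overline{\operatorname{dom}\tilde g^*}\subseteq\mathbb{R}^n_{\ge 0}$ is a $T$-independent translate of $\Delta(T)$. Fenchel--Young and $\tilde g\le \tilde g(0)+\tilde g_\infty$ give
\[
\max_j\bigl(\langle a_j,x\rangle-\tilde g^*(a_j)\bigr)\le \tilde g(x)\le \tilde g(0)+h_{\tilde\Delta}(x)
\]
for every finite set $\{a_j\}\subseteq\operatorname{dom}\tilde g^*$, a set whose closure is $\tilde\Delta$.

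Let $E$ be a prime divisor over $X$ whose center meets $U_\sigma$, and put $w=(\mathrm{ord}_E z_i)_i$. The function $\max_j\sum_i a_{j,i}\log|z_i|$ has generic Lelong number $\min_j\langle a_j,w\rangle$ along $E$. Bound $h_{\tilde\Delta}$ above by $h_Q$ for polytopes $Q\supseteq\tilde\Delta$ in $\mathbb{R}^n_{\ge0}$, and choose the $a_j$ near a minimizer. You get
\[
\nu(T,E)=\min_{a\in\tilde\Delta}\langle a,w\rangle ,
\]
which depends only on $\Delta(T)$.

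Combined with the valuative description of $\preceq_{\mathcal{I}}$ recalled in \cref{subsec:qpsh}, this formula gives well-definedness and injectivity at once. Taking $E=D_u$ recovers the corrected Lelong formula above. Citing Guenancia also works if applied chart by chart, since only the interior of his Newton body matters, but it is heavier than necessary.
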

Here a convex body refers to a compact non-empty convex set.

Given such a current $T$, the corresponding convex body (known as the Newton body of $T$) is denoted by $\Delta(T)$. We have
\[
\vol T=n! \vol \Delta(T).
\]
Two toric-invariant currents $T, S$ in $\alpha$ satisfies $T\preceq_{\mathcal{I}} S$ if and only if $\Delta(T)\subseteq \Delta(S)$. Now fix a toric-invariant prime divisor $D$. In the ample case, $D$ corresponds to a facet of $P_H$, but this fails in the big case. However, if $\rho\in \Sigma(1)$ is the ray corresponding to $D$, then we have
\[
\nu(T,D)=\inf \left\{ \langle m,u_{\rho}\rangle+a_{\rho}: m\in \Delta(T) \right\}.
\]
So eventually, we are looking for such inequalities:
\begin{equation}\label{eq:volDeltadiff_lower_guess}
    \vol \Delta(S)-\vol \Delta(T)\geq c \biggl( \inf \Bigl\{ \langle m,u_{\rho}\rangle: m\in \Delta(T) \Bigr\}-\inf \Bigl\{ \langle m,u_{\rho}\rangle: m\in \Delta(S) \Bigr\}\biggr)^n.
\end{equation}

Note that it is impossible to find uniform $c$ for all $S$ and $T$, contrary to the assertions in the literature.
\begin{example}\label{ex:count_Su}
    Consider the simplex $Q$ with the following vertices in $\mathbb{R}^{n}$:
    \[
    (0,\ldots,0), (1,0,\ldots,0), (1,\epsilon,0,\ldots,0),(1,0,\epsilon,0,\ldots,0),\ldots, (1,0,\ldots,0,\epsilon)
    \]
    for some $\epsilon>0$. It is a polytope contained in $[0,1]^n$, the Newton polytope of $\mathcal{O}(1,\dots,1)$ on $(\mathbb{P}^1)^n$.

    Now let 
    \[
    Q_t=\{x\in Q:x_1\geq t\},
    \]
    where $t\in (0,1)$. Then
    \[
    \vol Q-\vol Q_t=\frac{t^n}{n!}\epsilon^{n-1}.
    \]
    If we take $u_{\rho}=(1,0,\ldots,0)$ corresponding to the facet $\{x_1=0\}$, we find that the constant $c$ in \eqref{eq:volDeltadiff_lower_guess} corresponds to 
    \[
    \frac{\epsilon^{n-1}}{n!},
    \]
    which can be arbitrarily small.
\end{example}

Now we look for the optimal constant $c$ in \eqref{eq:volDeltadiff_lower_guess}. We first make a few simplifications. Since $u_{\rho}$ is part of a basis of $N$, up to a linear transform in $\mathrm{SL}(n,\mathbb{Z})$, we may assume that $N=\mathbb{Z}^n$, $u_{\rho}=(1,0,\ldots,0)$. We have a given rational polytope $P_H$ with positive volume contained in $\{x_1\geq 0\}$, two subconvex bodies $Q_1\subseteq Q_2$. We wish to find $c$ so that
\begin{equation}\label{eq:volDeltadiff_lower_guess2}
    \vol Q_2-\vol Q_1\geq c \left(\inf_{x\in Q_1}x_1-\inf_{x\in Q_2}x_1 \right)^n.
\end{equation}
Without loss of generality, we may assume that $Q_2$ touches $\{x_1=0\}$. Namely, $\inf_{x\in Q_2}x_1=0$.

Now we fix $Q_2$ and $t\coloneqq \inf_{x\in Q_1}x_1$ and consider the optimal $Q_1$, namely when the left-hand side takes the minimal value. It is clearly given by
\[
Q_1\coloneqq \{(x_1,y)\in Q:x_1\geq t\}.
\]
See \cref{fig:Q1Q1} for the optimal situation.
\begin{figure}[ht]
\caption{The optimal situation}\label{fig:Q1Q1}
\centering
\includegraphics[width=0.5\textwidth]{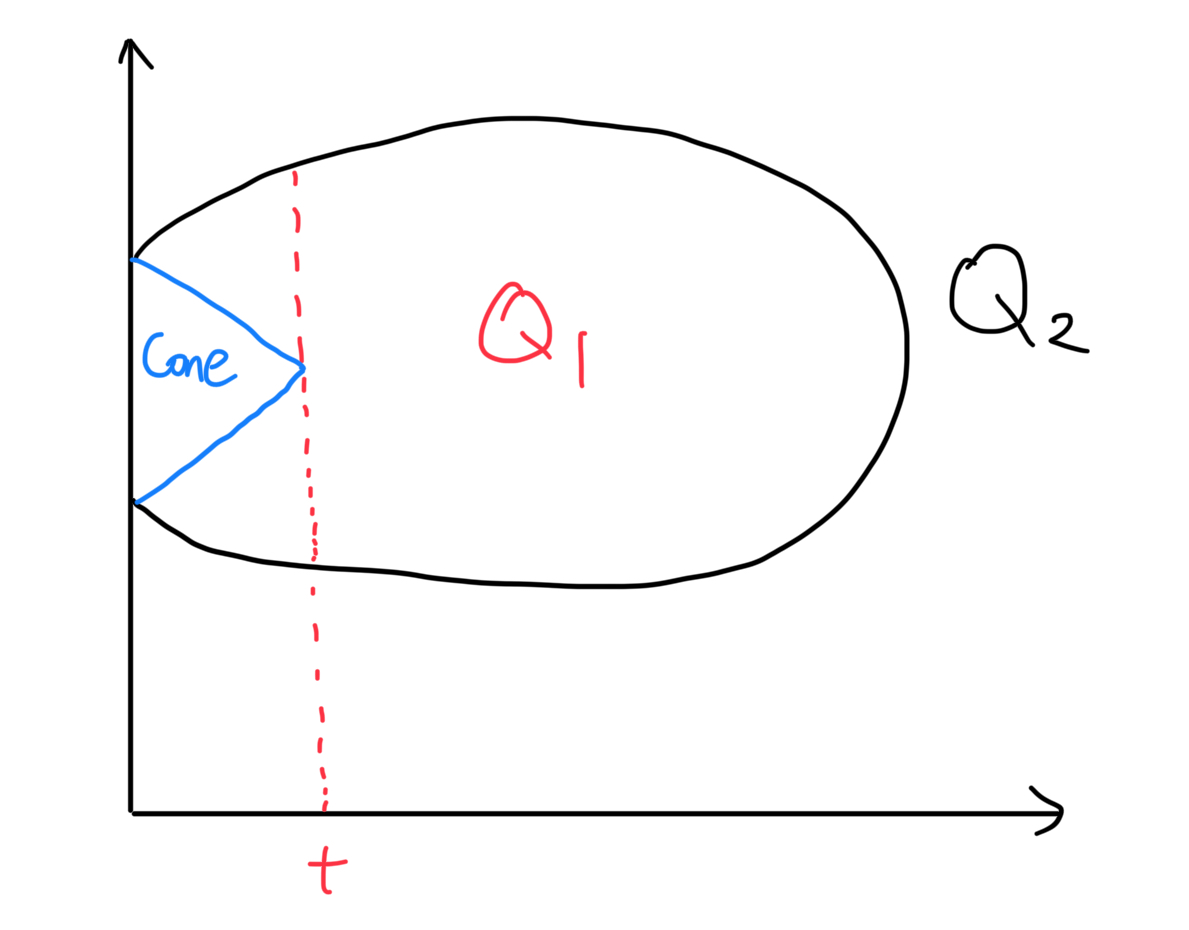}
\end{figure}

The volume of the remaining part is then bounded from below by the volume of a cone with  base $Q_2\cap \{x_1=0\}$ and a vertex with $x_1=t$:
\[
\frac{t}{n}\vol \{y\in \mathbb{R}^{n-1}:(0,y)\in Q_2\}. 
\]
But we wish to obtain something to the order of $t^n$, for this purpose, it suffices to observe that
\[
t\leq \max_{x\in P_H}x_1.
\]
Putting everything together, we find
\[
\begin{split}
\vol Q_2-\vol Q_1\geq \frac{1}{n\left(\max_{x\in P_H}x_1-\min_{x\in P_H}x_1\right)^{n-1}}\\
\left(\inf_{x\in Q_1}x_1-\inf_{x\in Q_2}x_1 \right)^n \vol \{y\in \mathbb{R}^{n-1}:(0,y)\in Q_2\}.
\end{split}
\]
Translating everything back to the language of currents, we get
\begin{proposition}\label{prop:ST_toric_volumediff}
    Let $S,T$ be toric invariant closed positive $(1,1)$-currents in the same cohomology class $\alpha$ with $S\succeq_{\mathcal{I}}T$, then for any toric-invariant prime divisor $D$ on $X$, we have
    \[
    \vol S-\vol T\geq \frac{1}{\wid(\alpha,D)^{n-1}} \cdot \Bigl( \nu(T,D)-\nu(S,D) \Bigr)^{n}\cdot \vol_{X|D}\Bigl(S-\nu(S,D)[D]\Bigr).
    \]
\end{proposition}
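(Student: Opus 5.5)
The plan is to translate everything into convex geometry and run the cone argument sketched just before the statement. By the toric dictionary, $\vol S=n!\vol\Delta(S)$, $\vol T=n!\vol\Delta(T)$ and $\Delta(T)\subseteq\Delta(S)\subseteq P_H$. After an $\mathrm{SL}(n,\mathbb{Z})$ change of coordinates we take $u_\rho=(1,0,\ldots,0)$, so that $\nu(\bullet,D)=\inf x_1+a_\rho$ on the corresponding Newton body. Translating in the $x_1$-direction (which amounts to subtracting $\nu(S,D)[D]$ from $S$, $T$ and $\alpha$) we may assume $\inf_{\Delta(S)}x_1=0$. Put $t\coloneqq\nu(T,D)-\nu(S,D)=\inf_{\Delta(T)}x_1$.

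The key identifications are the following two.
\begin{enumerate}
\item $\wid(\alpha,D)=\max_{P_H}x_1-\min_{P_H}x_1$. Indeed, $\nu_{\max}$ is attained by the current whose Newton body is a single point of $P_H$ maximizing $x_1$, and $\nu(\alpha,D)$ corresponds to $P_H$ itself, i.e.\ to a current with minimal singularities.
\item $\vol_{X|D}(S-\nu(S,D)[D])=(n-1)!\,\vol_{n-1}\bigl(\Delta(S)\cap\{x_1=0\}\bigr)$. This should follow from \cref{ex:resvol_DTT} together with the toric description of the trace operator in \cite[Chapter~12]{Xiabook}: restricting a toric current to $D$ gives a toric current on $D$ whose Newton body is the face $\Delta(S)\cap\{x_1=0\}$, suitably identified with a body in $\mathbb{R}^{n-1}$.
\end{enumerate}
I expect the second identification to be the main obstacle. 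One has to check that the trace of a non-Kähler current (after adding $\epsilon\omega$ and letting $\epsilon\to0$) really has the face as its Newton body, rather than merely something contained in it, and that volumes converge. I would first prove this for Kähler toric currents with analytic singularities, where $\Tr_D$ is genuine restriction and the Newton body of the restriction is the face. I would then pass to the limit using \cref{thm:trace_cont} and the continuity of the volume of faces under the monotone approximations $\Delta(S)+\epsilon P_\omega$. Upper semicontinuity of face volume is harmless here, since only the lower bound is needed.

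With these in hand, the geometry is as follows. Pick $v\in\Delta(T)$ with $v_1=t$. The cone $C=\mathrm{conv}\bigl(v,\Delta(S)\cap\{x_1=0\}\bigr)$ lies in $\Delta(S)\cap\{x_1\le t\}$ by convexity. Its interior meets $\Delta(T)$ only in a null set, since $\Delta(T)\subseteq\{x_1\ge t\}$. Hence
\[
\vol\Delta(S)-\vol\Delta(T)\ge\vol C=\frac{t}{n}\vol_{n-1}\bigl(\Delta(S)\cap\{x_1=0\}\bigr).
\]
Since $0\le t\le\wid(\alpha,D)$, we have $t\ge t^n/\wid(\alpha,D)^{n-1}$. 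Multiplying by $n!$ gives
\[
\vol S-\vol T\ge\frac{t^n}{\wid(\alpha,D)^{n-1}}\,(n-1)!\,\vol_{n-1}\bigl(\Delta(S)\cap\{x_1=0\}\bigr),
\]
which is the claim by the second identification. When $\wid$ is computed after translation, the bound $t\le\wid$ follows from $v\in P_H$ and $\min_{P_H}x_1\le0$. The only further bookkeeping is checking that the lattice normalisations of $(n-1)$-dimensional volume on $\{x_1=0\}$ match, which holds because $u_\rho$ is primitive.
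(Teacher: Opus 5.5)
Your proposal is correct and is essentially the paper's argument. You pass to Newton bodies with $u_\rho=(1,0,\ldots,0)$, bound $\vol\Delta(S)-\vol\Delta(T)$ from below by the cone over $\Delta(S)\cap\{x_1=0\}$ with apex at height $t$, identify the restricted volume with $(n-1)!$ times that face volume, and use $t\le\wid(\alpha,D)$ to replace $t$ by $t^n/\wid(\alpha,D)^{n-1}$; your identification~(1) is more than you need, since $t=\nu(T,D)-\nu(S,D)\le\nu_{\max}(\alpha,D)-\nu(\alpha,D)$ follows directly from the definitions.
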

See \cref{def:numax} for the definition of $\nu_{\max}$.

This proof only works in the toric setting. In general, we shall rely on the non-toric generalization of Newton bodies, namely the Okounkov bodies.

\subsection{Transcendental Okounkov bodies}
Let $X$ be a connected compact Kähler manifold of dimension $n$.

Recall that a smooth flag on $X$ is a chain of smooth analytic subspaces $Y_{\bullet}=(Y_1\supseteq \cdots \supseteq Y_n)$ so that each $Y_i$ is irreducible of codimension $i$ in $X$. Given a big class $\alpha\in \mathrm{H}^{1,1}(X,\mathbb{R})$, there is a natural way of constructing a convex body $\Delta_{Y_{\bullet}}(\alpha)\subseteq \mathbb{R}^n$ associated with $Y_{\bullet}$ and $\alpha$, known as the Okounkov body. The Okounkov body is contained in the first quadrant. Instead of recalling the lengthy definition, we refer to \cite{Deng17, DRWNXZ} for the details.

We shall need a few basic properties of the Okounkov bodies, as proved in \cite{DRWNXZ} and \cite[Chapter~10]{Xiabook}. 
\begin{theorem}\label{thm:Oko_prop}
Let $\alpha\in \mathrm{H}^{1,1}(X,\mathbb{R})$ be a big class and $Y_{\bullet}$ be a smooth flag on $X$, then we have the following:
\begin{enumerate}
    \item The volume of the Okounkov body is proportional to the volume of the class:
        \begin{equation}\label{eq:vol_Oko}
            \vol \alpha=n!\vol \Delta_{Y_{\bullet}}(\alpha).
        \end{equation}
    \item The Okounkov body $\Delta_{Y_{\bullet}}(\alpha)$ is continuous with respect to $\alpha$ if we consider the topology induced by the Hausdorff metric.
    \item  The two ends have pluripotential-theoretic interpretations:
        \[
            \min_{x\in \Delta_{Y_{\bullet}}(\alpha)}x_1=\nu(\alpha,Y_1),\quad \max_{x\in \Delta_{Y_{\bullet}}(\alpha)}x_1=\nu_{\max}(\alpha,Y_1).
        \]
    \item For all $t\in (\nu(\alpha,Y_1),\nu_{\max}(\alpha,Y_1))$, we have
        \[
            \vol_{X|Y_1}\left(\alpha-t\{Y_1\}\right)=(n-1)!\vol \left\{y\in \mathbb{R}^{n-1}:(t,y)\in \Delta_{Y_{\bullet}}(\alpha)\right\}.
        \]
\end{enumerate}    
\end{theorem}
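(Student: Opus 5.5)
The plan is not to rebuild the theory of transcendental Okounkov bodies. Each of the four assertions will be read off from the results of \cite{DRWNXZ} and \cite[Chapter~10]{Xiabook}, and only the normalizations need checking against the conventions of this paper. Throughout I will use that $\Delta_{Y_\bullet}(\alpha)$ is built from valuative data of currents in $\alpha$ with respect to the flag: the first coordinate of a point is a generic Lelong number along $Y_1$, and the fibre over $x_1=t$ comes from the restriction to $Y_1$ of currents in $\alpha-t\{Y_1\}$.

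\textbf{Steps (1) and (2).} The volume identity \eqref{eq:vol_Oko} is the main theorem of \cite{DRWNXZ} and will simply be quoted. Continuity in the Hausdorff metric will follow from two facts.
\begin{enumerate}
    \item The map $\alpha\mapsto\Delta_{Y_\bullet}(\alpha)$ is homogeneous and Minkowski superadditive: if $T\in\alpha$ and $T'\in\beta$, then $T+T'\in\alpha+\beta$, and the valuative data add.
    \item These bodies have positive volume.
\end{enumerate}
A concave, homogeneous family of convex bodies with nonempty interiors over an open cone is Hausdorff continuous. To see this, given a Kähler class $\gamma$, sandwich $\Delta(\alpha-\epsilon\gamma)+\epsilon\Delta(\gamma)\subseteq\Delta(\alpha)$, and compare volumes using (1) together with the continuity of $\vol$.

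\textbf{Step (3).} The first coordinate of the valuative points equals $\nu(T,Y_1)$ for currents $T\in\alpha$. Its infimum is therefore attained by a current with minimal singularities, giving $\nu(\alpha,Y_1)$. Its supremum is $\nu_{\max}(\alpha,Y_1)$ by \cref{def:numax}, since every value of $\nu(T,Y_1)$ with $T\in\alpha$ is realized. Closedness of the body then turns the infimum and supremum into a minimum and maximum.

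\textbf{Step (4).} This is the main point and should be the slice formula of \cite{DRWNXZ}. For $t$ in the open interval, $\alpha-t\{Y_1\}$ is big with $\nu(\alpha-t\{Y_1\},Y_1)=0$; this uses \cref{prop:numaxalpha}(3) and the interior position of $t$. The slice $\{y:(t,y)\in\Delta_{Y_\bullet}(\alpha)\}$ is the Okounkov body on $Y_1$, with respect to the induced flag, of the restricted data $\Tr_{Y_1}$ of currents in $\alpha-t\{Y_1\}$. Applying the volume identity (1) on $Y_1$ to this restricted object gives
\[
(n-1)!\vol\{y:(t,y)\in\Delta_{Y_\bullet}(\alpha)\}=\vol\Tr_{Y_1}\mathbb{D}\bigl(\alpha-t\{Y_1\}\bigr).
\]
By \cref{thm:coinc_class}, the right-hand side equals $\vol_{X|Y_1}(\alpha-t\{Y_1\})$.

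The obstacle I expect is to justify that the slice is exactly the body of the trace, rather than merely contained in it up to its boundary. I would first prove the inclusion of the slice into the trace body, then obtain equality almost everywhere in $t$ by integrating both sides and using (1) on $X$. Finally, for $t$ in the interior, I would upgrade to equality everywhere using the continuity of both sides in $t$: this is concavity for the slice, and \cref{thm:trace_cont} for the restricted volume.
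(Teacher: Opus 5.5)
The paper does not prove this theorem: all four items are quoted from \cite{DRWNXZ} and \cite[Chapter~10]{Xiabook}, with the remark that the slice in (4) is a partial Okounkov body. Quoting (1)--(3), as you essentially do, is therefore in line with the paper. One small correction: $\nu(\alpha-t\{Y_1\},Y_1)=0$ for interior $t$ does not come from \cref{prop:numaxalpha}(3), which concerns $\nu_{\max}$. It follows instead from a convex combination of $T_{\min}$ and a current with Lelong number $>t$ along $Y_1$, which gives a current in $\alpha$ with Lelong number exactly $t$.

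The genuine gap is in the argument you sketch for (4). Put $g(t)=(n-1)!\vol\{y:(t,y)\in\Delta_{Y_\bullet}(\alpha)\}$ and $h(t)=\vol_{X|Y_1}(\alpha-t\{Y_1\})$. Your inclusion of the slice into the trace body gives $g\le h$. Integrating and using (1) on $X$ then yields only
\[
\vol\alpha=n\int g(t)\,\mathrm{d}t\leq n\int_{\nu(\alpha,Y_1)}^{\nu_{\max}(\alpha,Y_1)}h(t)\,\mathrm{d}t,
\]
which is the wrong direction. To get $g=h$ almost everywhere you need the reverse inequality $n\int h\leq\vol\alpha$, and (1) says nothing about it. Given (1) and your inclusion, this inequality is \emph{equivalent} to the almost-everywhere slice formula you want. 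It is the integrated form of one half of the derivative formula $-\frac{\mathrm{d}}{\mathrm{d}t}\vol(\alpha-t\{Y_1\})=n\,h(t)$, which is a separate input. So ``integrating both sides and using (1)'' does not close the argument.

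The missing inequality can be supplied from earlier sections of the paper. For $\nu(\alpha,Y_1)<t<t'<\nu_{\max}(\alpha,Y_1)$, let $S_s$ have minimal singularities in $\alpha-s\{Y_1\}$. Apply \cref{thm:bdivint_diff_ineq} to $T'=S_t+t[Y_1]$ and $T=S_{t'}+t'[Y_1]$; here $T\preceq_{\mathcal{I}}T'$, $\mathbb{D}(T')=\mathbb{D}(S_t)$ and $\mathbb{D}(T)=\mathbb{D}(S_{t'})$. Do this in each term of the telescoping of $\mathbb{D}(S_t)^n-\mathbb{D}(S_{t'})^n$. Keep only the term $D=Y_1$ and use \cref{prop:BM} and \cref{thm:coinc_class}. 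This gives
\[
\vol(\alpha-t\{Y_1\})-\vol(\alpha-t'\{Y_1\})\geq n(t'-t)\min\{h(t),h(t')\},
\]
and Riemann sums then give $n\int h\le\vol\alpha$.

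Two further repairs are needed in the same step.
\begin{enumerate}
\item The identity $(n-1)!\vol(\text{trace body})=\vol\Tr_{Y_1}\mathbb{D}(\alpha-t\{Y_1\})$ is the volume formula for partial Okounkov bodies of currents on $Y_1$ (\cite[Chapter~10]{Xiabook}). It is not (1): that result concerns classes and would produce $\vol\bigl((\alpha-t\{Y_1\})|_{Y_1}\bigr)$ instead.
\item \cref{thm:trace_cont} concerns nets in a \emph{fixed} cohomology class, so it does not give continuity of $h$ in $t$. Use instead that $h^{1/(n-1)}$ is concave on the open interval, by \cref{prop:res_vol_class}(3) and \cref{prop:BM}. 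For this, first check, again by a convex combination, that $Y_1$ is not in the non-Kähler locus of $\alpha-t\{Y_1\}$.
\end{enumerate}
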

In fact, the slice in (4) is an example of the partial Okounkov bodies studied in \cite{Xia21} and \cite[Chapter~10]{Xiabook}.

We shall need a few auxiliary lemmata.
\begin{lemma}\label{lma:conc_int}
    Let $A>0$.
    Let $f\colon [0,A]\rightarrow \mathbb{R}_{\geq 0}$ be a continuous concave function. Then for any $n\in \mathbb{Z}_{>0}$, and any $t_0\in (0,A)$, we have
    \[
    \int_0^A f(t)^n\,\mathrm{d}t\leq \frac{f(t_0)^nA^{n+1}}{n+1} \min \{t_0,A-t_0\}^{-n}.
    \]
\end{lemma}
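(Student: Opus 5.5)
The idea is to dominate $f$ by a single affine function through $(t_0,f(t_0))$, namely a supporting line of the concave function $f$ at $t_0$. Then we maximize $\int_0^A\ell^n$ over all admissible lines. This optimization is convex in the slope, so the maximum sits at one of the two extreme slopes, and for these the integral is explicit. We avoid bounding $f$ piecewise by the two chords, because the resulting V-shaped majorant is not concave and loses a factor of about $2$.

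\textbf{Step~1 (degenerate case).} Suppose $f(t_0)=0$. Since $f\geq 0$ is concave and $t_0$ is interior, every $t\in(0,A)$ gives $0=f(t_0)\geq \lambda f(t)+(1-\lambda)f(s)\geq \lambda f(t)$, where $s$ is an endpoint on the other side of $t_0$ and $\lambda\in(0,1]$. Hence $f\equiv 0$ on $(0,A)$, so the left-hand side vanishes and the inequality is trivial. From now on assume $f(t_0)>0$; by homogeneity of both sides in $f$ we may normalize $f(t_0)=1$.

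\textbf{Step~2 (supporting line).} Because $f$ is concave and $t_0$ lies in the open interval $(0,A)$, the superdifferential of $f$ at $t_0$ is nonempty. Pick $k$ in it and set $\ell(t)=1+k(t-t_0)$, so that $f\leq \ell$ on $[0,A]$. Since $f\geq 0$, we get $\ell(0)\geq 0$ and $\ell(A)\geq 0$, which forces
\[
-\frac{1}{A-t_0}\leq k\leq \frac{1}{t_0}.
\]
Consequently $0\leq f\leq \ell$ on $[0,A]$, and therefore $\int_0^A f^n\,\mathrm{d}t\leq \int_0^A \ell^n\,\mathrm{d}t=:\Phi(k)$.

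\textbf{Step~3 (convexity in $k$ and endpoint evaluation).} For fixed $t$, the map $k\mapsto (1+k(t-t_0))^n$ is convex on the admissible range, where the base stays nonnegative. So $\Phi$ is convex on the closed interval above and attains its maximum at an endpoint.
\begin{itemize}
\item For $k=1/t_0$ we have $\ell(t)=t/t_0$, and $\Phi=A^{n+1}/((n+1)t_0^n)$.
\item For $k=-1/(A-t_0)$ we have $\ell(t)=(A-t)/(A-t_0)$, and $\Phi=A^{n+1}/((n+1)(A-t_0)^n)$.
\end{itemize}
Both values are at most $A^{n+1}\min\{t_0,A-t_0\}^{-n}/(n+1)$. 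Undoing the normalization yields the factor $f(t_0)^n$ and the claimed bound.

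The only point requiring care is Step~2: the existence of a supporting line at an interior point, and the fact that nonnegativity of $f$ at the endpoints pins down the slope range. Once this is in place, the rest is the one-variable convexity argument and the two elementary integrals.
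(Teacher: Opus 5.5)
Your proof is correct and follows the paper's argument. You majorize $f$ by an affine function through $(t_0,f(t_0))$, note that nonnegativity at the endpoints confines the slope to $[-1/(A-t_0),1/t_0]$, and use convexity of the integral in the slope to evaluate it at the two extreme slopes; your supporting-line step just makes explicit the reduction to the affine case that the paper states as immediate.
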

\begin{proof}
Fix $t_0\in (0,A)$, assume that $f(t_0)=C\geq 0$, let us compute the maximum of $\int_0^A f(t)^n\mathrm{d}t$. Since $f$ is concave, the maximization problem reduces immediately to the case where $f$ is affine. So we need to compute the maximum of
\[
\int_0^A \left( B(t-t_0)+C\right)^n\,\mathrm{d}t=\frac{(B(A-t_0)+C)^{n+1}-(-Bt_0+C)^{n+1}}{(n+1)B}
\]
under the constraints that
\[
-Bt_0+C\geq 0,\quad B(A-t_0)+C\geq 0.
\]
Observe that the integral is convex in $B$, and hence the maximum is obtained at the boundary of the interval, in other words, the maximum of the integral is 
\[
\max\left\{ \frac{C^nA^{n+1}}{(n+1)t_0^n}, \frac{C^nA^{n+1}}{(n+1)(A-t_0)^n}\right\}=\frac{C^nA^{n+1}}{n+1} \min \{t_0,A-t_0\}^{-n}.
\]
\end{proof}

\begin{lemma}\label{lma:slice_convexbody}
Assume that $n>1$.
    Let $P\subseteq \mathbb{R}^n$ be a convex body. Assume that
    \[
    \min_{x\in P}x_1=0,\quad \max_{x\in P}x_1=A\geq 0.
    \]
    Then for any $t_0\in [0,A]$, we have
    \[
    \vol \left\{y\in \mathbb{R}^{n-1}: (t_0,y)\in P \right\}\geq \frac{n}{A^n}\vol P\cdot \min\{t_0,A-t_0\}^{n-1}. 
    \]
\end{lemma}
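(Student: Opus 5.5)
The plan is to combine Brunn--Minkowski concavity of the slice volumes with \cref{lma:conc_int}. Define
\[
f(t)\coloneqq \vol_{n-1}\left\{y\in \mathbb{R}^{n-1}:(t,y)\in P\right\}^{1/(n-1)},\qquad t\in[0,A].
\]
By the Brunn--Minkowski inequality in $\mathbb{R}^{n-1}$, $f$ is concave on $[0,A]$. The slices are nonempty there: $P$ is convex and meets both $\{x_1=0\}$ and $\{x_1=A\}$, so it meets every intermediate hyperplane. By Fubini,
\[
\vol P=\int_0^A f(t)^{n-1}\,\mathrm{d}t .
\]

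The main step applies \cref{lma:conc_int} with exponent $n-1\geq 1$, using $n>1$. For $t_0\in(0,A)$ it gives
\[
\vol P\leq \frac{f(t_0)^{n-1}A^n}{n}\min\{t_0,A-t_0\}^{-(n-1)}.
\]
Rearranging yields exactly
\[
f(t_0)^{n-1}\geq \frac{n}{A^n}\vol P\cdot\min\{t_0,A-t_0\}^{n-1}.
\]

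The main obstacle is that \cref{lma:conc_int} needs $f$ continuous on the closed interval $[0,A]$, while a concave function on $[0,A]$ is only guaranteed continuous on the open interval. It may jump down at the endpoints, where the slice can be lower-dimensional. I would handle this by replacing $f$ with its continuous extension $\bar f$ from the interior. This $\bar f$ is still concave, satisfies $\bar f\geq f$, and agrees with $f$ at every interior point $t_0$, and endpoint values do not change the integral. Applying the argument to $\bar f$ then gives the same conclusion for every $t_0\in(0,A)$.

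The endpoint cases $t_0\in\{0,A\}$ are trivial, since the right-hand side vanishes. The degenerate case $A=0$ also needs a remark. There $P$ has volume zero, so the inequality reads $\text{slice}\geq 0$ under any sensible convention for $0/0$, and I would note that the statement is only meaningful for $A>0$.
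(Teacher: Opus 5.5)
Your proposal is correct and is essentially the paper's own proof: Brunn--Minkowski concavity of the $(n-1)$-th root of the slice volume, Fubini, and \cref{lma:conc_int} with exponent $n-1$, then rearrange. Your endpoint patch is harmless but not actually needed. For a convex body the slices $\{y:(t,y)\in P\}$ converge in the Hausdorff sense to the end slices, so the slice volume is already continuous on $[0,A]$; moreover, the proof of \cref{lma:conc_int} only uses a supporting line at $t_0$, not continuity.
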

The proof resulted from a discussion with Yangyang Li.
\begin{proof}
We may assume that $t_0\in (0,A)$ since otherwise there is nothing to prove.

We write 
\[
g(t)\coloneqq \vol \left\{y\in \mathbb{R}^{n-1}: (t,y)\in P \right\}.
\]
    By Brunn--Minkowski inequality, the function $g^{1/(n-1)}$ is concave.
    Therefore, thanks to \cref{lma:conc_int}, we have
    \[
    \vol P=\int_0^A g(t)\,\mathrm{d}t\leq \frac{g(t_0)A^n}{n}\min\{t_0,A-t_0\}^{-n+1}.
    \]
\end{proof}

The following lower bound of the restricted volume seems new.
\begin{corollary}\label{cor:res_vol_vol_ineq}
Let $\alpha\in \mathrm{H}^{1,1}(X,\mathbb{R})$ be a big class. Then for any prime divisor $D$ on $X$, we have
\begin{equation}\label{eq:res_vol_vol_ineq}
\vol_{X|D}\left(\alpha-t\{D\}\right)\geq \frac{\vol \alpha }{\wid(\alpha,D)^n}\cdot \min\{t-\nu(\alpha,D),\nu_{\max}(\alpha,D)-t\}^{n-1}
\end{equation}
as long as $\nu(\alpha,D)\leq t\leq \nu_{\max}(\alpha,D)$.
\end{corollary}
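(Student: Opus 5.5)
The plan is to pass to an Okounkov body and reduce everything to \cref{lma:slice_convexbody}. The bound depends only on birational data of $D$ and $\alpha$, so we may first replace $X$ by a modification.

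\textbf{Step~1: making $D$ part of a smooth flag.} Choose a modification $\pi\colon Y\rightarrow X$ such that the strict transform $D'$ of $D$ is smooth. Then extend $D'$ to a smooth flag $Y_\bullet$ on $Y$ with $Y_1=D'$; the lower-dimensional members can be taken as general complete intersections inside $D'$, after further blow-ups if necessary. Each quantity in \eqref{eq:res_vol_vol_ineq} is unchanged under this replacement:
\begin{itemize}
\item $\vol\alpha=\vol\pi^*\alpha$;
\item $\nu(\alpha,D)=\nu(\pi^*\alpha,D')$ and $\nu_{\max}(\alpha,D)=\nu_{\max}(\pi^*\alpha,D')$, the latter by \cref{prop:numaxalpha}(1);
\item $\vol_{X|D}(\alpha-t\{D\})=\vol_{Y|D'}(\pi^*\alpha-t\{D'\})$.
\end{itemize}
The last point is not entirely formal, because $\pi^*\{D\}$ differs from $\{D'\}$ by exceptional divisors. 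I would handle it using the bimeromorphic invariance of restricted volume from \cite[Lemma~4.2]{CT22}, together with the observation that adding exceptional divisors to $\pi^*\alpha-t\{D'\}$ does not affect the restricted volume on $D'$. This bookkeeping is the main technical obstacle. If it becomes awkward, I would state the corollary for $D$ smooth first and then reduce to that case.

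\textbf{Step~2: the interior case via the Okounkov body.} Assume now that $D$ is smooth and $Y_1=D$. Let $P=\Delta_{Y_\bullet}(\alpha)$, and translate it by $-\nu(\alpha,D)$ in the first coordinate. By \cref{thm:Oko_prop}(3), the translated body $P'$ satisfies $\min x_1=0$ and $\max x_1=A\coloneqq\wid(\alpha,D)$, and $A>0$ by \cref{prop:numaxalpha}(2). For $t$ in the open interval $(\nu(\alpha,D),\nu_{\max}(\alpha,D))$, put $t_0=t-\nu(\alpha,D)$. Then \cref{thm:Oko_prop}(4) and (1) give
\[
\vol_{X|D}(\alpha-t\{D\})=(n-1)!\,g(t_0),\qquad \vol\alpha=n!\vol P',
\]
where $g(t_0)$ is the $(n-1)$-dimensional volume of the slice of $P'$ at $x_1=t_0$. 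Applying \cref{lma:slice_convexbody} to $P'$ gives
\[
(n-1)!\,g(t_0)\geq (n-1)!\,\frac{n}{A^n}\vol P'\,\min\{t_0,A-t_0\}^{n-1}=\frac{\vol\alpha}{A^n}\min\{t_0,A-t_0\}^{n-1},
\]
which is exactly \eqref{eq:res_vol_vol_ineq}.

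\textbf{Step~3: endpoints and small dimension.} At the endpoints $t=\nu(\alpha,D)$ and $t=\nu_{\max}(\alpha,D)$ the right-hand side vanishes (for $n>1$), so there is nothing to prove. When $n=1$, the restricted volume to a point is $1$ whenever $\nu(\alpha-t\{D\},D)=0$, and the right-hand side equals $\vol\alpha/\wid(\alpha,D)$. On a curve, $\vol\alpha=\deg\alpha$ and $\wid(\alpha,D)=\deg\alpha$, so the right-hand side is $1$. At $t=\nu_{\max}(\alpha,D)$ the class $\alpha-t\{D\}$ has degree $0$; this boundary case requires the convention $\min\{\cdot\}^{0}=1$, and I would check it separately, or simply restrict the corollary to $n>1$ as in \cref{lma:slice_convexbody}.
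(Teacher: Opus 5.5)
Your proposal is correct and follows the paper's proof: reduce to $D$ smooth and the first member of a smooth flag, then combine \cref{thm:Oko_prop}(1), (3), (4) with \cref{lma:slice_convexbody} applied to the Okounkov body (your translation to $\min x_1=0$ is harmless), noting that the boundary values of $t$ are trivial. Your extra bookkeeping fills in details the paper only asserts as ``invariance under modifications'': that $\pi^*\alpha-t\{D'\}$ differs from $\pi^*(\alpha-t\{D\})$ by an effective exceptional class, which does not change the restricted volume on $D'$; the further blow-ups needed to produce a smooth flag; and the $n=1$ remark.
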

In particular, when $\nu(\alpha,D)< t < \nu_{\max}(\alpha,D)$, we have $\nu(\alpha-t\{D\},D)=0$.
\begin{proof}
The inequality \eqref{eq:res_vol_vol_ineq} is trivial if $t$ takes the boundary value. We may assume that 
\[
\nu(\alpha,D)< t < \nu_{\max}(\alpha,D).
\]
Since the problem is invariant after replacing $X$ by a modification and $D$ by its strict transform, we may assume that $D$ is smooth and is the first component in a smooth flag $Y_{\bullet}$ on $X$. Then our assertion follows from \cref{lma:slice_convexbody} and \cref{thm:Oko_prop}. More precisely, we have
    \[
    \begin{aligned}
        &\vol_{X|D}\left(\alpha-t\{D\}\right)\\
        \geq & (n-1)!\vol \left\{y\in \mathbb{R}^{n-1}:(t,y)\in \Delta_{Y_{\bullet}}(\alpha)\right\}\\
        \geq & (n-1)!\cdot \frac{n}{\Bigl(\nu_{\max}(\alpha,D)-\nu(\alpha,D)\Bigr)^n}\vol \Delta_{Y_{\bullet}}(\alpha)\cdot \min\{t-\nu(\alpha,D),\nu_{\max}(\alpha,D)-t\}^{n-1}\\
        =& \frac{\vol \alpha}{\Bigl(\nu_{\max}(\alpha,D)-\nu(\alpha,D)\Bigr)^n}\cdot \min\{t-\nu(\alpha,D),\nu_{\max}(\alpha,D)-t\}^{n-1}.
    \end{aligned}
    \]

\end{proof}

\subsection{Loss of mass problem}
Now fix a connected compact Kähler manifold $X$ of dimension $n$.

\begin{theorem}\label{thm:loss_mass_Lelongprod}
    Let $\alpha_1,\dots,\alpha_n\in \mathrm{H}^{1,1}(X,\mathbb{R})$ be big classes. Consider closed positive $(1,1)$-currents $S_i,T_i\in \alpha_i$ with $S_i\succeq_{\mathcal{I}} T_i$. Fix a prime divisor $D$ over $X$. Then
    \[
    \begin{aligned}
       & \vol(S_1,\ldots,S_n)-\vol(T_1,\ldots,T_n)\\
        \geq & \max_{i=1,\ldots,n}\left( \frac{1}{\prod_{j\neq i}\wid(\alpha_j,D)}\cdot \vol_{X|D}\left(\mathbb{D}(S_1),\ldots,\widehat{\mathbb{D}(S_i)},\ldots,\mathbb{D}(S_n)\right)\right) \\
        &\cdot \prod_{i=1}^n \Bigl(\nu(T_i,D)-\nu(S_i,D) \Bigr).
    \end{aligned}
    \]
\end{theorem}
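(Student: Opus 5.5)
The plan is to telescope the difference of mixed volumes, replacing one $S_i$ by $T_i$ at a time, and to bound each step from below with \cref{thm:bdivint_diff_ineq} (case $p=n-1$). We may pass to a modification on which $D$ is a prime divisor; by \cref{prop:numaxalpha}(1) this changes neither the widths nor the hypotheses. By symmetry it is enough to prove the bound with index $i$ fixed, say $i=n$. First reduce to $\mathcal{I}$-good Kähler currents. Replace $S_j,T_j$ by $\theta_j+\ddc P_{\theta_j}[\cdot]_{\mathcal{I}}$; this leaves the volumes, the Lelong numbers and the $\mathbb{D}$'s unchanged. Then, via \cref{thm:DTandnpp}, write
\[
\vol(S_1,\ldots,S_n)=\bigl(\mathbb{D}(S_1)\cap\cdots\cap\mathbb{D}(S_n)\bigr)_X .
\]
If some $\alpha_j$ is only big and the currents are not Kähler, add $\epsilon\omega$ and let $\epsilon\to0+$ at the end. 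This uses continuity of $\wid$ (\cref{prop:numaxalpha}(6)) and monotone continuity of the restricted volumes.

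\textbf{Key steps.} Write
\[
V(R_1,\ldots,R_n)=\bigl(\mathbb{D}(R_1)\cap\cdots\cap\mathbb{D}(R_n)\bigr)_X .
\]

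\emph{Step 1 (exchange $S_n$).} By monotonicity (\cref{prop:bdiv_int_prop}(4)),
\[
V(S)-V(T)\geq V(S_1,\ldots,S_{n-1},S_n)-V(S_1,\ldots,S_{n-1},T_n).
\]
Apply \cref{thm:bdivint_diff_ineq} and keep only the term for $D$. This gives a lower bound
\[
\bigl(\nu(T_n,D)-\nu(S_n,D)\bigr)\,\vol_{X|D}\bigl(\mathbb{D}(S_1),\ldots,\mathbb{D}(S_{n-1})\bigr).
\]
This handles one factor of the product, with the correct restricted volume. It remains to produce the other factors $\nu(T_j,D)-\nu(S_j,D)$ for $j\ne n$, each divided by $\wid(\alpha_j,D)$.

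\emph{Step 2 (the extra factors are at most $1$).} The naive observation is that for each $j$,
\[
0\le \nu(T_j,D)-\nu(S_j,D)\le \nu_{\max}(\alpha_j,D)-\nu(\alpha_j,D)=\wid(\alpha_j,D),
\]
since $\nu(S_j,D)\geq\nu(\alpha_j,D)$. So each ratio $\bigl(\nu(T_j,D)-\nu(S_j,D)\bigr)/\wid(\alpha_j,D)$ is at most $1$. Multiplying the Step 1 bound by these ratios therefore only decreases it, which yields exactly the claimed inequality for $i=n$. Taking the maximum over $i$ gives the theorem. The earlier Okounkov body material (\cref{cor:res_vol_vol_ineq}) is then needed only in the subsequent corollary, to bound $\vol_{X|D}$ from below by $\vol$.

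\textbf{Main obstacle.} The substantive part is checking that \cref{thm:bdivint_diff_ineq} applies with $D$ a divisor over $X$. After passing to the modification, the restricted volume on $Y$ must push forward correctly; \cref{prop:trace_bimeo} ensures this, and the top-degree number is invariant. There are also two side issues:
\begin{itemize}
\item the $\epsilon$-regularization, where the restricted volume of $\mathbb{D}(S_j)+\epsilon\mathbb{D}(\omega)$ decreases to that of $\mathbb{D}(S_j)$ by \cref{prop:res_vol};
\item the degenerate case where some $\nu(T_j,D)=\nu(S_j,D)$, in which the right-hand side is $0$ and there is nothing to prove.
\end{itemize}
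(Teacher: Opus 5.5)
Your proposal is correct and follows essentially the same route as the paper's proof. Monotonicity reduces the change to a single slot, the difference inequality (\cref{thm:bdivint_diff_ineq}) is applied keeping only the $D$-term, and the bound is multiplied by the ratios $\bigl(\nu(T_j,D)-\nu(S_j,D)\bigr)/\wid(\alpha_j,D)\leq 1$ before symmetrizing. Your preliminary reductions to $\mathcal{I}$-good Kähler currents and the $\epsilon$-regularization are harmless but unnecessary, since \cref{thm:bdivint_diff_ineq} already holds for arbitrary closed positive currents $T\preceq_{\mathcal{I}}T'$ and arbitrary nef b-divisors.
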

\begin{proof}
    Observe that
    \[
    \vol(S_1,\ldots,S_n)-\vol(T_1,\ldots,T_n)\geq \vol(S_1,S_2,\ldots,S_n)-\vol(T_1,S_2,\ldots,S_n).
    \]
    We apply \cref{thm:main_thm} to get
    \[
    \vol(S_1,\ldots,S_n)-\vol(T_1,\ldots,T_n)\geq \Bigl(\nu(T_1,D)-\nu(S_1,D)\Bigr)\vol_{X|D}\Bigl(\mathbb{D}(S_2),\ldots,\mathbb{D}(S_n)\Bigr).
    \]
    Next observe that for $j>1$,
    \[
        \nu(T_j,D)-\nu(S_j,D)\leq \nu_{\max}(\alpha_j,D)-\nu(\alpha_j,D),
    \]
    and hence
    \[
    \begin{aligned}
    &\vol(S_1,\ldots,S_n)-\vol(T_1,\ldots,T_n)\\
    \geq &\prod_{j=2}^n \frac{\nu(T_j,D)-\nu(S_j,D)}{\nu_{\max}(\alpha_j,D)-\nu(\alpha_j,D)}\cdot \Bigl(\nu(T_1,D)-\nu(S_1,D)\Bigr)\cdot \vol_{X|D}\Bigl(\mathbb{D}(S_2),\ldots,\mathbb{D}(S_n)\Bigr),
    \end{aligned}
    \]
    and our assertion follows by symmetry.
\end{proof}

In order to appreciate our result, let us consider the following special cases.

\begin{corollary}
    Let $\alpha\in \mathrm{H}^{1,1}(X,\mathbb{R})$ be a big class. Consider closed positive $(1,1)$-currents $T,S\in \alpha$ with $T\preceq_{\mathcal{I}} S$. Then
    \[
    \vol S-\vol T\geq \frac{1}{\wid(\alpha,D)^{n-1}} \cdot \Bigl(\nu(T,D)-\nu(S,D)\Bigr)^n\cdot \vol_{X|D}\Bigl(S-\nu(S,D)[D]\Bigr).
    \]
\end{corollary}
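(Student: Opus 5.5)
This follows directly from \cref{thm:loss_mass_Lelongprod} by taking $\alpha_1=\dots=\alpha_n=\alpha$, $S_i=S$ and $T_i=T$ for all $i$. With these choices the left-hand side is $\vol(S,\ldots,S)-\vol(T,\ldots,T)=\vol S-\vol T$. The product of Lelong number differences is $(\nu(T,D)-\nu(S,D))^n$. For every index $i$, the prefactor equals
\[
\frac{1}{\wid(\alpha,D)^{n-1}}\cdot\vol_{X|D}\Bigl(\mathbb{D}(S)^{n-1}\Bigr).
\]
So it only remains to identify $\vol_{X|D}(\mathbb{D}(S)^{n-1})$ with $\vol_{X|D}(S-\nu(S,D)[D])$.

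For that identification, I first replace $X$ by a modification on which $D$ is a prime divisor; all quantities are bimeromorphically invariant by \cref{prop:trace_bimeo} and \cref{prop:numaxalpha}(1). Then $S'\coloneqq S-\nu(S,D)[D]$ is a closed positive current with $\nu(S',D)=0$. The key point is that $\mathbb{D}(S')=\mathbb{D}(S)$. Indeed, on every modification $Y$, Siu's decomposition gives $\Reg\pi^*S'=\Reg\pi^*S$, since $\pi^*[D]$ is a divisorial current and is absorbed into the divisorial part.

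With this equality, \cref{ex:resvol_DTT} applied to $S'$ gives
\[
\vol_{X|D}\Bigl(\mathbb{D}(S)^{n-1}\Bigr)=\vol_{X|D}\Bigl(\mathbb{D}(S')^{n-1}\Bigr)=\vol_{X|D}(S'),
\]
and the corollary follows.

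The one point to check is that \cref{thm:loss_mass_Lelongprod} is proved for a prime divisor $D$ \emph{over} $X$, while \cref{ex:resvol_DTT} requires $D\subseteq X$. This mismatch is resolved by the reduction to a modification above. When $n=1$, the restricted volume term should be read as $\vol_{X|D}()=\{D\}$, i.e.\ $1$ for a point, and the statement is immediate from \cref{cor:RS}. No genuine obstacle is expected beyond this bookkeeping.
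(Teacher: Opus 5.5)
Your proposal is correct and matches the paper's proof: specialize \cref{thm:loss_mass_Lelongprod} to $S_i=S$, $T_i=T$, then identify $\vol_{X|D}(\mathbb{D}(S)^{n-1})$ with $\vol_{X|D}(S-\nu(S,D)[D])$ via \cref{ex:resvol_DTT}. Your additional points spell out what the paper leaves implicit: that $\mathbb{D}(S-\nu(S,D)[D])=\mathbb{D}(S)$, by linearity of $\mathbb{D}$ and $\mathbb{D}([D])=0$; the reduction to $D\subseteq X$; and the $n=1$ case.
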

We have obtain what we could expect from the toric situation \cref{prop:ST_toric_volumediff}.
\begin{proof}
    It suffices to apply \cref{thm:loss_mass_Lelongprod}. Note that the restricted volume of b-divisors is just $\vol_{X|D}(S-\nu(S,D)[D])$ thanks to \cref{ex:resvol_DTT}.
\end{proof}

\begin{corollary}\label{cor:volalphamvolT1Tn}
    Let $\alpha\in \mathrm{H}^{1,1}(X,\mathbb{R})$ be a big class, and $T_1,\ldots,T_n\in \alpha$ be closed positive $(1,1)$-currents. Suppose that $D$ is a prime divisor on $X$.
    Then
    \begin{equation}\label{eq:loss_mixed_wid1}
    \vol\alpha-\vol(T_1,\ldots,T_n)\geq \frac{1}{\wid(\alpha,D)^{n-1}} \vol_{X|D} \Bigl(\langle \alpha \rangle\Bigr) \cdot \prod_{i=1}^n \Bigl( \nu(T_i,D)-\nu(\alpha,D) \Bigr).
    \end{equation}
\end{corollary}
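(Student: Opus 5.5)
The plan is to specialize \cref{thm:loss_mass_Lelongprod} to $\alpha_i=\alpha$ for all $i$, with $S_i$ a current with minimal singularities $T_{\min}\in\alpha$. Then $S_i\succeq_{\mathcal{I}} T_i$ holds automatically. Moreover $\vol(S_1,\ldots,S_n)=\vol(T_{\min},\ldots,T_{\min})=\vol\alpha$, since $T_{\min}$ is $\mathcal{I}$-good and $\int_X T_{\min}^n=\vol\alpha$, and $\nu(S_i,D)=\nu(\alpha,D)$. Hence \cref{thm:loss_mass_Lelongprod} gives
\[
\vol\alpha-\vol(T_1,\ldots,T_n)\geq \frac{1}{\wid(\alpha,D)^{n-1}}\vol_{X|D}\Bigl(\mathbb{D}(T_{\min})^{n-1}\Bigr)\prod_{i=1}^n\Bigl(\nu(T_i,D)-\nu(\alpha,D)\Bigr),
\]
where every choice of $i$ in the maximum gives the same term.

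It then remains to identify the restricted volume: I want $\vol_{X|D}(\mathbb{D}(T_{\min})^{n-1})=\vol_{X|D}(\langle\alpha\rangle)$. Since $\mathbb{D}(T_{\min})=\mathbb{D}(\alpha)$ and $\mathbb{D}(\alpha)=\mathbb{D}(\langle\alpha\rangle)$, the latter because $\Reg\pi^*T_{\min}$ is also the regular part of $\pi^*$ of the minimal-singularity current $T_{\min}-\sum_E\nu(\alpha,E)[E]\in\langle\alpha\rangle$, I can work with the big class $\beta:=\langle\alpha\rangle$, which has $\nu(\beta,D)=0$. Then \cref{ex:volrestDalphavol}, i.e. \cref{thm:coinc_class}, gives $\vol_{X|D}(\mathbb{D}(\beta)^{n-1})=\vol_{X|D}(\beta)$. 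Alternatively, \cref{ex:resvol_DTT} applied to $T_{\min}-\nu(\alpha,D)[D]$ gives the same conclusion, matching the preceding corollary's formulation.

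There is one subtlety, which I expect to be the only real obstacle: if $\nu(T_{\min},D)>0$, the b-divisor restricted volume is still defined via traces of $\mathbb{D}(T_{\min})$, and these depend only on the regular part. So I should check that $\vol_{X|D}(\mathbb{D}(T_{\min}),\ldots)$ is computed by the $D$-free current $T_{\min}-\sum\nu(\alpha,E)[E]$. This holds because the trace of a b-divisor is defined purely through $\mathbb{D}$, and \cref{thm:main_paper1} with \cref{ex:resvol_DTT} handles the non-divisorial representative.

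Finally, if some $\nu(T_i,D)<\nu(\alpha,D)$ were possible the product could be negative, but minimality of $\nu(\alpha,D)$ rules this out. The degenerate case $n=1$ is read directly from the formula, since the restricted volume factor is then $1$.
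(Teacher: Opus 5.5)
Your proposal is correct and essentially follows the paper's proof. Both specialize \cref{thm:loss_mass_Lelongprod} to $\alpha_i=\alpha$ with every $S_i$ a current with minimal singularities, then identify the resulting b-divisor restricted volume via \cref{ex:volrestDalphavol}. The only difference is that the paper first reduces to $\nu(\alpha,D)=0$ by subtracting $\nu(\alpha,D)[D]$, using \cref{prop:numaxalpha}(4). You instead observe $\mathbb{D}(T_{\min})=\mathbb{D}(\alpha)=\mathbb{D}(\langle\alpha\rangle)$ and apply \cref{thm:coinc_class} to the big modified nef class $\langle\alpha\rangle$, which works equally well.
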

\begin{proof}
Thanks to \cref{prop:numaxalpha}(4), replacing $\alpha$ by $\alpha-\nu(\alpha,D)\{D\}$ and $T_i$ by $T_i-\nu(\alpha,D)[D]$ does not change either side of \eqref{eq:loss_mixed_wid1}, so we may assume that $\nu(\alpha,D)=0$.
    We apply \cref{thm:loss_mass_Lelongprod}, with all $S_i$'s being currents with minimal singularities in $\alpha$. The restricted volume of the b-divisors is just  $\vol_{X|D} \left(\alpha\right)=\vol_{X|D} \left(\langle \alpha \rangle\right)$ due to \cref{ex:volrestDalphavol}.
\end{proof}

Note that in the situation of \cref{cor:volalphamvolT1Tn}, it could happen that $\vol_{X|D}(\langle \alpha \rangle)=0$. In order to get a meaningful lower bound in this case, we shall derive a different estimate following the same idea.

\begin{theorem}\label{thm:voldiff_prod1}
Assume that $n>1$.
    Let $\alpha_1,\dots,\alpha_n\in \mathrm{H}^{1,1}(X,\mathbb{R})$ be big classes. Consider closed positive $(1,1)$-currents $T_i\in \alpha_i$ for all $i=1,\ldots,n$. Fix a prime divisor $D$ over $X$. Then
    \begin{equation}\label{eq:alpha_T_mixedvol}
        \begin{aligned}
        &\langle \alpha_1,\ldots,\alpha_n\rangle -\vol(T_1,\ldots,T_n)\\
            \geq & \frac{1}{2^{n-1}}\prod_{i=1}^n \Bigl( \nu(T_i,D)-\nu(\alpha_i,D) \Bigr) \max_{k=1,\ldots,n}\prod_{j\neq k} \left(  \frac{\vol \alpha_j }{\wid(\alpha_j,D)^n}\right)^{1/(n-1)}.
        \end{aligned}
    \end{equation}
\end{theorem}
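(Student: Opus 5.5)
The plan follows the proof of \cref{thm:loss_mass_Lelongprod}, but replaces the currents with minimal singularities in the last $n-1$ slots by \emph{intermediate} currents. Their Lelong number along $D$ is half that of $T_j$, so that their restricted volume along $D$ stays positive and can be bounded from below by \cref{cor:res_vol_vol_ineq}.

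\textbf{Reductions.} By \cref{prop:numaxalpha}(1) and the bimeromorphic invariance of all quantities involved, we may replace $X$ by a modification and assume that $D$ is a prime divisor on $X$. By \cref{prop:numaxalpha}(4), replacing $\alpha_i$ by $\alpha_i-\nu(\alpha_i,D)\{D\}$ and $T_i$ by $T_i-\nu(\alpha_i,D)[D]$ changes neither side of \eqref{eq:alpha_T_mixedvol}. So we may assume $\nu(\alpha_i,D)=0$ for all $i$, and write $\nu_i\coloneqq\nu(T_i,D)\le \wid(\alpha_i,D)$. By symmetry it suffices to treat $k=1$. Let $T_{i,\min}\in\alpha_i$ have minimal singularities, so that $\langle\alpha_1,\ldots,\alpha_n\rangle=\vol(T_{1,\min},\ldots,T_{n,\min})$.

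\textbf{Intermediate currents.} For $j\ge 2$ set $t_j=\nu_j/2\in[0,\wid(\alpha_j,D)/2]$. Let $R_j\coloneqq R_j^0+t_j[D]$, where $R_j^0$ has minimal singularities in $\alpha_j-t_j\{D\}$; this class is big for $t_j<\nu_{\max}(\alpha_j,D)$. Let $S_j\in\alpha_j$ be the current whose potential is the maximum of the potentials of $T_j$ and $R_j$. Then $T_j\preceq S_j$ and $R_j\preceq S_j$, and $\nu(S_j,D)=\min(\nu_j,t_j)=t_j$.

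\textbf{Main chain.} Monotonicity (\cref{thm:mono_thm_partial}) gives
\[
\langle\alpha_1,\ldots,\alpha_n\rangle\ge\vol(T_{1,\min},S_2,\ldots,S_n),\qquad \vol(T_1,\ldots,T_n)\le\vol(T_1,S_2,\ldots,S_n).
\]
Applying \cref{thm:bdivint_diff_ineq} in the first slot, with $T'=T_{1,\min}$ and $T=T_1$, and keeping only the term $D$ of the sum, the difference is at least
\[
\nu_1\,\vol_{X|D}\bigl(\mathbb{D}(S_2),\ldots,\mathbb{D}(S_n)\bigr).
\]
By \cref{prop:BM} this is at least
\[
\nu_1\prod_{j\ge2}\bigl(\vol\Tr_D\mathbb{D}(S_j)\bigr)^{1/(n-1)}.
\]
We have $\vol\Tr_D\mathbb{D}(S_j)=\vol_{X|D}(S_j-t_j[D])$. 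Since $S_j-t_j[D]$ and $R_j^0$ lie in the same class, both have zero Lelong number along $D$, and $R_j^0\preceq S_j-t_j[D]$, monotonicity of the trace (\cref{prop:trace_basic}(1) together with \cref{thm:coinc_class}) yields
\[
\vol\Tr_D\mathbb{D}(S_j)\ge\vol_{X|D}(\alpha_j-t_j\{D\}).
\]

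\textbf{Conclusion.} \cref{cor:res_vol_vol_ineq} with $t=t_j$ gives, since $\min\{t_j,\wid-t_j\}=t_j$,
\[
\vol_{X|D}(\alpha_j-t_j\{D\})\ge\frac{\vol\alpha_j}{\wid(\alpha_j,D)^n}\Bigl(\frac{\nu_j}{2}\Bigr)^{n-1}.
\]
Taking the product over $j\ge2$ of the $(n-1)$-th roots produces $2^{-(n-1)}\prod_{j\ge2}\nu_j\cdot\prod_{j\ge2}\bigl(\vol\alpha_j/\wid(\alpha_j,D)^n\bigr)^{1/(n-1)}$, which together with the factor $\nu_1$ is exactly \eqref{eq:alpha_T_mixedvol}.

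\textbf{Main obstacle.} The delicate step is the construction of $S_j$ and the trace comparison. One must verify that the maximum of potentials really has generic Lelong number $t_j$ along $D$, and that the trace monotonicity applies in the required form: same class, equal Lelong number along $D$, which is what \cref{prop:trace_basic}(1) needs after passing to $\mathcal{I}$-good representatives. The boundary cases $\nu_j=0$ or $t_j$ at the endpoint of the allowed interval are handled separately. If $\nu_j=0$ for some $j$, the right-hand side of \eqref{eq:alpha_T_mixedvol} vanishes and there is nothing to prove; otherwise the boundary behaviour is obtained by a limiting argument via $\epsilon\omega$-perturbations.
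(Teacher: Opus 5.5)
Your argument is correct and is essentially the paper's proof with the free parameter fixed at $c=1/2$. The paper uses exactly your $R_j=R_j^0+\tfrac12\nu(T_j,D)[D]$ as the intermediate current and then runs the same chain: monotonicity, then \cref{thm:bdivint_diff_ineq} keeping only the $D$-term, then \cref{prop:BM}, then \cref{cor:res_vol_vol_ineq}. The max-of-potentials step and the trace-monotonicity detour are unnecessary. Indeed $T_j-t_j[D]$ is already a positive current in $\alpha_j-t_j\{D\}$, so $T_j\preceq R_j$, hence $S_j\sim R_j$. Then $\vol\Tr_D\mathbb{D}(S_j)=\vol_{X|D}(\alpha_j-t_j\{D\})$ follows directly from \cref{thm:coinc_class}, with $\nu(\alpha_j-t_j\{D\},D)=0$ supplied by the remark after \cref{cor:res_vol_vol_ineq}, since $0<t_j<\wid(\alpha_j,D)$; so there are no boundary cases to treat.
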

In the proof below, by choosing better values of $c$, we can slightly improve the inequality, we leave the details to the interested readers.
\begin{proof}
    Since the problem is invariant after replacing $X$ by a modification, we may assume that $D$ is a prime divisor on $X$.

     After replacing $\alpha_i$ by $\alpha_i-\nu(\alpha_i,D)\{D\}$ and $T_i$ by $T_i-\nu(\alpha_i,D)[D]$, we may assume that $\nu(\alpha_i,D)=0$ for all $i=1,\ldots,n$.

    We may assume that $\nu(T_i,D)>0$ for all $i$ since there is nothing to prove otherwise. 
    Fix a constant $c\in (0,1)$ for the moment.
    
    Now let $S_i'$ be a current with minimal singularities in $\alpha_i-c\nu(T_i,D)\{D\}$. Then $\nu(S'_i,D)=0$ thanks to \cref{cor:res_vol_vol_ineq}. We write $S_i=S'_i+c\nu(T_i,D)[D]\in \alpha_i$. Note that $T_i\preceq_{\mathcal{I}} S_i$.
    Let $S_{\min}\in \alpha_1$  be a current with minimal singularities.
    Then
    \[
    \begin{aligned}
         &\langle \alpha_1,\ldots,\alpha_n\rangle -\vol\left(T_1,\ldots,T_n\right)\\
         \geq & \vol\left(S_{\min},S_2,\ldots,S_n\right)-\vol\left(T_1,S_2,\ldots,S_n\right)\\
         \geq & \nu(T_1,D)\vol_{X|D}\Bigl(\mathbb{D}(S_2),\ldots,\mathbb{D}(S_n)\Bigr)\\
         \geq & \nu(T_1,D) \prod_{j=2}^n \vol_{X|D}\Bigl(\alpha_j-c\nu(T_j,D)\{D\}\Bigr)^{1/(n-1)}\\
         \geq &  \nu(T_1,D) \prod_{j=2}^n \left(  \frac{\vol \alpha_j }{\nu_{\max}\left(\alpha_j,D\right)^n}\cdot \min\Bigl\{c\nu(T_j,D),\nu_{\max}(\alpha_j,D)-c\nu(T_j,D)\Bigr\}^{n-1}\right)^{1/(n-1)}\\
         =& \nu(T_1,D) \prod_{j=2}^n \left( \left(  \frac{\vol \alpha_j }{\nu_{\max}\left(\alpha_j,D\right)^n}\right)^{1/(n-1)}\cdot \min\Bigl\{c\nu(T_j,D),\nu_{\max}(\alpha_j,D)-c\nu(T_j,D)\Bigr\}\right).
    \end{aligned}
    \]
    where the first inequality follows from \cite[Proposition~3.6]{Xiabdiv}, the second follows from \cref{thm:bdivint_diff_ineq}, the third follows from \cref{prop:BM} and \cref{ex:volrestDalphavol}, the fourth follows from \cref{cor:res_vol_vol_ineq}. 
    
    Next take $c=1/2$, then since for each $j=2,\ldots,n$, we have
    \[
    \frac{1}{2}\nu(T_j,D)\leq \nu_{\max}(\alpha_j,D)-\frac{1}{2}\nu(T_j,D),
    \]
    we can continue the estimate
    \[
    \begin{aligned}
         &\langle \alpha_1,\ldots,\alpha_n\rangle -\vol\left(T_1,\ldots,T_n\right)\\
         \geq &\nu(T_1,D) \prod_{j=2}^n \left( \left(  \frac{\vol \alpha_j }{\nu_{\max}(\alpha_j,D)^n}\right)^{1/(n-1)}\cdot \frac{1}{2}\nu(T_j,D)\right)\\
         = & \frac{1}{2^{n-1}}\prod_{i=1}^n\nu(T_i,D) \cdot \prod_{j=2}^n \left(  \frac{\vol \alpha_j }{\nu_{\max}(\alpha_j,D)^n}\right)^{1/(n-1)}.
    \end{aligned}
    \]
    The desired inequality follows by symmetry.
\end{proof}
\begin{remark}\label{rmk:alphai_equal}
    When the $T_i$'s are all the same, then we can in fact remove the factor $2^{1-n}$ in \eqref{eq:alpha_T_mixedvol}. In fact, we may assume that $D$ is a prime divisor on $X$ and is the first component in a smooth flag. By considering the Okounkov bodies $\Delta(T)\subseteq \Delta(\alpha)$, the desired inequality translates immediately to the following: Suppose that $Q\subseteq P$ are two convex bodies in $\mathbb{R}^n$. Then
    \begin{equation}\label{eq:volPvolQ_diff_rmk}
    \vol P-\vol Q\geq \vol P\cdot \left( \frac{\min_{x\in P}x_1-\min_{x\in Q}x_1}{\max_{x\in P}x_1-\min_{x\in P}x_1} \right)^n.
    \end{equation}
    In order to show \eqref{eq:volPvolQ_diff_rmk}, we may assume without loss of generality that 
    \[
    \min_{x\in P}x_1=0,\quad \max_{x\in P}x_1=1.
    \]
    Let 
    \[
    f(t)=\vol\left\{y\in \mathbb{R}^{n-1}:(t,y)\in P\right\}^{1/(n-1)},\quad t\in [0,1].
    \]
    Then $f$ is concave and non-negative, and \eqref{eq:volPvolQ_diff_rmk} follows from the following elementary fact: The function
    $\frac{\int_0^s f^{n-1}}{s^n}$
    is decreasing with $s\in (0,1]$.

    Note that this argument is not valid when the $T_i$'s are different.
\end{remark}

\begin{corollary}\label{cor:mixed_loss_mass}
Assume that $n>1$.
    Let $\alpha_1,\dots,\alpha_n\in \mathrm{H}^{1,1}(X,\mathbb{R})$ be big classes. Consider closed positive $(1,1)$-currents $S_i,T_i\in \alpha_i$. Assume that $T_i\preceq_{\mathcal{I}}S_i$.
    Fix a prime divisor $D$ over $X$. Assume that $\vol S_j>0$ for all $j=2,\ldots,n$.
    Then
    \begin{equation}\label{eq:volSandT_diff_final}
        \begin{aligned}
        &\vol(S_1,\ldots,S_n) -\vol(T_1,\ldots,T_n)\\
            \geq & \frac{1}{2^{n-1}}\prod_{i=1}^n \Bigl( \nu(T_i,D)-\nu(S_i,D) \Bigr) \prod_{j=2}^n \left(  \frac{\vol S_j }{\Bigl(\nu_{\max}(\alpha_j,D)-\nu(S_j,D)\Bigr)^n}\right)^{1/(n-1)}.
        \end{aligned}
    \end{equation}
\end{corollary}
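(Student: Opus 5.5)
We mimic the proof of \cref{thm:voldiff_prod1}, with $S_i$ playing the role of the currents with minimal singularities. Since the statement is invariant under modifications of $X$, we first replace $X$ by a modification and assume that $D$ is a prime divisor on $X$. We may assume that $\nu(T_i,D)>\nu(S_i,D)$ for all $i$, otherwise the right-hand side of \eqref{eq:volSandT_diff_final} vanishes (the left-hand side is non-negative by monotonicity). We then subtract divisorial parts along $D$. Set $S_i^0\coloneqq S_i-\nu(S_i,D)[D]$ and $T_i^0\coloneqq T_i-\nu(S_i,D)[D]$, both in $\alpha_i^0\coloneqq\alpha_i-\nu(S_i,D)\{D\}$. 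Mixed volumes only see the $\mathbb{D}(\cdot)$'s, and $\mathbb{D}(S_i)=\mathbb{D}(S_i^0)+\nu(S_i,D)\mathbb{D}([D])$, with the same shift for $T_i$. Hence the difference $\vol(S_1,\ldots,S_n)-\vol(T_1,\ldots,T_n)$ is unchanged by this replacement; to justify this we expand multilinearly via \cref{thm:DTandnpp} and \cref{prop:bdiv_int_prop}, and use the monotonicity of each mixed term. Each $\nu(T_i,D)-\nu(S_i,D)$ is also unchanged, and $\vol S_j=\vol S_j^0$. By \cref{prop:numaxalpha}(3) we have $\nu_{\max}(\alpha_j^0,D)=\nu_{\max}(\alpha_j,D)-\nu(S_j,D)$. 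So we reduce to $\nu(S_i,D)=0$.

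Fix $c=1/2$. Let $R_j$ be a current with minimal singularities in the class of $S_j$ relative to the singularity type of $S_j$ shifted by $c\nu(T_j,D)$ along $D$. Concretely, take $R_j'\coloneqq P[S_j-c\nu(T_j,D)[D]]$ in $\alpha_j-c\nu(T_j,D)\{D\}$. This requires $S_j$ to dominate $c\nu(T_j,D)[D]$, which fails; so instead we use the concavity of the Okounkov body of $S_j$. The cleaner route is the one used in \cref{thm:voldiff_prod1}. Set $R_j\coloneqq\tfrac12 S_j+\tfrac12 T_j$ for $j\ge 2$, so that $T_j\preceq_{\mathcal{I}}R_j\preceq_{\mathcal{I}}S_j$. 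Using monotonicity and \cref{thm:bdivint_diff_ineq}, we then get
\[
\vol(S_1,\ldots,S_n)-\vol(T_1,\ldots,T_n)\ge \vol(S_1,R_2,\ldots,R_n)-\vol(T_1,R_2,\ldots,R_n)\ge \nu(T_1,D)\vol_{X|D}\bigl(\mathbb{D}(R_2),\ldots,\mathbb{D}(R_n)\bigr).
\]
By \cref{prop:BM} and \cref{ex:resvol_DTT}, the last factor is at least $\prod_{j\ge2}\vol_{X|D}(R_j-\nu(R_j,D)[D])^{1/(n-1)}$, where $\nu(R_j,D)=\tfrac12\nu(T_j,D)$.

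The main obstacle is a current-level analogue of \cref{cor:res_vol_vol_ineq}:
\[
\vol_{X|D}\bigl(S-t[D]\text{-part}\bigr)\ge \frac{\vol S}{\nu_{\max}^n}\min\{t,\nu_{\max}-t\}^{n-1},
\]
applied to the slice at level $t=\tfrac12\nu(T_j,D)$ of the partial Okounkov body of $S_j$. To prove it, I would use partial Okounkov bodies $\Delta(S_j)\subseteq\Delta(\alpha_j)$ for a flag starting with $D$ (\cite{Xia21}). Their volume is $\vol S_j/n!$, and $x_1$ ranges in $[0,\nu_{\max}']$ with $\nu_{\max}'\le\nu_{\max}(\alpha_j,D)$. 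The slice of $\Delta(R_j)$ at $x_1=\nu(R_j,D)$ contains the slice of $\tfrac12\Delta(S_j)+\tfrac12\Delta(T_j)$, which in turn contains the $(1/2)$-scaled Minkowski average of the corresponding slices. It then suffices to bound the restricted volume of $R_j$ from below by $(n-1)!$ times the volume of the $x_1=t$ slice of $\Delta(S_j)$. Given this, we apply \cref{lma:slice_convexbody} with $A=\nu_{\max}(\alpha_j,D)$, using $\min\{t,A-t\}\ge \tfrac12\nu(T_j,D)$ because $\nu(T_j,D)\le A$. This yields
\[
\vol_{X|D}(R_j-\nu(R_j,D)[D])\ge \frac{\vol S_j}{A^n}\Bigl(\tfrac12\nu(T_j,D)\Bigr)^{n-1}.
\]
Taking $(n-1)$-th roots and multiplying over $j\ge2$ gives \eqref{eq:volSandT_diff_final}. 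If the slice identification for currents is unavailable, the fallback is to approximate $S_j$ by analytic-singularity currents, pass to a modification where they become nef classes plus divisors, apply \cref{cor:res_vol_vol_ineq} there, and conclude via \cref{prop:contresvol}.
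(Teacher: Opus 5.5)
Your chain is valid as far as
\[
\vol(S_1,\ldots,S_n)-\vol(T_1,\ldots,T_n)\geq \nu(T_1,D)\,\vol_{X|D}\bigl(\mathbb{D}(R_2),\ldots,\mathbb{D}(R_n)\bigr).
\]
The gap is the choice $R_j=\tfrac12 S_j+\tfrac12 T_j$. With it, this last factor can vanish, and the inequality you say ``suffices'' (restricted volume of $R_j-\nu(R_j,D)[D]$ at least $(n-1)!$ times the volume of the slice of $\Delta(S_j)$ at $x_1=t$) is false. The restricted volume along $D$ only sees the bottom face $\Delta(R_j)\cap\{x_1=\nu(R_j,D)\}$. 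Since $\min_{\Delta(S_j)}x_1=0$ and $\min_{\Delta(T_j)}x_1=\nu(T_j,D)$, the bottom face of $\tfrac12\Delta(S_j)+\tfrac12\Delta(T_j)$ is the average of the bottom faces of $\Delta(S_j)$ and $\Delta(T_j)$. The slice of $\Delta(S_j)$ at level $\tfrac12\nu(T_j,D)$ never enters.

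Concretely, work in the toric setting with $n=2$, $u_D=(1,0)$, and currents with Newton bodies $\Delta(S_2)=\{(x,y):|y|\leq x\leq 1\}$ and $\Delta(T_2)=\mathrm{conv}\{(\tfrac12,0),(1,\tfrac12),(1,-\tfrac12)\}\subseteq\Delta(S_2)$, translated into $P_H$. Newton bodies are additive, so the face of $\Delta(R_2)$ in direction $-u_D$ is the single point $(\tfrac14,0)$. Hence $\vol_{X|D}(\mathbb{D}(R_2))=0$, and your argument only gives $\vol(S_1,S_2)-\vol(T_1,T_2)\geq 0$. Yet the slice of $\Delta(S_2)$ at $x_1=\tfrac14$ has length $\tfrac12$, and the right-hand side of the statement is positive as soon as $\nu(T_1,D)>\nu(S_1,D)$. (A minor point: $\mathbb{D}([D])=0$, so your first reduction is immediate and needs no multilinear expansion.)

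What you need for $j\geq 2$ is an intermediate current whose body is the truncation $\Delta(S_j)\cap\{x_1\geq \tfrac12\nu(T_j,D)\}$. This is essentially the object you first wrote down and then discarded. The paper obtains it by reduction rather than construction:
\begin{itemize}
\item Adding $\epsilon\omega$ and taking quasi-equisingular approximations of the $S_i$ (using continuity of volumes and Lelong numbers), it reduces to $S_i$ with analytic singularities.
\item After a modification, $S_i=[D_i]+R_i$ with $R_i$ of bounded local potentials. Then $\vol(S_1,\ldots,S_n)=\{R_1\}\cap\cdots\cap\{R_n\}$.
\item Each $T_i-[D_i]$ is a positive current in the nef class $\{R_i\}$ with the same mixed volume, and $\nu(T_i-[D_i],D)=\nu(T_i,D)-\nu(S_i,D)$. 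So \cref{thm:voldiff_prod1} applies to the classes $\{R_i\}$.
\item Inside that theorem, the intermediate current is $S'_j+\tfrac12\nu(T_j-[D_j],D)[D]$, with $S'_j$ of minimal singularities in $\{R_j\}-\tfrac12\nu(T_j-[D_j],D)\{D\}$. Its restricted volume is the restricted volume of that class, which \cref{cor:res_vol_vol_ineq} bounds via the slice formula \cref{thm:Oko_prop}(4) for classes. No slice formula for currents is needed.
\item The stated constant then follows from $\vol\{R_j\}=\vol S_j$ and $\nu_{\max}(\{R_j\},D)+\nu(S_j,D)\leq\nu_{\max}(\alpha_j,D)$.
\end{itemize}
Your ``fallback'' is the right reduction, but it must be paired with this intermediate current rather than with $\tfrac12(S_j+T_j)$.
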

Observe that under our assumptions, for $j=2,\ldots,n$, we have
\[
\nu_{\max}(\alpha_j,D)-\nu(S_j,D)>0.
\]
In fact, since 
\[
\vol \Bigl( S_j-\nu(S_j,D)[D] \Bigr)=\vol S_j>0,
\]
the class $\alpha_j-\nu(S_j,D)\{D\}$ is big. Therefore,
\[
\nu_{\max}\Bigl( \alpha_j-\nu(S_j,D)\{D\},D \Bigr)=\nu_{\max}\left(\alpha_j,D\right)-\nu(S_j,D)>0
\]
by \cref{prop:numaxalpha}(2) and (3).

\begin{proof}
    Since the problem is invariant after replacing $X$ by a modification and $D$, we may assume that $D$ is a prime divisor on $X$.

    When $S_1,\ldots,S_n$ have analytic singularities, from the bimeromorphic invariance of both sides of \eqref{eq:volSandT_diff_final}, we may reduce to the case where $S_1,\ldots,S_n$ have log singularities: Write
    \[
    S_i=[D_i]+R_i,
    \]
    where $D_i$ is an effective divisor and $R_i$ is a closed positive $(1,1)$-current with locally bounded potentials. Note that for $i>0$, $\vol \{R_i\}=\vol S_i>0$, and hence $\wid(\{R_i\},D)>0$ thanks to \cref{prop:numaxalpha}(2).

    By \cref{thm:voldiff_prod1}, we have
    \[
    \begin{aligned}
    & \vol\left(S_1,\ldots,S_n\right)-\vol\left(T_1,\ldots,T_n\right)\\
    = & \{R_1\}\cap \dots \cap \{R_n\}-\vol\Bigl(T_1-[D_1],\ldots,T_n-[D_n]\Bigr)\\
    \geq & \frac{1}{2^{n-1}}\prod_{i=1}^n \Bigl( \nu(T_i,D)-\nu(S_i,D) \Bigr) \cdot \prod_{j=2}^n \left( \frac{\vol \{R_j\}}{\wid\left(\{R_j\},D\right)} \right)^{1/(n-1)}
    \end{aligned}
    \]
    Observe that for $j=2,\ldots,n$,
    \[
    \vol \{R_j\}=\vol S_j,\quad \nu_{\max}\left( \{R_j\},D \right)+\nu\left(S_j,D\right)\leq \nu_{\max}\left( \{S_j\},D \right),
    \]
    our assertion \eqref{eq:volSandT_diff_final} follows.

    Next we handle the case where $S_1,\ldots,S_n$ are Kähler currents. Take quasi-equisingular approximations $(S_i^k)_{k>0}$ of $S_i$ for each $i=1,\ldots,n$, the the desired inequality holds with $S_i^k$ in place of $S_i$. Let $k\to\infty$, the assertion for the $S_i$'s follows. Here we have applied the continuity of Lelong numbers, as proved in \cite[Theorem~6.2.4]{Xiabook}, and the continuity of the volumes as proved in \cite[Proposition~3.10]{Xiabdiv}.

    Finally, consider the general case. Fix a Kähler form $\omega$ on $X$. Then for any $\epsilon>0$, we have
     \[
        \begin{aligned}
        &\vol\left(S_1+\epsilon\omega,\ldots,S_n+\epsilon\omega\right) -\vol\left(T_1+\epsilon\omega,\ldots,T_n+\epsilon\omega\right)\\
            \geq &  \frac{1}{2^{n-1}}\prod_{i=1}^n \Bigl( \nu(T_i,D)-\nu(S_i,D) \Bigr) \prod_{j=2}^n \left(  \frac{\vol (S_j+\epsilon\omega) }{\Bigl(\nu_{\max}(\alpha_j+\epsilon\{\omega\},D)-\nu(S_j,D)\Bigr)^n}\right)^{1/(n-1)}.
        \end{aligned}
    \]
    Letting $\epsilon\to 0+$ and applying \cref{prop:numaxalpha}(5), we conclude \eqref{eq:volSandT_diff_final}.
\end{proof}

The following case is probably the most important for applications:
\begin{corollary}
    Let $\alpha\in \mathrm{H}^{1,1}(X,\mathbb{R})$ be a big class. Consider closed positive $(1,1)$-currents $S,T\in \alpha$ with $T\preceq_{\mathcal{I}}S$. Fix a prime divisor $D$ over $X$, then
    \begin{equation}\label{eq:volSvolTdifffinal}
    \vol S-\vol T\geq \Bigl( \nu(T,D)-\nu(S,D) \Bigr)^n\cdot  \frac{\vol S }{2^{n-1}\cdot \wid(\alpha,D)^n}.
    \end{equation}
\end{corollary}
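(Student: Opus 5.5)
This is the specialization of \cref{cor:mixed_loss_mass} to $\alpha_1=\dots=\alpha_n=\alpha$, $S_i=S$, $T_i=T$. I would first dispose of two degenerate cases.

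If $n=1$, the claim reads $\vol S-\vol T\geq (\nu(T,D)-\nu(S,D))\vol S/\wid(\alpha,D)$. I would obtain it from \cref{thm:loss_mass_Lelongprod} with $n=1$, where the restricted-volume factor is a point mass. Alternatively it follows from \cref{cor:RS}, since $\vol S\leq \wid(\alpha,D)$ in dimension one: indeed $\vol S\leq \vol\alpha-\nu(\alpha,D)$ while $\nu_{\max}(\alpha,D)=\vol\alpha$.

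If $\vol S=0$, the right-hand side vanishes and the statement follows from the monotonicity theorem.

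Now assume $n>1$ and $\vol S>0$. Applying \cref{cor:mixed_loss_mass} gives
\[
\vol S-\vol T\geq \frac{1}{2^{n-1}}\Bigl(\nu(T,D)-\nu(S,D)\Bigr)^n\cdot \frac{\vol S}{\bigl(\nu_{\max}(\alpha,D)-\nu(S,D)\bigr)^n},
\]
because the product over $j=2,\ldots,n$ of $n-1$ identical factors raised to the power $1/(n-1)$ collapses to a single factor. It then remains to compare the denominators. Since $S\in\alpha$, we have $\nu(S,D)\geq \nu(\alpha,D)$: for big $\alpha$, a current with minimal singularities has the smallest Lelong number. Hence
\[
\nu_{\max}(\alpha,D)-\nu(S,D)\leq \nu_{\max}(\alpha,D)-\nu(\alpha,D)=\wid(\alpha,D),
\]
and this quantity is positive by the remark following \cref{cor:mixed_loss_mass}. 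Raising the denominator to the power $n$ only weakens the bound in the right direction, which yields \eqref{eq:volSvolTdifffinal}.

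The only delicate points are that $\nu(\alpha,D)\leq\nu(S,D)$ must hold for a prime divisor $D$ \emph{over} $X$, and that positivity of the widths is needed to apply the corollary. For the first, I would pass to a modification on which $D$ lives and invoke \cref{prop:numaxalpha}(1). I expect no genuine obstacle here: all the substantive work lies in \cref{cor:mixed_loss_mass}.
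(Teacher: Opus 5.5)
Your proposal is correct and matches the paper's proof: for $n>1$ the paper also reads the estimate off from \cref{cor:mixed_loss_mass}, and dismisses the case $\vol S=0$ as trivial. Your explicit comparison $\nu_{\max}(\alpha,D)-\nu(S,D)\leq \wid(\alpha,D)$ is the step the paper leaves implicit. The only deviation is for $n=1$, where both arguments rest on the same two inequalities, $\vol S\leq\wid(\alpha,D)$ and $\vol S-\vol T\geq \nu(T,D)-\nu(S,D)$. The paper obtains them from partial Okounkov bodies (intervals), while you obtain them from \cref{cor:RS} and the fact that a big class on a curve is Kähler with $\nu(\alpha,D)=0$ and $\nu_{\max}(\alpha,D)=\vol\alpha$; this is slightly more elementary and avoids the rationality caveat the paper mentions.
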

If we use \cref{rmk:alphai_equal}, then we can remove the factor $2^{n-1}$ on the right-hand side.

\begin{proof}
We first assume that $n>1$. When $\vol S>0$, \eqref{eq:volSvolTdifffinal} follows immediate from \cref{cor:mixed_loss_mass}. When $\vol S=0$, there is nothing to prove.

Finally when $n=1$, we consider the partial Okounkov bodies $\Delta(S)$ and $\Delta(T)$ with respect to the flag $D$. See the algebraic approach in \cite{Xia21} or the transcendental approach in  \cite[Chapter~10]{Xiabook}. Since we are in dimension $1$, the cohomology class $\alpha$ is necessarily algebraic, so the algebraic theory in \cite{Xia21} actually suffices, at least when $\alpha$ is a rational class.

Let $\Delta(\alpha)$ be the corresponding Okounkov body of $\alpha$.
Then $\Delta(\alpha)\supseteq \Delta(S)\supseteq \Delta(T)$ and they are (possibly degenerate) closed intervals. The situation is summarized in \cref{fig:Oko}.
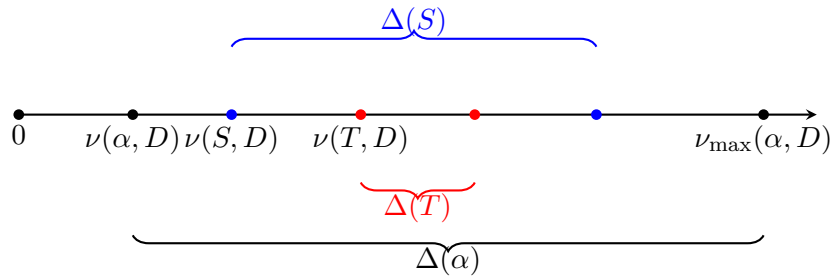
\begin{figure}[htbp]
\centering
\begin{tikzpicture}[>=stealth, thick]

\draw[->] (0,0) -- (10.5,0);

\fill (0,0) circle (2pt);
\fill (1.5,0) circle (2pt);

\fill[blue] (2.8,0) circle (2pt);   
\fill[red]  (4.5,0) circle (2pt);   

\fill[red]  (6.0,0) circle (2pt);   
\fill[blue] (7.6,0) circle (2pt);   

\fill       (9.8,0) circle (2pt);   

\node[below] at (0,0) {$0$};
\node[below] at (1.5,0) {$\nu(\alpha,D)$};
\node[below] at (2.8,0) {$\nu(S,D)$};
\node[below] at (4.5,0) {$\nu(T,D)$};
\node[below] at (9.8,0) {$\nu_{\max}(\alpha,D)$};

\draw[blue, decorate, decoration={brace, amplitude=6pt}]
  (2.8,0.9) -- (7.6,0.9);
\node[blue, above] at (5.2,0.9) {$\Delta(S)$};

\draw[red, decorate, decoration={brace, amplitude=6pt}]
  (6.0,-0.9) -- (4.5,-0.9);
\node[red, below] at (5.25,-0.9) {$\Delta(T)$};

\draw[decorate, decoration={brace, amplitude=6pt}]
  (9.8,-1.6) -- (1.5,-1.6);
\node[below] at (5.65,-1.6) {$\Delta(\alpha)$};

\end{tikzpicture}
\caption{The Okounkov bodies}
\label{fig:Oko}
\end{figure}

Then using the theory of Okounkov bodies, our assertion \eqref{eq:volSvolTdifffinal} translates into
\[
\vol \Delta(S)-\vol \Delta(T)\geq \Bigl( \nu(T,D)-\nu(S,D) \Bigr) \cdot \frac{\vol \Delta(S)}{\nu_{\max}(\alpha,D)-\nu(\alpha,D)}.
\]
From the picture, it is clear that
\[
\vol \Delta(S)\leq \nu_{\max}(\alpha,D)-\nu(\alpha,D),\quad  \vol \Delta(S)-\vol \Delta(T)\geq \nu(T,D)-\nu(S,D),
\]
and our assertion follows.
\end{proof}

Note that the constant 
\[
\frac{1}{\wid(\alpha,D)^n}
\]
appearing in \eqref{eq:volSvolTdifffinal} depends continuously on $\alpha$, as follows from \cref{prop:numaxalpha}(6). Our theorem therefore implies the main theorem of \cite{Vu23} when $D$ is taken as the exceptional divisor of the blow-up at a point.

The dependence on $\vol S$ in the right-hand of \eqref{eq:volSvolTdifffinal} seems optimal, in view of \cref{ex:count_Su}.

\section{Toric monotonicity theorem}\label{sec:toric_mono}
In this section, we will explain \cref{thm:main_thm} in the toric setting. As we shall see, our main theorem recovers the well-known Minkowski volume formula of convex bodies due to Aleksandrov \cite{Ale39} and Fenchel--Jessen \cite{FJ38}. See \cite{Sch14} for a modern treatment. The readers are assumed to be familiar with the toric pluripotential developed in \cite[Chapter~12]{Xiabook}.

We shall continue to use the notations of \cref{subsec:toric}. In particular, $N$ is a $n$-dimensional lattice and $M$ is its dual, $X$ is a smooth projective toric variety corresponding to a fan $\Sigma$ in $N_{\mathbb{R}}$. Let $T_c$ be the compact toric contained in the dense torus in $X$.

We recall a few standard terminologies from convex geometry. Given a convex body $P\subseteq M_{\mathbb{R}}$, the support function $\Supp_P\colon N_{\mathbb{R}}\rightarrow \mathbb{R}$ is defined by
\[
\Supp_P(n)\coloneqq \max_{m\in P} \langle m,n\rangle.
\]
Given $u\in N_{\mathbb{R}}\setminus \{0\}$, we write $F(P,u)$ for the support hyperplane of $P$ in the direction of $u$, namely
\[
F(P,u)\coloneqq \left\{m\in M_{\mathbb{R}}: \langle m,u \rangle=\Supp_P(u) \right\}.
\]

We first observe that \cref{thm:main_thm} requires only toric-invariant divisors.
\begin{theorem}\label{thm:main_toric}
    Let $S$ be a $T_c$-invariant closed positive $(1,1)$-currents on $X$ with positive volume, $T,T'$ be $T_c$-invariant closed positive $(1,1)$-currents on $X$ in the same cohomology class such that $T\preceq_{\mathcal{I}}T'$. Then
        \begin{equation}\label{eq:Ddiffprecise_toric}
    \begin{split}
        &\Bigl(\mathbb{D}(S)^{n-1} \cap \mathbb{D}(T')\Bigr)-\Bigl(\mathbb{D}(S)^{n-1} \cap \mathbb{D}(T)\Bigr)\\
        =&\lim_{\pi\colon Y\rightarrow X}{}' \sum_{D\subseteq Y}{}' \Bigl(\nu(T,D)-\nu(T',D) \Bigr) \vol_{Y|D}\Bigl(\mathbb{D}(S)_Y \Bigr),
    \end{split}
    \end{equation}
    where the primes refer to the fact that we only consider toric-invariant modifications and toric-invariant prime divisors.
\end{theorem}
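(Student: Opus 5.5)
The plan is to run the proof of \cref{thm:main_thm} inside the toric category, checking that every reduction can be made with toric-invariant objects. The transcendental Morse inequality is replaced by an explicit toric computation, which is available because $X$ is projective anyway. First, the $\geq$ inequality, with the restricted ($'$) limit on the right, follows exactly as in Step~1 of \cref{thm:main_thm}. For each toric modification $\pi\colon Y\rightarrow X$ we apply \cref{thm:bdivint_diff_ineq} to $\mathbb{D}(\mathbb{D}(S)_Y)^{n-1}$. We then drop the non-toric divisors on the right, since all terms are non-negative. Toric modifications are cofinal among those needed to approximate $\mathbb{D}(S)$: the net $(\mathbb{D}(\mathbb{D}(S)_Y))_Y$ restricted to toric $Y$ still decreases to $\mathbb{D}(S)$, because $\mathbb{D}(S)$ is determined by toric valuations (the $\mathcal{I}$-class of a toric current is encoded by its Newton body). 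So \cref{thm:bdiv_int_cont1} gives the $\geq$ direction.

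For the reverse inequality, I would repeat the reductions of Step~2 of \cref{thm:main_thm}, keeping everything $T_c$-invariant. First, add a toric Kähler form and pass to $P_\theta[\cdot]_{\mathcal I}$, which preserves invariance. Next, replace $T'$ by a $T_c$-invariant Kähler form $\omega$. Then approximate $T$ by toric quasi-equisingular approximations; averaging over $T_c$, or taking the toric approximations of \cite[Chapter~12]{Xiabook} associated with rational polytopes inside $\Delta(T)$, should keep the singularities toric. Finally resolve by a toric modification. This reduces the claim to the toric analogue of \eqref{eq:Snminus1capE_temp3}: for every toric prime divisor $E$ on $X$,
\[
\{S^{n-1}\}\cap\{E\}\leq \varliminf_{Y}{}' \sum_{D\subseteq Y}{}'\nu([E],D)\,\vol_{Y|D}\bigl(\mathbb{D}(S)_Y\bigr).
\]

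To prove this, I would verify \eqref{eq:cons_BDPP_temp1} directly for toric $Y$. Take a toric-invariant class $\beta=\mathbb{D}(S)_Y$ and a toric divisor $D$ on $Y$. We need $\vol_{Y|D}(\beta)=\langle\beta^{n-1}\rangle\cap\{D\}$. Since $Y$ is projective, \cref{thm:BDPP1} already gives this, via Witt Nyström. Alternatively, in convex-geometric terms: the movable part of $\beta$ is a toric class with polytope $P$, and $(n-1)!$ times the facet volume of $P$ in direction $u_D$ equals both sides. Because $E$ is toric, $\pi^*\{E\}$ is a combination of toric divisors only, so the sum in \eqref{eq:cons_BDPP_temp1} involves only toric $D$. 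Then \cref{thm:nefbint_aslimit} along the cofinal toric net identifies the limit of the left-hand side with $\{S^{n-1}\}\cap\{E\}$. Cofinality follows from $\pi_*$-monotonicity, \cref{prop:mov_int_push_ineq}, together with the fact that the limit of a decreasing net is unchanged on a cofinal subnet, provided toric models compute $\mathbb{D}(S)$.

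The main obstacle is the cofinality claim: that the toric modifications suffice to compute both $\mathbb{D}(S)$ as a decreasing limit and the limit in \cref{thm:nefbint_aslimit}. I would address it through the Newton body description. For toric $S$, $\mathbb{D}(S)_Y$ is determined by $\Supp_{\Delta(S)}$ evaluated on the rays of $Y$. Refining toric fans makes the associated polytopes converge in Hausdorff distance to $\Delta(S)$, and continuity of mixed volumes then gives convergence of all the intersection numbers involved. Together with \cref{lma:posclass_vanish} (top degree suffices), this closes the argument.
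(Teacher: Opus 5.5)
Your proposal is correct and follows the paper's own proof. The $\geq$ direction comes from \cref{thm:bdivint_diff_ineq} via the net $\mathbb{D}(\mathbb{D}(S)_Y)$, as in Step~1 of \cref{thm:main_thm}, and the $\leq$ direction comes from rerunning the remaining steps of \cref{thm:main_thm} with $T_c$-invariant data. You also make explicit something the paper's one-line proof leaves implicit: toric models are not cofinal among all modifications, so you need the polytopes of $\mathbb{D}(S)_Y$ to decrease to $\Delta(S)$ in Hausdorff distance to see that the toric net still computes the relevant limits.

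One minor slip: the polytopes used for a quasi-equisingular approximation of $T$ must contain $\Delta(T)$ and decrease to it, since less singular currents have larger Newton bodies; polytopes inside $\Delta(T)$ give the wrong direction. The $T_c$-invariant (Demailly-type) approximation you also mention already suffices.
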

\begin{proof}
    The $\geq$ direction is a direct consequence of \cref{thm:bdivint_diff_ineq}. The reverse inequality follows almost \emph{verbatim} from the argument of \cref{thm:main_thm}.
\end{proof}

We are now in a position to translate everything into the language of convex bodies. Just for clarity, we assume that $\{S\}$ is a big integral class in the Néron--Severi group, so that \cite[Chapter~12]{Xiabook} can be applied without modification. The general case follows by simple approximations.

Fix a basis of $N$ and define a norm on $N_{\mathbb{R}}$ by making this basis orthonormal.
As shown in \cite[Theorem~12.3.3]{Xiabook}, the mixed volumes on the left-hand side of \eqref{eq:Ddiffprecise_toric} can be expressed as the mixed volumes of the corresponding Newton bodies:
\begin{equation}\label{eq:voldiff_toric_gen1}
\left(\mathbb{D}(S)^{n-1}\cap \mathbb{D}(T') \right)-\left(\mathbb{D}(S)^{n-1}\cap \mathbb{D}(T) \right)=n!\Biggl( \vol\left( \Delta(S)^{n-1},\Delta(T') \right)-\vol\left( \Delta(S)^{n-1},\Delta(T) \right)\Biggr).
\end{equation}

As for the right-hand side, consider a toric-invariant prime divisor $D\subseteq Y$.
Let $u_D\in N$ be the primitive vector corresponding to $D$. Then $\nu(T,D)-\nu(T',D)$ is nothing but the difference between the support functions of the Newton bodies:
\begin{equation}\label{eq:Leldiff_toric_gen1}
\nu(T,D)-\nu(T',D)=\Supp_{\Delta(T')}(-u_D)-\Supp_{\Delta(T)}(-u_D),
\end{equation}
as a consequence of \cite[Corollary~12.2.1]{Xiabook}. 

Thanks to \cite[Corollary~12.3.8]{Xiabook}, we have
\begin{equation}\label{eq:resvol_toric_gen1}
\vol_{|D}\left(\mathbb{D}(S)_Y^{n-1}\right)=(n-1)! \vol F\Bigl( \Delta\left(\mathbb{D}(S)_Y\right),-u_D \Bigr),
\end{equation}
where the volume on the right-hand side is the $(n-1)$-dimensional volume on the normal hyperplane to $u_D$ so that the smallest cube of the restriction of $M$ has volume $1$.  Note that the $(n-1)$-dimensional Hausdorff measure of the latter cube is exactly the norm of $u_D$. Therefore,  we could rescale the variable $-u_D$ in the support functions in \eqref{eq:Leldiff_toric_gen1} to a unit vector and replace $\vol$ in \eqref{eq:resvol_toric_gen1} by the $(n-1)$-dimensional Hausdorff measure at the same time, without changing their product as on the right-hand side of \eqref{eq:Ddiffprecise_toric}.

Therefore, combining \eqref{eq:resvol_toric_gen1} with \eqref{eq:Leldiff_toric_gen1}, we can rewrite 
\[
\begin{aligned}
& \sum_{D\subseteq Y}{}' \Bigl(\nu(T,D)-\nu(T',D) \Bigr) \vol_{Y|D}\Bigl(\mathbb{D}(T_1)_Y\cap \dots \cap \mathbb{D}(T_{n-1})_Y \Bigr)\\
= &(n-1)!\int_{\mathbb{S}^{n-1}} \left( \Supp_{\Delta(T')}(v)-\Supp_{\Delta(T)}(v) \right)\,\mathrm{d}\mathcal{S}\Bigl(\Delta\left(\mathbb{D}(S)_Y\right)\Bigr)(v),
\end{aligned}
\]
where $\mathcal{S}(\Delta\left(\mathbb{D}(S)_Y\right))$ is the area measure of the polytope $\Delta\left(\mathbb{D}(S)_Y\right)$ (the Newton polytope of an arbitrary toric invariant $\mathbb{Q}$-divisor on $Y$ representing the class $\mathbb{D}(S)_Y$, well-defined up to translation), as defined in \cite[Section~4.2]{Sch14}. For those who are not familiar with the area formula $\mathcal{S}$, it suffices to use the simple formula \cite[{(4.24)}]{Sch14} as the definition.

Taking the limit with respect to $Y$ and using the weak continuity of the area measure, we find
\[
\begin{aligned}
&\lim_{\pi\colon Y\rightarrow X}{}' \sum_{D\subseteq Y}{}' \Bigl(\nu(T,D)-\nu(T',D) \Bigr) \vol_{|D}\Bigl(\mathbb{D}(T_1)_Y\cap \dots \cap \mathbb{D}(T_{n-1})_Y \Bigr)\\
=& (n-1)!\int_{\mathbb{S}^{n-1}} \left( \Supp_{\Delta(T')}(v)-\Supp_{\Delta(T)}(v) \right)\,\mathrm{d}\mathcal{S}\Bigl(\Delta(S)\Bigr)(v),
\end{aligned}
\]
Therefore, in view of \eqref{eq:voldiff_toric_gen1}, \eqref{eq:Ddiffprecise_toric} translates into
\begin{equation}
\begin{aligned}
     &\vol\left( \Delta(S)^{n-1},\Delta(T') \right)-\vol\left( \Delta(S)^{n-1},\Delta(T) \right)\\
     =&\frac{1}{n}\int_{\mathbb{S}^{n-1}}\left( \Supp_{\Delta(T')}(v)-\Supp_{\Delta(T)}(v) \right)\,\mathrm{d}\mathcal{S}\Bigl(\Delta(S)\Bigr)(v).
\end{aligned}
\end{equation}
By polarization, we get
\begin{equation}
\begin{aligned}
     &\vol\Bigl( \Delta(T_1),\ldots,\Delta(T_{n-1}),\Delta(T') \Bigr)-\vol\Bigl(  \Delta(T_1),\ldots,\Delta(T_{n-1}),\Delta(T) \Bigr)\\
     =&\frac{1}{n}\int_{\mathbb{S}^{n-1}}\left( \Supp_{\Delta(T')}(v)-\Supp_{\Delta(T)}(v) \right)\,\mathrm{d}\mathcal{S}\Bigl( \Delta(T_1),\ldots,\Delta(T_{n-1})\Bigr)(v).
\end{aligned}
\end{equation}
Here we have used the mixed area measure $\mathcal{S}$ as defined in \cite[Theorem~5.1.7]{Sch14}.

Note that arbitrary convex bodies $P_1,\ldots,P_{n-1}$ can be realized as $\Delta(T_1),\ldots,\Delta(T_{n-1})$, and similarly arbitrary pairs of convex bodies $Q'\supseteq Q$ can be realized as $\Delta(T')\supseteq \Delta(T)$ as in \cref{thm:main_toric}. In fact, it suffices to take a big toric-invariant section $H$ of $\mathcal{O}(k)$ for some large enough $k$, so that the corresponding Newton body $\Delta(H)$ contains $P_1,\ldots,P_{n-1},Q',Q$ in the interior. Then the existence of the currents in $c_1(\mathcal{O}(k))$ follow immediately from \cite[Corollary~12.3.3]{Xiabook}. We conclude the following  consequence:
\begin{corollary}\label{cor:Minkow_volume_formula}
    Consider convex bodies $P_1,\ldots,P_{n-1},Q',Q\subseteq \mathbb{R}^n$. Assume that $Q'\supseteq Q$, then
    \begin{equation}\label{eq:toric_mixarea}
\begin{aligned}
     &\vol\left( P_1,\ldots,P_{n-1},Q'\right)-\vol\left(  P_1,\ldots,P_{n-1}, Q\right)\\
     =&\frac{1}{n}\int_{\mathbb{S}^{n-1}}\left( \Supp_{Q'}-\Supp_{Q} \right)\,\mathrm{d}\mathcal{S}\left(P_1,\ldots,P_{n-1}\right).
\end{aligned}
\end{equation}
\end{corollary}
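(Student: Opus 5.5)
The statement is essentially already proved by the preceding discussion. The plan is to make the realization step precise and then read off \eqref{eq:toric_mixarea} from the translated form of \cref{thm:main_toric}.

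\textbf{Step 1: reduce to the case $Q'\supseteq Q$ inside a common polytope.} Both sides of \eqref{eq:toric_mixarea} are invariant under simultaneous translation of all bodies. The left side is invariant because mixed volumes are translation invariant. On the right side, $\Supp_{Q'}-\Supp_Q$ is unchanged when $Q,Q'$ are translated together. Translating a single $P_i$ leaves the mixed area measure unchanged. Both sides are also homogeneous of degree $n$ under dilation: on the left by multilinearity, on the right because support functions are $1$-homogeneous and mixed area measures are $(n-1)$-homogeneous. We may therefore assume all the bodies lie in the interior of the simplex $\Delta(H)$, the Newton polytope of a toric-invariant section $H$ of $\mathcal{O}(k)$ on $X=\mathbb{P}^n$, or on any smooth projective toric variety with $k$ large. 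By \cite[Corollary~12.3.3]{Xiabook}, we can then find $T_c$-invariant currents $T_1,\dots,T_{n-1},T,T'\in c_1(\mathcal{O}(k))$ with $\Delta(T_i)=P_i$, $\Delta(T)=Q$ and $\Delta(T')=Q'$. The inclusion $Q\subseteq Q'$ gives $T\preceq_{\mathcal{I}}T'$.

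\textbf{Step 2: the diagonal case.} First assume $P_1=\dots=P_{n-1}=P$ with nonempty interior. Choose $S$ with $\Delta(S)=P$, so that $\vol S=n!\vol P>0$. Apply \cref{thm:main_toric}. Rewrite its left-hand side by \eqref{eq:voldiff_toric_gen1}. On the right-hand side, use \eqref{eq:Leldiff_toric_gen1} and \eqref{eq:resvol_toric_gen1} to turn each finite sum over toric divisors of a fixed toric modification $Y$ into
\[
(n-1)!\int_{\mathbb{S}^{n-1}}(\Supp_{Q'}-\Supp_Q)\,\mathrm{d}\mathcal{S}\bigl(\Delta(\mathbb{D}(S)_Y)\bigr),
\]
rescaling each $u_D$ to a unit vector as explained above.

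\textbf{Step 3: pass to the limit over $Y$.} This is the step I expect to need the most care. The polytopes $\Delta(\mathbb{D}(S)_Y)$ must converge to $\Delta(S)=P$ in the Hausdorff metric as $Y$ runs over toric modifications. This should follow from the fact that $(\mathbb{D}(\mathbb{D}(S)_Y))_Y$ decreases to $\mathbb{D}(S)$, which on the convex side means the polytopes cut out by finitely many supporting halfspaces of $P$ decrease to $P$, together with the toric dictionary in \cite[Chapter~12]{Xiabook}. Given this convergence, weak continuity of area measures \cite[Section~4.2]{Sch14}, tested against the continuous function $\Supp_{Q'}-\Supp_Q$ on the sphere, identifies the limit with $(n-1)!\int(\Supp_{Q'}-\Supp_Q)\,\mathrm{d}\mathcal{S}(P)$. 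Dividing both sides by $n!$ gives \eqref{eq:toric_mixarea} when all $P_i$ equal $P$.

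\textbf{Step 4: general $P_i$.} Both sides are symmetric and multilinear in $(P_1,\dots,P_{n-1})$ under Minkowski sums with nonnegative coefficients, the right side by multilinearity of mixed area measures \cite[Theorem~5.1.7]{Sch14}. Apply the diagonal case to $P=\sum\lambda_iP_i$, which is full-dimensional if we first add a small ball, and compare the coefficients of $\lambda_1\cdots\lambda_{n-1}$. This polarization yields the mixed formula for full-dimensional bodies. Finally, degenerate bodies are handled by approximating with $P_i+\epsilon B$ and using continuity of both sides as $\epsilon\to0$.
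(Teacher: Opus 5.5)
Your proposal is correct and follows essentially the same route as the paper. You apply \cref{thm:main_toric} with $\Delta(S)=P$ full-dimensional, translate both sides via \eqref{eq:voldiff_toric_gen1}, \eqref{eq:Leldiff_toric_gen1} and \eqref{eq:resvol_toric_gen1}, pass to the limit over toric modifications by weak continuity of area measures, then polarize, realize the bodies as Newton bodies of currents in $c_1(\mathcal{O}(k))$, and approximate degenerate $P_i$. Your translation/dilation reduction and your description of $\Delta(\mathbb{D}(S)_Y)$ as the polytope cut out by the supporting halfspaces of $P$ in the ray directions of $Y$ are useful details the paper leaves implicit; the latter is what justifies the Hausdorff convergence in the limit step.
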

The argument sketched above only works when $P_1,\ldots,P_{n-1}$ have full dimensions. The general case follows by a simple approximation.

We have therefore completely recovered the Minkowski volume formula on Page~282 of \cite{Sch14}. In fact, our main theorem gives a new construction of the mixed area measures $\mathcal{S}\left(P_1,\ldots,P_{n-1}\right)$ of convex bodies themselves!

In the non-toric setting, we expect that there be a similar limit measure between nef b-divisors defined on the Berkovich analytification of $X$. These measures should play similar roles in pluripotential theory as the mixed area measures play in convex geometry.
\appendix

\section{Positive forms}\label{sec:posform}

We briefly recall the notion of positive forms due to Lelong \cite{Lel68} and Harvey--Knapp \cite{HK74}. The latter reference, on which a large part of Demailly's textbook \cite[Chapter~III]{Dembook2} was based, was unfortunately omitted from the reference list.

Let $V$ be a finite dimensional complex vector space of dimension $n$. Let $V^{\vee}$ denote the complex linear space consisting of $\mathbb{R}$-linear functional $V\rightarrow \mathbb{R}$ on $V$. 
For each pair of non-negative integers $p,q$, we write $\Lambda^{p,q}V^{\vee}$ for the space of $(p,q)$-forms in $\Lambda^{p+q}(V^{\vee}\otimes_{\mathbb{R}}\mathbb{C})$. Note that the complex dual of $V$ is $\Lambda^{1,0}V^{\vee}$.

Fix an integer $p\in \{0,1,\ldots,n\}$ in the sequel.
\begin{definition}
A form $\alpha\in \Lambda^{p,p}V^{\vee}$ is 
\begin{enumerate}
    \item \emph{weakly positive} if for each $\beta_1,\ldots,\beta_{n-p}\in V^{\vee}$, we have
    \[
    \alpha\wedge \mathrm{i}\beta_1\wedge \overline{\beta_1}\wedge \cdots \wedge \mathrm{i}\beta_{n-p}\wedge \overline{\beta_{n-p}}
    \]
    gives the positive orientation of $V$;
    \item \emph{strongly positive} if $\alpha$ is a finite linear combination with $\mathbb{R}_{\geq 0}$ coefficients of terms of the form
    \begin{equation}\label{eq:basic_form}
    \mathrm{i}\beta_1\wedge \overline{\beta_1}\wedge \cdots \wedge \mathrm{i}\beta_{p}\wedge \overline{\beta_{p}},
    \end{equation}
    where $\beta_1,\ldots,\beta_{p}\in V^{\vee}$. A form like \eqref{eq:basic_form} is called a \emph{basic form}.
    \item \emph{positive} if for one (hence all) basis $w_1,\ldots,w_n$ of $\Lambda^{1,0} V^{\vee}$, when we expand $\alpha$ as
    \begin{equation}\label{eq:pos_form_mat}
    \alpha=(-1)^{p(p-1)/2}\sum_{|I|=|J|=p} \alpha_{I,J} \mathrm{i}^p w_I\wedge \overline{w_J},
    \end{equation}
    the matrix $(\alpha_{I,J})_{I,J}$ is Hermitian and positive semidefinite. Here $w_I=w_{i_1}\wedge \cdots \wedge w_{i_p}$, where $i_1<i_2<\cdots<i_p$ are the elements in $I$, and $\overline{\beta_J}$ is similar.
    \end{enumerate}
\end{definition}
\begin{remark}
    The terminology needs some clarification. In \cite{Lel68}, Lelong only introduced the notions of weakly positive forms and strongly positive forms. He called the weakly positive forms \emph{positive forms}. Lelong's terminology is largely followed by the modern schools in complex geometry.

    The notion of positive forms as above is introduced by Harvey--Knapp. The terminology of Harvey--Knapp is, by contrast, largely followed by people in non-Archimedean geometry and tropical geometry.
\end{remark}

When we represent a weakly positive form in the form of matrices as in \eqref{eq:pos_form_mat}, the matrix is always Hermitian. See \cite[Page~167, Corollary~1.5]{Dembook2}.

It is immediate from the definitions that a strongly positive form is positive, and a positive form is weakly positive. Furthermore, the cone of weakly positive forms is the dual cone of that of strongly positive forms. 

When $p=0,1,n-1,n$, all three notions are equivalent. For all other value of $p$, namely $p=2,3,\ldots,n-2$, all three notions are different. This was originally a question of Lelong, and proved in \cite{HK74}.

The wedge product preserves the various positivities as \cref{table:wed_product_pos}. In general, the product between a positive form and a weakly positive form is no longer weakly positive, as follows from the arguments of \cite[Page~49]{HK74}.
\begin{table}[ht]
\centering
\begin{tabular}{|c|c|c|}
\hline
$\alpha$ 
& $\beta$ 
& $\alpha \wedge \beta$ \\
\hline
Strongly positive & Strongly positive & Strongly positive \\
\hline
Positive & Positive & Positive \\
\hline
Strongly positive & Weakly positive & Weakly positive \\
\hline
\end{tabular}
\caption{Positivity properties of wedge products}
\label{table:wed_product_pos}
\end{table}

It is easy to see that weakly positive and positive forms form closed  convex cones. 
As for the case of strongly positive forms, this has always been a folklore result. Harvey--Knapp \cite{HK74} mentioned this on Page~49 without giving the details.
We take this opportunity to give a complete proof along the lines of Harvey--Knapp, following the answer on MathOverflow \cite{Strongpos_form}.
\begin{lemma}\label{lma:strongpos_closed}
    The set of strongly positive $(p,p)$-forms on $V$ form a closed convex cone.
\end{lemma}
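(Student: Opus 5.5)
The cone of strongly positive $(p,p)$-forms is by definition the convex cone generated by the basic forms $\mathrm{i}\beta_1\wedge\overline{\beta_1}\wedge\cdots\wedge\mathrm{i}\beta_p\wedge\overline{\beta_p}$. Convexity is therefore immediate, and only closedness needs proof. The plan is to write this cone as the cone over a compact set that misses the origin, and then use the general fact that such a cone is closed in a finite-dimensional space.

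\textbf{Step 1 (compact generating set).} Fix a Hermitian inner product on $V$ and the induced norm on $\Lambda^{p,p}V^{\vee}$. By multilinearity, rescaling $\beta_i$ rescales the basic form by $|\beta_i|^2$. Hence every nonzero basic form is a positive multiple of a basic form built from unit vectors $\beta_1,\ldots,\beta_p$. The set
\[
K=\left\{\mathrm{i}\beta_1\wedge\overline{\beta_1}\wedge\cdots\wedge\mathrm{i}\beta_p\wedge\overline{\beta_p}:|\beta_i|=1\right\}
\]
is the continuous image of a product of spheres, so it is compact. The strongly positive cone equals the cone generated by $K$, and by Carathéodory's theorem every element is a combination $\sum_{k=1}^N\lambda_k\kappa_k$ with $\lambda_k\geq 0$, $\kappa_k\in K$, and $N=\dim_{\mathbb{R}}\Lambda^{p,p}_{\mathbb{R}}V^{\vee}$ fixed.

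\textbf{Step 2 (a strictly positive functional on $K$).} Pair with $\omega^{n-p}$, where $\omega$ is the Kähler form of the inner product, and define
\[
\ell(\alpha)=\int\alpha\wedge\omega^{n-p}/\text{(volume)}.
\]
For a basic form, $\ell$ is a positive multiple of the square of the norm of $\beta_1\wedge\cdots\wedge\beta_p$ in $\Lambda^{p,0}$. It vanishes exactly when the $\beta_i$ are linearly dependent, and in that case the basic form itself is $0$. So I replace $K$ by $K'=K\cap\{\ell\geq c\}$ for suitable $c>0$, or better, normalize differently. Each nonzero basic form can be rescaled so that $\ell=1$. The set of nonzero basic forms with $\ell=1$ is the image of the set of tuples $(\beta_i)$ with $|\beta_1\wedge\cdots\wedge\beta_p|=1$. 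This set is not compact as given, but after a unitary change (Gram--Schmidt), the decomposable $p$-vector $\beta_1\wedge\cdots\wedge\beta_p$ can be represented by orthonormal $\beta_i$ times a scalar. The basic form depends only on the decomposable vector $\beta_1\wedge\cdots\wedge\beta_p$, through $\mathrm{i}^{p^2}\,\eta\wedge\overline{\eta}$ up to sign. Therefore the normalized generators form the compact set $K_1=\{c_p\,\eta\wedge\overline\eta:\eta\text{ decomposable},|\eta|=1\}$, which is compact because the unit decomposable vectors form a closed subset of the unit sphere (the Plücker relations are closed conditions). Moreover $\ell\equiv 1$ on $K_1$, so $0\notin K_1$ and $\ell>0$ on $K_1$.

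\textbf{Step 3 (closedness).} Let $\alpha_j=\sum_{k=1}^N\lambda_{j,k}\kappa_{j,k}\to\alpha$ with $\kappa_{j,k}\in K_1$. Then $\sum_k\lambda_{j,k}=\ell(\alpha_j)\to\ell(\alpha)$, so the coefficients are bounded. Passing to a subsequence, $\lambda_{j,k}\to\lambda_k$ and $\kappa_{j,k}\to\kappa_k\in K_1$, which gives $\alpha=\sum\lambda_k\kappa_k$, a strongly positive form.

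The main obstacle is Step 2: one has to check that the basic form depends only on the wedge $\beta_1\wedge\cdots\wedge\beta_p$, and that it equals a fixed constant times $\eta\wedge\overline\eta$. This identity follows by reordering factors, and it produces the sign $(-1)^{p(p-1)/2}\mathrm{i}^p$ appearing in \eqref{eq:pos_form_mat}. Once it is established, compactness and the positivity of $\ell$ are clear.
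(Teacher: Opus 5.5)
Your argument is correct and follows essentially the same strategy as the paper. In both, the strongly positive cone is exhibited as the cone over a compact set of normalized basic forms $\mathrm{i}^{p^2}\eta\wedge\overline{\eta}$ with $\eta$ decomposable (compactness coming from the closedness of decomposable vectors, i.e.\ the Pl\"ucker relations) whose convex hull avoids the origin, and closedness follows. The only difference is packaging: you normalize by the linear functional $\alpha\mapsto\alpha\wedge\omega^{n-p}$ (a multiple of the trace of the coefficient matrix), which makes separation from $0$ automatic and lets Carath\'eodory plus bounded coefficients replace the paper's unit-sphere normalization, its positive-semidefinite-matrix argument, and its lemma on cones over convex bodies.
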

Therefore, the cone of strongly positive forms is the dual of that of weakly positive forms.

We need the following straightforward consequence in the main body of the paper: A $(p,p)$-form on a complex manifold is strongly positive as a form if and only if it is strongly positive as a current, namely the pairing with any compactly supported weakly positive form with bidimension $(p,p)$ is non-negative.

This consequence has already been widely applied in the related literature without rigorous justification.
\begin{remark}
    A very similar argument works in the non-Archimedean setting, and gives a rigorous proof to the assertion about strongly positive Lagerberg forms (as defined in \cite{Lag12}) in the second version of \cite[Section~1.2.4]{CLD12}.
\end{remark}
\begin{proof}
    The non-trivial point is to show that the cone of strongly positive $(p,p)$-forms is closed.
    
    Fix an identification $V\cong \mathbb{C}^n$ so that $V$ gets a Hermitian norm. 

    \textbf{Step~1}. We show that
    \[
    C\coloneqq \left\{ \alpha\in \Lambda^{p,p}V^{\vee} : \alpha=\mathrm{i}\beta_1\wedge \overline{\beta_1}\wedge \cdots \wedge \mathrm{i}\beta_{p}\wedge \overline{\beta_{p}}\textup{ for some }\beta_1,\ldots,\beta_p\in  V^{\vee}\right\}
    \]
    is closed.

    Let 
    \[
    C'\coloneqq \left\{ \alpha\in \Lambda^{p,p}V^{\vee} : \alpha=\mathrm{i}^{p^2} \gamma\wedge \overline{\gamma} \textup{ for some }\gamma\in  \Lambda^{p,0}V^{\vee}\right\}.
    \]
    Note that $C\subseteq C'$.

    We first show that $C'$ is closed. Consider a Cauchy sequence $(\mathrm{i}^{p^2}\gamma_j\wedge \bar{\gamma_j})_j$ in $C$. Fix a basis $w_1,\ldots,w_n$ of $V^{\vee}$. Then we can write
    \[
    \gamma_j=\sum_{|I|=p} \gamma_{j,I} w_I.
    \]
    The boundedness of $\mathrm{i}^{p^2}\gamma_j\wedge \bar{\gamma_j}$ implies the boundedness of $\gamma_{j,I}$ for each fixed $I$. Therefore, after subtracting a subsequence, we may assume that $\gamma_{j,I}\to \gamma_{I}$ for some $\gamma_I\in \mathbb{C}$. Then
    \[
    \mathrm{i}^{p^2}\gamma_j\wedge \bar{\gamma_j}\to \mathrm{i}^{p^2}\gamma\wedge \bar{\gamma}.
    \]
    Next we show that $C$ is closed. This means, suppose that $\gamma_j$ is decomposable then we want to show that $\gamma$ is also decomposable. But the condition of being decomposable is equivalent to finitely many polynomial relations between the coefficients, classically known as the Plücker relations, which pass through limits. See \cite[Section~3.1.E]{GKZ08} for details.\footnote{More conceptually, the decomposability of $\gamma$ also follows from the fact that the Plücker embedding of the Grasmannian is a closed immersion.} Our assertion follows.

    \textbf{Step~2}. Let $S$ be the unit sphere in $V$. Then from Step~1, the intersection $S\cap C$ is compact.

    We claim that the convex hull of $S\cap C$ does not contain the origin. Suppose that this fails, then we can find $v_i\in S\cap C$ and $\lambda_i>0$ ($i=1,\ldots,m$) so that
    \[
    \sum_{i=1}^m \lambda_i v_i=0.
    \]
    But under the matrix representation \eqref{eq:pos_form_mat}, the $v_i$'s can be regarded as strictly positive matrices, so this cannot happen.

    Our assertion now follows from \cref{lma:convexcone_closed}.
\end{proof}

\begin{lemma}\label{lma:convexcone_closed}
    Let $C$ be a convex body in $\mathbb{R}^n$, $0\not\in C$, then the convex cone generated by $C$ union with $0$ is closed.
\end{lemma}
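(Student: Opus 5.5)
The plan is to prove the nontrivial closedness statement: if $C\subseteq\mathbb{R}^n$ is a convex body with $0\notin C$, then $K\coloneqq\{\lambda x:\lambda\geq 0,\ x\in C\}$ is closed. (The convex cone generated by $C$ together with $0$ is exactly $K$, because $C$ is convex, so a nonnegative combination $\sum\lambda_i x_i$ equals $(\sum\lambda_i)\cdot\bigl(\sum \tfrac{\lambda_i}{\sum\lambda_j}x_i\bigr)$.) The argument is a direct compactness argument.

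\textbf{Step 1.} Since $C$ is compact and $0\notin C$, the quantities
\[
r\coloneqq\min_{x\in C}|x|>0,\qquad R\coloneqq\max_{x\in C}|x|<\infty
\]
are well defined, and $r>0$.

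\textbf{Step 2.} Take a sequence $y_j=\lambda_j x_j\in K$, with $\lambda_j\geq 0$ and $x_j\in C$, converging to some $y\in\mathbb{R}^n$. Then $\lambda_j r\leq |y_j|$. Since the $|y_j|$ are bounded, the $\lambda_j$ are bounded.

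\textbf{Step 3.} By compactness of $C$ and of a bounded interval in $\mathbb{R}_{\geq 0}$, pass to a subsequence with $\lambda_j\to\lambda\geq 0$ and $x_j\to x\in C$. Then $y=\lambda x\in K$, which proves closedness.

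The one point needing care is the bound in Step 2. It relies on $r>0$, which in turn needs both $0\notin C$ and the compactness of $C$. If $C$ were merely closed and unbounded, $r$ could still be positive but $x_j$ might escape to infinity; compactness of $C$ is what rules this out in Step 3.

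When applying this in Step 2 of \cref{lma:strongpos_closed}, $C$ should be taken as the convex hull of $S\cap C$, which is compact because it is the convex hull of a compact set in finite dimension (Carathéodory). The claim proved there is exactly that $0$ does not lie in this hull. One further check: every nonzero element of the original cone is a positive multiple of an element of $S\cap C$, after normalising, so the cone generated by the hull coincides with the strongly positive cone.
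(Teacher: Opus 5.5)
Your proof is correct and is essentially the paper's argument. You bound the scalars $\lambda_j$ using $\min_{x\in C}|x|>0$ and the boundedness of the limit sequence, extract convergent subsequences by compactness, and identify the limit as $\lambda x$; allowing $\lambda\geq 0$ simply absorbs the case of limit $0$, which the paper treats separately, and your remark on applying the lemma to the convex hull of $S\cap C$ (compact by Carath\'eodory) is also accurate.
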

\begin{proof}
    Let $\lambda_i>0$ and $c_i\in C$. Assume that $\lambda_ic_i$ converges to $c\in \mathbb{R}^n$. Then we need to show that either $c=0$ or $c$ is in the convex cone generated by $c$. 

    We may assume that $c\neq 0$. Then since $C$ is compact and $0\not\in C$, then $|c_i|$ is a bounded sequence, bounded away from $0$ as well. But $\lambda_i c_i$ has a non-zero limit, this means $\lambda_i$ is also bounded and bounded away from $0$. After replacing everything by a subsequence, we may assume that $\lambda_i\to \lambda>0$, $c_i\to c'\in C$. Then $c=\lambda c'$, and our assertion follows.
\end{proof}

A symmetric form can always be decomposed as the difference of two strongly positive forms. More generally, we have the following:
\begin{theorem}\label{thm:HK}
    Fix a basis $w_1,\ldots,w_n$, then $\Lambda^{p,p}V^{\vee}$ has a basis consisting of forms of the form
    \[
    \mathrm{i}^{p^2} \beta_{1}\wedge \cdots \wedge \beta_p\wedge \overline{\beta_{1}}\wedge \cdots \wedge \overline{\beta_p},
    \]
    where $\beta_i$ is one of $w_i\pm w_j$ or $w_i\pm \mathrm{i}w_j$ with $i,j=1,\ldots,n$.
\end{theorem}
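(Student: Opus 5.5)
The plan is to prove that the listed forms span $\Lambda^{p,p}V^{\vee}$, and then extract a basis from this finite spanning family. The tool is the polarization identity for $(1,1)$-forms. Since $\{w_I\wedge\overline{w_J}\}_{|I|=|J|=p}$ is a basis of $\Lambda^{p,p}V^{\vee}$ (over $\mathbb{C}$), it suffices to write every $w_I\wedge\overline{w_J}$ as a linear combination of forms
\[
\mathrm{i}^{p^2}\beta_1\wedge\cdots\wedge\beta_p\wedge\overline{\beta_1}\wedge\cdots\wedge\overline{\beta_p}
\]
with each $\beta_k$ a scalar multiple of one of $w_i\pm w_j$, $w_i\pm\mathrm{i}w_j$. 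Replacing $\beta_k$ by $c\beta_k$ only multiplies the form by $|c|^2$, so scalar multiples are harmless.

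\textbf{Step~1 (one factor).} For $u,v\in\Lambda^{1,0}V^{\vee}$, I will expand
\[
(u+\mathrm{i}^a v)\wedge\overline{(u+\mathrm{i}^a v)}=u\wedge\bar u+\mathrm{i}^{-a}u\wedge\bar v+\mathrm{i}^{a}v\wedge\bar u+v\wedge\bar v .
\]
Multiplying by $\mathrm{i}^a$ and summing over $a=0,1,2,3$, the sums $\sum_a\mathrm{i}^a$ and $\sum_a\mathrm{i}^{2a}$ vanish, so only the $u\wedge\bar v$ term survives:
\[
\sum_{a=0}^3\mathrm{i}^a\,(u+\mathrm{i}^a v)\wedge\overline{(u+\mathrm{i}^a v)}=4\,u\wedge\bar v .
\]
With $u=w_i$ and $v=w_j$, the vectors $u+\mathrm{i}^av$ are exactly $w_i+w_j$, $w_i+\mathrm{i}w_j$, $w_i-w_j$, $w_i-\mathrm{i}w_j$. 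When $i=j$ these are scalar multiples of $w_i$ or zero, which is harmless.

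\textbf{Step~2 (products).} Write $I=\{i_1<\dots<i_p\}$ and $J=\{j_1<\dots<j_p\}$. Reordering the $1$-forms shows that $w_I\wedge\overline{w_J}$ equals, up to a sign depending only on $p$, the product $\prod_{k=1}^p w_{i_k}\wedge\overline{w_{j_k}}$; here $(1,1)$-forms commute, so the order of the factors does not matter. Substituting Step~1 into each factor and expanding gives $w_I\wedge\overline{w_J}$ as a $\mathbb{C}$-linear combination of $4^p$ products $\prod_k\gamma_k\wedge\overline{\gamma_k}$ with $\gamma_k=w_{i_k}+\mathrm{i}^{a_k}w_{j_k}$. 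Each such product equals, again up to a universal sign and power of $\mathrm{i}$, the form $\mathrm{i}^{p^2}\gamma_1\wedge\cdots\wedge\gamma_p\wedge\overline{\gamma_1}\wedge\cdots\wedge\overline{\gamma_p}$. This proves spanning over $\mathbb{C}$.

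\textbf{Step~3 (reality and basis).} The forms $\mathrm{i}^{p^2}\gamma\wedge\bar\gamma$ with $\gamma$ decomposable are real. Hence, for the real subspace of real $(p,p)$-forms, taking real parts of the coefficients shows they also span it over $\mathbb{R}$. A finite spanning family contains a basis, which gives the statement in either reading (complex, or real forms).

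The only real obstacle is the sign and power-of-$\mathrm{i}$ bookkeeping in Step~2. I will avoid computing it precisely by observing that it is a nonzero constant depending only on $p$, so it affects coefficients but not spanning.
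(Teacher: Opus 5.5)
Your proof is correct and is essentially the standard argument behind the references the paper cites in place of a proof (\cite{HK74}, \cite[Page~167, Lemma~1.4]{Dembook2}). The polarization identity $4\,w_j\wedge\overline{w_k}=\sum_{a=0}^{3}\mathrm{i}^a(w_j+\mathrm{i}^a w_k)\wedge\overline{(w_j+\mathrm{i}^a w_k)}$, applied factor by factor to $w_I\wedge\overline{w_J}$, shows that the listed forms span $\Lambda^{p,p}V^{\vee}$, and a basis is then extracted from this finite spanning family; your reality remark also shows that the same subfamily is an $\mathbb{R}$-basis of the real $(p,p)$-forms.
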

This result was first proved by Harvey--Knapp \cite[Proposition~1.9]{HK74}. See also Demailly \cite[Page~167, Lemma~1.4]{Dembook2}. 
The corresponding statement fails in the setup of Lagerberg forms, as previously asserted in the first version of \cite{CLD12}. See \cite{Ber25} for the details.

\clearpage

\printbibliography

Mingchen Xia, \textsc{Institute of Geometry and Physics, USTC}\par\nopagebreak
  \textit{Email address}, \texttt{xiamingchen2008@gmail.com}\par\nopagebreak
  \textit{Homepage}, \url{https://mingchenxia.github.io/}.

\end{document}